\newcommand{\CwPre}[1]{\widetilde{\mathcal{C}}_{#1}} 
\newcommand{\CsPre}[1]{\widetilde{\mathcal{D}}_{#1}}
\newcommand{\Cv}[1]{\mathcal{C}_{#1}}
\newcommand{\Cs}[1]{\mathcal{D}_{#1}}
\newcommand{\CvEps}[1]{\mathcal{C}^{\epsilon}_{#1}}
\newcommand{\CvEpsR}[1]{\mathcal{C}^{\epsilon}_{#1}}
\newcommand{\CvEpsRGeq}[1]{\mathcal{C}^{\epsilon}_{#1}}
\newcommand{\CvEpsReq}[1]{\mathcal{C}^{\epsilon}_{#1}}
\newcommand{\CvRLeq}[1]{\mathcal{C}_{#1}}
\numberwithin{equation}{section}
\newcommand{\I}{{\rm i}}
\newcommand{\Id}{\mathbbm{1}}
\newcommand{\Or}{\mathcal{O}}
\newcommand{\N}{\mathbb{N}}
\newcommand{\e}[0]{\varepsilon}
\newcommand{\EE}{\ensuremath{\mathbb{E}}}
\newcommand{\PP}{\ensuremath{\mathbb{P}}}
\newcommand{\R}{\ensuremath{\mathbb{R}}}
\newcommand{\C}{\ensuremath{\mathbb{C}}}
\newcommand{\Z}{\ensuremath{\mathbb{Z}}}
\newcommand{\Rplus}{\ensuremath{\mathbb{R}_{+}}}
\newcommand{\Zgzero}{\ensuremath{\mathbb{Z}_{>0}}}
\newcommand{\Q}{\ensuremath{\mathbb{Q}}}
\newcommand{\Y}{\ensuremath{\mathbb{Y}}}
\newcommand{\MM}{\ensuremath{\mathbf{MM}}}
\newcommand{\WM}[1]{\ensuremath{\mathbf{WM}}_{(#1)}}
\newcommand{\Zsd}{\ensuremath{\mathbf{Z}}}
\newcommand{\Fsd}{\ensuremath{\mathbf{F}}}
\newcommand{\Sym}{\ensuremath{\mathrm{Sym}}}
\newcommand{\Res}[1]{\underset{{#1}}{\mathrm{Res}}}
\newcommand{\la}[0]{\lambda}
\newcommand{\T}{\mathcal{T}}
\renewcommand{\Re}{\operatorname{Re}}
\renewcommand{\Im}{\operatorname{Im}}
\DeclareMathOperator{\dist}{dist}
\DeclareMathOperator{\Ai}{Ai}
\DeclareMathOperator{\sign}{sgn}
\DeclareMathOperator*{\var}{Var}
\newtheorem{theorem}{Theorem}[section]
\newtheorem{proposition}[theorem]{Proposition}
\newtheorem{lemma}[theorem]{Lemma}
\newtheorem{corollary}[theorem]{Corollary}
\newtheorem{claim}[theorem]{Claim}
\newtheorem{remark}[theorem]{Remark}
\newtheorem{definition}[theorem]{Definition}
\newenvironment{remarks}{\begin{remark}\normalfont}{\end{remark}}
\title[Free energy fluctuations for directed polymers]{Free energy fluctuations for directed polymers in random media in $1+1$ dimension}
\author[A. Borodin]{Alexei Borodin}
\address{A. Borodin,
Massachusetts Institute of Technology,
Department of Mathematics,
77 Massachusetts Avenue, Cambridge, MA 02139-4307, USA}
\email{borodin@math.mit.edu}
\author[I. Corwin]{Ivan Corwin}
\address{I. Corwin,
Microsoft Research,
New England, 1 Memorial Drive, Cambridge, MA 02142, USA}
\email{ivan.corwin@gmail.com}
\author[P.L. Ferrari]{Patrik Ferrari}
\address{P.L. Ferrari,
  Bonn University,
Institute for Applied Mathematics,
Endenicher Allee 60, 53115 Bonn, Germany}
\email{ferrari@uni-bonn.de}
\begin{document}
\sloppy
\maketitle
\thispagestyle{empty}

\begin{abstract}
We consider two models for directed polymers in space-time independent random media (the O'Connell-Yor semi-discrete directed polymer and the continuum directed random polymer) at positive temperature and prove their KPZ universality via asymptotic analysis of exact Fredholm determinant formulas for the Laplace transform of their partition functions. In particular, we show that for large time $\tau$, the probability distributions for the free energy fluctuations, when rescaled by $\tau^{1/3}$, converges to the GUE Tracy-Widom distribution.

We also consider the effect of boundary perturbations to the quenched random media on the limiting free energy statistics. For the semi-discrete directed polymer, when the drifts of a finite number of the Brownian motions forming the quenched random media are critically tuned, the statistics are instead governed by the limiting Baik-Ben Arous-P\'{e}ch\'{e} distributions from spiked random matrix theory. For the continuum polymer, the boundary perturbations correspond to choosing the initial data for the stochastic heat equation from a particular class, and likewise for its logarithm -- the Kardar-Parisi-Zhang equation. The Laplace transform formula we prove can be inverted to give the one-point probability distribution of the solution to these stochastic PDEs for the class of initial data.
\end{abstract}

\setcounter{tocdepth}{1}
\tableofcontents
\hypersetup{linktocpage}

\section{Introduction and main results}

The main results of this paper are contained in Sections \ref{1.2}, \ref{CDRPfe} and \ref{preasymSec}. The below section introduces the study of directed polymers and motivates our main results within that field.

\subsection{Directed polymers in random media.}
This article studies the effect of quenched disorder (homogeneous and inhomogeneous) on a class of path measures introduced first by Huse and Henley~\cite{HH85} which are commonly called {\it directed polymers in a random media} (DPRM). Such polymers are directed in what is often referred to as a {\it time} direction, and then are free to configure themselves in the remaining $d$ {\it spatial} dimensions. The probability $dP_{\beta,Q}(\pi(\cdot))$ of a given configuration $\pi(\cdot)$ of the polymer is then given relative to an underlying reference path measure $dP_0$ by a \mbox{Radon-Nikodym} derivative. This derivative is written as a {\it Boltzmann weight} involving a {\it Hamiltonian} $H_{Q}$ which assigns an energy to the path\footnote{Actually, the energy is $-H_{Q}(\pi(\cdot))$ but we keep the same convention as in the models analyzed.}:
\begin{equation*}
dP_{\beta,Q}(\pi(\cdot)) = Z_{\beta,Q}^{-1} e^{\beta H_{Q}(\pi(\cdot))} dP_0(\pi(\cdot)).
\end{equation*}
In the above equation $\beta$ is the inverse temperature which balances the entropy of the underlying reference path measure with the energy of the Hamiltonian. The subscript $Q$ stands for {\it quenched} which means that this $H_{Q}(\pi(\cdot))$ is a function of some disorder $\omega$ which we think of as an element of a probability space. Hence, with respect to this probability space, $H_{Q}$ is a random function. The normalization constant $Z_{\beta,Q}$, given by
\begin{equation*}
Z_{\beta,Q}= \int e^{\beta H_{Q}(\pi(\cdot))} dP_0(\pi(\cdot)),
\end{equation*}
is the quenched partition function, and is a function of $\omega$ as well. The measure $dP_{\beta,Q}$ is a quenched polymer measure since it is also a function of $\omega$. We denote averages with respect to the disorder $\omega$ by $\EE$, so that $\EE(Z_{\beta,Q})$ is the average of the quenched partition function. We denote by $\PP$ the probability measure for the disorder $\omega$ and denote the variance with respect to the disorder as $\var{\cdot}$. From now on we will assume $dP_0$ is the path measure of a random walk with either a free end point, or a specified pinned endpoint. The latter case is called a {\it point-to-point} polymer. We will focus mainly on point-to-point polymers herein.

At infinite temperature, $\beta=0$, and under standard hypotheses on $dP_0$ (i.e., i.i.d.\ finite variance increments) the measure $dP_{\beta,Q}(\pi(\cdot))$ rescales diffusively to that of a Brownian motion and thus the polymer is purely maximizing entropy. At zero temperature, $\beta=\infty$, the polymer measure concentrates on the path (or paths) $\pi$ which maximize the polymer energy $H_Q(\pi)$. A well studied challenge is to understand the effect of quenched disorder at positive $\beta$ on the behavior of a $dP_{\beta,Q}$-typical path of the free energy
$F_{Q,\beta}:=\beta^{-1} \ln (Z_{\beta,Q})$. A rough description of the behaviour is given by the {\it transversal fluctuation exponent} $\xi$ and the {\it longitudinal fluctuation exponent} $\chi$. There are many different ways these exponents have been defined, and it is not at all obvious that they exist for a typical polymer model -- though it is believed that they do. As $n$ goes to infinity, the first exponent describes the fluctuations of the endpoint of the path $\pi$: typically $|\pi(n)| \approx n^{\xi}$. The second exponent describes the fluctuations of the free energy: $\var{F_{\beta,Q}} \approx n^{2\chi}$. Assuming the existence of these exponents, in order to have a better understanding of the system it is of essential interest to understand the statistics for the properly scaled location of the endpoint and fluctuations of the free energy.

From now on we will focus entirely on Hamiltonians which take the form of a path integral through a space-time independent noise field (the quenched disorder). In the discrete setting of $dP_0$ given as a simple symmetric random walk (SSRW) of length $n$, we consider a homogeneous noise field given by i.i.d.\ random variables $w_{t,x}$. The quenched Hamiltonian is then defined as $H_{Q}(\pi(\cdot)) = \sum_{t=0}^{n} w_{t,\pi(t)}$.

The first rigorous mathematical work on directed polymers was by Imbrie and Spencer~\cite{IS88} in 1988 where they proved that in dimensions $d\ge 3$ and with small enough $\beta$, the polymer is diffusive ($\xi=1/2$). Bolthausen~\cite{Bol89} strengthened the result to a central limit theorem for the endpoint of the walk. This means that in $d\ge 3$ entropy dominates at high enough temperature, i.e., the polymer behaves as if there were no noise. Bolthausen's work relied on the now fundamental observation that renormalized partition function (for $dP_0$ a SSRW of length $n$) $W_n=Z_{\beta,Q} /\EE(Z_{\beta,Q})$ is a martingale. By a zero-one law, the limit $W_\infty=\lim_{n\to \infty} W_n$ is either almost surely $0$ or almost surely positive.

Since at $\beta=0$ the effect of the randomness $\omega$ vanishes and one has $W_\infty=1$, one refers to the case $W_{\infty}>0$ as {\it weak disorder}, while the term {\it strong disorder} is used when $W_{\infty}=0$. There is a critical value $\beta_c$ such that weak disorder holds for $\beta<\beta_c$ and strong for $\beta>\beta_c$. It is known that $\beta_c=0$ for $d\in\{1,2\}$~\cite{CSY04} and $0<\beta_c\le\infty$ for $d\ge 3$. In $d\ge 3$ and weak disorder the walk converges to a Brownian motion~\cite{CJ06}. On the other hand, the behavior of the polymer paths in strong disorder is different since there exist (random) points at which the path $\pi$ has a uniform (in $n$) positive probability (under $dP_{\beta,Q}$) of ending (see~\cite{CSY04,CC11}).

\subsubsection*{The KPZ universality conjecture for $d=1$.}
In this paper we focus on the $d=1$ case. The universality conjecture says that the scaling exponents and limiting fluctuation statistics for the free energy exist and are universal in the sense that they do not depend of the details of the model, provided some generic requirements are satisfied (e.g.\ the variables $w$ have only local correlations,  and enough non-trivial finite moments, see also the review~\cite{Cor11}). The models we consider below are predicted to belong to the so-called the Kardar-Parisi-Zhang (KPZ) universality class~\cite{KPZ86} for which $\xi=2/3$ and $\chi=1/3$~\cite{FNS77,BKS85}. In particular, since the transition from strong to weak disorder happens at $\beta=0$, the universality conjecture predicts that for any $\beta>0$ the scaling exponents $\xi,\chi$ and the fluctuation statistics equal the corresponding zero temperature ones ($\beta=\infty$).

The first step to this conjecture is to identify $\xi,\chi$ and describe the scaling limit for polymers and their free energy. This can be done by studying any model believed to be in the universality class. The second step is to show universality, i.e., show that the results are independent of the chosen model. In this paper we show such universality for any $\beta > 0$ for two particular polymer models -- the semi-discrete and continuum directed random polymers.

Consider now $Z_{\beta,Q}^{\rm pp}(\tau,x)$ to be the point-to-point partition function of polymers ending at $x$ at time $\tau$. Denote by $F_{\beta,Q}^{\rm pp}:=\frac{1}{\beta}\ln (Z_{\beta,Q}^{\rm pp})$ the free energy and by $f_{\beta,Q}^{\rm pp}$ the free energy limit shape (law of large numbers of $F_{\beta,Q}^{\rm pp}$ along different velocities). Then, a stronger version of the KPZ universality conjecture is to claim that over all models, there exists a unique limit\footnote{More precisely, the uniqueness should be up to a space-time scaling by parameters which can be calculated from the microscopic properties of the polymer such as $dP_0$, $\beta$ and the disorder distribution (see, for example,~\cite{Spo12,KK08}).}
\begin{equation}\label{eq1.3}
\lim_{\e\to 0} \e^{\chi} \left(F_{\beta,Q}^{\rm pp}(\e^{-1}\tau,\e^{-\xi}x)-
\e^{-1}f_{\beta,Q}^{\rm pp}(\tau,\e^{1-\xi}x)\right).
\end{equation}
One issue is that, since we are in the strong disorder regime, the quenched free energy differs from the annealed one (easier to compute) and there is no general way of determining it. The conjectural space-time limit is described in~\cite{CQ11} where it is called the {\it KPZ renormalization fixed point}. Information about this fixed point has generally come from studying exactly solvable models at zero temperature, $\beta=\infty$, such as last passage percolation, TASEP or PNG (see the review~\cite{Cor11}). For these models, one indeed finds $\xi=2/3$, $\chi=1/3$, and although the existence of the full space-time limit under these scalings is not known, for fixed time $\tau>0$ the limiting spatial process is the Airy$_2$ process~\cite{PS02,Jo03b}. This extends the results of~\cite{BDJ99,Jo00b} that show that the limiting one-point distribution (fixed $(\tau,x)$) is governed by the GUE Tracy-Widom distribution~\cite{TW94}.

Some of the zero temperature models can be reinterpreted as growth models of interfaces belonging to the KPZ universality class (where the free energy plays the role of a height function). Thus, the universality conjecture says that the height function (properly rescaled) converges to the same limit as the polymer free energy. These other models require initial data (i.e., height function profile at time zero) and the description of the KPZ fixed point takes the limit of this initial data into consideration. The scalings $\xi$ and $\chi$ do not change, but the limiting statistics reflect the initial data (see the reviews~\cite{Fer07,Fer10b} and recent experiments~\cite{TS12}). For random polymers the change in statistics occurs for instance if one looks at point-to-line problem instead. We will look at a different way to change the statistics by considering point-to-point polymers with boundary perturbations.

\subsubsection*{Polymers with boundary perturbations}
The role of initial data for growth processes and particle systems can be mimicked in the case of polymers by introducing inhomogeneity into the quenched disorder so as to encourage the path measure to spend more time on the boundary of the underlying path space. For instance consider a polymer with $dP_0$ given by a SSRW (either free or point-to-point) in a quenched disorder formed by i.i.d.\ centered weights $w_{t,x}$, for $x<t$; and i.i.d.\ boundary weights $w_{t,t}$ with positive mean, for $t\geq 0$. As the mean of the boundary weights $w_{t,t}$ increases, the paths which stay along the boundary for a long time tend to have a larger Boltzmann weight, and above a critical threshold, the energy-entropy competition is dominated by the boundary attraction so that the path spends a macroscopic proportion of its time pinned along the boundary. Note that once a path leaves the boundary it can not return. This leads to Gaussian scalings and statistics for the free energy and hence takes us beyond the basin of attraction for the KPZ renormalization fixed point.

However, in a scaling window of perturbation strength around the critical value, the boundary energy is strong enough to modify the free energy statistics and polymer measure, without modifying the scaling exponents. Modulating the strength within the window one sees a transition from sub to super critical behaviors (in terms of exponents and statistics).
Under the scaling (\ref{eq1.3}) these critically perturbed polymers should converge to the KPZ fixed point with a certain type of initial data, which results in different statistics than in the homogeneous disorder case. Information about these statistics originated from the analysis of a few solvable zero temperature, $\beta=\infty$, models~\cite{BBP05,BP07,BR00,BFS09,SI04,BC09}.

\subsubsection*{Contributions of this paper}
In this work we consider two polymer models with boundary perturbations at positive temperature: the O'Connell-Yor semi-discrete directed polymer and the continuum directed random polymer (CDRP). Each of these two models are themselves scaling limits of discrete polymers with general weight distributions when the inverse temperature $\beta$ scales to 0 simultaneously with the other parameters scaling to infinity. This type of scaling is called intermediate disorder scaling as it moves between weak and strong disorder (see~\cite{AKQ10,AKQ12,AKQ12b,QM12,QMR12}). This distinguishes these two models as being somewhat universal in their own rights, and hence provides increased motivation to prove the KPZ universality conjecture in their cases.

The first contribution of this work is to rigorously establish the KPZ universality conjecture (for one-point scalings and statistics) for these two positive temperature polymer models with boundary perturbations. In particular, we determine the critical perturbation strength and show that the phase diagram of one-point scalings and statistics for these polymers match that of the solvable zero temperature models. These results significantly expand upon the previous proved results for positive temperature polymer models~\cite{ACQ10,CQ10,BC11,Sep09,SV10}.

Our results rely on an algebraic framework for the exact solvability of these polymers which comes from Macdonald processes and their limits and degenerations~\cite{BC11}. One output of that work is an exact formula for the Laplace transform of the partition function for the semi-discrete polymer with arbitrary boundary perturbation strength. From this,~\cite{BC11} showed KPZ universality for the one-point scalings and statistics for the unperturbed semi-discrete polymer at sufficiently low temperatures. The second contribution of this work is to develop a variant on the formulas of~\cite{BC11} for the semi-discrete polymer which are well-adapted for rigorous asymptotic analysis at all temperatures (and for boundary perturbations). This requires modifications at a fairly high level of the hierarchy of models arising from Macdonald processes.

The third contribution is the discovery and proof of exact formulas for the Laplace transform of the partition function for the CDRP with boundary perturbations. The semi-discrete polymer has an intermediate disorder scaling limit in which it converges to the CDRP \cite{QM12,QMR12}, and these formulas are found via rigorous asymptotics of the aforementioned variant on the formulas of~\cite{BC11}. These formulas display a striking similarity (and a clear limit transition) to those of the KPZ fixed point with initial data corresponding to the boundary perturbations~\cite{BBP05}. The CDRP free energy solves the Kardar-Parisi-Zhang (KPZ) stochastic PDE and the boundary perturbation corresponds to particular class of half Brownian-like initial data. The discovered formulas also provide a new class of statistics for this equation, which is believed to model the fluctuations of randomly growing interfaces.

Let us now explain in detail our results.

\subsection{O'Connell-Yor semi-discrete polymer free energy}\label{1.2}
The first result is about a semi-discrete directed polymer model introduced by O'Connell and Yor~\cite{OCY01}.

\begin{definition}
An {\it up/right path} in $\R\times \Z$ is an increasing path which either proceeds to the right or jumps up by one unit. For each sequence $0<s_1<\cdots<s_{N-1}<\tau$ we can associate an up/right path $\phi$ from $(0,1)$ to $(\tau,N)$ which jumps between the points $(s_i,i)$ and $(s_{i},i+1)$, for $i=1,\ldots, N-1$, and is continuous otherwise. Fix a real vector $a=(a_1,\ldots, a_N)$ and let $B(s) = (B_1(s),\ldots, B_N(s))$ for $s\geq 0$ be independent standard Brownian motions such that $B_i$ has drift $a_i$. Let $\PP$ and $\EE$ denote the probability measure and expectation with respect to the disorder $B(\cdot)$.

Define the {\it energy} of a path $\phi$ to be
\begin{equation*}
E(\phi) = B_1(s_1)+\left(B_2(s_2)-B_2(s_1)\right)+ \cdots + \left(B_N(\tau) - B_{N}(s_{N-1})\right).
\end{equation*}
Then the {\it O'Connell-Yor semi-discrete directed polymer partition function} $\Zsd^{N}(\tau)$ is given by
\begin{equation}\label{Zsd}
\Zsd^{N}(\tau) = \int e^{E(\phi)} d\phi,
\end{equation}
where the integral is with respect to Lebesgue measure on the Euclidean set of all up/right paths $\phi$ (i.e., the simplex of jumping times $0<s_1<\cdots<s_{N-1}<\tau$).
The quenched {\it free energy} $\Fsd^{N}(\tau)$ is defined as
\begin{equation}\label{Fsd}
\Fsd^{N}(\tau) = \ln \Zsd^{N}(\tau),
\end{equation}
whereas the annealed free energy is given by $\Fsd_{\rm an}^N(\tau):=\ln \EE \Zsd^{N}(\tau)$.
\end{definition}

This model is related to the discrete polymer discussed in the introduction as follows: By rotating an angle of $\pi/4$ the discrete polymer becomes a measure on up/right lattice paths starting at $(1,1)$ with weights in $(\Z_{> 0})^2$. Consider the point-to-point partition function for paths going from $(1,1)$ to $(\tau\e^{-1},N)$ and rescale the lattice weights like $\e^{1/2} w_{i,j}$. Then as $\e\to 0$, due to the invariance principle the up/right lattice paths become like $\phi$ of the above definition, and the discrete path integral energy becomes $E(\phi)$.

We focus on scaling the semi-discrete polymer as $\tau=\kappa N$ for some $\kappa>0$. Due to Brownian scaling, the parameter $\kappa$ can be changed to 1 by replacing $E(\phi)$ by $\kappa^{1/2} E(\phi)$ in the exponential of (\ref{Zsd}). In this way, $\kappa$ corresponds to an inverse temperature parameter $\beta= \kappa^{1/2}$.

It is easy to calculate the annealed free energy exactly as
\begin{equation*}
\Fsd^{N}_{\rm an}(\tau) = \frac{\tau}{2} + (N-1)\ln \tau - \ln (N-1)!
\end{equation*}
which after setting $\tau=\kappa N$ implies that the annealed free energy density converges to
\begin{equation*}
\textbf{f}_{\rm an}(\kappa):=\lim_{N\to \infty} N^{-1} \Fsd_{\rm an}^N(\kappa N) =\frac{\kappa}{2} + \ln \kappa +1.
\end{equation*}

Let us briefly relate what the law of large numbers is for the quenched free energy of the unperturbed polymer.

\begin{definition}\label{digammadef}
The Digamma function is given by $\Psi(z) = \frac{d}{dz}\ln \Gamma(z)$. For $\theta\in \R_+$, define
\begin{equation*}
\kappa_\theta:=\Psi'(\theta),\quad f_\theta:=\theta \Psi'(\theta)-\Psi(\theta),\quad c_\theta:=(-\Psi''(\theta)/2)^{1/3},
\end{equation*}
or, equivalently, for $\kappa \in\R_+$, define
\begin{equation*}
\theta^\kappa:=(\Psi')^{-1}(\kappa)\in\R_+,\quad f^\kappa:=\inf_{t>0} (\kappa t - \Psi(t))\equiv f_{\theta^\kappa},\quad c^\kappa:=c_{\theta^\kappa}.
\end{equation*}
\end{definition}

The following law of large numbers for $\Fsd^{N}(\kappa N)$ was conjectured in~\cite{OCY01} and proved in~\cite{MOC07}. Consider the semi-discrete directed polymer with drift vector \mbox{$a=(0,\ldots,0)$}. Then for all $\kappa>0$,
\begin{equation*}
\lim_{N\to \infty} N^{-1} \Fsd^{N}(\kappa N) = f^{\kappa}, \qquad \textrm{a.s.}
\end{equation*}

It follows that for all $\kappa>0$, $\textbf{f}_{\rm an}(\kappa)=\frac{\kappa}{2} + \ln \kappa +1>f^{\kappa}$, and the quenched and annealed free energy density converge in the $\kappa\to 0$ limit. This is in agreement with strong disorder.

Below we analyze the large $N$ asymptotics of the fluctuations of $\Fsd^{N}(\kappa N)$ when centered by $N f^\kappa$.

\begin{theorem}\label{ThmPosTempAsy}
Consider the semi-discrete directed polymer with drift vector \mbox{$a=(a_1,\ldots, a_m,0,\ldots,0)$} where $m\leq N$ is fixed and the $m$ non-zero real numbers $a_1,a_2,\ldots, a_m$ may depend on $N$. We can consider without loss of generality that \mbox{$a_1\geq a_2\geq \ldots \geq a_m$} as the free energy is invariant under permuting these parameters. Then for all $\kappa>0$, we have the following characterization of the limiting behavior of the free energy $\Fsd^{N}(\kappa N)$ as $N\to \infty$.
\begin{itemize}
\item[(a)] If $\limsup_{N\to\infty}N^{1/3} (a_1(N)-\theta^\kappa)=-\infty$, then
\begin{equation*}
\lim_{N\to \infty} \PP\left( \frac{ \Fsd^{N}(\kappa N) - N f^{\kappa}}{c^\kappa N^{1/3}}\leq r\right) = F_{{\rm GUE}}(r),
\end{equation*}
where $F_{{\rm GUE}}$ is the GUE Tracy-Widom distribution~\cite{TW94}.
\item[(b)] If $\lim_{N\to \infty} N^{1/3}(a_i(N) -\theta^\kappa) = b_i\in \R\cup\{-\infty\}$ for $i=1,\ldots,m$, then
\begin{equation*}
\lim_{N\to \infty} \PP\left( \frac{ \Fsd^{N}(\kappa N) - N f^{\kappa}}{c^\kappa N^{1/3}}\leq r\right) = F_{{\rm BBP},b}(r),
\end{equation*}
where $F_{{\rm BBP},b}$ is the Baik-Ben Arous-P\'{e}ch\'{e}~\cite{BBP05} distribution from spiked random matrix theory, with $b=(b_1,\ldots,b_m)$.
\end{itemize}
\end{theorem}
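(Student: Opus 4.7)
The plan is to proceed via asymptotic analysis of a Fredholm determinant formula for the Laplace transform of $\Zsd^{N}(\kappa N)$ in the presence of the drifts $a=(a_1,\dots,a_m,0,\dots,0)$. The paper's introduction advertises such a formula --- a variant, well adapted to large-$N$ asymptotics at all $\kappa>0$, of the Macdonald-process formulas of \cite{BC11}. I will take as input an identity of the schematic form
\[
\EE\!\left[e^{-u \Zsd^{N}(\kappa N)}\right] \;=\; \det\!\bigl(\Id + K^{N}_{u,a}\bigr)_{L^{2}(C)},
\]
where $K^{N}_{u,a}$ is a double-contour kernel whose integrand involves ratios of Gamma functions encoding the drifts $a_{i}$, and where the parameter $u$ enters through a Mellin--Barnes factor along a vertical contour in an auxiliary variable.

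To convert the Laplace transform into probabilistic information I would choose
\[
u = u_{N}(r) := \exp\!\bigl(-N f^{\kappa} - c^{\kappa} N^{1/3} r\bigr),
\]
so that $x \mapsto \exp(-u_{N}(r) e^{x})$ is a smooth approximation of $\mathbf{1}_{\{(x-Nf^{\kappa})/(c^{\kappa} N^{1/3}) \le r\}}$ that converges pointwise to it as $N\to\infty$. Combined with a tightness/upper-tail estimate for the rescaled free energy fluctuations (which can be bootstrapped from the same Fredholm formula via moment bounds of $\Zsd^{N}$), this identifies $\lim_{N}\EE[\exp(-u_{N}(r)\Zsd^{N}(\kappa N))]$ with the desired limiting CDF of $(\Fsd^{N}(\kappa N) - N f^{\kappa})/(c^{\kappa} N^{1/3})$.

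Next comes a saddle-point analysis. Writing the kernel schematically as $\iint e^{N(G(w)-G(z))} R(w,z)\,dw\,dz$ with $G'(w)=\kappa\ln w - \Psi(w)+\text{const.}$, the critical-point equation $G'(\theta)=0$ reduces to $\Psi'(\theta)=\kappa$ and selects $\theta=\theta^{\kappa}$, while $G''(\theta^{\kappa})=-\Psi''(\theta^{\kappa})=2(c^{\kappa})^{3}$ fixes the Airy zoom $w=\theta^{\kappa}+\tilde w/(c^{\kappa}N^{1/3})$. I would then deform the $w$- and $z$-contours to steepest descent through $\theta^{\kappa}$, prove uniform Gaussian decay along the tails so that pointwise convergence of the kernel upgrades to trace-norm convergence of the determinant, and identify the limiting kernel in each regime. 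In case~(a), the Gamma poles at $w=a_{i}$ lie at a scale strictly larger than $N^{-1/3}$ to the left of $\theta^{\kappa}$ and play no role in the limit, giving the Airy kernel $K_{\mathrm{Ai}}$ and hence $F_{\mathrm{GUE}}(r)$. In case~(b), the critical tuning $N^{1/3}(a_{i}-\theta^{\kappa})\to b_{i}$ places these poles exactly at the Airy scale, and the residues they contribute deform $K_{\mathrm{Ai}}$ into the BBP kernel whose Fredholm determinant is $F_{\mathrm{BBP},b}(r)$, in direct analogy with the spiked random matrix picture of \cite{BBP05}.

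The principal obstacle I expect is the interplay, in case~(b), among the steepest descent contour through $\theta^{\kappa}$, the Mellin--Barnes pole carrying the $u$-dependence, and the Gamma poles at $w=a_{i}$: all must sit on the correct side of the contour, even though the $a_{i}$'s are within $O(N^{-1/3})$ of the saddle, and tracking the residues picked up during deformation is what ultimately encodes the $b$-dependence of the limit. A related and arguably harder preparatory step is producing the ``variant'' Fredholm determinant formula advertised in the introduction: the formula obtained directly from Macdonald processes in \cite{BC11} was amenable to asymptotics only at sufficiently low temperatures, and recasting it into a form whose integrand genuinely looks like $e^{NG}$ (with controlled subleading terms, admissible contours, and visible drift structure) for every $\kappa>0$ is where the bulk of the non-routine analytic work will lie.
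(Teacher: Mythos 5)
Your proposal follows essentially the same route as the paper: set $u=\exp(-Nf^\kappa - c^\kappa N^{1/3}r)$, convert the Laplace-transform Fredholm determinant into a double-exponential expectation, carry out steepest descent around $\theta^\kappa$, and in case (b) track the Gamma poles at $w=a_i$ scaled to the Airy window so they produce the BBP deformation. Two points deserve correction, though. First, the derivative counting in your saddle analysis is off. With the paper's $G(z)=\ln\Gamma(z)-\kappa z^2/2 + f^\kappa z$, one has $G'(z)=\Psi(z)-\kappa z + f^\kappa$, and $\theta^\kappa$ is a \emph{double} critical point: $G'(\theta^\kappa)=G''(\theta^\kappa)=0$ (the relation $\Psi'(\theta^\kappa)=\kappa$ is precisely $G''=0$, not $G'=0$ as you write), and the $N^{1/3}$ Airy scale is driven by the nonvanishing \emph{third} derivative $G^{(3)}(\theta^\kappa)=\Psi''(\theta^\kappa)=-2(c^\kappa)^3$. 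If the critical point were simple with $G''(\theta^\kappa)\ne 0$, as your write-up suggests, you would obtain Gaussian $N^{1/2}$ fluctuations rather than Tracy--Widom --- so this slip, if carried through, would derail the computation. Second, the auxiliary tightness/upper-tail estimate you invoke is unnecessary: the paper's Lemma~\ref{problemma1} shows directly that if $\EE[\Theta_N((\Fsd^N(\kappa N)-Nf^\kappa)/(c^\kappa N^{1/3})-r)]\to p(r)$ with $p$ a continuous CDF, then the CDFs themselves converge; no separate moment-based tightness argument is needed. Finally, the technical heart of the rigorous proof, which you correctly anticipate as hard, is the contour work: there is no globally steep-descent contour for $-G(\tilde z)$ because the $\tilde z$-contour must lie to the right of all poles $v+1,v+2,\ldots$, so the paper instead uses a fixed vertical line plus explicit residue contributions from those poles that lie to its left, and all estimates must be uniform enough to pass the $N\to\infty$ limit through the Fredholm expansion.
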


The definitions of $F_{{\rm GUE}}(r)$ and $F_{{\rm BBP},b}$ are provided below in Definition~\ref{GUEBBPdef}. The fact that the result is independent of the ordering of $a_1,\ldots,a_N$ is apparent from the formulas, see e.g.\ Theorem~\ref{OConYorFluctThm} below. In Section~\ref{formalOConnellYorKPZ} we reduce the proof of this result to a claim on certain asymptotics of the Fredholm determinant formula presented in Section~\ref{preasymSec}. We provide a formal critical point derivation of these asymptotics in Section~\ref{formalOConnellYorKPZ} and a rigorous proof later in Section~\ref{proofOConnellYorKPZ}.

\begin{remarks}
If $\lim_{N\to \infty} N^{1/3}(a_i(N) -\theta^\kappa)=\infty$ then the boundary perturbation overwhelms the free energy fluctuations and the scalings and statistics become Gaussian in nature. This super critical regime is proved for zero temperature polymers~\cite{BBP05,Pec05}. We do not include a proof of this regime here, but it should be readily accessible from the exact formulas via asymptotic analysis.
\end{remarks}

\begin{remarks}
In the unperturbed case, i.e., \mbox{$a=(0,\ldots,0)$}, a tight upper bound on the exponent for the free energy fluctuation scalings was determined in~\cite{SV10}. In~\cite{BC11} the full one-point scaling limit was proved for $\kappa>\kappa^*$ with $\kappa^*$ a large (enough) constant forced by some technical consideration in the asymptotic analysis. In this article we do away with those technical issues which allows us to rigorously extend the asymptotics to all positive $\kappa$, as well as to $\kappa$ tending to zero simultaneously with $N$, as we soon will consider.
\end{remarks}

Theorem~\ref{ThmPosTempAsy} is expected by universality, because the same results hold in the zero-temperature limit ($\beta=\infty$) and the phase transition is expected to be at $\beta=0$.
More precisely, the limit of the free energy (divided by $\beta$) as $\beta$ goes to infinity and $(N,\tau)$ is fixed, is described by the ground-state maximization problem
\begin{equation*}
\begin{aligned}
M^N(\tau) :=& \lim_{\beta\to \infty} \frac{1}{\beta} \ln \int e^{\beta E(\phi)} d\phi\\
=& \max_{0<s_1<\cdots<s_{N-1}<\tau} B_1(s_1)+\left(B_2(s_2)-B_2(s_1)\right)+ \cdots + \left(B_N(\tau) - B_{N}(s_{N-1})\right).
\end{aligned}
\end{equation*}
In the unperturbed case, $a_i\equiv 0$, $M^N(1)$ is distributed as the largest eigenvalue of an $N\times N$ GUE random matrix~\cite{Bar01}, see also Theorem~1.1 of~\cite{OCon09}. In fact, as a process of $\tau$, $M^N(\tau)$ has the law of the largest eigenvalue of the standard Dyson Brownian motion on Hermitian matrices (the lower eigenvalues are also connected to certain generalizations of the free energy~\cite{OCon09}). It follows from the original work of Tracy and Widom~\cite{TW94} and also~\cite{For93,NW93} that
\begin{equation*}
\lim_{N \to \infty} \PP\left( \frac{ M^N(N) - 2N}{N^{1/3}}\leq r\right) = F_{{\rm GUE}}(r).
\end{equation*}

For general drift parameters, $M^N(\tau)$ is related to Dyson Brownian motion with drifts and the distribution of $M^N(\tau)$ then coincides with the largest eigenvalue of a {\it spiked} GUE matrix, for which the analog of Theorem~\ref{ThmPosTempAsy} was proved by P\'{e}ch\'{e}~\cite{Pec05}. The first such results were for spiked LUE matrices in the work of Baik-Ben Arous-P\'{e}ch\'{e}~\cite{BBP05}.

We now record the definitions of these limiting distributions in terms of Fredholm determinants. Note that there are many equivalent ways to rewrite these formulas (cf. \cite{Baik}) and we use the most convenient for our purposes.
\begin{definition}\label{GUEBBPdef}
The GUE Tracy-Widom distribution~\cite{TW94} is defined as
\begin{equation*}
F_{{\rm GUE}}(r) =\det(\Id-K_{\rm Ai})_{L^2(r,\infty)},
\end{equation*}
where $K_{\rm Ai}$ is the Airy kernel, that has integral representations
\begin{equation*}
K_{\rm Ai}(\eta,\eta')=\frac{1}{(2\pi\I)^2} \int_{e^{-2\pi\I/3}\infty}^{e^{2\pi\I/3}\infty} dw \int_{e^{-\pi\I/3}\infty}^{e^{\pi\I/3}\infty} dz\frac{1}{z-w}\frac{e^{z^3/3-z\eta}}{e^{w^3/3-w\eta'}}=\int_{\R_+}d\lambda \Ai(\eta+\lambda)\Ai(\eta'+\lambda),
\end{equation*}
where in the first representation the contours $z$ and $w$ must not intersect.

The BBP distribution from spiked random matrix theory~\cite{BBP05} is defined as
\begin{equation*}
F_{{\rm BBP},b}(r) =\det(\Id-K_{{\rm BBP},b})_{L^2(r,\infty)},
\end{equation*}
where $b=(b_1\geq b_2\geq \ldots \geq b_m)\in\R^m$ and $K_{{\rm BBP},b}$ is given by
\begin{equation}\label{eqBBP}
K_{{\rm BBP},b}(\eta,\eta')=\frac{1}{(2\pi\I)^2} \int_{e^{-2\pi \I/3}\infty}^{e^{2\pi\I/3}\infty} dw \int_{e^{-\pi \I/3}\infty}^{e^{\pi\I/3}\infty} dz
\frac{1}{z-w}\frac{e^{z^3/3-z\eta}}{e^{w^3/3-w\eta'}}\prod_{k=1}^m\frac{z-b_k}{w-b_k},
\end{equation}
where the path for $w$ passes on the right of $b_1,\ldots,b_m$ and does not intersect with the path $z$, see Figure~\ref{PFFigPathsBBP}. It is convenient to extend this definition to allow for $b_i=-\infty$ for all $i=\ell+1,\ldots,m$. Calling $\tilde{b}=(b_1,\ldots,b_{\ell})$ the finite values of $b$, we then set $K_{{\rm BBP},b}=K_{{\rm BBP},\tilde{b}}$ and likewise define $F_{{\rm BBP},b}(r)= F_{{\rm BBP},\tilde{b}}(r)$. Notice that if $m=0$ then $K_{{\rm BBP},b}=K_{\rm Ai}$. For representations of this kernel in terms of Airy functions see~\cite{BBP05}.
\begin{figure}[ht]
\begin{center}
\psfrag{Cw}[lb]{$\mathcal{C}_w$}
\psfrag{Cz}[lb]{$\mathcal{C}_z$}
\psfrag{bbar}[cb]{$\bar b$}
\includegraphics[height=5cm]{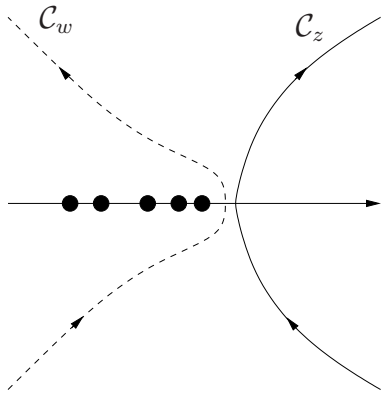}
\caption{Integration contours for $w$ (dashed) and $z$ (solid) of the kernel $K_{{\rm BBP},b}$. The black dots are $b_1,\ldots,b_m$.}
\label{PFFigPathsBBP}
\end{center}
\end{figure}
\end{definition}

\subsection{Continuum directed random polymer free energy}\label{CDRPfe}

Just as Brownian motion serves as a paradigm for and universal scaling limit of random walks, the {\it continuum directed random polymer} (CDRP) serves a similar role for $1+1$ dimensional directed random polymers~\cite{CDR10,AKQ10}. It is proved that the CDRP is the limit of (general weight distribution) discrete~\cite{AKQ12} or semi-discrete polymers~\cite{QM12} under intermediate disorder scaling in which the inverse temperature (or noise strength) is scaled to zero as the system size grows (so as to converge to space-time white noise). One such result is quoted as Theorem~\ref{scalinglimit}.

The directed polymers considered earlier are measures on random walk trajectories reweighted by a Boltzmann weight with Hamiltonian given in terms of a path integral through a space-time independent noise field along the walk's trajectory. In the continuum, random walks are replaced by Brownian motions and the space-time noise field becomes space-time Gaussian white-noise. It is possible to define a path measure in the continuum~\cite{AKQ12b}. Here we will only focus on the continuum limit of the polymer partition function (\ref{Zsd}), not the path measure. We define the CDRP partition function with respect to general boundary perturbations. These perturbations are the limit of critically tuned boundary perturbations of the discrete polymers.

\begin{definition}
The {\it partition function} for the continuum directed random polymer with boundary perturbation $\ln\mathcal{Z}_0(X)$ is given by the solution to the stochastic heat equation with multiplicative Gaussian space-time white noise and $\mathcal{Z}_0(X)$ initial data:
\begin{equation}\label{SHE}
\partial_T \mathcal{Z} = \tfrac{1}{2}\partial_X^2 \mathcal{Z} +\mathcal{Z}\dot{\mathscr{W}}, \qquad \mathcal{Z}(0,X)=\mathcal{Z}_0(X).
\end{equation}
The initial data $\mathcal{Z}_0(X)$ may be random but is assumed to be independent of the Gaussian space-time white noise $\dot{\mathscr{W}}$ and is assumed to be almost surely a sigma-finite positive measure. Observe that even if $\mathcal{Z}_0(X)$ is zero in some regions, the stochastic PDE makes sense and hence the partition function is well-defined.
\end{definition}

The stochastic heat equation (\ref{SHE}) is really short-hand for its integrated (mild) form
\begin{equation*}
\mathcal{Z}(T,X) = \int_{-\infty}^{\infty} p(T,X-Y) \mathcal{Z}_0(Y) dY + \int_{0}^{T} \int_{-\infty}^\infty p(T-S,X-Y)\mathcal{Z}(S,Y) \dot{\mathscr{W}}(dS,dY)
\end{equation*}
where (formally) Gaussian space-time white noise has covariance
\begin{equation*}
\EE\left[\dot{\mathscr{W}}(T,X)\dot{\mathscr{W}}(S,Y)\right] = \delta(T-S)\delta(Y-X),
\end{equation*}
and $p(T,X) = (2\pi T)^{-1/2} \exp(-X^2/2T)$ is the standard heat kernel. A detailed description of this stochastic PDE and the class of initial data for which it is well-posed can be found in~\cite{ACQ10,BG97}, or the review~\cite{Q12}.

As long as $\mathcal{Z}_0$ is an almost surely sigma-finite positive measure, it follows from work of M\"{u}ller~\cite{Mue91} that, almost surely, $\mathcal{Z}(T,X)$ is positive for all $T>0$ and $X\in \R$. Hence we can take its logarithm.

\begin{definition}
For $\mathcal{Z}_0$ an almost surely sigma-finite positive measure define the {\it free energy} for the continuum directed random polymer with boundary perturbation $\ln\mathcal{Z}_0(X)$ as
\begin{equation*}
\mathcal{F}(T,X) = \ln\mathcal{Z}(T,X).
\end{equation*}
\end{definition}

The random space-time function $\mathcal{F}$ is also the Hopf-Cole solution to the Kardar-Parisi-Zhang equation with initial data $\mathcal{F}_0(X)=\ln\mathcal{Z}_0(X)$~\cite{ACQ10,BG97}.

The solution to the stochastic heat equation is interpreted as a polymer partition function due to a version of the Feynman-Kac representation (see~\cite{BC95} or the review~\cite{Cor11}):
\begin{equation*}
\mathcal{Z}(T,X) = \EE_{B(T)=X}\left[\mathcal{Z}_0(B(0)):\,\exp\,: \left\{\int_0^{T} \dot{\mathscr{W}}(t,B(t))dt\right\}\right]
\end{equation*}
where the expectation $\EE$ is taken over the law of a Brownian motion $B$ which is run {\it backwards} from time $T$ position $X$. The $:\,\exp\,:$ is the {\it Wick exponential} which accounts for the fact that one can not na\"{\i}vely integrate white noise along a Brownian motion (various equivalent ways exist to define this exponential~\cite{BC95,ACQ10,Cor11}). By time reversal we may consider this as the partition function for Brownian bridges which can depart at time 0 from any location $B(0)\in \R$ with an energetic cost of $\ln \left(p(T,X-B(0))\mathcal{Z}_0(B(0))\right)$ and then must end at $X$ at time $T$. This departure energy cost is the limit of the boundary perturbations for the discrete and semi-discrete polymers.

Let us introduce the class of boundary perturbations which arise in the limit of the semi-discrete directed polymer partition function with the first few drift parameters $a_1,\ldots, a_m$ tuned as a function of $N$ in a critical way (and all other drifts zero).

\begin{definition}\label{spikedICdef}
The continuum directed random polymer partition function with $m$-{\it spiked} boundary perturbation corresponds to choosing initial data for (\ref{SHE}) as follows: Fix $m\geq 1$ and a real vector $b=(b_1,\ldots, b_m)$; then
\begin{equation*}
\mathcal{Z}_0(X) = \Zsd^m(X)\mathbf{1}_{X\geq 0}
\end{equation*}
where $\Zsd^m(X)$ is defined as in (\ref{Zsd}) with drift vector $b$, and where $\mathbf{1}_{X\geq 0}$ is the indicator function for $X\geq 0$. When $m=0$ we will define $0$-{\it spiked} initial data as corresponding to $\mathcal{Z}_0(X)=\mathbf{1}_{X=0}$, where $\mathbf{1}_{X=0}$ is the indicator function that $X=0$.
\end{definition}

\begin{theorem}\label{ThmIntDisAsy}
Fix $m\geq 0$ and a real vector $b=(b_1,\ldots, b_m)$. Consider the free energy of the continuum directed random polymer with $m$-spiked boundary perturbation with drift vector $b$ (Definition~\ref{spikedICdef}).
\begin{itemize}
\item[(a)] If $m=0$, then for any $T>0$ and any $S$ with positive real part,
\begin{equation*}
\EE\left[e^{-S \exp\left(\mathcal{F}(T,0) + T/4!\right)}\right] = \det(\Id-K_{{\rm CDRP}})_{L^2(\R_+)}.
\end{equation*}
\item[(b)] If $m\geq 1$, then for any $T>0$ and any $S$ with positive real part,
\begin{equation*}
\EE\left[e^{-S \exp\left(\mathcal{F}(T,0) + T/4!\right)}\right] = \det(\Id-K_{{\rm CDRP},b})_{L^2(\R_+)}.
\end{equation*}
\end{itemize}
\end{theorem}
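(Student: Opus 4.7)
The plan is to deduce Theorem~\ref{ThmIntDisAsy} by taking an intermediate-disorder $N\to\infty$ scaling limit of the Fredholm determinant formula for the Laplace transform of the semi-discrete polymer partition function $\Zsd^N$ established in Section~\ref{preasymSec}. Concretely, for fixed $T>0$ and $b=(b_1,\ldots,b_m)$, I would set $\tau(N)=\sqrt{N}\,T$ and tune drifts as $a_i(N)=b_i/\sqrt{N}$ for $1\le i\le m$ (with $a_j=0$ for $j>m$). Under this scaling, the intermediate-disorder convergence result of~\cite{QM12,QMR12} (recorded in the paper as Theorem~\ref{scalinglimit}) gives
\begin{equation*}
C_N^{-1}\,\Zsd^N(\tau(N)) \;\xrightarrow{d}\; \mathcal{Z}(T,0),
\end{equation*}
where $\mathcal{Z}(T,0)$ is the CDRP partition function with $m$-spiked boundary perturbation (Definition~\ref{spikedICdef}), and $C_N$ is an explicit deterministic prefactor absorbing the $e^{N}$-type exponential centering and the Gaussian density at $X=0$; the additive shift $T/4!$ in the statement is produced precisely by the sub-leading correction in $\ln C_N$.

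The first stage of the proof matches the Laplace transform inputs. Setting $u=u(N):=S\,e^{T/4!}/C_N$ in the semi-discrete formula yields $u(N)\,\Zsd^N(\tau(N)) \xrightarrow{d} S\exp\!\bigl(\mathcal{F}(T,0)+T/4!\bigr)$. Because $|e^{-uZ}|\le 1$ whenever $\Re(u)\ge 0$ and $Z\ge 0$, and since $x\mapsto e^{-Sx}$ is bounded continuous on $[0,\infty)$ for $\Re(S)>0$, this distributional convergence upgrades to convergence of expectations, so the left-hand side of the semi-discrete identity converges to $\EE[e^{-S\exp(\mathcal{F}(T,0)+T/4!)}]$.

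The second stage is to identify the limit of the right-hand side with $\det(\Id-K_{{\rm CDRP},b})_{L^2(\R_+)}$. After rescaling the contour variables $(w,z)\mapsto(\tilde w/\sqrt{N},\tilde z/\sqrt{N})$ in a neighborhood of the relevant critical point of $\kappa t-\Psi(t)$ (which, in the regime $\kappa\sim T/\sqrt N$, is itself of order $\sqrt N$), the ratio $\prod_{k=1}^m(z-a_k)/(w-a_k)$ becomes $\prod_{k=1}^m(\tilde z-b_k)/(\tilde w-b_k)$, while the remaining $\Gamma$-function and exponential factors Taylor-expand to give the cubic exponentials $e^{z^3/3-z\eta}/e^{w^3/3-w\eta'}$ appearing in the definition of $K_{{\rm CDRP},b}$ from Section~\ref{preasymSec}. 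For part (a) the ratio is empty and one recovers $K_{\rm CDRP}$.

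The hard step is promoting pointwise convergence of the kernel to convergence of the Fredholm determinants. This requires $N$-uniform decay estimates on the rescaled semi-discrete kernel, strong enough to apply dominated convergence to each term of the Fredholm series and to justify summing the series via a Hadamard bound. The required estimates come from deforming $\mathcal{C}_z$ and $\mathcal{C}_w$ to steep-descent contours for $\Re(z^3/3-z\eta)$, with $\mathcal{C}_w$ passing to the right of all $a_k(N)$ and not intersecting $\mathcal{C}_z$ so that no spurious residues at $z=w$ or at the poles $w=a_k$ are picked up; this is where part (b) becomes more delicate than (a). The decay bounds are of the same saddle-point type used in the proof of Theorem~\ref{ThmPosTempAsy} in Section~\ref{proofOConnellYorKPZ}, specialized to the intermediate-disorder window $\kappa=T/\sqrt N$, so the bulk of the technical analysis can be shared between the two theorems.
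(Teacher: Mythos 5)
Your high-level plan matches the paper's: feed the intermediate-disorder scalings into the Fredholm determinant of Theorem~\ref{OConYorFluctThm}, use Theorem~\ref{scalinglimit} to pass the left-hand side to the CDRP Laplace transform, and prove convergence of the Fredholm determinants via steep-descent contours and dominated convergence. However, the specific scalings you propose are wrong in a way that would not produce the CDRP kernel.

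First, the drift tuning $a_i(N)=b_i/\sqrt{N}$ cannot be correct. The critical tuning must place the drifts next to the critical point $\theta^\kappa\simeq\sqrt{N/T}+\tfrac12$ of $G$, and the paper (following Theorem~\ref{scalinglimit}) takes $a_k=\theta+b_k$ with $b_k$ fixed. Your choice sends $a_k\to 0$ while $\theta\to\infty$, so $N^{1/3}(a_k-\theta^\kappa)\to-\infty$: the perturbation is subcritical and would simply disappear, reducing part (b) to part (a).

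Second, and more structurally, your contour rescaling $(w,z)\mapsto(\tilde w/\sqrt N,\tilde z/\sqrt N)$ is the wrong window. The correct change of variable is $v=\theta+\sigma w$ with $\sigma=(2/T)^{1/3}$ a fixed constant; the window around the critical point has width $O(1)$, not $O(N^{-1/2})$. This is exactly the observation that separates the CDRP case from the GUE/BBP case of Theorem~\ref{ThmPosTempAsy}. Because the window is $O(1)$, neither the $\pi/\sin(\pi(\tilde z-v))$ factor nor the ratio $\prod\Gamma(v-a_k)/\Gamma(s+v-a_k)$ linearizes. So your claim that the Gamma ratio becomes $\prod(\tilde z-b_k)/(\tilde w-b_k)$ — i.e.\ the BBP-type polynomial factor — is incorrect here. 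The correct limit is the ratio of intact Gamma functions $\prod\Gamma(\sigma w-b_k)/\Gamma(\sigma z-b_k)$, and the sine factor survives to give the crucial $\sigma\pi S^{(z-w)\sigma}/\sin(\pi(z-w)\sigma)$ term. These two surviving factors are precisely what distinguish $K_{\rm CDRP,b}$ from $K_{\rm BBP,b}$, so the estimate-sharing with Section~\ref{proofOConnellYorKPZ} is less straightforward than you indicate: the CDRP case has the new difficulty that all the sine poles persist in the limit and have to be tracked carefully (this is handled in the paper by the explicit contour bookkeeping of Section~\ref{proofCDRP}, which is not a direct specialization of the GUE/BBP analysis).

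Also a minor point: the $T/4!$ shift does not come from a sub-leading term of $\ln C_N$; it appears in the choice of $u$, which involves the expansion of $Nf_\theta$ beyond $\ln C(N,T,0)$ (the paper carries the $\theta=\sqrt{N/T}+\tfrac12-\tfrac1{12}\sqrt{T/N}+\cdots$ expansion to this order to extract the $T/4!$). The remaining ingredients of your proposal — Theorem~\ref{scalinglimit} for the left-hand side and steep-descent bounds plus Hadamard for the right-hand side — are correct in spirit, but the errors above are substantive: as written, the asymptotics would not yield the stated kernel.
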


The kernel in the above theorem are given now.

\begin{definition}\label{2.18def}
Fix $m\geq 0$ and a real vector $b=(b_1,\ldots, b_m)$. The integral kernel \begin{equation}\label{eq2.18}
K_{{\rm CDRP},b}(\eta,\eta')=\frac{1}{(2\pi\I)^2}\int_{\mathcal{C}_w} dw \int_{\mathcal{C}_z} dz \frac{\sigma \pi S^{(z-w)\sigma}}{\sin(\pi(z-w)\sigma)}  \frac{e^{z^3/3-z\eta'}}{e^{w^3/3-w\eta}} \prod_{k=1}^m\frac{\Gamma(\sigma w-b_k) }{\Gamma(\sigma z-b_k)},
\end{equation}
where $\sigma=(2/T)^{1/3}$. When $m=0$ the product of Gamma function ratios is replaced by 1 and the resulting kernel is denoted $K_{{\rm CDRP}}$. The $w$ contour $\mathcal{C}_w$ is from $-\frac{1}{4\sigma}-\I\infty$ to $-\frac{1}{4\sigma}+\I\infty$ and crosses the real axis on the right of $b_1/\sigma,\ldots,b_m/\sigma$. The $z$ contour $\mathcal{C}_z$ is taken as $\mathcal{C}_w$ shifted to the right by $\frac{1}{2\sigma}$ (see Figure~\ref{PFFigPathsCDRP} for an illustration). Just as in Definition~\ref{GUEBBPdef} there exist integral representations for these kernels involving Airy functions. In particular,
\begin{equation}\label{PFeqKPZ}
K_{{\rm CDRP}}(\eta,\eta')=\int_{\R} dt \frac{S}{S+e^{-t/\sigma}} \Ai(t+\eta)\Ai(t+\eta').
\end{equation}
Similar formulas exist for $K_{{\rm CDRP},b}$ involving Gamma deformed Airy functions \cite{CQ10,SI11}.

\begin{figure}[ht]
\begin{center}
\psfrag{Cw}[lb]{$\mathcal{C}_w$}
\psfrag{Cz}[lb]{$\mathcal{C}_z$}
\psfrag{s}[cb]{$\frac{1}{4\sigma}$}
\psfrag{0}[cb]{$0$}
\includegraphics[height=5cm]{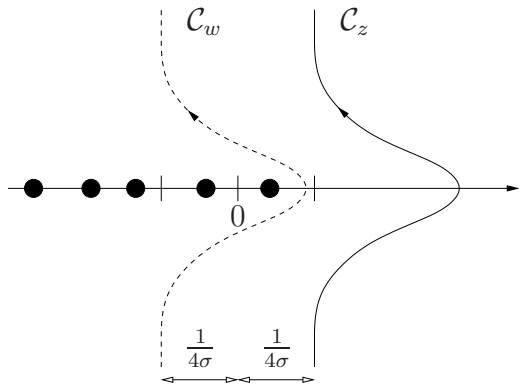}
\caption{Integration contours for $w$ (dashed) and $z$ (solid) of the kernel $K_{{\rm CDRP},b}$. The black dots are $b_1/\sigma,\ldots,b_m/\sigma$. The path $\mathcal{C}_z$ equals $\frac{1}{2\sigma}+\mathcal{C}_w$, so as to avoid the zeros of the sine function.}
\label{PFFigPathsCDRP}
\end{center}
\end{figure}
\end{definition}

\begin{remarks}
To recover the case of $(m-1)$-spiked from the $m$-spiked boundary perturbation case, one needs to take $b_m \to -\infty$ and simultaneously replace $S$ by $-b_m S$.
\end{remarks}

In Section~\ref{critpointCDRP}, Theorem~\ref{scalinglimit} we explain how the semi-discrete polymer partition function limits to that of the CDRP. This reduces the proof of the above theorem to a claim on the asymptotics of the Fredholm determinant formula presented in Section~\ref{preasymSec}. We provide a formal critical point derivation of these asymptotics in Section~\ref{critpointCDRP} and a rigorous proof later in Section~\ref{proofCDRP}.

\begin{remarks}
The above Laplace transform can be inverted via a contour integral in $S$ so as to give the probability distribution for the free energy. Note that the branch cut in $S^{(z-w)\sigma}$ in the integrand in (\ref{eq2.18}) should be taken as the negative real axis.

For $m=0$ the free energy probability distribution was discovered simultaneously and independently in both~\cite{ACQ10,SS10} and rigorously proved in~\cite{ACQ10} via Tracy and Widom's ASEP formulas~\cite{TW08,TW08b,TW08c,TW08cErratum}; the above Laplace transform formula was soon after (non-rigorously) derived from the replica trick approach in~\cite{CDR10,Dot10}. For $m=1$ the free energy probability distribution was discovered and rigorously proved in~\cite{CQ10} also via ASEP~\cite{TW09b}; the above Laplace transform formula was later (non-rigorously) derived from the replica trick approach in~\cite{SI11}. The general $m\geq 2$ result above is, to our knowledge, new and it is not clear how one would derive or prove it from ASEP.

There are other ways to write the kernel as well as the Fredholm determinant in the theorem, as can be seen in the above mentioned citations.
\end{remarks}

\begin{remarks}\label{perturbedshiftREM}
It is not necessary to focus just on the free energy at $(T,0)$. When \mbox{$m=0$}, for $T$ fixed, $\mathcal{F}(T,X) - \ln p(T,X)$ is a stationary process in $X$ \cite{ACQ10} due to the fact that space-time white noise is statistically invariant under affine shifts. For $m\geq 1$, $\mathcal{F}(T,X)$ is no longer stationary, however, a calculation given in Section~\ref{nonstatSec} shows that if we let \mbox{$\mathcal{\tilde{Z}}_0(X)\stackrel{(d)}{=}\Zsd^m(X)\mathbf{1}_{X\geq 0}$} for a shifted drift vector $b=(b_1+X/T,\ldots, b_m+X/T)$, then
\begin{equation*}
\mathcal{Z}(T,X) =  e^{-\frac{X^2}{2T}} \mathcal{\tilde{Z}}(T,0),
\end{equation*}
where $ \mathcal{\tilde{Z}}(T,X)$ solves the stochastic heat equation with initial data $\mathcal{\tilde{Z}}_0(X)$.
\end{remarks}

A corollary of the above theorem is the large $T$ asymptotics of the free energy fluctuations for the CDRP with $m$-spiked boundary perturbation.

\begin{corollary}\label{CDRPtoKPZ}
Fix $m\geq 0$ and a real vector $b=(b_1,\ldots, b_m)$. Consider the free energy of the continuum directed random polymer with $m$-spiked boundary perturbation with drift vector $\sigma b$, with $\sigma=(2/T)^{1/3}$ (Definition~\ref{spikedICdef}).
\begin{itemize}
\item[(a)] If $m=0$, then for any $r\in \R$,
\begin{equation*}
\lim_{T\to \infty} \PP\left(\frac{ \mathcal{F}(T,0) +T/4!}{(T/2)^{1/3}} \leq r \right) = F_{{\rm GUE}}(r),
\end{equation*}
where $F_{{\rm GUE}}$ is the GUE Tracy-Widom distribution~\cite{TW94} (see Definition~\ref{GUEBBPdef}).
\item[(b)] If $m\geq 1$, then for any $r\in \R$,
\begin{equation*}
\lim_{T\to \infty} \PP\left(\frac{ \mathcal{F}(T,0) +T/4!}{ (T/2)^{1/3}} \leq r \right) = F_{{\rm BBP},b}(r),
\end{equation*}
where $F_{{\rm BBP};b}$ is the Baik-Ben Arous-P\'{e}ch\'{e}~\cite{BBP05} distribution from spiked random matrix theory (see Definition~\ref{GUEBBPdef}).\\
\end{itemize}
\end{corollary}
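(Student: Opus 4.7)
The plan is to derive the corollary from Theorem~\ref{ThmIntDisAsy} by substituting $S = e^{-r/\sigma}$ into the Laplace transform formula and then letting $T \to \infty$, equivalently $\sigma \to 0$.

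\emph{Laplace transform to CDF.} With $S = e^{-r/\sigma}$ the left-hand side of Theorem~\ref{ThmIntDisAsy} reads
$$\EE\!\left[\exp\!\left(-e^{\mathcal{F}(T,0) + T/4! - r/\sigma}\right)\right].$$
Since $x \mapsto \exp(-e^{x})$ is a monotone, smooth approximation of $\mathbf{1}_{x \le 0}$, and since the argument above is $(\mathcal{F}(T,0)+T/4!)/(T/2)^{1/3} - r$ rescaled by $1/\sigma$, a standard Laplace-to-CDF argument (of the kind used in~\cite{ACQ10}) shows that, provided the rescaled free energy $(\mathcal{F}(T,0)+T/4!)/(T/2)^{1/3}$ is tight in $T$ and the limiting distribution is continuous, the above Laplace transform and $\PP\bigl((\mathcal{F}(T,0)+T/4!)/(T/2)^{1/3} \le r\bigr)$ share the same limit as $\sigma \to 0$.

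\emph{Asymptotics of the Fredholm determinant.} One shows that $\det(\Id - K_{{\rm CDRP},b})_{L^2(\R_+)}$, taken with drift vector $\sigma b$ and $S = e^{-r/\sigma}$, converges to $F_{{\rm BBP},b}(r)$, respectively $F_{\rm GUE}(r)$ when $m=0$. After deforming $\mathcal{C}_w, \mathcal{C}_z$ from their $\sigma$-dependent positions near $\mathrm{Re}\,w = -1/(4\sigma)$, $\mathrm{Re}\,z = 1/(4\sigma)$ to fixed, $\sigma$-independent BBP-type contours as in Figure~\ref{PFFigPathsBBP} (while keeping all poles $w = b_k$ to the left), the two key pointwise limits inside the kernel are
$$\frac{\sigma \pi e^{-(z-w)r}}{\sin(\pi(z-w)\sigma)} \;\xrightarrow[\sigma\to 0]{}\; \frac{e^{-(z-w)r}}{z-w}, \qquad \prod_{k=1}^m \frac{\Gamma(\sigma(w - b_k))}{\Gamma(\sigma(z - b_k))} \;\xrightarrow[\sigma\to 0]{}\; \prod_{k=1}^m \frac{z - b_k}{w - b_k},$$
using $\Gamma(\varepsilon u) \sim 1/(\varepsilon u)$ as $\varepsilon \to 0$. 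Absorbing $e^{-(z-w)r}$ into $e^{z^3/3 - z\eta'}/e^{w^3/3 - w\eta}$ via the shift $\eta \mapsto \eta + r$, $\eta' \mapsto \eta' + r$, and invoking the transpose-invariance of Fredholm determinants, identifies the limiting kernel with $K_{{\rm BBP},b}$ on $L^2(r,\infty)$. For $m=0$ the Airy representation~\eqref{PFeqKPZ} offers a more direct route: the Fermi--Dirac weight $1/(1+e^{(r-t)/\sigma})$ converges pointwise to $\mathbf{1}_{t>r}$, so $K_{\rm CDRP}(\eta,\eta') \to K_{\rm Ai}(\eta + r, \eta' + r)$.

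\emph{Main obstacle.} The main technical work is upgrading these pointwise convergences of the kernel to convergence of the Fredholm determinant. This requires $\sigma$-uniform integrable majorants for the kernel along its contours, which are then fed through dominated convergence inside the Fredholm series expansion (together with Hadamard's inequality for the determinants appearing there). The cubic exponentials $e^{z^3/3 - w^3/3}$ supply the necessary Gaussian-type decay along suitably chosen steep-descent contours, but one must verify that neither the prefactor $\sigma\pi/\sin(\pi(z-w)\sigma)$, which has poles whenever $(z-w)\sigma \in \Z\setminus\{0\}$, nor the Gamma ratios, which by Stirling can grow polynomially when $|\sigma z|, |\sigma w|$ become large, destroys this decay. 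A careful contour choice, keeping $(z-w)\sigma$ bounded away from the nonzero integers on the bulk of the contour and letting the cubic term dominate on the tails, is needed to produce the required bound. The tightness assumed in the first step can then be harvested from the asymptotics themselves (continuity of the limit plus convergence of the Laplace transform on a dense set implies tightness) or established separately using moment estimates for the stochastic heat equation.
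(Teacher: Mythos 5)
Your proposal is correct and follows essentially the same route as the paper's proof: set $S = e^{-r/\sigma}$, take the $\sigma\to0$ pointwise limit of the kernel so that the sine factor becomes $e^{-r(z-w)}/(z-w)$ and the Gamma ratio becomes $\prod (z-b_k)/(w-b_k)$, identify the limit with $K_{{\rm BBP},b}(\eta+r,\eta'+r)$, and convert Laplace-transform convergence to distributional convergence via the $\Theta_T$-approximation of an indicator and Lemma~\ref{problemma1}. Two small remarks: (i) the paper's Lemma~\ref{problemma1} does not hypothesize tightness separately --- it deduces weak convergence directly from $\EE[\Theta_T(X_T-r)]\to p(r)$ for all $r$ and continuity of $p$, so your ``provided ... is tight'' framing is more circuitous than necessary, though your subsequent remark that tightness follows from the limit theorem itself closes the loop; (ii) you correctly flag that the contours $\mathcal{C}_w$, $\mathcal{C}_z$ at $\Re w = -1/(4\sigma)$, $\Re z = 1/(4\sigma)$ run off to infinity as $\sigma\to0$, so a deformation to $\sigma$-independent BBP-type contours is needed before the pointwise limit can be taken --- the paper elides this and the accompanying dominated-convergence majorants with a reference to~\cite{ACQ10,CQ10}, whereas you spell out why the cubic decay dominates the sine poles and the polynomially growing Gamma ratios. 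Your alternative $m=0$ route via the Fermi--Dirac weight in~\eqref{PFeqKPZ} is a genuine, and cleaner, variant for that special case.
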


For $m=0,1$ the above corollary is proved in~\cite{ACQ10} and~\cite{CQ10} (respectively). Given the new $m\geq 2$ formulas, it is straightforward to prove the full corollary as is done in Section~\ref{Corproof}.

\subsection{Fredholm determinant formula for semi-discrete polymer free energy}\label{preasymSec}

Theorems~\ref{ThmPosTempAsy} and~\ref{ThmIntDisAsy} are proved via asymptotic analysis of a Fredholm determinant formula for the Laplace transform of the O'Connell-Yor semi-discrete directed polymer partition function which we now give as Theorem~\ref{OConYorFluctThm}. The formula is written in terms of a Fredholm determinant. One of the surprising aspects of the known exactly solvable positive temperature directed random polymers is that the Laplace transform of their partition functions are given by Fredholm determinants. Here we write the Laplace transform as a double-exponential transform of the free energy.

\begin{definition}\label{CaCsdef}
For $\alpha\in \R$ and $\varphi\in (0,\pi/4)$, we define a contour $\Cv{\alpha,\varphi}$ that surrounds the portion of the real axis with values less than $\alpha$ by
$\Cv{\alpha,\varphi}=\{\alpha+e^{\I (\pi+\varphi)}y\}_{y\in \Rplus}\cup \{\alpha+e^{\I (\pi-\varphi)}y\}_{y\in \Rplus}$. The contour is oriented so as to have increasing imaginary part.

For every $v\in \Cv{\alpha,\varphi}$ we choose $R=-\Re(v)+\alpha+1$, $d>0$, and define a contour $\Cs{v}$ as follows:
$\Cs{v}$ goes by straight lines from $R-\I \infty$, to $R-\I d$, to $1/2-\I d$, to $1/2+\I d$, to $R+\I d$, to $R+\I\infty$. The parameter $d$ is taken small enough so that $v+\Cs{v}$ do not intersect $\Cv{\alpha,\varphi}$. We also call $\Cs{v,\vert}$ the portion of $\Cs{v}$ with real part $R$ and $\Cs{v,\sqsubset}$ the remaining part. See Figure~\ref{contours} for an illustration.
\end{definition}
\begin{figure}
\begin{center}
\psfrag{Cv}[lb]{$\Cv{\alpha,\varphi}$}
\psfrag{v+Cs}[lb]{$v+\Cs{v}$}
\psfrag{Cs}[lb]{$\Cs{v}$}
\psfrag{v}[cb]{$v$}
\psfrag{R}[cb]{$R$}
\psfrag{2d}[lb]{$2d$}
\psfrag{alpha}[cb]{$\alpha$}
\psfrag{0}[cb]{$0$}
\psfrag{phi}[lb]{$\varphi$}
\includegraphics[height=5cm]{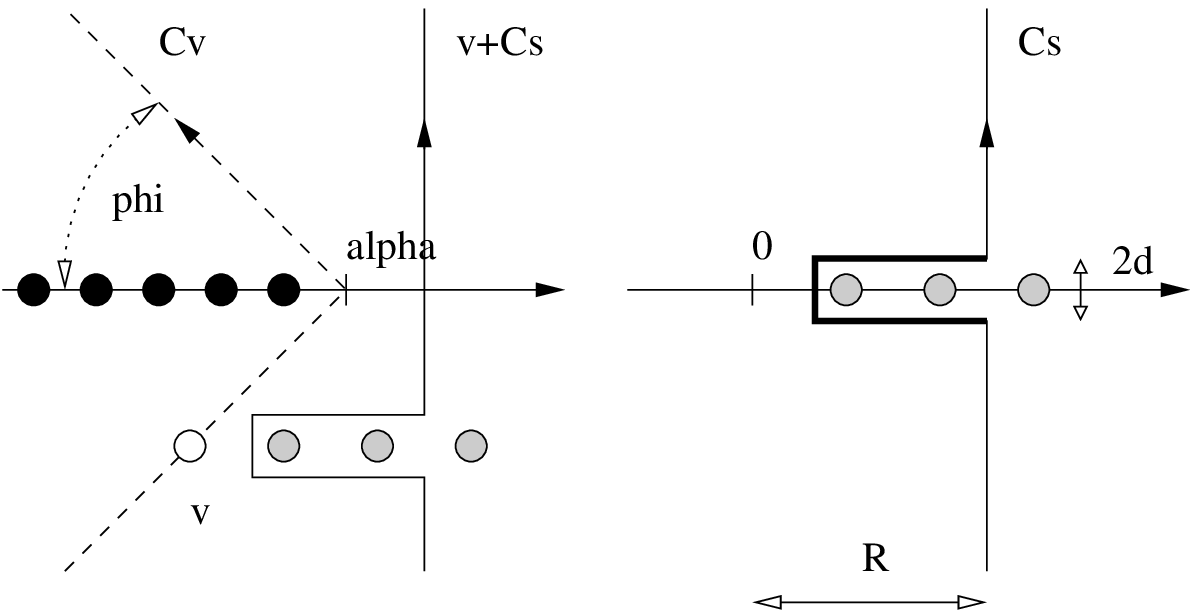}
\end{center}
\caption{Left: The contour $\Cv{\alpha,\varphi}$ (dashed), where the black dots are $a_1,\ldots,a_N$ and $\alpha>\max\{a_1,\ldots,a_N\}$. The contour $v+\Cs{v}$ is the solid line. Right: The contour $\Cs{v}$, where the thick part is $\Cs{v,\sqsubset}$ and the thin part is $\Cs{v,\vert}$. The grey dots are at $\{1,2,\ldots\}$}
\label{contours}
\end{figure}

\begin{theorem}\label{OConYorFluctThm}
For fixed $N\geq 1$, $\tau\geq 0$, a drift vector $a=(a_1,\ldots, a_N)$, and \mbox{$\alpha>\max\{a_1,\ldots,a_N\}$}, the Laplace transform of the partition function for the O'Connell-Yor semi-discrete directed polymer with drift $a$ is given by the Fredholm determinant formula
\begin{equation*}
\EE\left[ e^{-u \Zsd^{N}(\tau)}\right] = \det(\Id+ K_{u})_{L^2(\Cv{\alpha,\varphi})}
\end{equation*}
where $\Cv{\alpha,\varphi}$ is given in Definition~\ref{CaCsdef} for any $\varphi\in (0,\pi/4)$. The operator $K_u$ is defined in terms of its integral kernel
\begin{equation}\label{kvvprime}
K_u(v,v') = \frac{1}{2\pi \I}\int_{\Cs{v}}ds \Gamma(-s)\Gamma(1+s) \prod_{m=1}^{N}\frac{\Gamma(v-a_m)}{\Gamma(s+v-a_m)} \frac{ u^s e^{v\tau s+\tau s^2/2}}{v+s-v'}.
\end{equation}
\end{theorem}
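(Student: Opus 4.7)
The strategy is to carry out the Macdonald process / Whittaker measure derivation of \cite{BC11}, but with contour choices tailored for asymptotic analysis at all temperatures. The argument has three steps.

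First, from the Whittaker limit of the $q$-Whittaker measure moment formulas in the Macdonald hierarchy, one obtains a nested-contour moment formula of the schematic form
\begin{equation*}
\EE\big[(\Zsd^N(\tau))^k\big] = \frac{1}{(2\pi\I)^k} \int_{\gamma_1}\cdots\int_{\gamma_k} \prod_{A<B}\frac{z_A-z_B}{z_A-z_B-1}\, \prod_{j=1}^k \frac{e^{\tau z_j^2/2 + \tau z_j}}{\prod_{m=1}^N (z_j-a_m)}\, dz_j,
\end{equation*}
where $\gamma_1,\ldots,\gamma_k$ enclose $\{a_1,\ldots,a_N\}$ and are nested so that $\gamma_j$ encloses $\gamma_{j+1}+1$. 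I would begin by deforming all the $\gamma_j$'s onto a common wedge contour $\Cv{\alpha,\varphi}$; the condition $\alpha>\max\{a_m\}$ keeps the poles of $\Gamma(v-a_m)$ inside, and the nesting is then traded, via a symmetrization/residue bookkeeping, for a sum over set partitions of $\{1,\ldots,k\}$ with controlled residual contributions.

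Second, I would sum $\sum_k \frac{(-u)^k}{k!}\EE[(\Zsd^N)^k]$ by means of a Mellin--Barnes identity encoded in $\Gamma(-s)\Gamma(1+s)=-\pi/\sin(\pi s)$, whose residues at positive integers $s=k$ produce precisely the required coefficients. This turns the discrete sum into an integral over $s\in\Cs{v}$ whose bump at $\Re(s)=1/2$ is designed to enclose the poles at $s=1,2,\ldots$ while keeping $s=0$ outside. Combined with the Cauchy determinantal expansion of the rational factor $\prod_{A<B}(z_A-z_B)/(z_A-z_B-1)$ used in~\cite{BC11}, this converts the summed expression into the Fredholm determinant $\det(\Id+K_u)_{L^2(\Cv{\alpha,\varphi})}$ with kernel exactly as in (\ref{kvvprime}).

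Third, one must rigorously justify the formal summation and the contour deformations. This is where I expect the main difficulty to lie. The quadratic factor $e^{\tau s^2/2}$ grows in both the real and imaginary directions of $s$, whereas $\Gamma(-s)\Gamma(1+s)$ only provides decay $\sim e^{-\pi|\Im s|}$. The decisive additional decay must come from $\prod_m |\Gamma(s+v-a_m)|^{-1}$, which by Stirling behaves like $\exp(-\tfrac{N}{2}|\Im s|\log|\Im s|)$ as $|\Im s|\to\infty$ and dominates $|e^{\tau s^2/2}|$ provided $\Re(s+v-a_m)\ge 1$; this is precisely the reason for the specific choice $R=-\Re(v)+\alpha+1$ defining the vertical segments of $\Cs{v}$. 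The complementary analysis for the $v$-integral uses the opening angle $\varphi<\pi/4$ to control $|e^{\tau s v}|$ along $\Cv{\alpha,\varphi}$. Together with the trace-class estimate for $K_u$ on $L^2(\Cv{\alpha,\varphi})$, these estimates yield the identity as a convergent Fredholm determinant; the final bookkeeping amounts to tracking, via residue calculus, that the detour of $\Cs{v}$ past $\Re(s)=1/2$ exactly reproduces the moment contributions from the original BC11 formula.
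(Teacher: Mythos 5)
Your outline follows the right general philosophy (Macdonald processes $\to$ moment formulas $\to$ Mellin--Barnes resummation $\to$ Fredholm determinant), but it contains a gap that is precisely the obstruction the paper's argument is engineered to avoid, and glossing over it is not a minor omission: it is the mathematical heart of the matter.

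The problem is in Step 2. You propose to form the series
\begin{equation*}
\sum_{k\geq 0}\frac{(-u)^k}{k!}\,\EE\big[(\Zsd^N(\tau))^k\big]
\end{equation*}
from a nested-contour moment formula at the $q=1$ level and then resum via $\Gamma(-s)\Gamma(1+s)$. But this series has radius of convergence zero: the disorder is Gaussian, so already for $N=1$ one has $\EE[(\Zsd^1(\tau))^k]=\EE[e^{kB_1(\tau)}]=e^{k^2\tau/2}$, and in general the moments grow like $e^{ck^2}$. The moment generating function does not converge anywhere and the moments do not determine the distribution, so the interchange of expectation and summation is not justified, nor can the Mellin--Barnes identity be applied to a divergent series. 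This is the well-known ``replica trick'' divergence that is the principal difficulty of positive-temperature polymer models. Your Step 3 treats only the convergence of the kernel integral (Stirling decay in $s$, the $\varphi<\pi/4$ wedge for $v$, and the trace-class bound); none of that repairs the fact that the passage from moments to the Laplace transform is broken before the Fredholm structure even appears.

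What the paper actually does is refuse to take the $q\to 1$ limit at the moment level. At the $q$-Whittaker level the observable is $q^{k\lambda_N}\leq 1$, so the $e_q$-generating series $\sum_k \frac{(\zeta/(1-q))^k}{k_q!}\,\langle q^{k\lambda_N}\rangle$ converges geometrically; the interchange of sum and expectation, the residue/partition bookkeeping (Proposition~\ref{mukprop}), and the Mellin--Barnes step (Theorem~\ref{PlancherelfredThm}) are all carried out there rigorously, producing a $q$-Laplace-transform Fredholm determinant on $L^2(\CwPre{\tilde\alpha,\varphi})$ together with an analytic continuation in $\zeta$. Only then is $\epsilon=-\ln q\to 0$ taken, and not term-by-term: one needs the weak convergence of the scaled $q$-Whittaker measure to the Whittaker measure (Theorem~\ref{theorem26}), uniform convergence of $e_q$ to $\exp$, and a three-stage contour-deformation/truncation/limit/repair argument for the Fredholm determinants (Propositions~\ref{compactifyprop}, \ref{finiteSprop}, \ref{postasymlemma}) that is the bulk of Section~\ref{MacSec}. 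A proposal that commutes the $q\to1$ limit past the resummation omits all of this and, as written, does not prove the theorem.
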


Theorem~5.2.10 of~\cite{BC11} gives a similar formula for the Laplace transform of the semi-discrete directed polymer partition function. The difference between the two formulas is the contours involved -- both the contour of the $L^2$ space and the contour involved in defining the kernel. The contours of the above formula are unbounded. This somewhat technical modification is important since it enables us to perform rigorous steepest descent analysis as necessary to prove Theorems~\ref{ThmPosTempAsy} and~\ref{ThmIntDisAsy}. In~\cite{BC11}, the corresponding contours were bounded, thus limiting the asymptotic analysis of the semi-discrete polymer to a low temperature regime.

In order to prove this modified formula we modify the choice of contours very early in the proof of Theorem~5.2.10 of~\cite{BC11}. In Section~\ref{formalMainFormula} we provide a proof of the above formula (see in particular the end of Section~\ref{WhitMeasSec}) with the more detailed and technical pieces of the proof delayed until Section~\ref{MacSec}. The occurrence of unbounded contours introduce some new considerations in proving this theorem. It is not presently clear how to derive the above theorem directly from Theorem~5.2.10 of~\cite{BC11}.

\subsection{Acknowledgements}
We wish to thank the American Institute of Mathematics, since this work originated at the workshop on: The Kardar-Parisi-Zhang equation and universality class. In particular, we appreciate early discussions with G\'{e}rard Ben Arous, Tomohiro Sasamoto and Thomas Weiss. We have also benefited from correspondence with Jeremy Quastel, Gregorio Moreno Flores, and Daniel Remenik on their work. We area also grateful to B\'alint Vet{\H o} for careful reading of part of the manuscript. AB was partially supported by the NSF grant DMS-1056390. IC was partially supported by the NSF grant DMS-1208998; as well as the Clay Mathematics Institute through a Clay Research Fellowship and Microsoft Research through the Schramm Memorial Fellowship. PF was supported by the German Research Foundation via the SFB611--A12 project.
\section{Free energy fluctuations for the O'Connell-Yor semi-discrete polymer}\label{formalOConnellYorKPZ}

In this section we reduce the proof of Theorems~\ref{ThmPosTempAsy} to a statement about the asymptotics of a Fredholm determinant (Theorem~\ref{PFThmF2pert} below). We then provide a formal critical point derivation of the asymptotics, delaying the rigorous proof until Section~\ref{proofOConnellYorKPZ}.

The starting point for the proof of Theorem~\ref{ThmPosTempAsy} is the Fredholm determinant formula given in Theorem~\ref{OConYorFluctThm} for $\EE [e^{-u\Zsd^{N}(t)}]$. We rely on the fact that under the scalings we consider, the Laplace transform of the partition function converges to the asymptotic probability distribution of the free energy (a similar approach is used in the proof of Corollary~\ref{CDRPtoKPZ}). Towards this aim, define a sequence of functions $\{\Theta_N\}_{N\geq 1}$ by $\Theta_N(x) = \exp\left(-\exp\left(c^\kappa N^{1/3}x\right)\right)$, where $c^\kappa$ is given in Definition \ref{digammadef}.
Recall also that we are scaling $\tau=\kappa N$ for $\kappa>0$ fixed.

Assume that the drift vector $a=(a_1,\ldots,a_N)$ is as specified in the statement of Theorem~\ref{ThmPosTempAsy}.
Set
\begin{equation*}
u=u(N,r,\kappa)=e^{-Nf^\kappa - r c^\kappa N^{1/3}}
\end{equation*}
where $f^\kappa$ is as in Definition \ref{digammadef} and observe that
\begin{equation}\label{uZdpTheta}
\EE\left[e^{-u\Zsd^{N}(\kappa N)}\right] = \EE\left[\Theta_N\left(\frac{\Fsd^{N}(\kappa N) - N f^{\kappa}}{c^\kappa N^{1/3}}-r\right)\right].
\end{equation}
The $N\to\infty$ asymptotics of the left-hand side of (\ref{uZdpTheta}) can be computed from taking asymptotics of the Fredholm determinant formula given in Theorem~\ref{OConYorFluctThm} which states that
\begin{equation}\label{uZdpThetaFRED}
\EE\left[e^{-u\Zsd^{N}(\kappa N)}\right] =  \det(\Id+ K_{u})_{L^2(\Cv{\alpha,\varphi})}
\end{equation}
for \mbox{$\alpha>\max\{a_1,\ldots,a_N\}$} and $\varphi\in (0,\pi/4)$. This asymptotic result is stated as Theorem~\ref{PFThmF2pert} below and proved in Section~\ref{proofOConnellYorKPZ}. After explaining how it implies Theorem~\ref{ThmPosTempAsy} we provide a formal critical point derivation of the asymptotics.

\begin{theorem}\label{PFThmF2pert}
Consider a vector \mbox{$a=(a_1,\ldots, a_m,0,\ldots,0)$} where $m\leq N$ is fixed and the $m$ non-zero real numbers $a_1,a_2,\ldots, a_m$ may depend on $N$. We can consider without loss of generality that \mbox{$a_1\geq a_2\geq \ldots \geq a_m> 0$}. Then for all $\kappa>0$,
\begin{itemize}
\item[(a)] For the unperturbed case, $m=0$,
\begin{equation*}
\lim_{N\to\infty} \det(\Id+ K_{u})_{L^2(\Cv{\alpha,\varphi})}=\det(\Id-K_{\rm Ai})_{L^2(r,\infty)}=F_{{\rm GUE}}(r),
\end{equation*}
where $F_{{\rm GUE}}$ is the GUE Tracy-Widom distribution~\cite{TW94}.
\item[(b)] If $\lim_{N\to \infty} N^{1/3}(a_i(N) -\theta^\kappa) = b_i\in \R\cup \{-\infty\}$ for $i=1,\ldots,m$, then
\begin{equation*}
\lim_{N\to\infty} \det(\Id+ K_{u})_{L^2(\Cv{\alpha,\varphi})}=\det(\Id-K_{{\rm BBP},b})_{L^2(r,\infty)}=F_{{\rm BBP},b}(r),
\end{equation*}
where $F_{{\rm BBP},b}$ is the Baik-Ben Arous-P\'{e}ch\'{e}~\cite{BBP05} distribution from spiked random matrix theory.
\end{itemize}
\end{theorem}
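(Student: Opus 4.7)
The approach is rigorous saddle-point (steepest descent) analysis of the Fredholm determinant in \eqref{uZdpThetaFRED}. First I would make the substitution $w = v + s$ in the inner integral defining $K_u$ and apply the reflection formula $\Gamma(-s)\Gamma(1+s) = -\pi/\sin(\pi s)$ to rewrite the kernel as an integral over $w \in v + \Cs{v}$. The exponential factor $u^{w-v}e^{\tau(w^2-v^2)/2}$, together with the Gamma ratios $\prod_{m=1}^N \Gamma(v-a_m)/\Gamma(w-a_m)$ in the unperturbed case, combines (after substituting $u = e^{-Nf^\kappa - rc^\kappa N^{1/3}}$ and $\tau = \kappa N$) into
\begin{equation*}
\exp\bigl(N(G(w) - G(v)) - rc^\kappa N^{1/3}(w-v)\bigr), \qquad G(z) := \tfrac{\kappa}{2} z^2 - f^\kappa z - \ln\Gamma(z).
\end{equation*}
A direct computation using Definition~\ref{digammadef} shows $G'(\theta^\kappa) = \kappa\theta^\kappa - f^\kappa - \Psi(\theta^\kappa) = 0$, $G''(\theta^\kappa) = \kappa - \Psi'(\theta^\kappa) = 0$, and $G'''(\theta^\kappa) = -\Psi''(\theta^\kappa) = 2(c^\kappa)^3 > 0$, so $\theta^\kappa$ is a double critical point with $G(z) - G(\theta^\kappa) \approx (c^\kappa)^3(z-\theta^\kappa)^3/3$ locally.

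The plan is then to deform $\Cv{\alpha,\varphi}$ and $v + \Cs{v}$ so that both pass through $\theta^\kappa$ along steepest descent paths: the $v$-contour leaves at angles $\pm 2\pi/3$ (along which $\Re G$ decreases outward), the $w$-contour at angles $\pm\pi/3$ (along which $\Re G$ increases outward). These angles precisely match the contours $\mathcal{C}_w$ and $\mathcal{C}_z$ of the limiting BBP/Airy kernels in Definition~\ref{GUEBBPdef}. Rescaling by $v = \theta^\kappa + \tilde v/(c^\kappa N^{1/3})$ and $w = \theta^\kappa + \tilde w/(c^\kappa N^{1/3})$, Taylor expansion gives $N(G(w) - G(v)) \to \tilde w^3/3 - \tilde v^3/3$ and $rc^\kappa N^{1/3}(w-v) \to r(\tilde w - \tilde v)$; the Jacobians $dv = d\tilde v/(c^\kappa N^{1/3})$ combine with the $N^{1/3}$ factors produced by $\pi/\sin(\pi(w-v))$ and $1/(w-v')$ so that the rescaled $n$-fold integrand converges to a well-defined pointwise limit. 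In the spiked case $a_i = \theta^\kappa + b_i/N^{1/3} + o(N^{-1/3})$, each Gamma ratio $\Gamma(v-a_i)/\Gamma(w-a_i)$ converges to a rational factor of the form $(\tilde w - c^\kappa b_i)/(\tilde v - c^\kappa b_i)$; after an overall rescaling of the contour variables by $c^\kappa$ (absorbable into the Airy-type contours), the rescaled kernel matches $K_{{\rm BBP},b}$ as in \eqref{eqBBP}, and the Fredholm determinant limit equals $F_{{\rm BBP},b}(r)$ via the standard identification of $\det(\Id + \tilde K)$ on Airy-angled contours with $\det(\Id - K_{{\rm BBP},b})_{L^2(r,\infty)}$.

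The main technical obstacle is upgrading this formal saddle calculation to convergence of the full Fredholm series. Since $\Cv{\alpha,\varphi}$ and $\Cs{v}$ are unbounded, I would need two kinds of uniform estimates: (i) quantitative steep-descent bounds showing $\Re(G(w) - G(v)) \leq -\delta(|v-\theta^\kappa|^3 + |w-\theta^\kappa|^3)$ in a compact neighborhood of $\theta^\kappa$, together with a suitable linear lower bound at infinity along the deformed contours, so that contributions from outside a shrinking $N^{-1/3}$-neighborhood of the saddle are exponentially small; and (ii) an $N$-uniform Hadamard-type bound on $|K_u(v,v')|$ along those contours, so that the Fredholm expansion $\sum_n \frac{1}{n!}\int \det[K_u(v_i,v_j)]\prod dv_i$ is dominated term-by-term and the $N\to\infty$ limit may be exchanged with the sum. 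Additional care is needed for contour deformations: the pole at $w=v$ from $\sin(\pi(w-v))^{-1}$, the Gamma poles at $w = a_m$ (in the spiked case these lie within $O(N^{-1/3})$ of the saddle), and the pole at $w = v'$ must all remain on the correct sides — in the spiked case, keeping the $v$-contour to the right of $a_1,\ldots,a_m$ forces a local excursion that in the limit produces the rational factor $\prod_k (z-b_k)/(w-b_k)$ appearing in $K_{{\rm BBP},b}$. That the contours of Theorem~\ref{OConYorFluctThm} are unbounded, in contrast to the bounded contours of~\cite{BC11}, is precisely what allows the steepest descent directions to be attained globally and enables the analysis to go through for every $\kappa > 0$, establishing both (a) and (b).
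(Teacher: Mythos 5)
Your high-level strategy — write the exponent as $N(G(w)-G(v))$, locate the double critical point at $\theta^\kappa$, rescale by $N^{-1/3}$, and pass to the limit of the Fredholm series via term-by-term domination — is the same as the paper's, and your identification of $G$, the scaling exponents, and the origin of the BBP rational factors is correct. However, there is a genuine gap in the contour handling that the paper spends substantial effort resolving and that your proposal elides.

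The key missing point concerns the $w = v+s$ (the paper's $\tilde z$) contour. The factor $\Gamma(-s)\Gamma(1+s) = -\pi/\sin(\pi s)$ has poles at \emph{every} positive integer $s = 1,2,\ldots$, i.e., at $\tilde z = v+1, v+2, \ldots$, and the contour $\Cs{v}$ is constructed precisely so that, as $v$ ranges over the unbounded contour $\Cv{\alpha,\varphi}$, it stays to the right of all of them (the width $R = -\Re(v)+\alpha+1$ grows as $v$ recedes). You cannot deform this contour to a ray through $\theta^\kappa$ at angle $\pm\pi/3$ without crossing arbitrarily many of these poles as $v$ moves away from $\theta^\kappa$. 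The paper is explicit about this obstruction at the end of Section~\ref{formalcalc1}: a global steep descent path for $-G(\tilde z)$ enclosing all the poles $v+1,v+2,\ldots$ simply does not exist. The resolution is to deform the $\tilde z$-contour to a \emph{vertical} line through $\theta^\kappa+\tilde\e$ (which Lemma~\ref{PFlemPathZ} shows is steep descent for $-\Re G$), and to keep track of the residues picked up at $v+1,\ldots,v+\ell(v)$; the delicate Lemma~\ref{PFlemPoles} then shows these residue contributions are exponentially negligible uniformly in $v$. Without an argument handling those residues — a piece that occupies a substantial fraction of Proposition~\ref{PFPropBound} — your proposed contour deformation is not valid, and no bound on the Fredholm series follows.

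A secondary issue is the choice of angles for the $v$-contour. You propose deforming to $\pm 2\pi/3$ (i.e., $\varphi=\pi/3$), which is outside the range $\varphi\in(0,\pi/4)$ for which Theorem~\ref{OConYorFluctThm} is established; and the footnote in Section~\ref{sect5.1} points out that angles $\varphi<\pi/6$ fail to be steep descent even locally, so the admissible window is narrow. The paper instead takes the extremal $\varphi=\pi/4$, i.e., the contour $\{\theta-|y|+\I y\}$, verifies steep descent by hand in Lemma~\ref{PFlemPathW} (obtaining only logarithmic decay of $\partial_y \Re G$ at infinity, which is nonetheless enough), and reconciles the nonstandard angles $\pm 3\pi/4$ and $\pm\pi/4$ with the Airy/BBP kernels at the very end via the reformulation Lemmas~\ref{PFLemTWreformuation} and~\ref{PFLemBBPreformuation}, rather than by deforming into the canonical $\pm 2\pi/3$, $\pm\pi/3$ directions. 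Your outline would need to either justify reaching $\varphi=\pi/3$ from the allowed range or, as the paper does, work with $\varphi=\pi/4$ and prove a reformulation lemma; either way the vertical-line-plus-residues structure for the $\tilde z$-integral is unavoidable.
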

We remind from Definition~\ref{GUEBBPdef} that when $b_i=-\infty$, $1\leq i \leq m$, then $F_{{\rm BBP},b}=F_{\rm GUE}$.

The above result implies the the right-hand side of (\ref{uZdpTheta}) has a limit $p(r)$ that is a continuous probability distribution function. Here $p(r)$ is the limiting distribution function in cases (a) or (b) of Theorem~\ref{PFThmF2pert}. The functions $\Theta_N(x-r)$ approximate $\mathbf{1}(x\leq r)$ in the sense necessary to apply Lemma~\ref{problemma1}, and hence $p(r)$ also describes the limiting probability distribution of the free energy:
\begin{equation*}
\lim_{N\to \infty} \PP\left( \frac{ \Fsd^{N}(\kappa N) - N f^{\kappa}}{c^\kappa N^{1/3}}\leq r\right) = p(r).
\end{equation*}
This implies Theorem~\ref{ThmPosTempAsy}.

\subsection{Formal critical point asymptotics for Theorem~\ref{PFThmF2pert}}\label{formalcalc1}
We provide a formal analysis of the asymptotics of the Fredholm determinant $\det(\Id+ K_{u})_{L^2(\Cv{\alpha,\varphi})}$. In particular, we only focus on the limit of the kernel $K_u$, and even in that pursuit, we only consider the pointwise limit of the kernel in (\ref{kvvprime}). We also disregard issues respecting the choice of contours. All of these issues are considered in the rigorous proof contained in Section~\ref{proofOConnellYorKPZ}.

The first set of manipulations to $K_u$ that we make are to rewrite $\Gamma(-s)\Gamma(1+s) = -\pi / \sin(\pi s)$, and to factor out the ratios of Gamma functions involving those $a_i$ for $1\leq i \leq m$. With a change of variable $\tilde z=s+v$, we obtain
\begin{equation}\label{zetaKeqn}
K_{u}(v,v') = \frac{-1}{2\pi \I}\int d\tilde z\, \frac{\pi}{\sin (\pi(\tilde z-v))}\frac{\exp\left(N G(v)+ r  c^\kappa N^{1/3} v\right)}{\exp\left(N G(\tilde z)+ r c^\kappa N^{1/3} \tilde z\right)}  \frac{1}{\tilde z-v'} \prod_{k=1}^{m}\frac{\Gamma(v-a_k) \Gamma(\tilde z)}{\Gamma(\tilde z-a_k)\Gamma(v)}\, ,
\end{equation}
where
\begin{equation}\label{Geqn}
G(z) = \ln \Gamma(z) - \kappa \frac{z^2}{2} + f^\kappa z.
\end{equation}

The problem is now prime for steepest descent analysis of the integral defining the kernel above. The idea of steepest descent is to find critical points for the function in the exponential, and then to deform contours so as to go close to the critical point. The contours should be engineered so that away from the critical point, the real part of the function $G$ in the exponential decays and hence as $N$ gets large, has negligible contribution. This then justifies localizing and rescaling the integration around the critical point. The order of the first non-zero derivative (here third order) determines the rescaling in $N$ (here $N^{1/3}$) which in turn corresponds with the scale of the fluctuations in the problem we are solving. It is exactly this third order nature that accounts for the emergence of Airy functions and hence the Tracy Widom (GUE) distribution as well as the BBP transition distributions.

The critical point equation for $G$ is given by $G'(z)=0$ with
\begin{equation*}
G'(z) = \Psi(z) - \kappa z + f^\kappa.
\end{equation*}
The Digamma function $\Psi(z)=\frac{d}{dz}\ln(\Gamma(z))$ is given in Definition~\ref{digammadef}. Also given in that definition is $\theta^\kappa\in \R_+$ which is the critical point, i.e., $G'(\theta^\kappa) = 0$. At the critical point $G''(\theta^\kappa)=0$ and $G^{(3)}(\theta^\kappa)=\Psi''(\theta^\kappa)=-2 (c^\kappa)^3$ so that Taylor expansion at the critical point gives (up to higher order terms)
\begin{equation*}
G(v) \simeq G(\theta^\kappa)- \frac{(c^\kappa)^3}{3} (v-\theta^\kappa)^3,\quad G(\tilde z) \simeq G(\theta^\kappa)- \frac{(c^\kappa)^3}{3} (\tilde z-\theta^\kappa)^3.
\end{equation*}
This cubic behavior suggests rescaling around $\theta^\kappa$ by the change of variables
\begin{equation*}
w = c^\kappa N^{1/3}(v-\theta^\kappa), \qquad w' = c^\kappa N^{1/3}(v'-\theta^\kappa), \qquad z = c^\kappa N^{1/3}(\tilde z-\theta^\kappa).
\end{equation*}
Under the above change of variables we find that as $N\to \infty$,
\begin{equation*}
\frac{\exp\left(N G(v)+ r c^\kappa N^{1/3} v\right)}{\exp\left(N G(\tilde z)+ r c^\kappa N^{1/3} \tilde z\right)}
\to \frac{\exp\left(z^3/3-r z\right)}{\exp\left(w^3/3-r w\right)}\,.
\end{equation*}
Note that since the $v,v'$ variables were scaled, there is a Jacobian factor of $1/(c^\kappa N^{1/3})$ introduced into the kernel. Grouping this with the reciprocal sine function we see that as $N\to \infty$,
\begin{equation*}
\frac{1}{c^\kappa N^{1/3}}\frac{\pi}{\sin (\pi(v-\tilde z))} \to \frac{1}{w-z}, \qquad \frac{d\tilde z}{\tilde z-v'} \to \frac{dz}{z-w'}\,.
\end{equation*}
It remains to study the ratio of Gamma functions. This is where the subcriticality (a) versus criticality (b) becomes important. First notice that the factor $\prod_{k=1}^{m}\Gamma(\tilde z)/\Gamma(v)\to 1$. The fact that the critical value for the $a_i$'s is $\theta^\kappa$ coincides with the centering of the change of variables is not an accident as we now explain.
After the above change of variables
\begin{equation*}
\frac{\Gamma(v-a_k)}{\Gamma(\tilde z-a_k)} = \frac{\Gamma(\theta^\kappa-a_k+ w/(c^\kappa N^{1/3}))}{\Gamma(\theta^\kappa-a_k+z/(c^\kappa N^{1/3}))}\,.
\end{equation*}
As long as $\limsup_{N\to\infty}N^{1/3} (a_k(N)-\theta^\kappa)=-\infty$, as $N\to \infty$, the numerator and denominator both converge to $\Gamma(\theta^\kappa-a_k)$ and hence their ratio is 1. Thus for the subcritical case (a) the limiting kernel is given by
\begin{equation*}
\frac{1}{2\pi \I}\int dz\, \frac{1}{(w-z)(z-w')}\frac{\exp\left(z^3/3-r z\right)}{\exp\left(w^3/3-r w\right)}\,.
\end{equation*}

In the critical case, $\limsup_{N\to\infty}N^{1/3} (a_k(N)-\theta^\kappa) = b_k$ for $1\leq k\leq m$. This means that after the change of variables
\begin{equation*}
\frac{\Gamma(v-a_k)}{\Gamma(\tilde z-a_k)} = \frac{\Gamma((w-b_k)/(c^\kappa N^{1/3}))}{\Gamma((z-b_k)/(c^\kappa N^{1/3}))}\simeq \frac{z-b_k}{w-b_k}
\end{equation*}
for large $N$, since $\Gamma(z)\simeq 1/z$ near $0$. Therefore, in the critical case (b) the limiting kernel is given by
\begin{equation*}
\widetilde K_{{\rm BBP},b}(w,w')=\frac{1}{2\pi \I}\int dz\, \frac{1}{(w-z)(z-w')}\frac{\exp\left(z^3/3-r z\right)}{\exp\left(w^3/3-r w\right)}\prod_{k=1}^{m}\frac{z-b_k}{w-b_k}\,.
\end{equation*}

The variables $w,w'$ are integrated along a contour $\mathcal{C}$ from $e^{-2\pi\I/3}\infty$ to $e^{2\pi\I/3}\infty$, passing on the right of $b_1,\ldots,b_m$. The variable $z$ is integrated along a contour which  goes from $e^{-\pi\I/3}\infty$ to $e^{\pi\I/3}\infty$ without crossing $\mathcal{C}$.

The limiting Fredholm determinants we have derived can be rewritten as Fredholm determinants on $L^2(r,\infty)$ as shown in Section~\ref{AppFredDet}, and hence one sees their equivalence to those given in Definition~\ref{GUEBBPdef}.

This completes the formal critical point derivation of Theorem~\ref{PFThmF2pert}. Let us note, that in order to make this formal manipulations into a proof it is necessary (among other things) to be careful about the contours. First one has to find a steep descent path\footnote{For an integral $I=\int_\gamma dz\, e^{t f(z)}$, we say that $\gamma$ is a steep
descent path if (1) $\Re(f(z))$ reaches the maximum at some $z_0\in\gamma$: $\Re(f(z))< \Re(f(z_0))$ for $z\in\gamma\setminus\{z_0\}$, and (2) $\Re(f(z))$ is monotone along $\gamma$ except at its maximum point $z_0$ and, if $\gamma$ is closed, at a point $z_1$ where the minimum of $\Re(f)$ is reached.} for $G(v)$, which might not be obvious due to the Digamma function (it turns out a useful representation for the real part of the Digamma function is as an infinite sum, see Section~\ref{sect5.1}). Secondly, one would like to find a steep descent path for $-G(\tilde{z})$, but because the path for $\tilde{z}$ has to include all the poles at $v+1,v+2,\ldots$, such path does not exists. The way out we used was to find a steep descent path for $-G(\tilde{z})$ and then add the contributions of the poles at the $v+1,\ldots,v+\ell$ which lie on the left of the path (see Figures~\ref{PFFigPathsTW} and~\ref{PFFigPathsBBPproof}). Finally, one needs to get estimates so that not only the kernel converges, but also the Fredholm determinant. Further technical details are presented in the proof, see Section~\ref{proofOConnellYorKPZ}.

\section{Laplace transform of the CDRP partition function}\label{critpointCDRP}

In this section we reduce the proof of Theorem~\ref{ThmIntDisAsy} to a statement about the asymptotics of a Fredholm determinant (Theorem~\ref{PFThmF2pert} below). We then provide a formal critical point derivation of the asymptotics, delaying the rigorous proof until Section~\ref{proofCDRP}. We also include two brief calculations delayed from the introduction.

The CDRP occurs as limits of discrete and semi-discrete polymers under what has been called {\it intermediate disorder} scaling. This means that the inverse temperature should be scaled to zero in a critical way as the system size scales up. For the discrete directed polymer it was observe independently by Calabrese, Le Doussal and Rosso~\cite{CDR10} and by Alberts, Khanin and Quastel~\cite{AKQ10} that if one scaled the inverse temperature to zero at the right rate while diffusively scaling time and space, then the discrete polymer partition function converges to CDRP partition function. Using convergence of discrete to continuum chaos series,~\cite{AKQ12} provide a proof of this result that is universal with respect to the underlying i.i.d.\ random variables which form the random media (subject to certain moment conditions).

Concerning the semi-discrete directed polymer,~\cite{QM12} prove convergence of the partition function to that of the CDRP.  The results of~\cite{QM12} deal with zero drift vector, and in~\cite{QMR12} the convergence is extended to deal with a finite number of non-zero drifts, critically tuned so as to result in an boundary perturbation for the CDRP partition function.

\begin{theorem}[\cite{QM12,QMR12}]\label{scalinglimit}
Fix $T>0$, $X\in \R$, $m\geq 0$, and a real vector $b=(b_1,\ldots,b_m)$. Set $\kappa=\sqrt{T/N}$ from which $\tau=\kappa N =\sqrt{TN}$, and $\theta=\theta^\kappa\simeq \sqrt{N/T}+\frac12$. For each $N\geq m$ define a drift vector $a=(a_1,\ldots,a_m,0,\ldots,0)$ where $a_k:=\theta+b_k$ for $1\leq k\leq m$. Consider the O'Connell-Yor semi-discrete polymer partition function $\Zsd^{N}(\tau)$ with drift vector $a$. For $\sqrt{T N}+X>0$, define its rescaling as
\begin{equation*}
\mathcal{Z}^N(T,X)=\frac{\Zsd^{N}(\sqrt{T N}+X)}{C(N,T,X)}
\end{equation*}
with scaling constant
\begin{equation*}
C(N,T,X) = \exp\left(N+\tfrac12 N\ln(T/N)+\tfrac12(\sqrt{T N}+X) + X\sqrt{N/T}\right) \exp\left(-\tfrac12 m\ln(T/N)\right).
\end{equation*}
Then as $N\to \infty$, $\mathcal{Z}^N(T,X)$ converges in distribution to $\mathcal{Z}(T,X)$ which is the partition function for the continuum directed random polymer with $m$-spiked boundary perturbation corresponding to drift vector $b$.
\end{theorem}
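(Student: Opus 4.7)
The plan is to reduce the $m$-spiked convergence to the unperturbed (narrow-wedge) intermediate-disorder convergence of \cite{QM12} via an explicit convolution identity for $\Zsd^{N}$, and then interchange the limit with the integration. This is the strategy followed in \cite{QMR12}.

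\emph{Step 1 (Convolution decomposition).} Split the up/right path at the $m$-th jumping time $s_{m}$. Writing each drifted Brownian motion as $B_{k}(s)=\tilde B_{k}(s)+a_{k} s$ with $a_{k}=\theta+b_{k}$ for $k\leq m$ and $a_{k}=0$ for $k>m$, the drift contribution to the energy is $\theta s_{m}+\sum_{k=1}^{m} b_{k}(s_{k}-s_{k-1})$. Combining this with independence of $(\tilde B_{1},\dots,\tilde B_{m})$ from $(\tilde B_{m+1},\dots,\tilde B_{N})$ and translation invariance of Brownian motion, I obtain the identity
\begin{equation*}
\Zsd^{N}(\tau)=\int_{0}^{\tau} e^{\theta Y}\,\Zsd^{m}_{(b)}(Y)\,\widehat{\Zsd}^{N-m,0}_{Y}(\tau-Y)\,dY,
\end{equation*}
where $\Zsd^{m}_{(b)}(Y)$ is the $m$-level polymer with drift vector $b$ (matching the boundary initial data $\mathcal{Z}_{0}(Y)=\Zsd^{m}(Y)\mathbf{1}_{Y\geq 0}$ of Definition~\ref{spikedICdef}), and $\widehat{\Zsd}^{N-m,0}_{Y}(\tau-Y)$ is an $(N-m)$-level unperturbed polymer built from the increments $\tilde B_{m+k}(Y+\cdot)-\tilde B_{m+k}(Y)$. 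The two factors are independent for each fixed $Y$.

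\emph{Step 2 (Bulk convergence and normalization).} Apply the unperturbed Moreno Flores--Quastel convergence, namely, for $\tau'=\sqrt{T(N-m)}+X'$,
\begin{equation*}
\frac{\Zsd^{N-m,0}(\tau')}{\widetilde C(N-m,T,X')}\xrightarrow{d}\mathcal{Z}_{\delta_{0}}(T,X'),
\end{equation*}
where $\mathcal{Z}_{\delta_{0}}$ solves the SHE with $\delta_{0}$ initial condition and $\widetilde C$ is the $m=0$ case of $C(N,T,X)$. Since $\tau-Y=\sqrt{T(N-m)}+X'$ with $X'=X-Y+O(\sqrt{T/N})$, the bulk factor converges to $\mathcal{Z}_{\delta_{0}}(T,X-Y)$ when divided by $\widetilde C(N-m,T,X-Y)$. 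A direct asymptotic expansion using $\theta^{\kappa}=1/\kappa+1/2+O(\kappa)$ with $\kappa=\sqrt{T/N}$, together with Stirling-type expansions of $N\ln(T/N)-(N-m)\ln(T/(N-m))$, verifies that
\begin{equation*}
\frac{e^{\theta Y}\,\widetilde C(N-m,T,X-Y)}{C(N,T,X)}\longrightarrow 1,\qquad N\to\infty,
\end{equation*}
uniformly for $Y$ in compact subsets of $[0,\infty)$; the extra factor $\exp(-\tfrac12 m\ln(T/N))$ built into $C(N,T,X)$ is precisely what is needed to absorb the residual terms coming from the shift $N\mapsto N-m$ and the subleading part of $\theta$.

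\emph{Step 3 (Passage to the limit in the integral).} Steps 1--2 identify the pointwise (in $Y$) limit of the integrand of $\mathcal{Z}^{N}(T,X)=\Zsd^{N}(\tau)/C(N,T,X)$ as $\Zsd^{m}(Y;b)\,\mathcal{Z}_{\delta_{0}}(T,X-Y)$, whose integral over $Y\in[0,\infty)$ is, using the distributional translation invariance $\mathcal{Z}_{\delta_{0}}(T,X-Y)\stackrel{d}{=}\mathcal{Z}_{\delta_{Y}}(T,X)$, the mild solution $\mathcal{Z}(T,X)$ with $m$-spiked initial data. The main obstacle is exchanging the limit with the $Y$-integration. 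This requires (i) joint convergence of the family $\{\widehat{\Zsd}^{N-m,0}_{Y}(\tau-Y)\}_{Y\geq 0}$ to the random field $Y\mapsto \mathcal{Z}_{\delta_{Y}}(T,X)$ in a suitable functional sense (e.g.\ uniformly on compact sets, in $L^{2}$ of the noise), and (ii) uniform integrability in $Y$ to truncate the integration to a compact interval. Both are provided by the sharp $L^{p}$ moment/hypercontractivity bounds for the unperturbed semi-discrete polymer from \cite{QM12}, the Gaussian decay of the deterministic envelope $p(T,X-Y)$, and polynomial-in-$Y$ moment bounds for $\Zsd^{m}(Y;b)$. Together these yield dominated convergence and hence the convergence in distribution $\mathcal{Z}^{N}(T,X)\to\mathcal{Z}(T,X)$.
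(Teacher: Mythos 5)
The paper does not prove Theorem~\ref{scalinglimit}; it imports it as a known result from \cite{QM12,QMR12} and uses it as a black box in Section~\ref{critpointCDRP}, so there is nothing in the paper to compare your argument against. As an outline of the proof in the cited references, your sketch is essentially sound: the convolution decomposition at the $m$-th jump time is correct (the drift in levels $1,\ldots,m$ telescopes to $\theta s_m+\sum_{k\leq m}b_k(s_k-s_{k-1})$, giving the $e^{\theta Y}$ prefactor and the $m$-level polymer with drift $b$), the bulk factor is handled by the unperturbed \cite{QM12} result, and the dominated-convergence / functional-convergence step you single out in Step~3 is indeed the technical core of \cite{QMR12}. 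One accounting slip in Step~2: the bulk normalization should be $\widetilde C(N-m,T,X')$ with the \emph{exact} $X'=\sqrt{TN}+X-Y-\sqrt{T(N-m)}=X-Y+\tfrac{m}{2}\sqrt{T/N}+O(N^{-1})$, not $\widetilde C(N-m,T,X-Y)$. The $O(\sqrt{T/N})$ shift in the argument of $\widetilde C$ contributes a factor $e^{m/2}$ through the $X'\sqrt{(N-m)/T}$ term, so the two intermediate claims as you wrote them — that the bulk divided by $\widetilde C(N-m,T,X-Y)$ tends to $\mathcal{Z}_{\delta_0}(T,X-Y)$, and that $e^{\theta Y}\widetilde C(N-m,T,X-Y)/C(N,T,X)\to 1$ — are each off by a factor $e^{\pm m/2}$; these cancel, so your final conclusion is correct, but the bookkeeping should carry $X'$ consistently.
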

\begin{remarks}
For $S$ with positive real part, due to the almost sure positivity of $\mathcal{Z}(T,X)$, the above weak convergence also implies convergence of Laplace transforms:
\begin{equation*}
\EE\left[e^{-S \mathcal{Z}(T,X)}\right]=\lim_{N\to\infty}\EE\left[e^{-S \mathcal{Z}^N(T,X)}\right]
\end{equation*}
\end{remarks}

Let us focus on the semi-discrete polymer with drift vector as specified in the above theorem, and $X=0$. Then, rewrite the CDRP partition function in terms of the free energy, the previous remark implies
\begin{equation*}
\EE\left[e^{-S \exp\left(\mathcal{F}(T,0) + T/4!\right)}\right] = \lim_{N\to\infty} \EE\left[e^{-u \Zsd^{N}(\sqrt{TN})}\right] =\lim_{N\to\infty} \det(\Id+ K_{u})_{L^2(\Cv{\alpha,\varphi})}
\end{equation*}
where
\begin{equation}\label{usec3}
u=S e^{-N-\frac12 N \ln(T/N)-\frac12 \sqrt{TN}+T/4!} e^{\frac12 m \ln(T/N)},
\end{equation}
\mbox{$\alpha>\max\{a_1,\ldots,a_N\}$} and $\varphi\in(0,\pi/4)$. This reduces the proof of Theorem~\ref{ThmIntDisAsy} to the following.

\begin{theorem}\label{ThmCDRPdets}
Fix $S$ with positive real part, $T>0$, $m\geq 0$, and a real vector \mbox{$b=(b_1,\ldots,b_m)$}. Set $\kappa=\sqrt{T/N}$ from which $\tau=\kappa N =\sqrt{TN}$, and $\theta=\theta^\kappa\simeq \sqrt{N/T}+\frac12$. For each $N\geq m$ define a drift vector \mbox{$a=(a_1,\ldots,a_m,0,\ldots,0)$} where $a_k:=\theta+b_k$ for $1\leq k\leq m$, and define $u$ by (\ref{usec3}). Fix any \mbox{$\alpha>\max\{a_1,\ldots,a_N\}$} and $\varphi\in (0,\pi/4)$. Then, recalling the kernel $K_u$ from (\ref{kvvprime}), it holds that:
\begin{itemize}
\item[(a)] In the unperturbed case, $m=0$,
\begin{equation*}
\lim_{N\to\infty} \det(\Id+ K_{u})_{L^2(\Cv{\alpha,\varphi})}=\det(\Id-K_{{\rm CDRP}})_{L^2(\R_+)}
\end{equation*}
with $K_{{\rm CDRP}}$ given in Definition~\ref{2.18def}.
\item[(b)] For the perturbed case, $m\geq 1$,
\begin{equation*}
\lim_{N\to\infty} \det(\Id+ K_{u})_{L^2(\Cv{\alpha,\varphi})}=\det(\Id-K_{{\rm CDRP},b})_{L^2(\R_+)}
\end{equation*}
with $K_{{\rm CDRP},b}$ given in Definition~\ref{2.18def}.
\end{itemize}
\end{theorem}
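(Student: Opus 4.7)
The plan is to run the same formal steepest-descent machinery as in the formal derivation of Theorem~\ref{PFThmF2pert} (Section~\ref{formalcalc1}), but with the crucial difference that in the intermediate-disorder scaling $\kappa=\sqrt{T/N}$ the critical point $\theta=\theta^\kappa\simeq\sqrt{N/T}+\tfrac12$ tends to infinity while the ``fluctuation scale'' $c^\kappa N^{1/3}$ converges to the finite constant $1/\sigma=(T/2)^{1/3}$. Concretely, I would first rewrite the kernel (\ref{kvvprime}) by substituting $\tilde z=s+v$ and using $\Gamma(-s)\Gamma(1+s)=-\pi/\sin(\pi s)$, so that the main exponential takes the form $e^{NG(v)-NG(\tilde z)+gs}$ with $G$ from (\ref{Geqn}) and $g=\ln u+Nf^\kappa$. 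A careful Stirling expansion of $\Psi(\theta)$ at $\theta=\sqrt{N/T}+\tfrac12-\tfrac14\sqrt{T/N}+O(1/N)$ gives $-Nf^\kappa=-N-\tfrac12\sqrt{TN}+\tfrac12 N\ln(N/T)+T/24+O(N^{-1/2})$, and this is exactly designed to match $\ln u$ in (\ref{usec3}) so that $g=\ln S+\tfrac12 m\ln(T/N)+O(N^{-1/2})$.

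Next I would change variables $v=\theta+w/(c^\kappa N^{1/3})$, $v'=\theta+w'/(c^\kappa N^{1/3})$, $\tilde z=\theta+z/(c^\kappa N^{1/3})$, noting that $c^\kappa N^{1/3}\to 1/\sigma$ because $(c^\kappa)^3=-\Psi''(\theta)/2\sim T/(2N)$. The pointwise limits are then: the cubic nature of $G$ at $\theta$ yields $NG(v)-NG(\tilde z)\to (z^3-w^3)/3$, the $gs$ term produces $S^{\sigma(z-w)}\cdot (T/N)^{m\sigma(z-w)/2}$, the spiked ratios satisfy $\Gamma(v-a_k)/\Gamma(\tilde z-a_k)\to\Gamma(\sigma w-b_k)/\Gamma(\sigma z-b_k)$, and the extra $\bigl(\Gamma(v)/\Gamma(\tilde z)\bigr)^{-m}$ factor introduced by the splitting of the $N$-fold product contributes $\theta^{m\sigma(z-w)}\sim (N/T)^{m\sigma(z-w)/2}$ by Stirling, exactly canceling the $(T/N)^{m\sigma(z-w)/2}$ from $u^s$. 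The non-exponential pieces give $-\pi/\sin(\pi s)\to-\pi/\sin(\pi\sigma(z-w))$ and, after absorbing the Jacobian from $dv$ into the kernel, the overall prefactor $\sigma$ appears. The outcome is a rescaled kernel $\tilde K$ on (the rescaled version of) $\Cv{\alpha,\varphi}$ whose contour integral exactly matches the integrand of $K_{{\rm CDRP},b}$ apart from the factor $1/(z-w')$.

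To land on a Fredholm determinant over $L^2(\R_+)$ I would employ the standard identity $1/(z-w')=\int_0^\infty e^{-\eta(z-w')}\,d\eta$, valid since $\mathcal{C}_z$ lies to the right of $\mathcal{C}_w$. This factorizes $\tilde K(w,w')=\int_0^\infty A(w,\eta)B(\eta,w')\,d\eta$ with $B(\eta,w')=e^{\eta w'}$ and $A(w,\eta)$ given by the $z$-integral containing $e^{-\eta z}$. Applying the Fredholm identity $\det(\Id+AB)_{L^2(\mathcal{C}_w)}=\det(\Id+BA)_{L^2(\R_+)}$ and computing $BA$ as the double contour integral then reproduces $\det(\Id-K_{{\rm CDRP},b})_{L^2(\R_+)}$ up to the sign and normalization conventions implicit in the contour Fredholm determinant (the factor of $2\pi\I$ absorbed into the measure on $\mathcal{C}_w$). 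Parts (a) and (b) are the same argument, with the $m=0$ case corresponding to the empty Gamma product.

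The main technical obstacle, as in the proof of Theorem~\ref{PFThmF2pert} in Section~\ref{proofOConnellYorKPZ}, is the rigorous justification: (i)~deforming $\Cv{\alpha,\varphi}$ to a steep-descent contour for $\Re G(v)$ passing through the moving critical point $\theta\to\infty$, which requires using the infinite-sum representation of $\Re\Psi$ (cf.\ Section~\ref{sect5.1}) to overcome the absence of a closed form; (ii)~deforming $\Cs{v}$ to the rescaled vertical contour $\mathcal{C}_z$ at $\Re(z)=1/(4\sigma)$ while carefully handling the ``ladder'' of poles at $s=1,2,\ldots,\lfloor R\rfloor$ that correspond in the limit to the zeros of $\sin(\pi\sigma(z-w))$ at $z=w+k/\sigma$; and (iii)~producing uniform-in-$N$ bounds on $\tilde K$ that decay in $\eta,\eta'$ sufficiently fast to allow Hadamard-bound control of the Fredholm series and thereby upgrade pointwise convergence to convergence of determinants. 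Step~(ii) differs most substantively from the KPZ-scaling proof, because the pole positions no longer scatter to infinity under rescaling but stabilize at $z=w+k/\sigma$, so they must be treated as genuine features of the limiting kernel rather than artifacts that disappear.
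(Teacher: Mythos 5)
Your proposal is correct and follows the paper's proof in Section~\ref{proofCDRP} essentially step for step: change of variables $v\mapsto\theta+\sigma w$ around the diverging critical point (the paper rescales by the exact constant $\sigma=(2/\mathcal{T})^{1/3}$ rather than the asymptotically equal $c^\kappa N^{1/3}$, which keeps the contours simpler), steep-descent contours with a pole ladder from $\Gamma(-s)$ corresponding to the residues at $\tilde z=v+1,v+2,\ldots$, pointwise limits yielding the surviving factor $\sigma\pi S^{\sigma(z-w)}/\sin(\pi\sigma(z-w))$ and the Gamma-ratio prefactor, uniform exponential bounds in $\Im(w)$ to dominate the Fredholm series, and the $AB\mapsto BA$ reformulation on $L^2(\R_+)$. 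Two small points you glossed over: for $m\geq 1$ the contours must be deformed to pass to the right of the $b_k/\sigma$'s, so when some $b_k\geq\tfrac14$ the hypothesis $\Re(z-w')>0$ needed for the integral representation fails and the paper instead proves the identity for small $b_k$ and then extends by analytic continuation in $b$; and the third-order term in your expansion of $\theta$ should be $-\tfrac1{12}\sqrt{T/N}$, not $-\tfrac14\sqrt{T/N}$ (a harmless typo, since your stated expansion of $-Nf^\kappa$ and the cancellation giving $g=\ln S+\tfrac{m}{2}\ln(T/N)+O(N^{-1/2})$ are nonetheless correct).
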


This theorem is proved in Section~\ref{proofCDRP}. However, we now include a formal critical point derivation of the asymptotics.

\subsection{Formal critical point aysmptotics for Theorem~\ref{ThmCDRPdets}.}

We provide a formal analysis of the asymptotics of the Fredholm determinant $\det(\Id+ K_{u})_{L^2(\Cv{\alpha,\varphi})}$ under the prescribed scalings. In particular, we only focus on the limit of the kernel $K_u$, and even in that pursuit, we only consider the pointwise limit of the kernel in (\ref{kvvprime}). We also disregard issues respecting the choice of contours. All of these issues are considered in the rigorous proof contained in Section~\ref{proofCDRP}.

Set $\kappa=\sqrt{T/N}$ and note that $u= (S/\theta^m) e^{-N f^\kappa+\Or(N^{-1/2})}$ (recall Definition~\ref{digammadef}) so that the result is the same if we just set $u=(S/\theta^m) e^{-N f^\kappa}$. It is convenient to do the change of variable $v\to \theta + \sigma w$ and $\tilde z\to \theta + \sigma z$. In these new variables, the kernel is (up to the approximations) given by
\begin{equation*}
K_\theta(w,w')=\frac{-1}{2\pi \I}\int dz \frac{\sigma \pi S^{(z-w)\sigma}}{\sin(\pi (z-w)\sigma)}
\frac{e^{N G(\theta +\sigma w)-N G(\theta + \sigma z)}}{z-w'}\prod_{k=1}^m\frac{\Gamma(\sigma w-b_k) \Gamma(\theta + \sigma z)\theta^{\sigma w}}{\Gamma(\sigma z-b_k)\Gamma(\theta + \sigma w)\theta^{\sigma z}},
\end{equation*}
where $G$ is given by (\ref{Geqn}).

$G$ has a double critical point at $\theta$, as $G'(\theta)=G''(\theta)=0$. Therefore, using the large $N$ Taylor expansion of $G$ one sees that
\begin{equation*}
N G(\theta +\sigma w) \simeq N G(\theta) - \frac{1}{3} w^3.
\end{equation*}
Also, as $\theta$ is going to infinity with $N$, it is immediate that $\Gamma(\theta + \sigma z)\theta^{\sigma w}/\Gamma(\theta + \sigma w)\theta^{\sigma z}\to 1$. Thus, as $N\to\infty$, the kernel $K_{\theta}$ goes to
\begin{equation*}
\widetilde K_{{\rm CDRP},b}(w,w')=\frac{-1}{2\pi \I}\int dz \frac{\sigma \pi S^{(z-w)\sigma}}{\sin(\pi (z-w)\sigma)} \frac{e^{z^3/3-w^3/3}}{z-w'}\prod_{k=1}^m\frac{\Gamma(\sigma w-b_k)}{\Gamma(\sigma z-b_k)}.
\end{equation*}
The variables $w,w'$ are on the contour $\mathcal{C}_w$ and $z$ on $\mathcal{C}_z$ of Figure~\ref{PFFigPathsCDRP}. The Fredholm determinant with this kernel can be rewritten as a Fredholm determinant on $L^2(\R_+)$ as shown in Section~\ref{AppFredDet} and one checks that this corresponds with $\det(\Id-K_{{\rm CDRP},b})_{L^2(\R_+)}$ as desired.

This completes the brief and formal critical point derivation of Theorem~\ref{ThmCDRPdets}. The technical challenges in order to produce a rigorous proof are similar to those explained at the end of Section~\ref{formalOConnellYorKPZ}. There is a new difficulty with regards to contours which arises in this case, however. The variables $w,w',z$ arose from the change of variables of $v,v',\tilde z$. Besides a shift by $\theta$ (which goes to infinity when $N$), the variables are just scaled by $\sigma$, which is of order 1 (here $T$ is fixed). This means that all of the singularities of  $1/\sin(\pi(\tilde z-v))$ remain in the $N\to \infty$ scaling. This necessitates significant care in the choice of contours along which to take asymptotics. The complete proof of these asymptotics is given in Section~\ref{proofCDRP}.

\subsection{Affine shifting of the perturbed CDRP partition function}\label{nonstatSec}
The following calculation shows how the claim (for $m\geq 1$) in Remark~\ref{perturbedshiftREM} is derived. As explained in Section~\ref{CDRPfe}, we can write
\begin{equation*}
\mathcal{Z}(T,X) = \int_{-\infty}^{\infty} p(T,Y-X) \mathcal{Z}_0(Y) \EE_{\substack{B(0)=Y\\ B(T)=X}}\left[:\,\exp\,: \left\{\int_0^{T} \dot{\mathscr{W}}(t,B(t))dt\right\}\right] dY,
\end{equation*}
where $\EE_{\substack{B(0)=Y\\ B(T)=X}}$ denotes the expectation over a Brownian bridge $B$ starting at $B(0)=Y$ and ending at $B(T)=X$.
Let $\tilde{B}(s)=B(s)-s X/T$, then observe
\begin{equation*}
\mathcal{Z}(T,X) = \int_{-\infty}^{\infty} \frac{p(T,Y)}{p(T,Y)} p(T,Y-X) \mathcal{Z}_0(Y)  \EE_{\substack{\tilde{B}(0)=Y\\ \tilde{B}(T)=0}}\left[:\,\exp\,: \left\{\int_0^{T} \dot{\mathscr{W}}(t,\tilde{B}(t))dt\right\}\right] dY,
\end{equation*}
where the space-time white noise here is the affine shift of the previous one (but, in any case, equal in law). We have also inserted a factor of 1 so that we can now rewrite
\begin{equation*}
\mathcal{Z}(T,X) =  e^{-\frac{X^2}{2T}} \int_{-\infty}^{\infty}  p(T,Y)\mathcal{Z}_0(Y)e^{XY/T} \EE_{\substack{\tilde{B}(0)=Y\\ \tilde{B}(T)=0}}\left[:\,\exp\,: \left\{\int_0^{T} \dot{\mathscr{W}}(t,\tilde{B}(t))dt\right\}\right] dY.
\end{equation*}
If $\mathcal{Z}_0(Y) = \Zsd^m(Y)\mathbf{1}_{Y\geq 0}$ for drift vector $b=(b_1,\ldots, b_m)$ then $\mathcal{\tilde{Z}}_0(Y) = \mathcal{Z}_0(Y) e^{XY/T}= \Zsd^m(Y)\mathbf{1}_{Y\geq 0}$ for drift vector $b=(b_1+X/T,\ldots, b_m+X/T)$. Letting $\mathcal{\tilde{Z}}$ correspond to the solution to the stochastic heat equation with $\mathcal{\tilde{Z}}_0(Y)$ initial data, we find that
\begin{equation*}
\mathcal{Z}(T,X) =  e^{-\frac{X^2}{2T}} \mathcal{\tilde{Z}}(T,0).
\end{equation*}
Thus, after a parabolic shift, and an addition of drift into the boundary perturbation, the distribution of the general $X$ free energy can also be determined from Theorem~\ref{ThmIntDisAsy} as well.

\subsection{Proof of Corollary~\ref{CDRPtoKPZ}}\label{Corproof}
Recall $\sigma = (2/T)^{1/3}$. Let us focus on $m\geq 1$ and note that $m=0$ is proved identically. Define a sequence of function $\{\Theta_T\}_{T\geq 0}$ by $\Theta_T(x) = e^{-e^{x/\sigma}}$. Then if we set $S=e^{-r/\sigma}$
\begin{equation}\label{lhseqn}
\EE\left[e^{-S \exp\left(\mathcal{F}(T,0) + T/4!\right)}\right]=\EE\left[\Theta_{T}\left(\frac{ \mathcal{F}(T,0) +T/4!}{ \sigma^{-1}}-r\right)\right].
\end{equation}
Let us first calculating the $T\to \infty$ limit of the Fredholm determinant expression for the left-hand side of the above equality. It is easy to see that in this limit
\begin{equation*}
K_{{\rm CDRP},\sigma b}(\eta,\eta') \to K_{{\rm BBP},b}(\eta+r,\eta'+r).
\end{equation*}
This is because $T\to \infty$ corresponds to $\sigma\to 0$ and thus
\begin{equation*}
\frac{\sigma \pi S^{(z-w)\sigma}}{\sin(\pi(z-w)\sigma)} \to \frac{e^{-r(z-w)}}{z-w},\quad
\frac{\Gamma(\sigma(w-b_k)) }{\Gamma(\sigma(z-b_k))} \to \frac{z-b_k}{w-b_k}.
\end{equation*}
One readily identifies the resulting expression with that of Definition~\ref{GUEBBPdef}. The tail bounds necessary to justify this are not hard (see~\cite{ACQ10,CQ10} for instance).
Going back to (\ref{lhseqn}), this implies that
\begin{equation}
\lim_{T\to\infty} \EE\left[\Theta_{T}\left(\frac{ \mathcal{F}(T,0) +T/4!}{ \sigma^{-1}}-r\right)\right] = F_{{\rm BBP},b}(r).
\end{equation}
Since (away from $x=r$) $\Theta_{T}(x-r)$ is converging to $\mathbf{1}_{x\leq r}$ (and in particular due to Lemma~\ref{problemma1}),
\begin{equation*}
\lim_{T\to\infty} \EE\left[\Theta_{T}\left(\frac{ \mathcal{F}(T,0) +T/4!}{ \sigma^{-1}}-r\right)\right]= \lim_{T\to \infty} \PP\left(\frac{ \mathcal{F}(T,0) +T/4!}{\sigma^{-1}} \leq r \right)
\end{equation*}
and hence the corollary is proved.

\section{Proof of Theorem~\ref{OConYorFluctThm}: the main formula}\label{formalMainFormula}

The proof of Theorem~\ref{OConYorFluctThm} follows closely the proof of Theorem~5.2.10 in~\cite{BC11}. The major difference is that the formula resulting from Theorem~5.2.10 had bounded contours which were unsuitable for the full scope of asymptotic analysis necessary to prove Theorems~\ref{ThmPosTempAsy} and~\ref{ThmIntDisAsy}. This somewhat minor modification to the final formula requires us to modify the proof quite early on and in fact the unboundedness of contours results in a fair number of new technical steps in the proof. We produce in this section the complete proof. Results used along the way which are stated and proved in~\cite{BC11} are not reproved. Some of the more technical points in the proof are delayed until Section~\ref{MacSec}.

To prove Theorem~\ref{OConYorFluctThm} we use the theory of Macdonald processes as developed in~\cite{BC11}. As we explain below in Section~\ref{WhitMeasSec}, due to O'Connell's work~\cite{OCon09} on a continuum version of tropical RSK correspondence, the partition function $\Zsd^N(t)$ arises as a marginal of the Whittaker process (or measure). Macdonald processes sit above Whittaker processes due to the hierarchy of symmetric functions. Under suitable scaling, Macdonald processes converge weakly to Whittaker processes (and hence a suitable marginal converges to $\Zsd^N(t)$). Due to the Macdonald difference operators and the Macdonald version of the Cauchy identity it is possible to compute simple formulas for expectations of a large class of observables of Macdonald processes. In particular it is possible to compute $q$-moments for the marginal random variable which converges to $\Zsd^N(t)$. A $q$-Laplace transform can be rigorously computed by taking an appropriate generating function of the $q$-moments, and switching the expectations and summation is rigorously justified. For the $q$-Laplace transform we find a nice Fredholm determinant. Taking the degeneration of Macdonald processes to Whittaker processes, the $q$-Laplace transform becomes the usual Laplace transform, and due to the weak convergence, we recover the desired formula for the Laplace transform of $\Zsd^N(t)$.

We do not provide an introduction to the theory of Macdonald symmetric functions here. Instead we refer readers to Section 2.1 of~\cite{BC11} for all of the relevant details, or to Chapter VI of~\cite{Mac79}. We also do not define Macdonald processes in their full generality but content ourselves with studying a certain set of marginals of the processes which are called Macdonald measures, and a certain Plancherel specialization.

\subsection{O'Connell's Whittaker measure and its relation to polymers}\label{WhitMeasSec}

In order to state the pre-asymptotic Laplace transform formula and set up its derivation, it is useful to introduce a few concepts. Initially they may seem a little out of place, but due to O'Connell's work~\cite{OCon09} (some of which is recorded in Theorem~\ref{OConthm}) the connection to the semi-discrete polymer becomes clear. This connection is analogous to the relation between last passage percolation and the Schur process (see e.g.~\cite{Jo05}).

The {\it class-one $\mathfrak{gl}_{N}$-Whittaker functions} are basic objects of representation theory and integrable systems~\cite{Kos79,Eti99}. One of their properties is that they are eigenfunctions for the quantum $\mathfrak{gl}_{N}$-Toda chain. As showed by Givental~\cite{Giv97}, they can also be defined via the following integral representation
\begin{equation*}
\psi_{\lambda}(x_{N,1},\ldots,x_{N,N})=\int_{\R^{N(N-1)/2}} e^{\mathcal{F}_\lambda(x)} \prod_{k=1}^{N-1}\prod_{i=1}^k dx_{k,i},
\end{equation*}
where $\lambda=(\lambda_1,\ldots,\lambda_N)$ and
\begin{equation*}
\mathcal{F}_{\lambda}(x)=\I\sum_{k=1}^{N} \lambda_k\left(\sum_{i=1}^k x_{k,i}-\sum_{i=1}^{k-1} x_{k-1,i}\right)-\sum_{k=1}^{N-1}\sum_{i=1}^k \left(e^{x_{k,i}-x_{k+1,i}}+e^{x_{k+1,i+1}-x_{k,i}}\right).
\end{equation*}

For any $\tau>0$ set
\begin{equation*}
\theta_{\tau}(x_1,\ldots,x_N)=\int_{\R^N} \psi_{\nu}(x_1,\ldots,x_{N}) e^{-\tau\sum_{j=1}^N\nu_j^2/2} m_N(\nu)\prod_{j=1}^{N} d\nu_j
\end{equation*}
with the Skylanin measure
\begin{equation*}
m_{N}(\nu)=\frac1{(2\pi)^{N} (N)!}\prod_{j\ne k} \frac 1{\Gamma(\I \nu_k-\I \nu_j)}\,.
\end{equation*}

Note that our definition of Whittaker functions differs by factors of $\I$ from those considered by O'Connell \cite{OCon09}.

\begin{definition}
For $N\geq 1$, define the {\it Whittaker measure} with respect to a vector \mbox{$a=(a_1,\ldots, a_N)\in \R^N$} via the density function (with respect to Lebesgue) given by
\begin{equation}\label{WMdef}
\WM{a_1,\ldots, a_N;\tau}\big(\{T_{i}\}_{1\le i\le N}\big)=e^{-\tau\sum_{j=1}^N a_j^2/2}\psi_{\iota a}(T_{1},\dots,T_{N})\,{\theta_{\tau}(T_{1},\dots,T_{N})}.
\end{equation}
\end{definition}

The fact that this measure integrates to one follows from analytic continuation of the orthogonality relation for Whittaker functions (see~\cite{BC11} Proposition~4.1.17). We write expectations with respect to Whittaker measures as $\langle\cdot \rangle_{\WM{a_1,\ldots, a_N;\tau}}$.

Let us remark that in~\cite{OCon09,BC11} the Whittaker measure is a level $N$ marginal of a measure on triangular arrays called the Whittaker process, which is also defined via Whittaker functions, but whose precise definition will not be important for us in this work.

The connection between the Whittaker measure and the semi-discrete polymer is best explained by introducing an extension to the polymer partition function and free energy.

\begin{definition}
The {\it hierarchy of partition functions} $\Zsd^{N}_{n}(\tau)$ for $0\leq n\leq N$ is defined so that $\Zsd^{N}_{0}(\tau)=1$ and for $n\geq 1$,
\begin{equation*}
\Zsd^{N}_{n}(\tau) = \int_{D^N_{n}(\tau)} e^{\sum_{i=1}^{n} E(\phi_i)} d\phi_1 \cdots d\phi_n,
\end{equation*}
where the integral is with respect to the Lebesgue measure on the Euclidean set $D^N_n(\tau)$ of all $n$-tuples of non-intersecting (disjoint) up/right paths with initial points $(0,1),\ldots, (0,n)$ and endpoints $(\tau,N-n+1),\ldots, (\tau,N)$. For a path $\phi_i$ which starts at $(0,i)$, ends at $(\tau,N-n+i)$ and jumps between levels $j$ and $j+1$ at times $s_j$, the energy $E(\phi_i)$ is defined as
\begin{equation*}
E(\phi_i) = B_i(s_i)+\left(B_{i+1}(s_{i+1})-B_{i+1}(s_i)\right)+ \cdots + \left(B_N(\tau) - B_{N}(s_{N-1})\right),
\end{equation*}
with $B_1,\ldots,B_N$ independent Brownian motions. It follows that $\Zsd^{N}_{1}(\tau) = \Zsd^{N}(\tau)$ as defined in (\ref{Zsd}).

The {\it hierarchy of free energies} $\Fsd^{N}_{n}(\tau)$ for $1\leq n\leq N$ is defined via
\begin{equation*}
\Fsd^{N}_{n}(\tau) = \ln\left(\frac{ \Zsd^{N}_{n}(\tau)}{ \Zsd^{N}_{n-1}(\tau)}\right).
\end{equation*}
It follows that $\Fsd^{N}_{1}(\tau) = \Fsd^{N}(\tau)$ as defined in (\ref{Fsd}).
\end{definition}

The following result is shown in~\cite{OCon09} by utilizing a continuous version of the tropical RSK correspondence (see also~\cite{COSZ11} for a discrete analog) and certain Markov kernel intertwining relations.

\begin{theorem}\label{OConthm}
Fix $N\geq 1$, $\tau\geq 0$ and a vector of drifts $a=(a_1,\ldots, a_N)$, then $\left\{\Fsd^{N}_n(\tau)\right\}_{1\leq n\leq N}$ is distributed according to the Whittaker measure $\WM{-a_1,\ldots, -a_N;\tau}$ of  (\ref{WMdef}).
\end{theorem}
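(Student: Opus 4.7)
The strategy, due to O'Connell~\cite{OCon09}, is to realize the hierarchy of partition functions as the top row of a triangular pattern output by a continuous geometric (tropical) RSK correspondence, and then compute the joint density of that row via a Markov intertwining argument together with Givental's integral formula.

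First I would set up the continuous geometric RSK map from sample paths $(B_1(\cdot),\ldots,B_N(\cdot))$ on $[0,\tau]$ to a Gelfand-Tsetlin-like triangular pattern $\{z_{k,i}(\tau)\}_{1\leq i\leq k\leq N}$. This map, obtainable as a continuum limit of the discrete geometric RSK of \cite{COSZ11}, has the defining property that for $1\leq n\leq N$,
\begin{equation*}
\prod_{i=1}^n z_{N,i}(\tau) \;=\; \Zsd^{N}_{n}(\tau),
\end{equation*}
so that $z_{N,n}(\tau)=\exp(\Fsd^{N}_{n}(\tau))$. Thus it suffices to identify the joint law of $(\ln z_{N,1}(\tau),\ldots,\ln z_{N,N}(\tau))$ with the Whittaker measure density (\ref{WMdef}).

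Next I would compute the joint density of the full pattern $\{z_{k,i}(\tau)\}$ at time $\tau$ under the Brownian dynamics with drifts $a=(a_1,\ldots,a_N)$. The key input is a Markov intertwining: pushed through the geometric RSK map, the drifted $N$-dimensional Brownian motion becomes a Markov process on triangular patterns whose top-row projection is itself a diffusion with generator the quantum $\mathfrak{gl}_N$-Toda Hamiltonian conjugated by $\psi_{\I a}$. Such intertwinings are cleanest to establish first in the discrete geometric setting (as combinatorial identities on pattern weights against local moves of the geometric RSK insertion) and then carried to the continuum by invariance principles. The joint density at time $\tau$ then takes the form $C(\tau)\exp(\mathcal{F}_{\I a}(x))\,\mu_\tau(x_{N,1},\ldots,x_{N,N})\,\prod_{k,i}dx_{k,i}$, where $x_{k,i}:=\ln z_{k,i}$ and $\mu_\tau$ encodes the top-row transition semigroup.

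Finally I would marginalize onto the top row: integrating $\exp(\mathcal{F}_{\I a}(x))$ over the $N(N-1)/2$ lower-row coordinates at fixed top row $(T_1,\ldots,T_N)$ produces exactly $\psi_{\I a}(T_1,\ldots,T_N)$ by the Givental formula quoted in the preamble. The time-dependent prefactor combines with $\mu_\tau$ and decomposes spectrally using completeness of Whittaker functions against the Sklyanin measure (Proposition~4.1.17 of~\cite{BC11}) to produce the factor $e^{-\tau\sum a_j^2/2}\theta_\tau(T)$, matching (\ref{WMdef}) (the sign flip $a\mapsto -a$ between drift and Whittaker spectral parameter is the standard convention that aligns path energies with Toda eigenfunctions). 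The main obstacle in this plan is the intertwining step: verifying that the tropical RSK projection of drifted Brownian motion is Markov on triangular patterns with the Whittaker-kernel transition probabilities. This is the positive-temperature analog of Baryshnikov's identification of $M^N(\tau)$ with the top eigenvalue of Dyson Brownian motion, and its verification ultimately reduces to the eigenfunction equation for $\psi_{\I a}$ under the $\mathfrak{gl}_N$-Toda Hamiltonian (Kostant, Etingof) together with a nontrivial continuum limit from the discrete geometric RSK dynamics.
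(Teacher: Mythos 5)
The paper does not prove this result: it quotes it as Theorem~\ref{OConthm}, attributing it to O'Connell's work~\cite{OCon09} (via continuous tropical RSK and Markov intertwinings, with~\cite{COSZ11} as the discrete analogue) and leaving the proof to that reference. Your sketch accurately reconstructs the strategy of the cited proof — geometric RSK pattern with $z_{N,n}(\tau)=\exp(\Fsd^{N}_{n}(\tau))$, Markov intertwining to a Toda-type diffusion on the top row, Givental's integral formula to marginalize the lower rows, and the Whittaker--Sklyanin spectral decomposition of the heat kernel — so it is essentially the same approach as the one the paper relies on.
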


More is shown in~\cite{OCon09} including the fact that the collection of free energies evolves as a Markov diffusion with infinitesimal generator given in terms of the quantum $\mathfrak{gl}_N$ Toda lattice Hamiltonian; as well as the fact that the entire triangular array $\Fsd(\tau)$ is distributed according to the Whittaker process which we briefly mentioned earlier.

\begin{remarks}\label{remsym}
It is useful to note the following symmetry: The transformation \mbox{$T_{i}\leftrightarrow -T_{N+1-i}$} maps $\WM{-a_1,\ldots, -a_N;\tau}$ to $\WM{a_1,\ldots, a_N;\tau}$ (the sign of $a_j$'s changes). This easily follows from the definition of $\WM{a_1,\ldots, a_N;\tau}$.
\end{remarks}

The result below is similar to the formula found in Theorem~4.1.40 of~\cite{BC11} except that the contours have been modified. This technical change requires us to go through the proof of this theorem and perform a number of estimates which are harder than in~\cite{BC11}, due to the unboundedness of the contours.

\begin{theorem}\label{NeilPolymerFredDetThm}
Fix $N\geq 1$, $\tau>0$ and a vector $a=(a_1,\ldots, a_N)\in \R^N$ and $\alpha>\max\{a_i\}$. Then for all $u\in \C$ with positive real part
\begin{equation*}
\left\langle e^{-u e^{-T_{N}}}  \right\rangle_{\WM{a_1,\ldots, a_N;\tau}} = \det(\Id+ K_{u})_{L^2(\Cv{\alpha,\varphi})}
\end{equation*}
where the operator $K_u$ is defined in terms of its integral kernel
\begin{equation}\label{kvvprimeC}
K_{u}(v,v') = \frac{1}{2\pi \I}\int_{\Cs{v}}ds \Gamma(-s)\Gamma(1+s) \prod_{m=1}^{N}\frac{\Gamma(v-a_m)}{\Gamma(s+v-a_m)} \frac{ u^s e^{v\tau s+\tau s^2/2}}{v+s-v'}.
\end{equation}
The paths $\Cv{\alpha,\varphi}$ and $\Cs{v}$ are as in Definition~\ref{CaCsdef}, where $\varphi$ is any angle in $(0,\pi/4)$.
\end{theorem}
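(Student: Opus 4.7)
The plan is to follow the Macdonald-processes framework of \cite{BC11} exactly as in the proof of Theorem~4.1.40 there, but to make the crucial contour choice early, so that after the $q\to 1$ Whittaker degeneration the contours remain the unbounded ones $\Cv{\alpha,\varphi}$ and $\Cs{v}$. Specifically, I would realize $\WM{a_1,\ldots,a_N;\tau}$ as the $q=e^{-\epsilon}\to 1$ scaling limit of a Plancherel-specialized Macdonald measure with parameters tuned so that $\lambda_i \sim \epsilon^{-2}\tau/2 - \epsilon^{-1}T_i$ and $a_i$ scaled accordingly. Under this scaling, the $q$-Laplace transform $(u\,q^{\lambda_N+1};q)_\infty^{-1}$ of the rightmost coordinate degenerates to $e^{-ue^{-T_N}}$, so the identity to prove is the $q\to 1$ limit of a Macdonald-level Fredholm determinant identity.

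The Macdonald-level identity itself is obtained, as in \cite{BC11}, by computing $q$-moments of $q^{\lambda_N+N}$ via Macdonald difference operators, packaging them into a Mellin--Barnes style generating series, and recognizing the result as a Fredholm determinant on a contour that encircles the $v$-singularities $\{a_i q^{k}\}_{k\geq 0}$. The main structural change from \cite{BC11} is that, instead of choosing this contour as a small closed loop around $\{a_1,\ldots,a_N\}$, I would take it to be a wedge-shaped contour whose image under the logarithmic rescaling $v\mapsto e^{\epsilon v}$ is precisely $\Cv{\alpha,\varphi}$, and likewise take the $s$-contour to extend to infinity along vertical lines so that its limit is $\Cs{v}$. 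The cost of this choice is that the pre-limit integrand is no longer automatically bounded, so termwise estimates must be supplied by hand.

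The hard part will be justifying that the $q\to 1$ limit commutes with the Fredholm expansion and the contour integrals along the unbounded parts of $\Cv{\alpha,\varphi}$ and $\Cs{v}$. For this one needs decay estimates that are uniform in $\epsilon$. The three sources of decay I expect to exploit are: (i) the Gaussian factor $e^{\tau s^2/2}$ (or its $q$-theta analogue) on the vertical parts $\Cs{v,\vert}$, which dominates all polynomial growth as $|\Im s|\to \infty$; (ii) Stirling's formula applied to the ratio $\prod_m \Gamma(v-a_m)/\Gamma(s+v-a_m)$, which gives power-law decay in $|v|$ uniformly over $s$ on compact sets; and (iii) the exponential decay of $\Gamma(-s)\Gamma(1+s)=-\pi/\sin(\pi s)$ along vertical lines. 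Converting these into the corresponding $q$-Gamma and $q$-theta bounds at the pre-limit level, uniformly for $\epsilon$ in a neighbourhood of $0$, is the technical bulk of the argument and is where I would expect to spend most of Section~\ref{MacSec}.

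Once the uniform estimates are in place, the remainder is routine: Hadamard's inequality together with the tail bounds yields absolute convergence of the Fredholm series, dominated convergence then transfers the limit inside, and the pointwise degenerations ($q$-Gamma $\to$ Gamma, $q$-Pochhammer $\to$ exponential, $q$-sine $\to$ sine) identify the limiting kernel with (\ref{kvvprimeC}). Combined with the convergence of the Plancherel-specialized Macdonald measure to $\WM{a_1,\ldots,a_N;\tau}$ (already established in \cite{BC11}), this produces the desired equality $\langle e^{-ue^{-T_N}}\rangle_{\WM{a_1,\ldots,a_N;\tau}}=\det(\Id+K_u)_{L^2(\Cv{\alpha,\varphi})}$. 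Independence of $\alpha>\max_i a_i$ and $\varphi\in(0,\pi/4)$ follows at the end by a contour deformation argument, since the estimates show no residues are crossed as $\alpha$ and $\varphi$ vary in their allowed ranges.
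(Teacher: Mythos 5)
Your high-level strategy is the same as the paper's: modify the contours early in the proof of Theorem~5.2.10 of \cite{BC11} so that they become unbounded, prove a Fredholm determinant identity at the $q$-Whittaker level on those contours (the paper's Theorem~\ref{PlancherelfredThm}), and then pass to the $q\to 1$ Whittaker limit with uniform-in-$\e$ estimates justifying the interchange. However, two of the technical claims in your sketch would not survive a rigorous execution.

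First, your decay source (ii) misidentifies where the decay in $|v|$ actually comes from, and at the pre-limit level it is simply wrong: the $q$-Pochhammer ratio $\prod_m (q^s w/\tilde a_m;q)_\infty/(w/\tilde a_m;q)_\infty$ does not decay --- it \emph{grows} super-polynomially in $|w|$ (the paper's bound is $c_1|w|^{Nc_q'\ln|w|}$). The $q$-Gamma function has no Stirling-type power-law decay regime analogous to that of $\Gamma$, so the analogy you appeal to fails precisely where you need it. The decay that kills both this growth and the length of the unbounded $w$-contour comes entirely from the factor $\exp(\gamma w(q^s-1))$; making that factor give $\exp(-\gamma c_\varphi|w|)$ \emph{uniformly over $s$} is exactly what forces the delicate $w$-dependent choice of $\CsPre{w}$ in Definition~\ref{CwPredef} (so that $\arg(w(q^s-1))$ stays in $(\pi/2 + b, 3\pi/2 - b)$). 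Your phrase ``vertical lines'' for the $s$-contour is therefore not enough: $R$ and $d$ must vary with $|w|$, roughly as $R\approx\ln_q|w|$ and $d\approx|w|^{-1}$.

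Second, ``dominated convergence then transfers the limit inside'' elides the fact that the pre-limit $v$-contour $\CvEps{\alpha,\varphi}$ is itself $\e$-dependent and only converges to $\Cv{\alpha,\varphi}$ locally uniformly; you cannot dominate on a single fixed measure space. The paper deals with this with a four-stage argument (Steps 2a--2d): deform $\CvEps{\alpha,\varphi}$ to a contour that coincides with $\Cv{\alpha,\varphi}$ inside a ball of radius $r$, show the Fredholm determinants restricted to $|v|<r$ differ from the full ones by at most $\eta$ uniformly in $\e$ (Propositions~\ref{compactifyprop} and~\ref{postasymlemma}), pass to the limit on the fixed finite contour (Proposition~\ref{finiteSprop}), then send $r\to\infty$. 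You should also note that the $q$-level identity is first established only for $|\zeta|$ small (so the generating series of $q$-moments converges) and is then extended to all $\zeta\notin\Rplus$ by proving analyticity of both sides in $\zeta$; this extra analytic-continuation step (Step~3 of Section~\ref{PlancherelfredThmproof}) is absent from your sketch and is needed because the relevant $\zeta$ scaling $\zeta=-\e^N e^{\tau\e^{-1}}u$ has $|\zeta|\to\infty$ as $\e\to 0$.
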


This theorem is proved over the course of the rest of this section, with the more technical aspects of the proof relegated to the later Section~\ref{MacSec}. Before going into this, let us note that in view Theorem~\ref{OConthm}, the proof of Theorem~\ref{OConYorFluctThm} is now immediate.

\begin{proof}[Proof of Theorem~\ref{OConYorFluctThm}]
By appealing to Theorem~\ref{OConthm} and Remark~\ref{remsym} we relate $\Zsd^{N}(\tau)$ to $T_{N}$ as
\begin{equation*}
\left\langle e^{-u e^{-T_{N}}}  \right\rangle_{\WM{a_1,\ldots, a_N;\tau}} = \EE\left[ e^{-u \Zsd^{N}(\tau)}\right].
\end{equation*}
Applying Theorem~\ref{NeilPolymerFredDetThm} gives the claimed Laplace transform and completes the proof.
\end{proof}

\subsection{Macdonald measures}
We now turn to the proof of Theorem~\ref{NeilPolymerFredDetThm}. Before giving the proof (as we do in Section~\ref{Neilpolyproof}) we need to recall and develop a variation on the solvability framework of Macdonald measures \cite{BC11}.

The Macdonald measure is defined as (following the notation of~\cite{BC11} where it is introduced and studied)
\begin{equation*}
\MM(\tilde a_1,\ldots,\tilde a_N;\rho)(\lambda)= \frac{P_{\lambda}(\tilde a_1,\ldots,\tilde a_N) Q_{\lambda}(\rho)} {\Pi(\tilde a_1,\ldots,\tilde a_N;\rho)}\,.
\end{equation*}
Here $\lambda$ is a partition of length $\ell(\lambda)\leq N$ (i.e., $\lambda=(\lambda_1\geq \lambda_2\geq \cdots \lambda_N\geq 0)$) and $P_{\lambda}$ and $Q_\lambda$ are Macdonald symmetric functions which are defined with respect to two additional parameters\footnote{The parameter $t$ does not correspond to time in any of the processes we have considered. For time we have either used $\tau$ in the semi-discrete polymer or $T$ in the CDRP} $q,t\in [0,1)$. The notation $P_{\lambda}(\tilde a_1,\ldots,\tilde a_N)$ means to specialize the symmetric function to an $N$ variable symmetric polynomial, and then evaluate those $N$ variables at the $\tilde a_i$'s\footnote{We use tildes presently since after a limit transition of Macdonald measures to Whittaker measures the $\tilde a_i$'s will become $a_i$'s}. The notation $Q_{\lambda}(\rho)$ means to apply a {\it specialization} $\rho$ to $Q_{\lambda}$. A specialization of a symmetric function $f$ in the space $\Sym$ of symmetric functions of an infinite number of indeterminants is an algebra homomorphism $\rho:\Sym\to\C$, and its application to $f$ is written as $f(\rho)$. We will focus here on a single class of {\it Plancherel} specializations $\rho$ which are defined with respect to a parameter $\gamma>0$ via the relation
\begin{equation}\label{tag1}
\sum_{n\ge 0} g_n(\rho) u^n= \exp(\gamma u) =: \Pi(u;\rho).
\end{equation}
Here $u$ is a formal variable and $g_n=Q_{(n)}$ is the $(q,t)$-analog of the complete homogeneous symmetric function $h_n$. The above generating function in $u$ is denoted $\Pi(u;\rho)$. Since $g_n$ forms a $\Q[q,t]$ algebraic basis of $\Sym$, this uniquely defines the specialization $\rho$. The Plancherel specialization has the property that $Q_{\lambda}(\rho)\geq 0$ for all partitions $\lambda$.

On account of the Cauchy identity for Macdonald polynomials
\begin{equation*}
\sum_{\la:\ell(\la)\le N} P_{\lambda}(\tilde a_1,\ldots,\tilde a_N) Q_{\lambda}(\rho) =: \Pi(\tilde a_1,\ldots,\tilde a_N;\rho) = \Pi(\tilde a_1;\rho)\cdots \Pi(\tilde a_N;\rho).
\end{equation*}

Our ability to compute expectations of observables for Macdonald measures comes from the following idea: Assume we have a linear operator $\mathcal{D}$ on the space of functions in $N$ variables whose restriction to the space of symmetric polynomials diagonalizes in the basis of Macdonald polynomials: $\mathcal{D} P_\la=d_\lambda P_\la$ for any partition $\la$ with $\ell(\la)\le N$. Then we can apply $\mathcal{D}$ to both sides of the identity
\begin{equation*}
\sum_{\la:\ell(\la)\le N} P_{\lambda}(\tilde a_1,\dots,\tilde a_N) Q_{\lambda}(\rho)=\Pi(\tilde a_1,\dots,\tilde a_N;\rho).
\end{equation*}
Dividing the result by $\Pi(\tilde a_1,\dots,\tilde a_N;\rho)$ we obtain
\begin{equation}\label{tag8}
\langle d_\lambda \rangle_{\MM(\tilde a_1,\dots,\tilde a_N;\rho)}=\frac{\mathcal{D}\Pi(\tilde a_1,\dots,\tilde a_N;\rho)} {\Pi(\tilde a_1,\dots,\tilde a_N;\rho)}\,,
\end{equation}
where $\langle \cdot\rangle_{\MM(\tilde a_1,\dots,\tilde a_N;\rho)}$ represents averaging $\cdot$ over the specified Macdonald measure.
If we apply $\mathcal{D}$ several times we obtain
\begin{equation*}
\langle d_\lambda^k \rangle_{\MM(\tilde a_1,\dots,\tilde a_N;\rho)}=\frac{\mathcal{D}^k \Pi(\tilde a_1,\dots,\tilde a_N;\rho)} {\Pi(\tilde a_1,\dots,\tilde a_N;\rho)}\,.
\end{equation*}
If we have several possibilities for $\mathcal{D}$ we can obtain formulas for averages of the observables equal to products of powers of the corresponding eigenvalues. One of the remarkable features of Macdonald polynomials is that there exists a large family of such operators for which they form the eigenbasis (and this fact can be used to define the polynomials). These are the Macdonald difference operators. We need only consider the first of these operators $D_N^1$ which acts on functions $f$ of $N$ independent variables as
\begin{equation*}
(D_N^1  f)(x_1,\ldots, x_N) = \sum_{i=1}^{N} f(x_1,\ldots,x_{i-1},qx_{i},x_{i+1},\ldots,x_N) \prod_{j\neq i} \frac{ tx_i-x_j}{x_i-x_j}.
\end{equation*}

This difference operator takes symmetric polynomials to symmetric polynomials and acts diagonally on Macdonald polynomials.
\begin{proposition}[VI(4.15) of~\cite{Mac79}]\label{prop5}
For any partition $\lambda=(\lambda_1\geq \lambda_2\geq \cdots)$ with $\lambda_m=0$ for $m>N$
\begin{equation*}
D_N^1 P_\lambda(x_1,\cdots,x_N)=\left(q^{\la_1}t^{N-1}+q^{\la_2}t^{N-2}+\cdots+q^{\la_N}\right) P_\lambda(x_1,\cdots,x_N).
\end{equation*}
\end{proposition}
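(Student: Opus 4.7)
My plan is to combine the standard characterization of Macdonald polynomials with the triangularity of $D_N^1$. Recall that $P_\lambda$ is uniquely determined by being triangular in the monomial basis, $P_\lambda = m_\lambda + \sum_{\mu<\lambda} c_{\lambda\mu}m_\mu$ (``$<$'' denoting the dominance order on partitions of $|\lambda|$), and orthogonal to each $m_\mu$ with $\mu<\lambda$ under the Macdonald inner product $\langle\cdot,\cdot\rangle_{q,t}$. It therefore suffices to establish: (a) $D_N^1$ preserves symmetric polynomials; (b) in the monomial basis, $D_N^1 m_\lambda = e_\lambda m_\lambda + \sum_{\mu<\lambda} d_{\lambda\mu}m_\mu$, with $e_\lambda := q^{\lambda_1}t^{N-1}+\cdots+q^{\lambda_N}$; and (c) $D_N^1$ is self-adjoint for $\langle\cdot,\cdot\rangle_{q,t}$. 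Granted these, the conclusion follows by induction on the dominance order: assuming $D_N^1 P_\mu = e_\mu P_\mu$ for every $\mu<\lambda$, the element $D_N^1 P_\lambda - e_\lambda P_\lambda$ lies in $\operatorname{span}\{m_\mu : \mu<\lambda\} = \operatorname{span}\{P_\mu : \mu<\lambda\}$ by (a), (b), and the inductive hypothesis, and pairing with any $P_\nu$, $\nu<\lambda$, and using (c) together with the orthogonality $\langle P_\lambda, P_\nu\rangle_{q,t} = 0$ forces all coefficients in that expansion to vanish.

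For (a), I would verify that the apparent simple poles at each diagonal $x_i = x_k$ cancel between the $i$-th and $k$-th summands in the definition of $D_N^1$: because $f$ is symmetric under the swap of its $i$-th and $k$-th arguments, the residues at $x_i = x_k$ contributed by the two summands are equal in magnitude and opposite in sign, so $D_N^1 f$ is genuinely a polynomial; $S_N$-symmetry is manifest from the summation. For (b), I would expand $\prod_{j\ne i}(tx_i - x_j)/(x_i - x_j)$ and apply $D_N^1$ to $x^\lambda$ for a strictly decreasing $\lambda$: the leading contribution from the $i$-th summand is $q^{\lambda_i}t^{N-i}x^\lambda$ (the exponent $N-i$ coming from the factors with $j>i$ contributing $t$, while the $i-1$ factors with $j<i$ contribute two compensating sign flips), whereas every remaining term produces a monomial strictly dominated by $\lambda$. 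Summing over $i$ and symmetrizing yields $e_\lambda$ as the coefficient of $m_\lambda$; the case of $\lambda$ with coincident parts then follows by continuity of the coefficients in $q,t$ or by direct combinatorial bookkeeping.

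The main obstacle is step (b), specifically the bookkeeping when $\lambda$ has repeated parts, where several summands can contribute to the same symmetric monomial and careful accounting is needed to confirm that all cross-terms cancel exactly so as to leave $e_\lambda$. The self-adjointness (c) is standard: one writes $\langle f,g\rangle_{q,t}$ as a torus integral against the Macdonald weight $\prod_{i\ne j}(x_i/x_j;q)_\infty/(tx_i/x_j;q)_\infty$ and performs, in each summand of $\langle D_N^1 f,g\rangle_{q,t}$, a change of variables $x_i\mapsto q^{-1}x_i$; the weight and Haar measure transform so as to shift the $q$-translation from $f$ onto $g$, which yields $\langle D_N^1 f,g\rangle_{q,t} = \langle f, D_N^1 g\rangle_{q,t}$. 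Given (a), (b), (c), the inductive argument in the first paragraph concludes the proof of Proposition~\ref{prop5}.
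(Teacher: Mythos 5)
The paper gives no proof of this proposition --- it is a direct citation of Macdonald's book (Chapter~VI, equation~(4.15)) --- and your argument is a correct reconstruction of the standard proof found there, resting on exactly the ingredients you list: $D_N^1$ maps symmetric polynomials to symmetric polynomials, acts upper-triangularly on the monomial basis in dominance order with diagonal entry $e_\lambda:=\sum_iq^{\lambda_i}t^{N-i}$, and is self-adjoint for the Macdonald scalar product. Two small comments. First, the closing induction on dominance is unnecessary: once you know that $D_N^1P_\lambda-e_\lambda P_\lambda\in\operatorname{span}\{P_\mu:\mu<\lambda\}$ and that, for every $\nu<\lambda$, $\langle D_N^1P_\lambda,P_\nu\rangle=\langle P_\lambda,D_N^1P_\nu\rangle=0$ (using only that $D_N^1P_\nu\in\operatorname{span}\{m_\mu:\mu\le\nu\}$, which is orthogonal to $P_\lambda$ --- no inductive hypothesis on the eigenvalue of $P_\nu$ is needed), the difference lies in and is orthogonal to that span, hence vanishes. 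Second, the leading-coefficient bookkeeping in step~(b) is, as you acknowledge, the genuinely nontrivial point: as written, applying $\prod_{j\ne i}(tx_i-x_j)/(x_i-x_j)$ to a single monomial produces a rational function rather than a polynomial, so one cannot literally isolate a ``leading contribution'' monomial by monomial. The clean route, which is Macdonald's, is to multiply $D_N^1$ through by the Vandermonde determinant, after which each summand becomes polynomial and one reads off the coefficient of the top antisymmetric monomial; this yields $e_\lambda$ uniformly, including when $\lambda$ has repeated parts, without the continuity fallback you mention.
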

Although the operator $D_N^1$ does not look particularly simple, it can be represented by contour integrals by properly encoding the shifts in terms of residues.

\begin{proposition}[Proposition~2.2.13 of~\cite{BC11}]\label{prop8}
Fix $k\ge 1$. Assume that $F(u_1,\dots,u_N)=f(u_1)\cdots f(u_N)$. Take $x_1,\dots,x_N >0$ and assume that $f(u)$ holomorphic and nonzero in a complex neighborhood of an interval in $\R$ that contains $\{q^ix_j\mid i=0,\ldots,k,j=1\ldots,N\}$.
Then
\begin{equation*}
\frac{\bigl({(D_n^1)}^k F\bigr)(x)}{F(x)}=\frac {(t-1)^{-k}}{(2\pi \I)^k} \oint\! \cdots\! \oint \! \! \prod_{1\le a<b\le k} \! \frac{(tz_a-qz_b)(z_a-z_b)}{(z_a-qz_b)(tz_a-z_b)} \prod_{c=1}^k\! \left(\prod_{m=1}^N \frac{(tz_c-x_m)}{(z_c-x_m)}\right)\!
\frac{f(qz_c)}{f(z_c)}\frac{dz_c}{z_c},
\end{equation*}
where the $z_c$-contour contains $\{qz_{c+1},\ldots,qz_k,x_1,\ldots,x_N\}$ and no other singularities for \mbox{$c=1,\dots,k$}.
\end{proposition}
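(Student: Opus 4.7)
The plan is to prove the identity by induction on $k$, with each inductive step reduced to a residue computation.

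For the base case $k=1$, the right-hand side is a single integral $\frac{1}{(t-1)(2\pi\I)}\oint\prod_{m=1}^N\frac{tz-x_m}{z-x_m}\cdot\frac{f(qz)}{f(z)}\,\frac{dz}{z}$ around $\{x_1,\dots,x_N\}$. By the holomorphy and non-vanishing of $f$ near $\{q^ix_j\}$ and by $x_m>0$, the only enclosed singularities are the simple poles at $z=x_i$, where $\mathrm{Res}_{z=x_i}\frac{tz-x_i}{z-x_i}=(t-1)x_i$. Summing residues gives $\sum_i\prod_{j\neq i}\frac{tx_i-x_j}{x_i-x_j}\cdot\frac{f(qx_i)}{f(x_i)}$, which by the definition of $D_N^1$ and the factored form of $F$ equals $(D_N^1 F)(x)/F(x)$.

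For the inductive step, I would assume the formula at level $k-1$ and use $(D_N^1)^k=D_N^1(D_N^1)^{k-1}$ together with the definition of $D_N^1$ to obtain
\[
\frac{(D_N^1)^k F(x)}{F(x)}=\sum_{i=1}^N\prod_{j\neq i}\frac{tx_i-x_j}{x_i-x_j}\cdot\frac{f(qx_i)}{f(x_i)}\cdot\frac{((D_N^1)^{k-1}F)(x|_{x_i\to qx_i})}{F(x|_{x_i\to qx_i})}.
\]
Substituting the inductive formula into the last factor turns each summand into a $(k-1)$-fold contour integral in $z_2,\dots,z_k$ whose $x_i$-dependence has been shifted to $qx_i$. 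The goal is then to recognize the sum over $i$ of these shifted $(k-1)$-fold integrals as the $z_1$-residue expansion of the target $k$-fold integral, with $z_1$ placed on a contour enclosing $\{qz_2,\dots,qz_k,x_1,\dots,x_N\}$. Inside that contour the integrand has poles at $z_1=x_i$ and at $z_1=qz_c$ for $c\geq 2$, so the $z_1$-integral splits accordingly.

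The residues at $z_1=qz_c$ encode the ``coalescence'' contribution in which two successive applications of $D_N^1$ act on the same coordinate: a direct computation shows that the $z_1=qz_c$ residue produces the factors $\prod_m(tqz_c-x_m)/(qz_c-x_m)$ and $f(q^2z_c)/f(qz_c)$ which, after the subsequent $z_c$-residue at $x_{i_c}$, reproduce exactly the weight of a second $D_N^1$ application at the already-shifted coordinate $qx_{i_c}$. The residues at $z_1=x_i$ instead carry the cross-factor $\prod_{b\geq 2}\frac{(tx_i-qz_b)(x_i-z_b)}{(x_i-qz_b)(tx_i-z_b)}$ evaluated at $z_1=x_i$, and I would argue that together with an $i$-dependent deformation of each $z_c$-contour (to enclose $qx_i$ in place of $x_i$) they reproduce the shifted $(k-1)$-fold integrand required by the inductive hypothesis.

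The main obstacle will be the bookkeeping for the last matching: the shifts $x_i\to qx_i$ in the inductive hypothesis alter the $z_c$-contour in an $i$-dependent way, while the target $k$-fold integral uses a uniform $z_c$-contour. Reconciling this requires either picking up boundary residues during the deformation --- which must then cancel against the $z_1=qz_c$ contributions --- or establishing directly an algebraic identity of rational functions in the $x$'s and $z$'s after summing over all combinatorial configurations of residue assignments. This identity is the technical heart of the induction; once it is verified, the two sides of the claimed formula match term by term in the combinatorial expansion over residue assignments, completing the induction.
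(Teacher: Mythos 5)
Your overall strategy --- induction on $k$, with the base case verified by a single residue computation and the inductive step by expanding one variable in residues --- is indeed the shape of the proof in the cited reference, and your base case $k=1$ is correct. But you pick the wrong variable to integrate first, and that choice is exactly what produces the unresolved ``technical heart'' you flag at the end.

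You expand in residues of $z_1$, whose contour is the \emph{largest} (it encloses $qz_2,\dots,qz_k$ as well as $x_1,\dots,x_N$). This forces you to account simultaneously for residues at $z_1=x_i$ and at $z_1=qz_c$, and the residue at $z_1=x_i$ does \emph{not} equal the $i$-th term of the inductive expansion $\sum_i\prod_{j\neq i}\frac{tx_i-x_j}{x_i-x_j}\cdot\frac{f(qx_i)}{f(x_i)}\cdot\frac{((D_N^1)^{k-1}F)(x|_{x_i\to qx_i})}{F(x|_{x_i\to qx_i})}$. Already for $N=1$, $k=2$ this fails: after taking the $z_1=x$ residue, the surviving cross-factor $\frac{(tx-qz_2)(x-z_2)}{(x-qz_2)(tx-z_2)}$ multiplies $\frac{tz_2-x}{z_2-x}$ and cancels the pole at $z_2=x$, so the $z_2$-integral vanishes; the inductive term, by contrast, equals $f(q^2x)/f(x)\neq 0$. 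The entire answer there comes from the $z_1=qz_2$ residue. So the term-by-term correspondence you posit is simply absent, and ``reconciling'' it would indeed require an algebraic identity over residue configurations which you do not establish.

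The fix is to integrate the \emph{innermost} variable $z_k$ first, which is what the cited proof does. Its contour encloses only $\{x_1,\dots,x_N\}$, so the only residues are at $z_k=x_j$. The residue of $\prod_m\frac{tz_k-x_m}{z_k-x_m}\cdot\frac{f(qz_k)}{f(z_k)}\cdot\frac{dz_k}{z_k}$ there is $(t-1)\prod_{m\neq j}\frac{tx_j-x_m}{x_j-x_m}\cdot\frac{f(qx_j)}{f(x_j)}$, which cancels one power of $(t-1)^{-1}$. For each $a<k$ the surviving cross-factor combines with $\frac{tz_a-x_j}{z_a-x_j}$ via the elementary identity
\begin{equation*}
\frac{(tz_a-qx_j)(z_a-x_j)}{(z_a-qx_j)(tz_a-x_j)}\cdot\frac{tz_a-x_j}{z_a-x_j}=\frac{tz_a-qx_j}{z_a-qx_j},
\end{equation*}
so the remaining $(k-1)$-fold integrand is \emph{literally} that of the claimed formula at level $k-1$ with $x_j$ replaced by $qx_j$; the $z_a$-contours already enclose $qx_j$ (they enclosed $qz_k$), and the now-spurious pole at $z_a=x_j$ has disappeared, so the contours of the inductive hypothesis are matched without deformation. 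Summing over $j$ recovers the inductive expansion exactly, with no boundary residues and no auxiliary identity. Rewriting your induction to open the $z_k$-integral first closes the argument.
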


Observe that $\Pi(\tilde a_1,\dots,\tilde a_N;\rho)=\prod_{i=1}^N \Pi(\tilde a_i;\rho)$. Hence, Proposition~\ref{prop8} is suitable for evaluating the right-hand side of equation (\ref{tag8}) and hence computing the associated observable of the Macdonald process.

\subsection{The emergence of a Fredholm determinant}\label{emergfreddet}

Macdonald polynomials in $N$ variables with $t=0$ are also known of as {\it $q$-deformed $\mathfrak{gl}_{N}$ Whittaker functions}~\cite{GLO11}. We now denote the Macdonald measure as $\MM_{t=0}(\tilde a_1,\dots,\tilde a_N;\rho)$ and refer to these as {\it $q$-Whittaker measures}.

The partition function for the corresponding $q$-Whittaker measure $\MM_{t=0}(\tilde a_1,\dots,\tilde a_N;\rho)$ when $\rho$ is Plancherel simplifies as
\begin{equation*}
\sum_{\lambda\in \Y(N)} P_\la(\tilde a_1,\dots,\tilde a_N)Q_\la(\rho)=\Pi(\tilde a_1,\dots,\tilde a_N;\rho)=\prod_{j=1}^N \exp(\gamma \tilde a_j).
\end{equation*}

\subsubsection{Formulas for q-moments}
Let us take the limit $t\to 0$ of Proposition~\ref{prop8}. Write
\begin{equation*}
\mu_k=\left\langle q^{k\lambda_N}\right\rangle_{\MM_{t=0}(\tilde a_1,\dots,\tilde a_N;\rho)}.
\end{equation*}
Then
\begin{equation}\label{mukdef}
\mu_k= \frac{(-1)^k q^{\frac{k(k-1)}{2}}}{(2\pi \I)^k} \oint \cdots \oint \prod_{1\leq A<B\leq k} \frac{z_A-z_B}{z_A-qz_B} \frac{f(z_1)\cdots f(z_k)}{z_1\cdots z_k} dz_1\cdots dz_k,
\end{equation}
where $f(z) = \prod_{i=1}^{N}\frac{\tilde a_i}{\tilde a_i-z}\exp\{(q-1)\gamma z\}$ and where the $z_j$-contours contain $\{q z_{j+1},\ldots, q z_k\}$ as well as $\{\tilde a_1,\ldots, \tilde a_N\}$ but not 0. For example when $k=2$ and all $\tilde a_i\equiv 1$, $z_2$ can be integrated along a small contours around 1, and $z_1$ is integrated around a slightly larger contour which includes 1 and the image of $q$ times the $z_2$ contour.

We have encountered the point at which we diverge from~\cite{BC11}. In particular, we will now deform our contours from bounded curves to infinite contours (as justified by Cauchy's theorem). This may not seem so significant presently, but in the final formula this frees us up to deform contours to steepest descent contours and hence to rigorously prove the various asymptotics desired for our limit theorems. However, the unboundedness of contours introduces new complications which we must address.

First, however, let us define two important unbounded contours we will soon encounter.
\begin{definition}\label{CwPredef}
For an illustration, see Figure~\ref{complexcontours}. For any $\tilde \alpha>0$ and $\varphi\in (0,\pi/4)$ define $\CwPre{\tilde \alpha,\varphi}=\{\tilde \alpha+e^{-\I \varphi\sign(y)}y,y\in \R\}$ (note that it is oriented so as to have decreasing imaginary part).
For $w\in \CwPre{\tilde \alpha,\varphi}$ for some choice of parameters, define a contour $\CsPre{w}$ in the same way as $\Cs{w}$ of Definition~\ref{CaCsdef} but with $R$ and $d$ taken such that the following holds: For all $s\in\CsPre{w}$ (i) if $b=\frac{\pi}{4}-\frac{\varphi}{2}$, then $\arg(w(q^s-1))\in (\pi/2+b,3\pi/2-b)$; (ii) $q^s w$ lies to the left of $\CwPre{\tilde \alpha,\varphi}$.
\end{definition}

\begin{remarks}\label{contoursrem}
Let us check that the contours $\CsPre{w}$ in above definition actually exist. Fix $\varphi\in (0,\pi/4)$ and fix a contour $\CwPre{\tilde \alpha,\varphi}$ as above. For $w\in \CwPre{\tilde \alpha,\varphi}$ we must show that there exists $R$ and $d$ as desired to satisfy (i) and (ii). The existence of these contains is clear for $|w|$ small, so let us consider when $|w|$ is sufficiently large. Then the argument of $w$ is roughly $\varphi$ (actually, $\pm \varphi$ but let us focus on $w$ with positive imaginary part). Then, for $R>R_0$ large enough and $d<d_0$ small enough (though positive) it follows from basic geometry that the argument of $w(q^s-1)$ can be bounded in $(\pi/2+b,3\pi/2-b)$. In order that $q^s w$ avoid $\CwPre{\tilde \alpha,\varphi}$. it suffices that $d$ be less than a constant times $|w|^{-1}$ and that $|w|q^R$ be small (but still of order~1). This implies that $R$ can be chosen roughly as $-\ln_q|w|$.
\end{remarks}

\begin{figure}
\begin{center}
\psfrag{a}[cb]{(a)}
\psfrag{b}[cb]{(b)}
\psfrag{c}[cb]{(c)}
\psfrag{dd}[cb]{(d)}
\psfrag{0}[cb]{$0$}
\psfrag{v}[cb]{$w$}
\psfrag{alpha}[cb]{$\tilde \alpha$}
\psfrag{phi}[rb]{$\varphi$}
\psfrag{qsw}[rb]{$q^s w, s\in \CsPre{w}$}
\psfrag{q12w}[lb]{$q^{1/2} w$}
\psfrag{Cv}[lb]{$\CwPre{\tilde \alpha,\varphi}$}
\psfrag{Cs}[lb]{$\CsPre{w}$}
\psfrag{R}[cb]{$R$}
\psfrag{d}[lb]{$d$}
\psfrag{qR}[cb]{$q^R$}
\psfrag{q12}[lb]{$q^{1/2}$}
\psfrag{2d}[lb]{$2d$}
\psfrag{12}[cb]{$1/2$}
\psfrag{1}[cb]{$1$}
\psfrag{mb}[cb]{$<b=\tfrac{\pi}{4}-\tfrac{\varphi}{2}$}
\includegraphics[height=10cm]{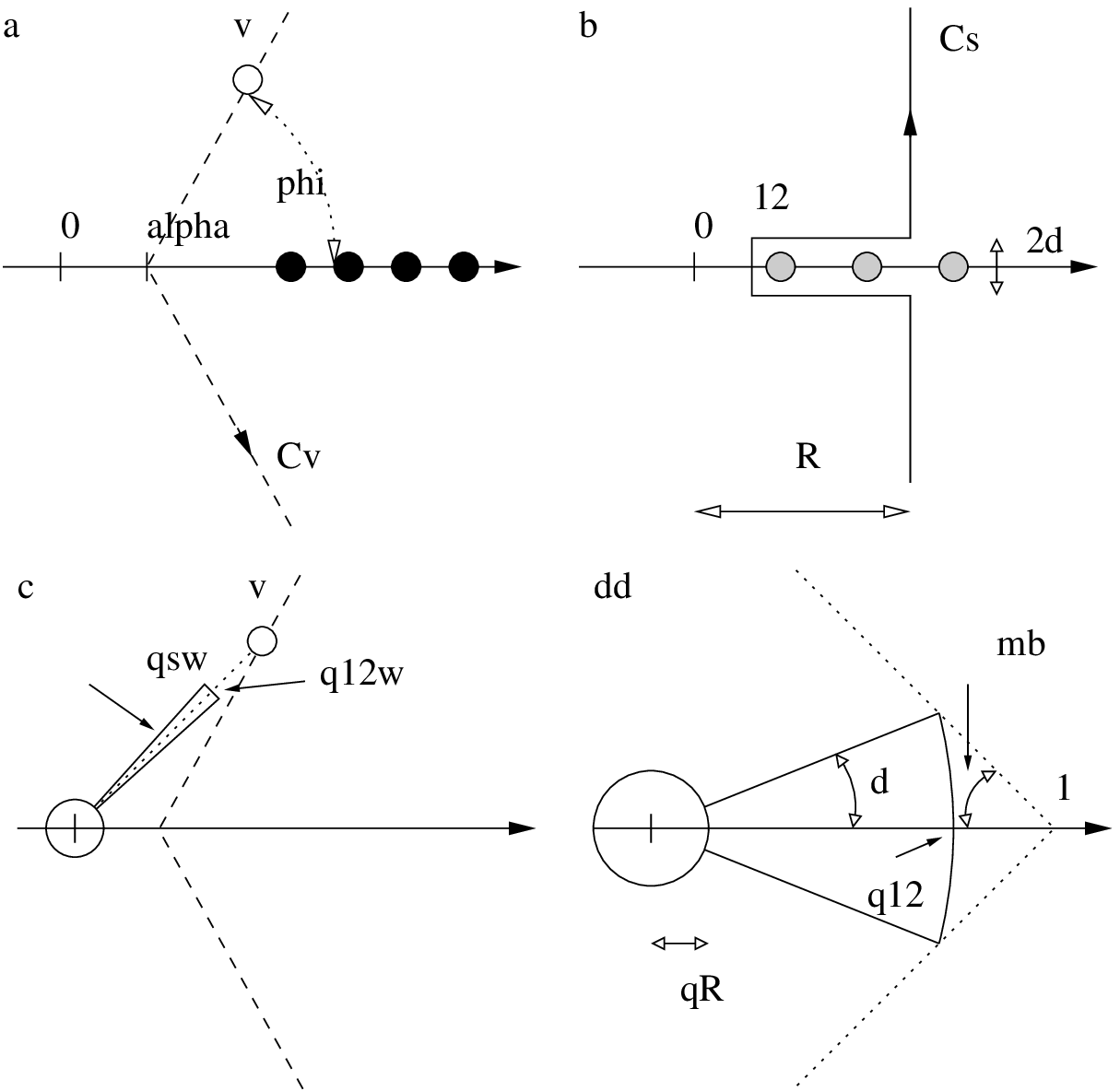}
\end{center}
\caption{(a) $w$-contour $\CwPre{\tilde \alpha,\varphi}$. The black dots are $\tilde a_1,\ldots,\tilde a_N$; (b) $s$-contour $\CsPre{w}$ which depends on the value of $w\in \CwPre{\tilde \alpha,\varphi}$; (c) The contour $\CsPre{w}$ is chosen so that $q^sw$ sits entirely to the left of $\CwPre{\tilde \alpha,\varphi}$ as is demonstrated here; (d) The image of $q^s$ for $s\in \CsPre{w}$. The value of $d$ and $R$ are also chosen so that the argument of $w(q^s-1)$ is contained in $(\pi/2+b,3\pi/2-b)$. This amounts to making sure that the angle $q^s-1$ makes with the negative real axis is within $(-b,b)$.}\label{complexcontours}
\end{figure}

For the moment we will only make use of the contour $\CwPre{\tilde \alpha,\varphi}$.

\begin{lemma}\label{vertLines}
Fix $k\geq 1$ and consider positive $\tilde a_1,\ldots, \tilde a_N$. Then, for any $\varphi\in (0,\pi/4)$,
\begin{equation*}
\mu_k= \frac{(-1)^k q^{\frac{k(k-1)}{2}}}{(2\pi \I)^k} \int \cdots \int \prod_{1\leq A<B\leq k} \frac{z_A-z_B}{z_A-qz_B} \frac{f(z_1)\cdots f(z_k)}{z_1\cdots z_k} dz_1\cdots dz_k,
\end{equation*}
with $f(z) = \exp((q-1)\gamma z)\prod_{i=1}^{N}\frac{\tilde a_i}{\tilde a_i-z}$ and the $z_j$-contours given by $\CwPre{\tilde \alpha_j,\varphi}$ where $\{\tilde \alpha_j\}_{j=1}^{N}$ are positive real numbers such that $\tilde \alpha_j<q\tilde \alpha_{j+1}$ for all $1\leq j\leq N-1$ and \mbox{$\tilde \alpha_N<\min_i\{\tilde a_i\}$}.
\end{lemma}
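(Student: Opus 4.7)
The plan is to deform the bounded nested contours in (\ref{mukdef}) to the unbounded contours $\CwPre{\tilde\alpha_j,\varphi}$ by repeated application of Cauchy's theorem, one variable at a time, working from the innermost variable $z_k$ outward to $z_1$. At each step I would need to verify two things: that no singularities of the integrand are crossed during the deformation, and that the integrand decays fast enough at infinity so that the boundary contribution from the ``arc at infinity'' vanishes.

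The poles of the integrand in $z_j$ come from three sources: the simple poles $z_j=\tilde a_i$ of $f(z_j)$, the simple poles $z_j=qz_B$ for $B>j$ coming from the cross ratios $(z_A-z_B)/(z_A-qz_B)$, and the pole $z_j=0$ from the $1/z_j$ factor. Under the original bounded choice of contours in (\ref{mukdef}), the $z_j$-contour encloses $\{\tilde a_1,\ldots,\tilde a_N\}\cup\{qz_{j+1},\ldots,qz_k\}$ but not $0$. After inductively deforming $z_{j+1},\ldots,z_k$ to the unbounded contours $\CwPre{\tilde\alpha_{j+1},\varphi},\ldots,\CwPre{\tilde\alpha_k,\varphi}$, the nesting $\tilde\alpha_j<q\tilde\alpha_{j+1}\le q\tilde\alpha_B$ for $B\ge j+1$ guarantees that each $qz_B$ has real part at least $q\tilde\alpha_B>\tilde\alpha_j$, so $qz_B$ lies strictly to the right of $\CwPre{\tilde\alpha_j,\varphi}$ and is enclosed by the new contour; iterating the nesting also gives $\tilde\alpha_j\le \tilde\alpha_N<\min_i\tilde a_i$, so every $\tilde a_i$ remains enclosed; and $\tilde\alpha_j>0$ keeps $0$ outside. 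Hence no singularities are crossed in the homotopy from the small loop to $\CwPre{\tilde\alpha_j,\varphi}$.

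For the decay at infinity, the essential factor is $e^{(q-1)\gamma z_j}$ in $f(z_j)$: since $q\in[0,1)$ and $\gamma>0$, it provides genuine exponential decay as $\Re(z_j)\to+\infty$, and in particular along the two branches of $\CwPre{\tilde\alpha_j,\varphi}$ which open into the right half-plane at opening angle $\varphi\in(0,\pi/4)$. Combined with the $O(|z_j|^{-N})$ decay from $\prod_i \tilde a_i/(\tilde a_i-z_j)$ and the extra $|z_j|^{-1}$, this produces an integrand that decays faster than any polynomial along $\CwPre{\tilde\alpha_j,\varphi}$. The cross-ratio factors $(z_A-z_B)/(z_A-qz_B)$ tend to $1$ as any single $|z_A|\to\infty$ with the others fixed, and because the strict nesting $\tilde\alpha_j<q\tilde\alpha_{j+1}$ keeps $z_A$ and $qz_B$ quantitatively separated, these ratios are in fact uniformly bounded on the full product contour $\prod_j \CwPre{\tilde\alpha_j,\varphi}$. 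This justifies discarding the arcs at infinity at each stage of the deformation.

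The main obstacle I expect is not any individual deformation but establishing absolute convergence of the resulting $k$-fold integral over the unbounded product contour, uniformly enough to pass the iterated Cauchy argument through Fubini. This amounts to a joint tail estimate: the single-variable exponential decay must beat the product of cross-ratio and Jacobian factors globally in $(z_1,\ldots,z_k)$. The uniform bounds sketched above on the cross ratios, together with the superexponential decay provided by the product $\prod_c e^{(q-1)\gamma z_c}$, should deliver this; with absolute convergence in hand, the equality of the bounded-contour and unbounded-contour integrals follows by deforming one variable at a time and invoking Cauchy's theorem, yielding exactly the formula claimed in the lemma.
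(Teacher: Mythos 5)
There is a genuine gap in your proposal, and it is in the step you flagged as unproblematic rather than in the one you worried about.

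Your list of singularities in $z_j$ is incomplete. The cross-ratio factor $\prod_{A<B}\frac{z_A-z_B}{z_A-qz_B}$ contributes, as a function of $z_j$, not only the poles $z_j=qz_B$ for $B>j$ (from the pairs $(j,B)$) but also the poles $z_j=z_A/q$ for $A<j$ (from the pairs $(A,j)$, since $z_A-qz_j$ vanishes there). In (\ref{mukdef}) the $z_j$-contour is required to exclude these latter points as well, and this exclusion must be preserved throughout the deformation. Your proposed order — open $z_k$ first, then $z_{k-1}$, and so on outward — makes this impossible. At the moment you open $z_k$ to $\CwPre{\tilde\alpha_k,\varphi}$, the variables $z_1,\ldots,z_{k-1}$ are still on bounded circles, so the set of forbidden points $\{z_A/q:A<k\}$ fills bounded disks whose rightmost extent on the real axis lies far to the right of $\tilde\alpha_k$. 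Those points sit squarely in the region swept out when the $z_k$-circle is opened up rightward, so the homotopy crosses them and picks up unwanted residues; the exponential decay of $f$ is irrelevant to this bounded-distance problem.

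The paper deforms in the opposite order, starting with the outermost variable $z_1$, and this is what makes the argument work. When $z_1$ is opened there are no poles $z_A/q$ with $A<1$ at all. Once $z_1,\ldots,z_{j-1}$ have been moved to their contours, each forbidden locus $z_A/q$ with $A<j$ lies on the dilated contour $\CwPre{\tilde\alpha_A/q,\varphi}$, and the nesting $\tilde\alpha_A<q\tilde\alpha_{A+1}$ gives $\tilde\alpha_A/q<\tilde\alpha_{A+1}\le\tilde\alpha_j$, so that dilated contour sits strictly to the left of $\CwPre{\tilde\alpha_j,\varphi}$; it therefore stays out of the way for the entire deformation of $z_j$. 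The remaining ingredients of your plan — the exponential decay of $f(z)$ as $\Re z\to+\infty$ along rays of angle $\varphi\in(0,\pi/4)$, boundedness of the cross-ratios on the nested product contour, and absolute convergence to justify the iterated application of Cauchy's theorem — are correct and match the paper; the fix is to reverse the order of deformation and to track the $z_A/q$ poles.
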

\begin{proof}
The formula in (\ref{mukdef}) for $\mu_k$ involves closed contours for the $z_j$ which can be chosen to be concentric circles centered on the real axis and with their left crossing point with the real axis a $\tilde \alpha_j$ (the $z_A$ contours containing $q$ times the $z_B$ contour for all $A<B$ as well as the poles at the $\tilde a_i$'s). We now proceed to expand these contours to the infinite contours $\CwPre{\tilde \alpha_j,\varphi}$, one at a time. We start with $z_1$. We may freely deform $z_1$ to a pie slice shape with radius $r\gg 1$ made up of the portion of $\CwPre{\tilde \alpha_1,\varphi}$ with distance from $\alpha_1$ less than $r$, and then a circular arc through the real axis: precisely the contour is $\{\tilde \alpha_1+e^{\I \varphi}t\}_{0\leq t\leq r}\cup \{\tilde \alpha_1+e^{-\I \varphi}t\}_{0\leq t\leq r}\cup \{\tilde\alpha + e^{\I \sigma}r\}_{-\varphi\leq \sigma\leq \varphi}$. We would like to replace this by the infinite contour $\CwPre{\tilde \alpha_1,\varphi}$. To justify this observe that for $z$ with positive real part, $f(z)$ decays exponentially with respect to the real part of $z$. Since $\varphi\in (0,\pi/4)$, as $r$ goes to infinity, the contributions of the contour integral away from the origin along both the pie slice and the limiting contour become negligible, and hence the replacement is justified. This procedure can be repeated for $z_2$ and so on until the contours are as claimed in the lemma. Note that as the original contours were positively oriented circles, and the expanded sections were on the left of the circles, the contours are now oriented so as to have decreasing imaginary part.
\end{proof}

In order to combine the $q$-moments $\mu_k$ into a Fredholm determinant it is useful to have all of the contours coincide. This, however, involves keeping track of the residues which result from such a deformation. First, define the $q$-Pochhammer symbol and $q$-factorial as 
\begin{equation*}
(a;q)_{n} = \prod_{i=0}^{n-1} (1-q^i a), \qquad k_q!:= \frac{(q;q)_{k}}{(1-q)^k}.
\end{equation*}

\begin{proposition}\label{mukprop}
Fix $k\geq 1$ and consider a meromorphic function $f(z)$ with $N$ poles $\tilde a_{1},\ldots,\tilde a_N$ with positive real part. Then setting
\begin{equation*}
\mu_k=\frac{(-1)^k q^{\frac{k(k-1)}{2}}}{(2\pi \I)^k} \int \cdots \int \prod_{1\leq A<B\leq k} \frac{z_A-z_B}{z_A-qz_B} \frac{f(z_1)\cdots f(z_k)}{z_1\cdots z_k} dz_1\cdots dz_k,
\end{equation*}
we have
\begin{equation*}
\begin{aligned}
\mu_k =& k_q! \sum_{\substack{\lambda\vdash k\\ \lambda=1^{m_1}2^{m_{2}}\cdots}} \frac{1}{m_1!m_2!\cdots} \\
 & \times \frac{(1-q)^{k}}{(2\pi \I)^{\ell(\lambda)}} \int \cdots \int \det\left[\frac{1}{w_i q^{\lambda_i}-w_j}\right]_{i,j=1}^{\ell(\lambda)} \prod_{j=1}^{\ell(\lambda)}  f(w_j)f(qw_j)\cdots f(q^{\lambda_j-1}w_j) dw_j,
\end{aligned}
\end{equation*}
with the $z_j$-contours given by $\CwPre{\tilde \alpha_j,\varphi}$ where $\{\tilde \alpha_j\}_{j=1}^{k}$ are positive real numbers such that \mbox{$\tilde \alpha_j<q\tilde \alpha_{j+1}$} for all $1\leq j\leq k-1$ and $\tilde \alpha_k<\min_i\{\tilde a_i\}$, and where the $w_j$-contours are all the same and given by $\CwPre{\tilde \alpha,\varphi}$ for any $\tilde \alpha\in (0,\min_i\{\tilde a_i\})$. The notation $\lambda \vdash k$ above means that $\lambda$ partitions $k$ (i.e., if $\lambda=(\lambda_1,\lambda_2,\ldots)$ then $k=\sum \lambda_i$), and the notation $\lambda = 1^{m_1}2^{m_2}\cdots$ means that $i$ shows up $m_i$ times in the partition $\lambda$.
\end{proposition}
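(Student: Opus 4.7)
The statement is essentially the same combinatorial identity as Proposition~3.2.1 of~\cite{BC11} (or its proof), but for the unbounded contours $\CwPre{\tilde\alpha,\varphi}$ rather than nested circles. So my plan is to follow the same residue-expansion and symmetrization strategy, with the new ingredient being decay estimates at infinity that justify the contour deformations.

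The plan is to deform the $z_j$-contours to a common contour $\CwPre{\tilde\alpha,\varphi}$ one variable at a time, starting from $z_1$, and track the residues picked up. The only poles of the integrand in $z_A$ that are crossed during this process are those at $z_A = q z_B$ for $B\neq A$; the poles of $f$ at $\tilde a_i$ and the zero of $z_A$ in the denominator lie to the right of $\CwPre{\tilde\alpha,\varphi}$ by the assumption $\tilde\alpha<\min_i\tilde a_i$, so they are never crossed. To see that the arcs at infinity give no contribution, note that along $\CwPre{\tilde\alpha,\varphi}$ one has $\Re(z) = \tilde\alpha + |\Im(z)|\cos\varphi\to+\infty$, and since $q\in(0,1)$, the factor $e^{(q-1)\gamma z}$ in $f$ decays like $\exp(-(1-q)\gamma |\Im(z)|\cos\varphi)$, which dominates all polynomial growth from the rational factors and from $\prod_{A<B}(z_A-z_B)/(z_A-qz_B)$. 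This decay is uniform throughout the family of intermediate contours, so the deformation is valid and all integrals are absolutely convergent.

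After all deformations are completed, the resulting contributions organize themselves by the combinatorial structure of which residues were iterated. A single residue at $z_A = q z_B$ couples the two variables, and iterating this produces "strings'' in which a chain of variables is locked into a geometric progression $w, qw, q^2 w, \ldots, q^{\lambda-1}w$. A complete residue configuration thus partitions the $k$ variables into strings whose lengths form a partition $\lambda = 1^{m_1}2^{m_2}\cdots$ of $k$, with the number of ways to form such a configuration contributing the combinatorial prefactor $k!/(1^{m_1}m_1!\,2^{m_2}m_2!\cdots)$ and the iterated residues producing the $q$-factors that combine into $k_q! = (q;q)_k/(1-q)^k$. The remaining factor $\prod_{A<B}(z_A-z_B)/(z_A-qz_B)$, evaluated at geometric progressions, collapses: inter-string interactions, after being symmetrized over the labelings inside each string, assemble into the Cauchy-type determinant $\det\bigl[\tfrac{1}{w_iq^{\lambda_i}-w_j}\bigr]_{i,j=1}^{\ell(\lambda)}$, while the intra-string products of $f$'s become $\prod_j f(w_j)f(qw_j)\cdots f(q^{\lambda_j-1}w_j)$.

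The main obstacle is the first step: verifying that the unbounded deformations are legitimate and that no new poles at infinity interfere. Once the decay of $f$ is used to dispatch the arcs at infinity, the rest of the argument is the standard algebraic identity from the Macdonald/$q$-deformed calculus of~\cite{BC11}, which I would invoke rather than re-prove in detail. A subsidiary obstacle is bookkeeping: one must verify that the deformed string-contour $w_j \mapsto (w_j,qw_j,\ldots,q^{\lambda_j-1}w_j)$ still lies on (and does not pick up new poles while passing to) the common contour $\CwPre{\tilde\alpha,\varphi}$; this reduces to checking that $q^\ell \CwPre{\tilde\alpha,\varphi}$ lies to the left of $\CwPre{\tilde\alpha,\varphi}$ for $\ell\geq 1$, which follows from $q<1$ and the definition of the contour.
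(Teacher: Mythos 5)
Your proposal is correct and takes essentially the same approach as the paper, whose entire proof is the single line ``word-for-word repetition of that for Proposition 3.2.1 of \cite{BC11}.'' You simply spell out what that repetition requires for the unbounded contours $\CwPre{\tilde\alpha,\varphi}$ --- namely the exponential decay of $f$ (coming from $e^{(q-1)\gamma z}$ with $\Re(z)\to+\infty$ along the wedge) to kill the arcs at infinity, and the observation that $q^\ell\CwPre{\tilde\alpha,\varphi}=\CwPre{q^\ell\tilde\alpha,\varphi}$ lies to the left of $\CwPre{\tilde\alpha,\varphi}$ so that the string substitution stays pole-free --- and you otherwise invoke the same residue/string-decomposition combinatorics that [BC11] uses.
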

\begin{proof}
Word-for-word repetition of that for Proposition~3.2.1 of~\cite{BC11}.
\end{proof}

\subsubsection{A first Fredholm determinant}

Before stating our first Fredholm determinant in this derivation, let us recall how such determinants are defined. Fix a Hilbert space $L^2(X,\mu)$ where $X$ is a measure space and $\mu$ is a measure on $X$. When $X=\mathcal{C}$ is a simple smooth contour in $\C$, we write $L^2(\mathcal{C})$ where $\mu$ is understood to be the path measure along $\mathcal{C}$ divided by $2\pi \I$. When $X$ is the product of a discrete set $D$ and a contour $\mathcal{C}$, $\mu$ is understood to be the product of the counting measure on $D$ and the path measure along $\mathcal{C}$ divided by $2\pi \I$. Let $K$ be an {\it integral operator} acting on $f(\cdot)\in L^2(X,\mu)$ by $Kf(x) = \int_{X} K(x,y)f(y) d\mu(y)$. $K(x,y)$ is called the {\it kernel} of $K$. One way of defining the Fredholm determinant of $K$, for trace class operators $K$, is via the Fredholm series
\begin{equation}\label{eqFredholm}
\det(\Id+K)_{L^2(X,\mu)} = 1+\sum_{n=1}^{\infty} \frac{1}{n!} \int_{X} \cdots \int_{X} \det\left[K(x_i,x_j)\right]_{i,j=1}^{n} \prod_{i=1}^{n} d\mu(x_i).
\end{equation}
In fact, if an operator $K$ is such that the above right-hand side series is absolutely convergent, then we still write it as $\det(\Id+K)$ as short-hand even if $K$ is not trace class. This is sufficient for our purposes as we will deal directly with the expansion. We will often write only $\det(\Id+K)_{L^2(X)}$ for $\det(\Id+K)_{L^2(X,\mu)}$.

Our first Fredholm determinant formula now follows.

\begin{proposition}\label{abgCor}
Fix $k\geq 1$ and consider positive $\tilde a_1,\ldots,\tilde a_N$. Then for any $\varphi\in (0,\pi/4)$ and any $\tilde\alpha \in (0,\min_{i}\{\tilde a_i\})$, there exists a positive constant $C\geq 1$ such that for all $|\zeta|<C^{-1}$,
\begin{equation}\label{deteqn}
\left\langle \frac{1}{\left(\zeta q^{\lambda_N};q\right)_{\infty}}\right\rangle_{\MM_{t=0}(\tilde a_1,\ldots,\tilde a_N;\rho)} = \det(\Id+K)_{L^2(\Zgzero\times\CwPre{\tilde \alpha,\varphi})}
\end{equation}
where the kernel $K$ is given by
\begin{equation}\label{abgKernel}
K(n_1,w_1;n_2,w_2) = \frac{\zeta^{n_1} f(w_1)f(qw_1)\cdots f(q^{n_1-1}w_1)}{q^{n_1} w_1 - w_2}
\end{equation}
with
\begin{equation*}
f(w) =  \exp((q-1)\gamma w)\prod_{m=1}^{N} \frac{\tilde a_m}{\tilde a_m-w}.
\end{equation*}
Recall that the contour $\CwPre{\tilde \alpha,\varphi}$ is given in Definition~\ref{CwPredef}.
\end{proposition}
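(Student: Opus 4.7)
The plan is to Taylor-expand the left-hand side into a generating series of the $q$-moments $\mu_k = \langle q^{k\lambda_N}\rangle$, substitute the explicit integral formula for $\mu_k$ from Proposition~\ref{mukprop}, and then recognize the resulting double sum as the Fredholm expansion (\ref{eqFredholm}) of the kernel (\ref{abgKernel}).

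First I would invoke the classical $q$-exponential identity $\frac{1}{(\zeta;q)_\infty} = \sum_{k\geq 0}\frac{\zeta^k}{(q;q)_k}$ with $\zeta$ replaced by $\zeta q^{\lambda_N}$. This gives, formally,
\begin{equation*}
\left\langle \frac{1}{(\zeta q^{\lambda_N};q)_\infty}\right\rangle_{\MM_{t=0}} = \sum_{k=0}^{\infty}\frac{\zeta^k\,\mu_k}{(q;q)_k}.
\end{equation*}
Interchanging sum and expectation requires a bound of the form $|\mu_k|\leq C^k$ for some $C$ depending on $\tilde a_1,\ldots,\tilde a_N$ and $\varphi$; this is exactly where the smallness hypothesis $|\zeta|<C^{-1}$ is used, and the bound will be obtained by estimating the integral formula from Lemma~\ref{vertLines} on $\CwPre{\tilde\alpha_j,\varphi}$ (using that $|f(z)| = |e^{(q-1)\gamma z}|\prod_m|\tilde a_m/(\tilde a_m-z)|$ decays exponentially on each $\CwPre{\tilde\alpha_j,\varphi}$ since $\varphi<\pi/4$ forces $\Re(z)\to+\infty$ along the tails).

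Next I would substitute the identity of Proposition~\ref{mukprop}, using $k_q! = (q;q)_k/(1-q)^k$ so that the prefactors telescope nicely:
\begin{equation*}
\frac{\zeta^k\mu_k}{(q;q)_k} = \sum_{\substack{\lambda\vdash k\\ \lambda = 1^{m_1}2^{m_2}\cdots}}\frac{\zeta^{|\lambda|}}{m_1!m_2!\cdots}\frac{1}{(2\pi\I)^{\ell(\lambda)}}\int\cdots\int \det\!\left[\tfrac{1}{w_iq^{\lambda_i}-w_j}\right]_{i,j=1}^{\ell(\lambda)}\prod_{j=1}^{\ell(\lambda)} f(w_j)\cdots f(q^{\lambda_j-1}w_j)\,dw_j.
\end{equation*}
At this stage I would use the standard reindexing $\sum_{k\geq 0}\sum_{\lambda\vdash k}\frac{1}{m_1!m_2!\cdots}(\cdots) = \sum_{L\geq 0}\frac{1}{L!}\sum_{\lambda_1,\ldots,\lambda_L\geq 1}(\cdots)$, which converts a sum over unordered partitions with symmetry factors into an ordered sum over compositions divided by $L!$. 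After this reindexing each term factorises row-by-row: pulling $\zeta^{\lambda_i}f(w_i)\cdots f(q^{\lambda_i-1}w_i)$ out of the $i$-th row of the determinant yields precisely the kernel $K(\lambda_i,w_i;\lambda_j,w_j)$ of (\ref{abgKernel}), with summation over $\lambda_i\in\Zgzero$ and integration over $w_i\in\CwPre{\tilde\alpha,\varphi}$. This is the Fredholm expansion (\ref{eqFredholm}) of $\det(\Id+K)_{L^2(\Zgzero\times\CwPre{\tilde\alpha,\varphi})}$.

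I expect the main obstacle to be proving absolute convergence rigorously enough to justify both Fubini exchanges (sum with expectation at the start, and sum with the integrals at the end). With bounded contours as in~\cite{BC11} this is straightforward from compactness, but here $\CwPre{\tilde\alpha,\varphi}$ is unbounded. The key estimates to establish are: (i) Hadamard's inequality on the determinant, giving $|\det[(q^{\lambda_i}w_i-w_j)^{-1}]|\leq L^{L/2}\sup_{i,j}|q^{\lambda_i}w_i-w_j|^{-L}$, combined with the observation that on $\CwPre{\tilde\alpha,\varphi}$ the denominators remain of size at least $\tilde\alpha(1-q^{\max\lambda_i})$ away from origin; (ii) an exponential estimate $|f(w)f(qw)\cdots f(q^{n-1}w)| \leq (\prod_m|\tilde a_m/(\tilde a_m-w)|)^n\,e^{(q-1)\gamma\,n\,\Re(w)\cos\varphi'}$ with $\varphi'$ a power of $q$ correction, using that $\Re(q^j w)\geq q^{j}\Re(w)\cos\varphi$ on the contour. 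Together these give a double geometric series that converges for $|\zeta|$ smaller than an explicit constant depending on $\tilde\alpha,\varphi,\gamma,N$, which is the constant $C$ in the statement; the contour deformation from the circles of Lemma~\ref{vertLines} to the fixed contour $\CwPre{\tilde\alpha,\varphi}$ for all $w_i$ is then by Cauchy's theorem (the only poles encountered are already treated in Proposition~\ref{mukprop}).
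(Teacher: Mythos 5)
Your proposal follows essentially the same route as the paper: $q$-exponential ($q$-Binomial) expansion of the left-hand side into a generating series of the $q$-moments $\mu_k$, substitution of the formula from Proposition~\ref{mukprop}, reindexing the sum over partitions $\lambda\vdash k$ into a sum over compositions divided by $L!$, and recognition of the Fredholm series, with absolute convergence justified by Hadamard's bound and the exponential decay of $f$ along $\CwPre{\tilde\alpha,\varphi}$. Two small corrections: the sum--expectation interchange at the start is immediate from $0\leq q^{k\lambda_N}\leq 1$ (so $\mu_k\leq 1$) together with $|\zeta|<1$ and does not require any integral estimate of $\mu_k$; the hypothesis $|\zeta|<C^{-1}$ is only needed at the end to make the geometric series in the Fredholm expansion absolutely convergent. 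Also, your estimate $\bigl|f(w)f(qw)\cdots f(q^{n-1}w)\bigr|\leq\bigl(\prod_m|\tilde a_m/(\tilde a_m-w)|\bigr)^n e^{(q-1)\gamma n\Re(w)\cos\varphi'}$ has an extra $n$ in the exponent that cannot hold uniformly, since $\prod_{i=0}^{n-1}|e^{(q-1)\gamma q^i w}|=e^{-\gamma(1-q^n)\Re(w)}$ and $1-q^n$ stays bounded by $1$; the correct bound is of the form $B^n e^{-c\Re(w)}$ with $c=(1-q)\gamma$, which still yields a geometric series in $n$ and the required integrability in $w$.
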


The left-hand-side of (\ref{deteqn}) is the $q$-Laplace transform of the random variable $q^{\lambda_N}$ (with respect to the $e_q$ exponential) --- see Section 3.1.1 of~\cite{BC11}. The $q$-Binomial theorem is analogous to Taylor expansion of the $e_q$ exponential. Because $q^{\lambda_N}<1$ it we can justify interchanging the summation (of the series expansion resulting from the $q$-Binomial theorem) and the expectation and hence we find that the $q$-Laplace transform can be written as a generating series of $q$-moments. Using the expression for these moments coming from Proposition~\ref{mukprop} it is an easy formal manipulation of terms in a summation to turn this generating series into the desired Fredholm determinant. A little extra arguing shows that these formal manipulations are numerical equalities and proves the claimed result. The details are given in Section~\ref{abgCorproof}.

\subsubsection{A Fredholm determinant suitable for asymptotics}
By using a Mellin-Barnes type integral representation and analytic continuation we can reduce our Fredholm determinant to that of an operator acting on a single contour. The above developments all lead to the following result.

\begin{theorem}\label{PlancherelfredThm}
Fix $\rho$ a Plancherel (see equation (\ref{tag1})) Macdonald nonnegative specialization and positive $\tilde a_1,\ldots, \tilde a_N$. Then for all $\zeta\in \C\setminus \Rplus$
\begin{equation}\label{thmlaplaceeqn}
\left\langle \frac{1}{\left(\zeta q^{\lambda_N};q\right)_{\infty}}\right\rangle_{\MM_{t=0}(\tilde a_1,\ldots,\tilde a_N;\rho)} = \det(\Id+\tilde K_{\zeta})_{L^2(\CwPre{\tilde \alpha,\varphi})}
\end{equation}
where $\CwPre{\tilde \alpha,\varphi}$ as in Definition~\ref{CwPredef} with any $\tilde \alpha\in (0,\min_i\{\tilde a_i\})$ and any $\varphi\in (0,\pi/4)$. The operator $\tilde K_{\zeta}$ is defined in terms of its integral kernel
\begin{equation*}
\tilde K_{\zeta}(w,w') = \frac{1}{2\pi \I}\int_{\CsPre{w}} \Gamma(-s)\Gamma(1+s)(-\zeta)^s g_{w,w'}(q^s)ds
\end{equation*}
where
\begin{equation}\label{gwwprimeeqn}
g_{w,w'}(q^s) = \frac{\exp\big(\gamma w(q^{s}-1)\big)}{q^s w - w'} \prod_{m=1}^{N} \frac{(q^{s}w/\tilde a_m;q)_{\infty}}{(w/\tilde a_m;q)_{\infty}},
\end{equation}
and the contour $\CsPre{w}$ is as in Definition~\ref{CwPredef}.
\end{theorem}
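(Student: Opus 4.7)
The plan is to start from Proposition~\ref{abgCor}, which already gives a Fredholm determinant identity on the discrete-continuous space $L^2(\Zgzero\times\CwPre{\tilde\alpha,\varphi})$ valid for $|\zeta|$ sufficiently small, and then collapse the discrete variable into a Mellin-Barnes contour integral. The key algebraic identity is
\begin{equation*}
\sum_{n\ge 1}(-\zeta)^n h(q^n) = \frac{1}{2\pi\I}\int_{\CsPre{w}} \Gamma(-s)\Gamma(1+s)(-\zeta)^s h(q^s)\,ds,
\end{equation*}
which, whenever it holds, comes from closing the $s$-contour to the right and picking up simple poles of $\Gamma(-s)$ at $s=1,2,\dots$ with residues $(-1)^{n+1}/n!$ combining with $\Gamma(1+n)=n!$ to give the series. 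Applied with $h(q^s)=g_{w,w'}(q^s)$ for each pair of contour variables $(w,w')$, this turns the discrete sum inside every term of the Fredholm expansion of $\det(\Id+K)_{L^2(\Zgzero\times\CwPre{\tilde\alpha,\varphi})}$ into precisely $\tilde K_\zeta(w,w')$, giving the equality of the two Fredholm determinants.

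To make the above rigorous I would proceed in the following order. First, I would verify the Mellin-Barnes identity pointwise in $w,w'\in\CwPre{\tilde\alpha,\varphi}$: conditions (i)--(ii) in Definition~\ref{CwPredef} are exactly what is needed. Condition (ii), that $q^s w$ stays on the correct side of $\CwPre{\tilde\alpha,\varphi}$, keeps $g_{w,w'}(q^s)$ holomorphic and bounded along $\CsPre{w}$ (in particular, the $q$-Pochhammer ratios in \eqref{gwwprimeeqn} have no poles hit). Condition (i), controlling $\arg(w(q^s-1))$, together with $|\Gamma(-s)\Gamma(1+s)|=\pi/|\sin(\pi s)|$ giving exponential decay in $|\Im s|$ along the vertical part $\CsPre{v,\vert}$, ensures the integrand decays fast enough to (a) make the contour integral absolutely convergent, and (b) justify closing to the right with vanishing arc contributions, which produces the residue sum on the left. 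Second, I would expand both Fredholm determinants using \eqref{eqFredholm} and observe that swapping the (absolutely convergent) Mellin-Barnes integral with the finite-dimensional integrals over the $w_i$'s in each term is justified by Fubini, provided one has the uniform bounds of step one.

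Third, one must ensure the Fredholm expansion itself converges. Along $\CwPre{\tilde\alpha,\varphi}$, the factor $\exp(\gamma w(q^s-1))$ supplies exponential decay in $|\Im w|$ because the argument condition (i) pushes $\Re(w(q^s-1))$ to $-\infty$ as $|\Im w|\to\infty$, and this decay survives the $s$-integration (being uniform over $\CsPre{w}$). Consequently $\tilde K_\zeta(w,w')$ decays sufficiently in both arguments that Hadamard's inequality bounds the $n$-th term of the Fredholm expansion by $C^n/n!^{1/2}$ type estimates, uniformly for $\zeta$ in compact subsets of $\C\setminus\Rplus$. Fourth, I would upgrade from small $|\zeta|$ to arbitrary $\zeta\in\C\setminus\Rplus$ by analytic continuation: both sides of \eqref{thmlaplaceeqn} are analytic in this domain. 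On the left the infinite product $(\zeta q^{\lambda_N};q)_\infty$ vanishes only on $\Rplus$ and $\lambda_N$ has sub-geometric tails, which combined with dominated convergence gives analyticity of the expectation; on the right, the Fredholm series is analytic term-by-term and the uniform bounds just discussed give analyticity of the sum.

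The main obstacle I expect is the bookkeeping in step one and the tail estimates in step three. The unboundedness of both $\CwPre{\tilde\alpha,\varphi}$ and $\CsPre{w}$ means one needs the precise geometric choices of Definition~\ref{CwPredef} (the angle constraint $\varphi<\pi/4$ and the $w$-dependent choice of $R$, $d$ in $\CsPre{w}$ noted in Remark~\ref{contoursrem}) to work simultaneously: the same contour data must make the Mellin-Barnes swap valid, keep $g_{w,w'}$ pole-free, and yield decay strong enough for Hadamard-type Fredholm estimates. Verifying these compatibly is what distinguishes this argument from the bounded-contour version in \cite{BC11}, and is where most of the technical work of Section~\ref{MacSec} is presumably spent.
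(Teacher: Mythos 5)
Your proposal follows essentially the same three-part structure as the paper's proof (Section~\ref{PlancherelfredThmproof}): a Mellin--Barnes lemma collapsing the discrete sum, its application term-by-term in the Fredholm expansion of Proposition~\ref{abgCor}, and analytic continuation from small $|\zeta|$ to $\C\setminus\Rplus$ using the decay furnished by the contour conditions of Definition~\ref{CwPredef}. One small slip: the left-hand side of your Mellin--Barnes identity should read $\sum_{n\ge 1}\zeta^n h(q^n)$ rather than $\sum_{n\ge 1}(-\zeta)^n h(q^n)$ --- the residue of $\Gamma(-s)\Gamma(1+s)$ at $s=n$ is $(-1)^{n+1}$, and on the negatively oriented contour the factors of $(-1)$ combine with $(-\zeta)^n$ to produce $\zeta^n$, which is what Proposition~\ref{abgCor}'s kernel actually requires; with this correction the argument aligns with Lemma~\ref{gammasumlemma} and the rest of the paper's proof.
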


The details of the proof of this result are given in Section~\ref{PlancherelfredThmproof}.

\subsection{Weak convergence to the Whittaker measure}

We are now almost prepared to relate the above discussion to the Fredholm determinant in Theorem~\ref{NeilPolymerFredDetThm}. The connection relies on the following weak convergence of probability measures result.

\begin{theorem}[Theorem~4.1.20 of~\cite{BC11}]\label{theorem26}
Fix \mbox{$N\geq 1$}, a drift vector \mbox{$a=(a_1,\ldots, a_N)\in \R^N$}, and a time parameter $\tau>0$. For positive $\tilde a_1,\dots,\tilde a_N$ and $\gamma>0$, consider a partition \mbox{$\lambda=(\lambda_1,\ldots,\lambda_N)$} distributed according to the $q$-Whittaker measure (i.e., Macdonald measure at $t=0$) $\MM_{t=0}(\tilde a_1,\dots,\tilde a_N;\rho)$ with Plancherel specialization $\rho$ determined by $\gamma$. For a small parameter $\e>0$ let
\begin{equation*}
q=e^{-\epsilon},\quad \gamma=\tau \epsilon^{-2}, \quad \tilde a_j=e^{-\epsilon a_j}, \quad  \lambda_j=\tau\epsilon^{-2}-(N+1-2j)\epsilon^{-1}\ln\epsilon+T_{j}\epsilon^{-1}, \quad 1\le j\le N.
\end{equation*}
Consider the $\e$-indexed measure induced on $\{T_{j}\}_{1\leq j\leq N}$. This measure weakly converges, as $\e\to 0$, to the Whittaker measure $\WM{a_1,\ldots, a_N,\tau}$ on $\{T_{j}\}_{1\leq j\leq N}$.
\end{theorem}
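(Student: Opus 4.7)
The strategy is to establish pointwise convergence of the density of the rescaled $q$-Whittaker measure (viewed as a density on $\R^N$ after the substitution $\lambda_j \mapsto T_j$ and multiplication by the Jacobian $\epsilon^{-N}$ coming from the lattice spacing) to the density of $\WM{a_1,\ldots,a_N;\tau}$, and then upgrade this to weak convergence by Scheff\'e's lemma. I would treat the three factors
\[
\frac{P_\lambda(\tilde a_1,\ldots,\tilde a_N)\,Q_\lambda(\rho)}{\Pi(\tilde a_1,\ldots,\tilde a_N;\rho)}
\]
separately and then combine them with the Jacobian.

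The normalization is straightforward: $\Pi = \exp(\gamma\sum_j \tilde a_j) = \exp(\tau\epsilon^{-2}\sum_j e^{-\epsilon a_j})$, and Taylor expansion gives $\Pi = \exp\bigl(\tau N \epsilon^{-2} - \tau \epsilon^{-1}\sum_j a_j + \tfrac{\tau}{2}\sum_j a_j^2 + O(\epsilon)\bigr)$. This is exactly what is needed so that the divergent $\epsilon^{-2}$ and $\epsilon^{-1}$ contributions coming from $P_\lambda$ and $Q_\lambda$ cancel and leave behind the prefactor $e^{-\tau\sum_j a_j^2/2}$ appearing in (\ref{WMdef}). The Plancherel-specialized $Q_\lambda(\rho)$ at $t=0$ admits an explicit product formula proportional to $\gamma^{|\lambda|}$ divided by a product of $q$-factorials; substituting $q=e^{-\epsilon}$, $\gamma=\tau\epsilon^{-2}$, and the scaling of $\lambda_j$ and Taylor-expanding the $q$-Pochhammer symbols term by term yields pointwise convergence to $\theta_\tau(T_1,\ldots,T_N)$ as defined before (\ref{WMdef}).

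The crux of the proof is the asymptotics of the $q$-Whittaker polynomial $P_\lambda(\tilde a_1,\ldots,\tilde a_N)$. I would use the Gerasimov-Lebedev-Oblezin discrete integral representation of $P_\lambda$ at $t=0$, which is a sum over interlacing Gelfand-Tsetlin-type patterns and parallels Givental's continuous integral representation of $\psi_{\iota a}$. Under the substitution $\lambda_j = \tau\epsilon^{-2} - (N+1-2j)\epsilon^{-1}\ln\epsilon + T_j\epsilon^{-1}$ together with a matching $\epsilon^{-1}$-rescaling of the interior pattern variables, a direct expansion shows that the exponent of the summand converges to $\mathcal{F}_{\iota a}(x)$ and the pattern-sum becomes a Riemann sum for Givental's integral. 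Equivalently, one checks that under the scaling, $D_N^1 - N$ (appropriately normalized) degenerates to the quantum $\mathfrak{gl}_N$-Toda Hamiltonian with matching eigenvalues, so that $P_\lambda$ converges to the Toda eigenfunction $\psi_{\iota a}$.

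The main obstacle is the rigorous control of this $P_\lambda$ limit, requiring both termwise convergence of the pattern-sum and uniform tail estimates so that Scheff\'e's lemma can be invoked. For the latter, one needs to know the limiting object is a genuine probability density, which follows from the orthogonality/Plancherel relation for class-one Whittaker functions mentioned just after (\ref{WMdef}). Combining the three pointwise limits with the Jacobian $\epsilon^{-N}$, one obtains convergence of the rescaled density to that of $\WM{a_1,\ldots,a_N;\tau}$, and Scheff\'e's lemma upgrades this to weak convergence of the associated probability measures.
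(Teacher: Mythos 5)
This theorem is invoked in the present paper purely by citation to Theorem~4.1.20 of~\cite{BC11}; no proof is given here, so your proposal is really being measured against that reference.

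Your decomposition into $P_\lambda$, $Q_\lambda(\rho)$, $\Pi$, and the use of the Gerasimov--Lebedev--Oblezin sum-over-patterns representation of the $t=0$ Macdonald polynomial degenerating to Givental's integral for $\psi_{\iota a}$, is the right architecture and matches~\cite{BC11}; the $P_\lambda$ asymptotics is indeed the crux and is handled there as a separate theorem.

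Two places where your sketch is imprecise or off-track. First, the $Q_\lambda(\rho)\to\theta_\tau$ step: from (\ref{tag1}), $Q_{(n)}(\rho)=g_n(\rho)=\gamma^n/n!$ --- ordinary factorials, not $q$-factorials --- and for general $\lambda$ at $t=0$ there is no simple $\gamma^{|\lambda|}\big/\prod (q;q)_{\lambda_i-\lambda_{i+1}}$ product formula. More to the point, $\theta_\tau$ is a Sklyanin-measure integral of Whittaker functions against a Gaussian; for $N\geq 2$ this is not going to emerge from a termwise Taylor expansion of a product. The route taken in~\cite{BC11} uses a torus-integral representation of $Q_\lambda(\rho)$ (via Macdonald orthogonality), which is precisely the $q$-analog of the integral defining $\theta_\tau$, and under the scaling the torus integral degenerates to the Sklyanin integral. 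Your picture works for $N=1$, where everything reduces to a Poisson local CLT, but not as stated for general $N$. Second, Scheff\'e's lemma is not quite the right tool: the rescaled $q$-Whittaker measures live on lattices of spacing $\epsilon$, so they are mutually singular with the absolutely continuous $\WM{a_1,\ldots,a_N;\tau}$ and cannot converge to it in total variation. What one actually proves is a local-limit-theorem statement (uniform-on-compacts convergence of $\epsilon^{-N}$ times the lattice point masses to the limiting density) together with tightness, from which weak convergence follows by a Riemann-sum/Portmanteau argument. Your Jacobian $\epsilon^{-N}$ is in that spirit, but the Scheff\'e citation glosses over the lattice-to-continuum step that makes the argument rigorous.
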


\subsection{Proof of Theorem~\ref{NeilPolymerFredDetThm}}\label{Neilpolyproof}
We may now combine the above results to provide a proof of Theorem~\ref{NeilPolymerFredDetThm}. We follow approach taken in proving Theorem~4.1.40 of~\cite{BC11} but due to the unboundedness of the contours with which we are dealing, there are a number of extra and somewhat involved estimates we must make. These are stated as propositions and proved in Sections~\ref{prop1sec} and~\ref{prop2sec}.

The proof splits into two parts. \emph{Step 1:} We prove that the left-hand side of equation (\ref{thmlaplaceeqn}) of Theorem~\ref{PlancherelfredThm} converges to $\left\langle e^{-u e^{-T_{N}}}  \right\rangle_{\WM{a_1,\ldots, a_N;\tau}}$. This relies on combining Theorem~\ref{theorem26} (which provides weak convergence of the $q$-Whittaker measure to the Whittaker measure) with Lemma~\ref{problemma2} and the fact that the $q$-Laplace transform converges to the usual Laplace transform. \emph{Step 2:} We prove that the Fredholm determinant expression coming from the right-hand side of Theorem~\ref{PlancherelfredThm} converges to the Fredholm determinant given in the theorem we are presently proving (see~\ref{kvvprimeC}).

In accordance with the scalings of Theorem~\ref{theorem26}, we scale the parameters of Theorem~\ref{PlancherelfredThm} as
\begin{equation}\begin{aligned}\label{scalings}
&q =e^{-\e}, \qquad\gamma =\tau\e^{-2}, \qquad \tilde a_k=e^{-\e a_k},\quad 1\leq k\leq N\\
&w= q^v, \qquad \zeta = -\e^{N} e^{\tau \e^{-1}} u, \qquad \lambda_N = \tau \e^{-2} + (N-1) \e^{-1} \ln \e + T_N \e^{-1}.
\end{aligned}\end{equation}

\subsubsection*{Step 1:}
We assume throughout that $u\in \C$ with positive real part. Rewrite the left-hand side of equation (\ref{thmlaplaceeqn}) in  Theorem~\ref{PlancherelfredThm} as
\begin{equation*}
\left\langle \frac{1}{\left(\zeta q^{\lambda_N};q\right)_{\infty}}\right\rangle_{\MM_{t=0}(\tilde a_1,\ldots,\tilde a_N;\rho)}  = \left\langle e_q(x_q)\right\rangle_{\MM_{t=0}(\tilde a_1,\ldots,\tilde a_N;\rho)}
\end{equation*}
where
\begin{equation*}
x_q=(1-q)^{-1}\zeta q^{\lambda_N} =-ue^{-T_N} \e/(1-q)
\end{equation*}
and
\begin{equation*}
e_q(x)=\frac{1}{((1-q)x;q)_{\infty}}
\end{equation*}
is a $q$-exponential. Combine this with the fact that $e_q(x)\to e^{x}$ uniformly on $\Re(x)<0$ to show that, considered as a function of $T_N$, $e_q(x_q)\to e^{-u e^{-T_N}}$ uniformly for $T_N\in \R$. By Theorem~\ref{theorem26}, the measure on $T_N$ (induced from the $q$-Whittaker measure on $\lambda_N$) converges weakly in distribution as $\e\to 0$ to the marginal of the Whittaker measure $\WM{a_1,\ldots,a_N;\tau}$ on the $T_N$ coordinate. Combining this weak convergence with the uniform convergence of $e_q(x_q)$ and Lemma~\ref{problemma2} gives that
\begin{equation*}
\left\langle \frac{1}{\left(\zeta q^{\lambda_N};q\right)_{\infty}}\right\rangle_{\MM_{t=0}(\tilde a_1,\ldots,\tilde a_N;\rho)} \to \left\langle e^{-u e^{-T_{N}}}  \right\rangle_{\WM{a_1,\ldots,a_N;\tau}}
\end{equation*}
as $q\to 1$, or equivalently $\e \to 0$.

\subsubsection*{Step 2:}
Recall the kernel in the right-hand side of equation (\ref{thmlaplaceeqn}) in  Theorem~\ref{PlancherelfredThm}. It can be rewritten as a Fredholm determinant of a kernel with the variables $v$ and $v'$ (recall $w=q^v$) as follows:
\begin{equation*}
\det(\Id+\tilde K_{\zeta}) = \det(\Id+K_u^{\e}).
\end{equation*}
Here the $L^2$ space with respect to which this determinant is defined is that of the contour specified in Definition~\ref{imagedef} below. The kernel is denoted by $K_u^{\e}$ to denote the dependence on $u$ (through $\zeta = -\e^{N} e^{\tau \e^{-1}} u$) and $\e$ (through $q^{-\e}$). It is given by
\begin{equation}\label{445}
K_u^{\e}(v,v') = \frac{1}{2\pi \I}\int_{\CsPre{q^v}}h^q(s) ds,
\end{equation}
where
\begin{equation}\label{446}
h^q(s)=\Gamma(-s)\Gamma(1+s)\left(\frac{-\zeta}{(1-q)^N}\right)^s \frac{q^v \ln q}{q^{s+v} - q^{v'}}  e^{\gamma q^v(q^{s}-1)} \prod_{m=1}^{N} \frac{\Gamma_q(v- a_m)}{\Gamma_q(s+v- a_m)}
\end{equation}
where the new term $q^v \ln q$ came from the Jacobian of changing $w$ to $v$ and where the \mbox{$q$-Gamma} function is defined as
\begin{equation*}
\Gamma_q(x) = \frac{(q;q)_{\infty}}{(q^x;q)_{\infty}} (1-q)^{1-x}.
\end{equation*}

The contour on which this kernel $K_u^{\e}$ acts is the image of the contour $\CwPre{\tilde \alpha,\varphi}$ under the map $x\mapsto \ln_q x$. There is a fair amount of freedom in specifying this contour, so we will fix a particular such pre-image contour.
\begin{definition}\label{imagedef}
Let $\alpha = 1+\max a_i$ then we define the contour $\CvEps{\alpha,\varphi}$ as the image of \mbox{$q^{\alpha} + e^{\pm \varphi\I}\Rplus $} under the map $x\mapsto \ln_{q} x$. This contour is illustrated in Figure~\ref{whitasymcontours}.
\end{definition}
We will assume that $K_u^{\e}$ acts on the contour $\CvEps{\alpha,\varphi}$. Note that as $\e\to 0$ this contour converges locally uniformly to $\alpha + e^{(\pi \pm \varphi)\I}\Rplus $ as can readily be seen by Taylor expanding the map $x\mapsto \ln_{q} x$.

It follows from the above observation that the contour on which the kernel $K_u^{\e}$ is defined converges as $\e\to 0$ to the contour a $\Cv{\alpha,\varphi}$ on which the kernel in Theorem~\ref{NeilPolymerFredDetThm} is defined. Let us likewise demonstrate the pointwise convergence of the integrand in the integral (\ref{445}) defining kernel $K_u^{\e}$ to that of the kernel $K_u$.

Consider the behavior of each term as $q\to 1$ (or equivalently as $\e\to 0$ as $q=e^{-\e}$):
\begin{align}
e^{-\tau s \e^{-1}}\left(\frac{-\zeta}{(1-q)^N}\right)^s & \to u^s,\label{pwlimits1}\\
\frac{q^v \ln q}{q^{s+v} - q^{v'}} &\to \frac{1}{v+s-v'}, \label{pwlimits2}\\
\frac{\Gamma_q(v-a_m)}{\Gamma_q(v+s-a_m)} & \to \frac{\Gamma(v-a_m)}{\Gamma(s+v-a_m)},\label{pwlimits3}\\
e^{\tau s \e^{-1}} \exp\left(\gamma q^v(q^{s}-1)\right) & \to e^{v \tau s + \tau s^2/2}.\label{pwlimits4}
\end{align}
Combining these pointwise limits together gives the integrand of the kernel $K_u$ given in (\ref{kvvprimeC}). However, in order to prove convergence of the determinants, or equivalently the Fredholm expansion series, one needs more than just this straightforward pointwise convergence.

There are four things we must do to complete \emph{Step 2} and prove convergence of the determinants. In proving convergence of Fredholm determinants it is convenient to have the contour on which the operator acts be fixed with $\e$.

In \emph{Step 2a} we deform $\CvEps{\alpha,\varphi}$ to a contour $\CvEpsR{\alpha,\varphi,r}$ with a portion $\CvRLeq{\alpha,\varphi,<r }$ (of distance $<r$ to the origin) which coincides with the limiting contour $\Cv{\alpha,\varphi}$.

Then in \emph{Step 2b} we show that for any fixed $\eta>0$, by choosing $\e_0$ small enough and $r_0$ large enough, for all $\e<\e_0$ and $r>r_0$ the determinant restricted to $L^2(\CvRLeq{\alpha,\varphi,<r})$ differs from the entire determinant on $L^2(\CvEpsR{\alpha,\varphi,r})$ by less than $\eta$. Thus, at an arbitrarily small cost of $\eta$ we can restrict to a sufficiently large radius on which the contour is independent of $\e$.

In \emph{Step 2c} we show that for any $\eta>0$, for $\e$ small, the Fredholm determinant of $K_u^{\e}$ restricted to $L^2(\CvRLeq{\alpha,\varphi,<r})$ differs by at most $\eta$ from the Fredholm determinant of $K_u$ restricted to the same space.

Finally, \emph{Step 2d} shows that for $r_0$ large enough, for all $r>r_0$ the Fredholm determinant of $K_u$ restricted to $L^2(\CvRLeq{\alpha,\varphi,<r})$ differs from the Fredholm determinant of $K_u$ on $L^2(\Cv{\alpha,\varphi})$ by at most $\eta$. Summing up the steps, we deform the contour, we cut the contour to be finite, we take the $\e\to 0$ limit, and then we repair the contour to its final form -- all at cost $3\eta$ for $\eta$ arbitrarily small.

\subsubsection*{Step 2a:} We must define the contour to which we want to deform $\CvEps{\alpha,\varphi}$, and then justify this deformation as not changing the value of the Fredholm determinant.

\begin{definition}\label{cpctcontdef}
Fix $\varphi\in (0,\pi/4)$ and $r>0$. For $\alpha\in \R$, define the finite contour $\CvRLeq{\alpha,\varphi,<r}$ to be $\{\alpha+te^{(\pi\pm \varphi)\I}:0\leq t\leq r\}$. The maximal imaginary part along $\CvRLeq{\alpha,\varphi,<r}$ is $r\sin(\varphi)$. Define the infinite contour $\CvEpsR{\alpha,\varphi,r}$ to be the union of $\CvRLeq{\alpha,\varphi,<r}$ with $\CvEpsRGeq{\alpha,\varphi,>r}$ and  $\CvEpsReq{\alpha,\varphi,=r}$. Here, the contour
$\CvEpsRGeq{\alpha,\varphi,>r}$ is the portion of the contour $\CvEps{\alpha,\varphi}$ which has imaginary part exceeding $r\sin(\varphi)$ in absolute value; and the contour $\CvEpsReq{\alpha,\varphi,=r}$ is composed of the two horizontal line segments which join $\CvRLeq{\alpha,\varphi,<r}$ with $\CvEpsRGeq{\alpha,\varphi,>r}$. These contours are illustrated in Figure~\ref{whitasymcontours}.
\end{definition}

\begin{figure}
\begin{center}
\psfrag{alpha}[cb]{$\alpha$}
\psfrag{C}[lb]{$\Cv{\alpha,\varphi}$}
\psfrag{Ce}[lb]{$\CvEps{\alpha,\varphi}$}
\psfrag{C1}[lb]{$\CvEpsR{\alpha,\varphi,>r}$}
\psfrag{C2}[lb]{$\CvEpsR{\alpha,\varphi,=r}$}
\psfrag{C3}[lb]{$\CvEpsR{\alpha,\varphi,<r}$}
\includegraphics[height=5cm]{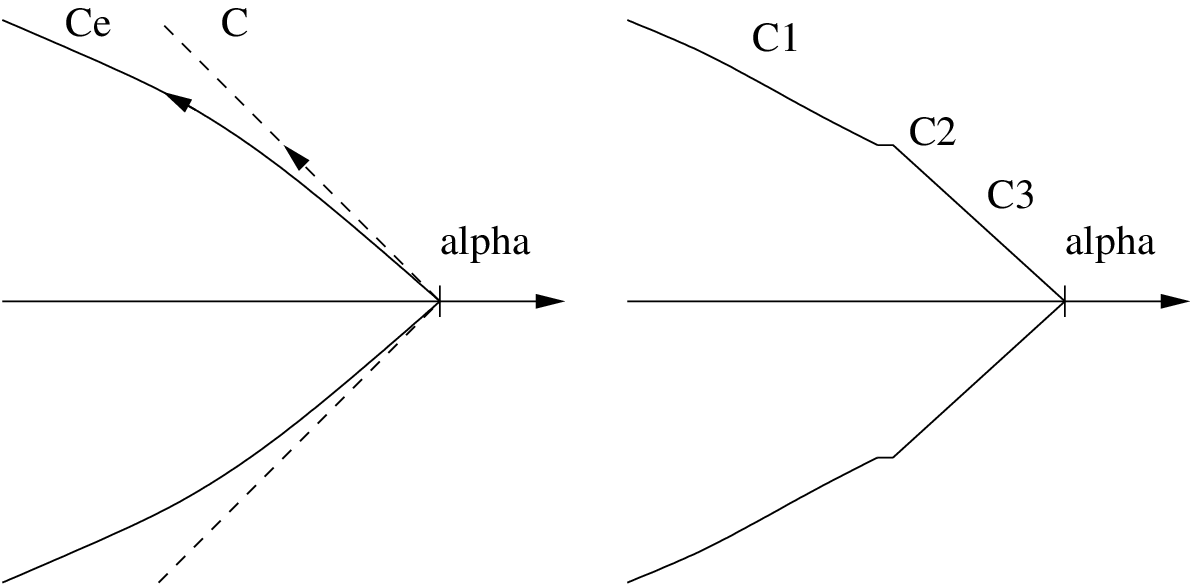}
\caption{Left: The infinite contour $\CvEps{\alpha,\varphi}$ and the limiting contour $\Cv{\alpha,\varphi}$. Right: The infinite contour $\CvEpsR{\alpha,\varphi,r}$ (which we deform from $\CvEps{\alpha,\varphi}$).}
\label{whitasymcontours}
\end{center}
\end{figure}

Now we justify replacing the contour $\CvEps{\alpha,\varphi}$ by $\CvEpsR{\alpha,\varphi,r}$.

\begin{lemma}
For any $r>0$ there exists $\e_0>0$ such that for all $\e<\e_0$,
\begin{equation*}
\det(\Id+K_u^{\e})_{L^2(\CvEps{\alpha,\varphi})} = \det(\Id+K_u^{\e})_{L^2(\CvEpsR{\alpha,\varphi,r})}.
\end{equation*}
\end{lemma}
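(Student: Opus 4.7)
The plan is to expand each Fredholm determinant as its Fredholm series and to deform, term by term, one variable at a time, from $\CvEps{\alpha,\varphi}$ to $\CvEpsR{\alpha,\varphi,r}$ by Cauchy's theorem. Both contours have the same asymptotic rays at infinity, coinciding exactly outside the strip $|\Im v|\leq r\sin(\varphi)$; hence together they enclose two bounded regions (one in each half-plane). The task reduces to showing that, for $\epsilon$ sufficiently small, the map $v\mapsto K_u^{\epsilon}(v,v')$ (for $v'$ on either of the two contours) is analytic on an open neighborhood of these enclosed regions, and symmetrically in $v'$.

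For the $v$-analyticity, the potential singularities of $K_u^{\epsilon}(v,v')$ are: poles of the $q$-Gamma ratio $\prod_m \Gamma_q(v-a_m)/\Gamma_q(s+v-a_m)$ in the integrand, poles from the factor $1/(q^{s+v}-q^{v'})$, and non-analyticities arising from the $v$-dependent $s$-contour $\CsPre{q^v}$. The first type gives poles at $v \in a_m - \Zgeqzero$ for some $m$; these are real and strictly to the left of $\alpha=1+\max a_i$, hence lie strictly inside both V-shaped contours (which make an angle $\pi\pm\varphi$ with the real axis, $\varphi<\pi/4$) and not in the enclosed sliver provided $\epsilon$ is small enough that the local-uniform convergence $\CvEps{\alpha,\varphi}\to\Cv{\alpha,\varphi}$ keeps the sliver within distance, say, $\frac{1}{2}\dist(\Cv{\alpha,\varphi},\R)$ of $\Cv{\alpha,\varphi}$. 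For the remaining concerns, I would invoke Remark \ref{contoursrem}: for each $v$ in the compact sliver one can choose $R=R(v)$ and $d=d(v)$ continuously so that $\CsPre{q^v}$ satisfies conditions (i) and (ii) of Definition \ref{CwPredef} and avoids the poles of $1/(q^{s+v}-q^{v'})$. The resulting $s$-integral is then a holomorphic function of $v$ by standard arguments (e.g.\ Morera's theorem combined with a continuity-of-contours argument).

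To pass from term-by-term deformation to deformation of the full Fredholm determinant, I would obtain Hadamard-type bounds $|\det[K_u^{\epsilon}(v_i,v_j)]_{i,j=1}^n|\leq n^{n/2}M^n$ with $M=M(\epsilon,r)$ a uniform bound on $|K_u^{\epsilon}|$ in a neighborhood of both contours, together with an integrable decay of $|K_u^{\epsilon}|$ along the unbounded tails (which comes from $e^{-\gamma|\Re(q^v(q^s-1))|}$ combined with the choice of $\CsPre{q^v}$ that forces $\Re(q^v(q^s-1))<0$ as per condition (i)); this gives absolute convergence of the series uniformly in the admissible contours, justifying Fubini/exchange of sum and integral. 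The main obstacle will be (a) the careful handling of the $v$-dependent contour $\CsPre{q^v}$ in establishing analyticity of $K_u^{\epsilon}(\cdot,v')$ across the sliver, since $R$ and $d$ must vary with $v$; and (b) proving the tail decay is strong enough to make the series bound uniform, which relies on condition (i) of Definition \ref{CwPredef} being preserved throughout the homotopy from $\CvEps{\alpha,\varphi}$ to $\CvEpsR{\alpha,\varphi,r}$.
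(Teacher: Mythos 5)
Your proposal is correct and takes essentially the same route as the paper: the paper also deforms $\CvEps{\alpha,\varphi}$ to $\CvEpsR{\alpha,\varphi,r}$ across the bounded sliver, observes that for $\e$ small the only potential singularities in $v,v'$ are the poles of the ratio of $q$-Gamma functions and of $q^v\ln q/(q^{s+v}-q^{v'})$, and argues these are avoided via the conditions on $\CsPre{q^v}$. The one stylistic difference is that the paper invokes Lemma~\ref{TWprop1} as a black box whereas you expand the Fredholm series and deform term-by-term with a Hadamard bound and tail estimate; your version is slightly more careful since Lemma~\ref{TWprop1} is phrased for closed curves and one needs the tail decay (as in Proposition~\ref{compactifyprop}) to transfer it to the unbounded contours here.
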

\begin{proof}
The two contours differ only by a finite length modification. We can continuously deform between the two contours. We will employ Lemma~\ref{TWprop1} which says that as long as the kernel is analytic in a neighborhood of the contour as we continuously deform then the Fredholm determinant remains unchanged throughout the deformation. The only things which could threaten the analyticity of the kernel are the poles coming from the left-hand side terms of (\ref{pwlimits2}) and (\ref{pwlimits3}).
On account of the condition satisfied by the contour $\CsPre{q^v}$ (see Definition~\ref{CwPredef}), it follows that these poles are avoided. By choosing $\e$ small enough, the two contours we are deforming between can be made as close as desired. Taking them close enough ensures it is possible then to deform between them while avoiding poles of the kernel in $v$ or $v'$ -- hence proving the lemma.
\end{proof}

\subsubsection*{Step 2b:}
We must now show that we can, with small error, restrict our Fredholm determinant to acting on the finite, fixed contour $\CvRLeq{\alpha,\varphi,<r}$. This requires us choosing $r>r_0$ for $r_0$ large enough, and also choosing $\e<\e_0$ for $\e_0$ small enough.

\begin{proposition}\label{compactifyprop}
Fix $\varphi\in (0,\pi/4)$. For any $\eta>0$ there exists $r_0>0$ and $\e_0>0$ such that for all $r>r_0$ and $\e<\e_0$
\begin{equation*}
\left|\det(\Id+ K_u^{\e})_{L^2(\CvEpsR{\alpha,\varphi,r})} - \det(\Id+ K_u^{\e})_{L^2(\CvRLeq{\alpha,\varphi,<r})}\right| \leq \eta.
\end{equation*}
\end{proposition}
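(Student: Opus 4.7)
The plan is to expand both Fredholm determinants as Fredholm series and bound the difference via the contribution of integration tuples with at least one coordinate in the ``tail'' $C_r := \CvEpsReq{\alpha,\varphi,=r}\cup\CvEpsRGeq{\alpha,\varphi,>r}$ of the contour $\CvEpsR{\alpha,\varphi,r}$. By Hadamard's inequality and the symmetry of the determinant, one reduces to showing two estimates, uniform in $\e$ sufficiently small: (a) a uniform boundedness
\[
M := \sup_v\int_{\CvEpsR{\alpha,\varphi,r}}\sup_{v'}|K_u^{\e}(v,v')||dv| \leq M_0 < \infty,
\]
and (b) a tail decay $T_r := \int_{C_r}\sup_{v'}|K_u^{\e}(v,v')||dv|\to 0$ as $r\to\infty$. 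Indeed, the Hadamard-based bound $|I_n-J_n|\leq n^{n/2+1}M^{n-1}T_r$ on the $n$-th Fredholm contribution combines with the summability of $\sum_n n^{n/2+1}/n!$ (by Stirling) to give the proposition with total error bounded by $C(M_0)T_r$, which can be made smaller than any $\eta>0$ by choosing $r$ large enough.

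The key ingredient is an $\e$-uniform pointwise bound on the kernel. Starting from the contour-integral representation (\ref{445})--(\ref{446}) for $K_u^{\e}(v,v')$, the $s$-integral is split over the vertical part $\CsPre{q^v,\vert}$ and the horizontal part $\CsPre{q^v,\sqsubset}$ of $\CsPre{q^v}$. On the vertical part, the factor $\Gamma(-s)\Gamma(1+s) = -\pi/\sin(\pi s)$ provides exponential decay $e^{-\pi|\Im(s)|}$, which dominates polynomial growth in $|\Im(s)|$ of the $q$-Gamma ratios and yields an $\e$-uniform bound with decay in $|\Im(v)|$. On the horizontal part, the critical factor is $\exp(\gamma q^v(q^s-1))$ with $\gamma=\tau\e^{-2}$: condition (i) in Definition~\ref{CwPredef} ensures $\Re(q^v(q^s-1))\leq -\sin(b)|q^v(q^s-1)|$, so this factor exponentially damps the integrand wherever $\gamma|q^v||q^s-1|$ is large, in particular as $|\Im(v)|\to\infty$ along $\CvEpsR{\alpha,\varphi,r}$. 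The denominator $q^{s+v}-q^{v'}$ stays bounded away from zero throughout thanks to condition (ii) in Definition~\ref{CwPredef}, and the $q$-Gamma ratios $\prod_m \Gamma_q(v-a_m)/\Gamma_q(s+v-a_m)$ are controlled uniformly in $\e$ by their classical analogs (\ref{pwlimits3}) via $q$-Stirling.

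The main obstacle is achieving these bounds uniformly in $\e$, since both the $s$-contour $\CsPre{q^v}$ and the integrand depend on $\e$ non-trivially (in particular $R\sim -\log_q|q^v|$, $d\sim |q^v|^{-1}$, and $\gamma=\tau\e^{-2}$). This is handled by a two-scale decomposition: on a compact portion of the contour (say $|\Im(v)|\leq K$ for some $\e$-independent $K$) one uses the pointwise convergences (\ref{pwlimits1})--(\ref{pwlimits4}) combined with dominated convergence to obtain uniform control, while on the remaining tail the exponential decay mechanisms above dominate every other factor in $h^q(s)$ once $\e$ is taken small enough. Once these uniform estimates are in place, the proposition follows with $r_0$ and $\e_0$ chosen to make $C(M_0) T_r < \eta$.
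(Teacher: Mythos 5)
Your high-level strategy is exactly the paper's: cut the Fredholm series at the $n$-th term, use Hadamard's inequality together with a uniform-in-$\e$ pointwise bound on $K_u^\e$ that decays exponentially along $\CvEpsR{\alpha,\varphi,r}$, and then split the estimate of the integrand $h^q(s)$ over the horizontal piece $\CsPre{q^v,\sqsubset}$ and the vertical piece $\CsPre{q^v,\vert}$. Your treatment of the horizontal piece (using condition (i) of Definition~\ref{CwPredef} to get $\Re\bigl(q^v(q^s-1)\bigr)\le -\sin(b)\,|q^v||q^s-1|$ and hence damping from $\exp\bigl(\gamma q^v(q^s-1)\bigr)$) is the same as the paper's Step 1 in Section~\ref{prop2sec}, and the reduction to the estimates (a) and (b) is a reasonable variant of the paper's Lemma~\ref{exponentialdecaycutoff}.

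However, your analysis of the vertical part $\CsPre{q^v,\vert}$ has a genuine gap. You assert that $\Gamma(-s)\Gamma(1+s)=-\pi/\sin(\pi s)$ supplies the needed $\e$-uniform bound with decay in $|\Im(v)|$, but this factor depends on $s$ only; its decay $e^{-\pi|\Im(s)|}$ cannot by itself produce any decay in $v$ (which is what you need to control the Fredholm series tail). Moreover, the ratios $\prod_m \Gamma_q(v-a_m)/\Gamma_q(s+v-a_m)$ do not grow polynomially in $|\Im(s)|$: they are $\e^{-1}$-periodic and grow exponentially, roughly like $e^{c\,f^\e(s+v)}$ with $f^\e(s)=\dist(\Im(s),2\pi\e^{-1}\Z)$ (this is the content of the estimate used around (\ref{festatement}) in Section~\ref{prop1sec}), and for several drifts $a_m$ the combined exponential growth can exceed the $e^{-\pi|\Im(s)|}$ decay rate. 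The mechanism that actually works on the vertical part is again the factor $\exp\bigl(\gamma q^v(q^s-1)\bigr)$ (combined with the $u^s$ contribution hidden in $\zeta$): writing $s=R+\I y$ one shows
\begin{equation*}
\tau\,\Re\bigl(\e^{-1}s + \e^{-2} q^v(q^s-1)\bigr) \;\le\; c'\tau\Bigl(-\tfrac{(y+\Im v)^2}{2}-c|v|^2\Bigr)
\end{equation*}
within a fundamental domain $y\in[-\pi\e^{-1},\pi\e^{-1}]$, which gives Gaussian decay in $y$ and the required exponential decay in $|v|$. The role of $\Gamma(-s)\Gamma(1+s)$ is only to sum the contributions of the shifted fundamental domains via a geometric series; it is not the source of either the uniform bound or the decay in $v$. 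As written, your vertical-part argument would fail, so the proof is incomplete until it is replaced by the Gaussian estimate above.
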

The proof of this proposition is fairly technical and is given in Section~\ref{prop2sec}.

\subsubsection*{Step 2c:} Having restricted our attention to the finite, unchanging (with $\e$) contour $\CvRLeq{\alpha,\varphi,<r}$ we may now take the limit of Fredholm determinant on the restricted $L^2$ space as $\e\to 0$.

\begin{proposition}\label{finiteSprop}
Fix $\varphi\in(0,\pi/4)$. For any $\eta>0$ and any $r>0$ there exists $\e_0>0$ such that for all $\e<\e_0$
\begin{equation*}
\left|\det(\Id+ K_u^{\e})_{L^2(\CvRLeq{\alpha,\varphi,<r})} - \det(\Id+ K_u)_{L^2(\CvRLeq{\alpha,\varphi,<r})}\right|\leq \eta
\end{equation*}
where $K_u(v,v')$ is given by the integral (\ref{kvvprime}).
\end{proposition}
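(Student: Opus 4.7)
The plan is to reduce the claim to uniform convergence of the kernels on the compact set $\CvRLeq{\alpha,\varphi,<r}^2$, and then extract Fredholm determinant convergence from the Fredholm series via Hadamard's inequality. Recall that
\[\det(\Id+K)_{L^2(\mathcal{C})} = \sum_{n\geq 0}\frac{1}{n!}\int_{\mathcal{C}^n}\det\bigl[K(v_i,v_j)\bigr]_{i,j=1}^n\prod_i\frac{dv_i}{2\pi\I},\]
and Hadamard's inequality bounds each $n$-fold integrand by $n^{n/2}(\sup_{\mathcal{C}^2}|K|)^n$. Since $\CvRLeq{\alpha,\varphi,<r}$ has finite length $\ell$, once a uniform bound $\sup|K_u^\e|,\sup|K_u|\leq M$ is in hand (for $\e<\e_0$), each $n$-th term is dominated by $\ell^n n^{n/2} M^n/n!$, which is summable. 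Term-by-term passage to the limit then gives the claim, provided we have uniform convergence $K_u^\e(v,v')\to K_u(v,v')$ on $\CvRLeq{\alpha,\varphi,<r}^2$.

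Both kernels are $s$-contour integrals: $K_u^\e$ over $\CsPre{q^v}$ and $K_u$ over $\Cs{v}$. For $v\in\CvRLeq{\alpha,\varphi,<r}$ (a bounded set), the parameters $R,d$ in both definitions can be chosen from bounded ranges uniformly in $v$ and in $\e<\e_0$, so both contours may be continuously deformed without crossing poles to a single fixed contour $\mathcal{D}$ enclosing the same positive integer poles $s=1,\dots,\lfloor R_0\rfloor$. The relevant poles come from $\Gamma(-s)\Gamma(1+s)$ at integers and from $1/(q^{s+v}-q^{v'})$ at $s=v'-v+2\pi\I k/\e$; the $k\neq 0$ poles sit at $|\Im s|\gtrsim\e^{-1}$ and play no role, while the contour definitions ensure the $k=0$ pole lies on the correct side. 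On $\mathcal{D}$, pointwise convergence of the integrand is given by (\ref{pwlimits1})--(\ref{pwlimits4}) and is uniform in $(v,v')$ on compact subsets of $\mathcal{D}$, so it remains to produce a dominating function in order to apply dominated convergence.

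The key estimates are: $\pi/|\sin(\pi s)|\leq Ce^{-\pi|\Im s|}$ for $|\Im s|\geq 1$; the Jacobian $|q^v\ln q|/|q^{s+v}-q^{v'}|$ is uniformly bounded away from the singularity on $\mathcal{D}$; the $q$-Gamma ratios converge uniformly to the ordinary $\Gamma$ ratios and are uniformly bounded; and the combined factor $\exp(\tau s\e^{-1}+\gamma q^v(q^s-1))\cdot u^s$ has Gaussian decay in $\Im s$. The main obstacle is verifying this last Gaussian estimate globally on $\mathcal{D}$, with constants independent of $\e<\e_0$ and $v\in\CvRLeq{\alpha,\varphi,<r}$. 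Writing $s=R_0+\I y$, the real part of the exponent is approximately
\[R_0\ln|u|+\tau\e^{-2}\Re\!\bigl[q^v(q^s-1)+\e s\bigr],\]
and Taylor expansion in $\e s$ yields $\tau\Re(s^2)/2+O(\e)=\tau(R_0^2-y^2)/2+O(\e)$ in the regime $|y|\lesssim\e^{-1+\delta}$. For $|y|\gtrsim\e^{-1}$ the combined exponent stays bounded above by a constant, and the $e^{-\pi|y|}$ factor from $1/\sin(\pi s)$ dominates. Splicing these two regimes produces a globally valid dominating function; dominated convergence then yields uniform convergence of the $s$-integrals for each $(v,v')$, and compactness upgrades this to uniform kernel convergence, completing the reduction.
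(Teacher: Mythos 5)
Your plan — reduce to uniform kernel convergence on the compact set via Hadamard's bound on the Fredholm series, fix a common $s$-contour for both $K_u^\e$ and $K_u$, then combine pointwise integrand convergence on a bounded piece of the $s$-contour with uniform tail estimates — is precisely the route the paper takes (the reduction is Lemma~\ref{uniformptconvergence}, the integrand convergence and tail bounds are the three Claims in Section~\ref{prop1sec}). Two of your supporting estimates need repair, though neither changes the structure: the ratio $1/|\Gamma_q(s+v-a_m)|$ is \emph{not} uniformly bounded along the vertical part of the $s$-contour — it grows like $e^{c'\,\mathrm{dist}(\Im s,\,2\pi\e^{-1}\Z)}$ within each period (cf.\ display~(\ref{festatement})), so the Gaussian decay of $\exp(\tau\e^{-1}s+\gamma q^v(q^s-1))$ must be brought in to absorb it rather than treated as a spectator; and your splicing of the regimes $|y|\lesssim\e^{-1+\delta}$ and $|y|\gtrsim\e^{-1}$ leaves a genuine intermediate window where neither the naive Taylor expansion nor the bounded-by-constant bound applies, which the paper closes by the uniform inequality $\cos x-1\le -x^2/6$ on $[-\pi,\pi]$ applied at $x=\e y$, valid all the way up to $|y|=\pi\e^{-1}$.
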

The proof of this proposition is also fairly technical and is given in Section~\ref{prop1sec}.

\subsubsection*{Step 2d:} Finally, we show that post-asymptotics we can return to the simple infinite contour $\Cv{\alpha,\varphi}$.

\begin{proposition}\label{postasymlemma}
Fix $\varphi\in(0,\pi/4)$. For any $\eta>0$ there exists $r_0>0$ such that for all $r>r_0$
\begin{equation*}
\left|  \det(\Id+ K_u)_{L^2(\CvRLeq{\alpha,\varphi,<r})} -  \det(\Id+ K_u)_{L^2(\Cv{\alpha,\varphi})}\right|\leq \eta.
\end{equation*}
\end{proposition}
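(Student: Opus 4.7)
The plan is a standard tail estimate for the Fredholm series, reduced to a Gaussian pointwise decay bound on the kernel along the contour $\Cv{\alpha,\varphi}$. Writing both determinants via the Fredholm expansion (\ref{eqFredholm}) and subtracting,
\begin{equation*}
\det(\Id+K_u)_{L^2(\Cv{\alpha,\varphi})} - \det(\Id+K_u)_{L^2(\CvRLeq{\alpha,\varphi,<r})}
= \sum_{n\geq 1}\frac{1}{n!}\int_{\mathcal{E}_n^r}\det[K_u(v_i,v_j)]_{i,j=1}^n\prod_{i=1}^n\frac{dv_i}{2\pi\I},
\end{equation*}
where $\mathcal{E}_n^r\subset\Cv{\alpha,\varphi}^n$ is the set of $n$-tuples with at least one coordinate outside the truncated arc $\CvRLeq{\alpha,\varphi,<r}$. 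My goal is to show the right-hand side is at most $\eta$ once $r$ is sufficiently large.

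The first and main step is a pointwise bound
\begin{equation*}
M(v) := \sup_{v'\in\Cv{\alpha,\varphi}}|K_u(v,v')|\leq C\,e^{-c|v-\alpha|^2}\qquad\text{for all } v\in\Cv{\alpha,\varphi},
\end{equation*}
with constants $c,C>0$ depending on $N,\tau,u,\alpha,\varphi$ and $\{a_m\}$. To extract the Gaussian decay I would complete the square in the exponent of (\ref{kvvprime}), writing $v\tau s+\tau s^2/2=\tau(s+v)^2/2-\tau v^2/2$. Parametrizing $v=\alpha+te^{\I(\pi\pm\varphi)}$ one computes $\Re(-\tau v^2/2)=-\tau(\alpha^2-2\alpha t\cos\varphi+t^2\cos(2\varphi))/2$, and since $\varphi\in(0,\pi/4)$ forces $\cos(2\varphi)>0$, this pulls a universal factor $e^{-ct^2}$ out of the integrand. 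The remaining $s$-integration over $\Cs{v}$ is then estimated piecewise: along the vertical tails $\Cs{v,\vert}$, where $\Re(s)=R=1+t\cos\varphi$ grows with $t$, the combination of $|\Gamma(-s)\Gamma(1+s)|=\pi/|\sin(\pi s)|$, the Gaussian in $\Im(s)$ coming from $e^{\tau(s+v)^2/2}$, and Stirling's estimate for the Gamma ratio $\prod_m\Gamma(v-a_m)/\Gamma(s+v-a_m)\sim|v|^{-Ns}$ together produce at most polynomial growth in $t$; along the horizontal detour $\Cs{v,\sqsubset}$ the real part of the exponent is a downward-opening quadratic in $\Re(s)$ whose Gaussian integration likewise contributes only polynomial prefactors. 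The denominator $1/(v+s-v')$ is controlled uniformly in $v'$ because, for $d$ chosen small in terms of $\varphi$, the contour $v+\Cs{v}$ stays at distance at least $\min(1,\sin(\varphi)/2)$ from every $v'\in\Cv{\alpha,\varphi}$ (vertical leg: $\Re(v+s)=\alpha+1$ while $\Re(v')\leq\alpha$; horizontal detour: direct geometric check).

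Given the bound on $M(v)$, Hadamard's inequality gives $|\det[K_u(v_i,v_j)]_{i,j=1}^n|\leq n^{n/2}\prod_{i=1}^n M(v_i)$. Symmetry of the integrand and a union bound on $\mathcal{E}_n^r$ then yield
\begin{equation*}
\left|\sum_{n\geq 1}\frac{1}{n!}\int_{\mathcal{E}_n^r}\det[K_u(v_i,v_j)]\prod_i\frac{dv_i}{2\pi\I}\right|
\leq B(r)\sum_{n\geq 1}\frac{n\cdot n^{n/2}}{n!}A^{n-1},
\end{equation*}
where $A=\int_{\Cv{\alpha,\varphi}}M(v)\,|dv|/(2\pi)<\infty$ and $B(r)=\int_{|v-\alpha|>r}M(v)\,|dv|/(2\pi)\to 0$ as $r\to\infty$. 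Since $n^{n/2}/n!$ decays super-geometrically by Stirling, the sum in $n$ is a finite constant, and choosing $r_0$ so that $B(r)$ is smaller than $\eta$ divided by this constant completes the proof. The hard part is the kernel estimate: the contour $\Cs{v}$ itself varies with $v$ (its vertical leg sits at $\Re(s)=R$ linear in $|v|$), so one must obtain the Gaussian decay uniformly across this family while carefully balancing the Gaussian, Gamma-ratio, sine, and $u^s$ contributions, and uniformly in $v'$ through the denominator.
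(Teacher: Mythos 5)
Your overall framework is the same as the paper's: both reduce the proposition to a uniform pointwise decay bound $M(v)=\sup_{v'}|K_u(v,v')|$ and then dominate the Fredholm tail via Hadamard's inequality (the paper packages this reduction as Lemma~\ref{exponentialdecaycutoff}). The substance is in the kernel estimate, and there your argument has a gap.

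You claim Gaussian decay $M(v)\leq Ce^{-c|v-\alpha|^2}$, extracted by writing $v\tau s+\tau s^2/2=\tau(s+v)^2/2-\tau v^2/2$ and pulling $e^{-\tau\Re(v^2)/2}\sim e^{-\tau t^2\cos(2\varphi)/2}$ out of the integral, asserting that the remaining $s$-integral contributes only polynomial growth. This fails on the horizontal detour $\Cs{v,\sqsubset}$. For $s=\tfrac12+\I y$ with $|y|\leq d$ and $v=\alpha-t\cos\varphi+\I t\sin\varphi$, one has $\Re\bigl((s+v)^2/2\bigr)=\tfrac12\bigl[(\tfrac12+\alpha-t\cos\varphi)^2-(y+t\sin\varphi)^2\bigr]\approx \tfrac12 t^2\cos(2\varphi)$ for large $t$, so the factor $e^{\tau(s+v)^2/2}$ has \emph{Gaussian growth} that cancels the Gaussian decay you pulled out, leaving only $\Re(v\tau s+\tau s^2/2)\big|_{s=1/2}\approx -\tfrac{\tau}{2}t\cos\varphi$, i.e.\ linear decay. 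Your subsidiary claim that on $\Cs{v,\sqsubset}$ ``the real part of the exponent is a downward-opening quadratic in $\Re(s)$'' is wrong: the coefficient of $\Re(s)^2$ in $\Re(\tau s^2/2)$ is $+\tau/2>0$, so the quadratic is upward-opening and its maximum over $x\in[1/2,R]$ is at an endpoint, precisely the endpoint $x=1/2$ where the cancellation just described occurs. The Gamma-ratio factor at $x=1/2$ only contributes $|v|^{-N/2}$, a polynomial, and cannot restore Gaussian decay. The correct rate is therefore exponential, $M(v)\leq Ce^{-c|v|}$, which is exactly what the paper's Steps 1 and 2 prove (the vertical tails $\Cs{v,\vert}$ do give Gaussian decay as you say; it is the horizontal detour near $x=1/2$ that caps the overall rate at exponential). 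Exponential decay still suffices for your tail estimate, so the proposition is of course true, but as written your proof does not establish the bound on which it rests; you would need to replace the completed-square Gaussian claim with a direct endpoint estimate of $\Re(v\tau s+\tau s^2/2)$ on $\Cs{v,\sqsubset}$ as the paper does.
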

The proof of this proposition is given in Section~\ref{prop3sec}. It is a fair amount more straight forward than the previous two proofs and hence is given first.

Having completed the four substeps we may combine Propositions~\ref{compactifyprop},~\ref{finiteSprop} and~\ref{postasymlemma} to show that for any $\eta>0$, there exists $\e_0>0$ such that for all $\e<\e_0$,
\begin{equation*}
\left|\det(\Id+\tilde K_{\zeta})_{L^2(\CwPre{\tilde \alpha,\varphi})} -  \det(\Id+ K_u)_{L^2(\Cv{\alpha,\varphi})} \right| \leq 3\eta
\end{equation*}
where $\det(\Id+\tilde K_{\zeta})$ is as in the right-hand side of equation (\ref{thmlaplaceeqn}) in  Theorem~\ref{PlancherelfredThm}, subject to the scalings given in (\ref{scalings}). Since $\eta$ is arbitrary this shows that
\begin{equation*}
\lim_{\e \to 0} \det(\Id+\tilde K_{\zeta})_{L^2(\CwPre{\tilde \alpha,\varphi})} = \det(\Id+ K_u)_{L^2(\Cv{\alpha,\varphi})}.
\end{equation*}

The above result completes the proof of Theorem~\ref{NeilPolymerFredDetThm} modulo proving Propositions~\ref{compactifyprop},~\ref{finiteSprop} and~\ref{postasymlemma}.

\section{Details in the proof of Theorem~\ref{ThmPosTempAsy}}\label{proofOConnellYorKPZ}
As discussed in Section~\ref{formalOConnellYorKPZ}, to finish the proof of Theorem~\ref{ThmPosTempAsy} we need to show Theorem~\ref{PFThmF2pert}. For $\kappa>0$, Definition~\ref{digammadef} associates the scaling parameters $\theta^\kappa$, $f^\kappa$ and $c^\kappa$ which appear in the statement of this result. The variable $\kappa$ and $\theta$ are dual in the sense that one could instead start with some fixed $\theta>0$ and then associated scaling parameters $\kappa_{\theta}$, $f_{\theta}$ and $c_{\theta}$. In particular, for $\theta=\theta^{\kappa}$ one recovers $f_{\theta}=f^\kappa$ and $c_{\theta}=c^\kappa$. In the proof it is more natural to parameterize everything by $\theta$ instead of $\kappa$, so we will do it.

First we prove the convergence to the GUE Tracy-Widom distribution without boundary perturbations, since the proof with boundary perturbations is a small modification of it.

\subsection{Proof of Theorem~\ref{PFThmF2pert}(a)}\label{sect5.1}
We first give explicit expansions for some of the functions from Definition~\ref{digammadef}.
Let $\Psi(z)=\frac{d}{dz} \ln(\Gamma(z))$ be the Digamma function and fix $\theta\in (0,\infty)$. Then
\begin{equation*}
\Psi(z)=-\gamma_{\rm E}+\sum_{n=0}^\infty \left(\frac{1}{n+1}-\frac{1}{n+z}\right),
\end{equation*}
where $\gamma_{\rm E}$ is the Euler constant. Hence,
\begin{align}
\kappa_\theta&=\Psi'(\theta)=\sum_{n=0}^\infty \frac{1}{(\theta+n)^2}, \label{PFeqKappa} \\
f_\theta&=\theta \Psi'(\theta)-\Psi(\theta)=\gamma_{\rm E} +\sum_{n=0}^\infty \left(\frac{n+2\theta}{(n+\theta)^2}-\frac{1}{n+1}\right), \label{PFeqF}\\
c_\theta&=(-\Psi''(\theta)/2)^{1/3}=\bigg(\sum_{n=0}^\infty \frac{1}{(n+\theta)^3}\bigg)^{1/3}. \label{PFeqCtheta}
\end{align}
Under the scaling limit
\begin{equation*}
\tau=\kappa_\theta N,\quad u=e^{-N f_\theta - r c_\theta N^{1/3}}.
\end{equation*}
we have to show the following: For $K_u$ as in (\ref{kvvprime}) and a contour $\mathcal{C}_v:=\Cv{0,\varphi}$,
\begin{equation*}
\lim_{N\to\infty}\det(\Id+K_u)_{L^2(\mathcal{C}_v)}= \det(\Id-K_{\rm Ai})_{L^2(r,\infty)}.
\end{equation*}

To show this we start with the kernel (\ref{kvvprime}), replace $\Gamma(-s)\Gamma(1+s) = -\pi / \sin(\pi s)$ and then perform the change of variable $\tilde z=s+w$ to obtain
\begin{equation*}
K_{u}(v,v') = \frac{-1}{2\pi \I}\int d\tilde z\, \frac{\pi}{\sin (\pi(\tilde z-v))}\frac{e^{N G(v)-N G(\tilde z)} e^{r N^{1/3} (v-\tilde z)}}{\tilde z-v'}.
\end{equation*}
where
\begin{equation*}
G(z) = \ln \Gamma(z) - \kappa \frac{z^2}{2} + f^\kappa z.
\end{equation*}
We will show that the leading contribution to the Fredholm determinant comes for $v,v'$ in a $N^{-1/3}$-neighborhood of $\theta$. Now let us specify the exact choice for the contour $\mathcal{C}_v$ as well as the contour along which $\tilde z$ is integrated. We choose\footnote{Theorem~\ref{OConYorFluctThm} is stated for $\varphi\in (0,\pi/4)$ since one uses the quadratic decay (\ref{eq6.9}) to control the linear term in the bound (\ref{6.12}). For $\varphi=\pi/4$ one gets a linear decay instead of (\ref{eq6.9}) whose strength depends on the parameter $\alpha$ too, it would not strong enough general $\alpha$. However, in our case, with $\alpha=\theta$, it still works, as can be seen from the bound obtained in Proposition~\ref{PFPropBound}. The proof could also be adapted to any other asymptotic direction $0<\varphi<\pi/4$ by simply modifying the path away at a distance greather than some (arbitrary but fixed with $N$) value $R_0$ (one can not employ any angle $\varphi\in (0,\pi/4)$ right away from the critical point since some steep descent properties are then locally not satisfied).}
\begin{equation*}
\mathcal{C}_v:=\{\theta-|y|+\I y, y\in \R\}.
\end{equation*}
$\mathcal{C}_v$ is a steep descent path (see the footnote in Section~\ref{formalcalc1}) for the function $\Re(G(v))$. The path for $\tilde z$ is dependent on $v$, since it has to pass to the left of, or contain the simple poles $v+1,v+2,\ldots$, see Figure~\ref{PFFigPathsTW} (left). Consider the sequence of points $S=\{\Re(v)+1,\Re(v)+2,\ldots\}$. There are three possibilities\label{PFChoiceOfPathZ}:
\begin{itemize}
  \item[(1)] If the sequence $S$ does not contain points in $[\theta,\theta+3c_\theta^{-1} N^{-1/3}]$, then let $\ell\in \N_0$ be such that $\Re(v)+\ell \in [\theta-1,\theta]$ and we set $\tilde\e=c_\theta^{-1} N^{-1/3}$.
  \item[(2)] If the sequence $S$ contains a point in $[\theta,\theta+2c_\theta^{-1} N^{-1/3}]$, then let $\ell\in \N$ such that $\Re(v)+\ell \in [\theta,\theta+2c_\theta^{-1} N^{-1/3}]$ and set $\tilde\e=3 c_\theta^{-1} N^{-1/3}$.
  \item[(3)] If the sequence $S$ contains a point in $(\theta+2c_\theta^{-1} N^{-1/3},\theta+3c_\theta^{-1} N^{-1/3}]$, then let $\ell\in \N$ such that $\Re(v)+\ell \in (\theta-1+2c_\theta^{-1} N^{-1/3},\theta-1+3c_\theta^{-1} N^{-1/3}]$ and set $\tilde\e= c_\theta^{-1} N^{-1/3}$.
\end{itemize}
With this choice, the singularity of the sine along the line $\theta+\tilde\e+\I\R$ is not present, since the poles are at a distance at least $c_\theta^{-1} N^{-1/3}$ from it. Then, the path for $\tilde z$ is given by
\begin{equation*}
\mathcal{C}_{\tilde z}:=\{\theta+\tilde\e+\I y, y\in \R \}\cup \bigcup_{k=1}^\ell B_{v+k},
\end{equation*}
and $B_{v+k}$ denotes a small circle (radius smaller than $1/2$) around $v+k$ and clockwise oriented. If $\ell=0$ then the small circles are simply not present.
The idea behind this choice of the path $\mathcal{C}_{\tilde z}$ is that the $z$-contour consists of a fixed line that is (almost) independent of kernel arguments, and an additional number of little circles (i.e., poles) as needed. Moreover, the leading contribution of the kernel comes only from the cases where $\ell=0$ (i.e., situation (1)) for which $\tilde\e=c_\theta^{-1} N^{-1/3}$.

\begin{figure}
\begin{center}
\psfrag{Phi1}[cb]{$\Phi^{-1}$}
\psfrag{0}[cb]{$0$}
\psfrag{theta}[rb]{$\theta$}
\psfrag{-thetat}[cb]{$-\theta c_\theta N^{1/3}$}
\psfrag{theta+epsilon}[lb]{$\theta+\tilde\e$}
\psfrag{p}[lb]{$p$}
\psfrag{w}[cb]{$w$}
\psfrag{Cw}[lb]{$\mathcal{C}_w$}
\psfrag{v}[cb]{$v$}
\psfrag{Cv}[lb]{$\mathcal{C}_v$}
\psfrag{Cztilde}[lb]{$\mathcal{C}_{\tilde z}$}
\psfrag{Cz}[lb]{$\mathcal{C}_z$}
\includegraphics[height=6cm]{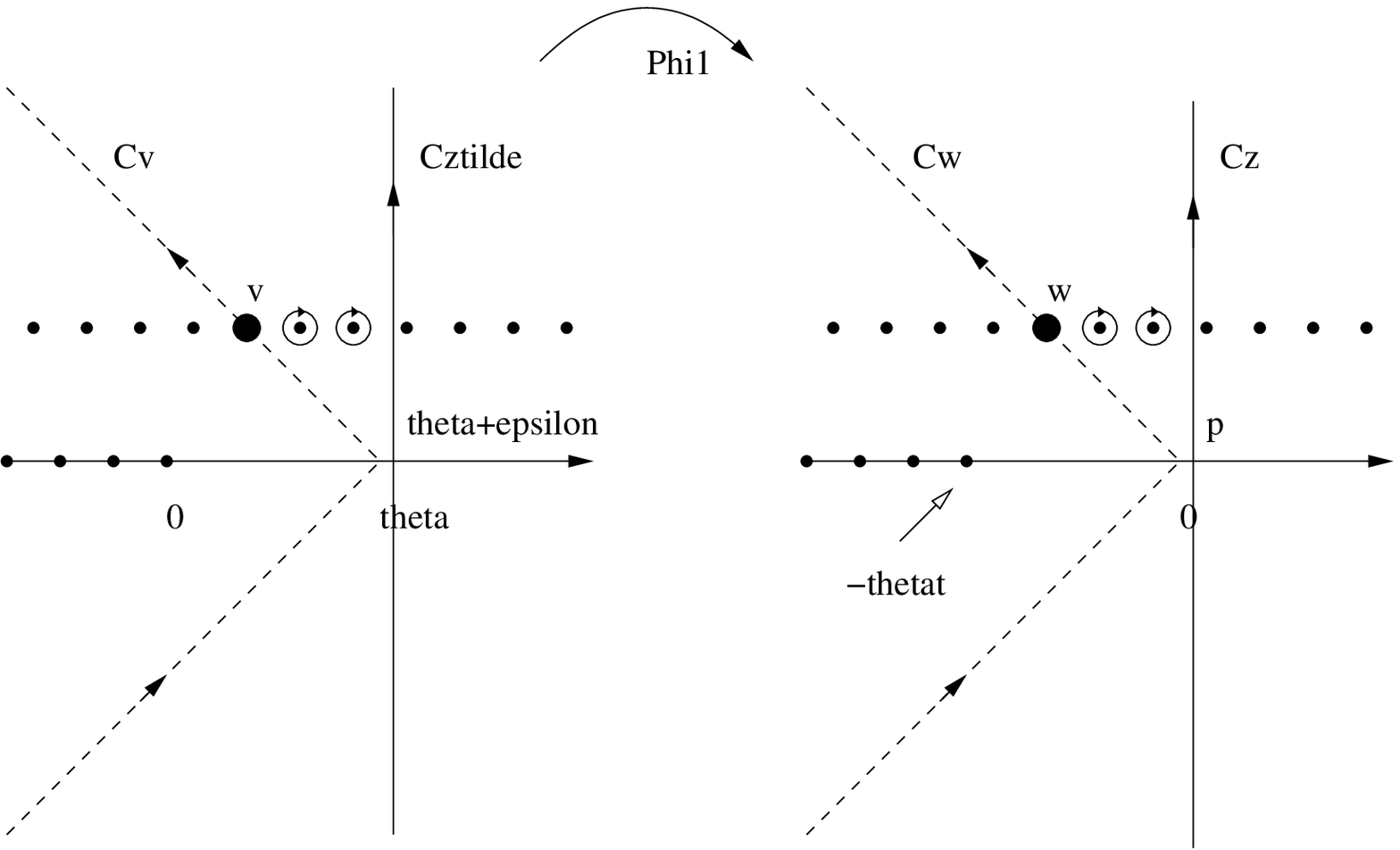}
\caption{Left: Integration paths $\mathcal{C}_v$ (dashed) and $\mathcal{C}_{\tilde z}$ (the solid line plus circles at $v+1,\ldots,v+\ell$). The small black dots are poles either of the sine or of the gamma function. Right:
Integration paths after the change of variables $\mathcal{C}_w$ (dashed) and $\mathcal{C}_z$ (the solid line plus circles at $w+1,\ldots,w+\ell$), with \mbox{$p=p(w)\in\{1,3\}$}.}
\label{PFFigPathsTW}
\end{center}
\end{figure}

Also, we do the change of variable
\begin{equation*}
\{v,v',\tilde z\}=\{\Phi(w),\Phi(w'),\Phi(z)\}\quad \textrm{with}\quad\Phi(z):=\theta+z c_\theta^{-1} N^{-1/3}.
\end{equation*}
After this change of variable, $\det(\Id+K_u)_{L^2(\mathcal{C}_v)}=\det(\Id+K_N)_{L^2(\mathcal{C}_w)}$, the path $\mathcal{C}_v$ becomes (see Figure~\ref{PFFigPathsTW} (right))
\begin{equation}\label{PFeqCw}
\mathcal{C}_w:=\{-|y|+\I y,y\in\R\}
\end{equation}
and the accordingly rescaled kernel
\begin{equation*}
\begin{aligned}
K_N(w,w')&:=c_\theta^{-1} N^{-1/3} K_u(\Phi(w),\Phi(w')) \\
&=\frac{-c_\theta^{-1} N^{-1/3}}{2\pi \I}\int_{\mathcal{C}_z:=\Phi^{-1}(\mathcal{C}_{\tilde z})}dz \frac{\pi e^{N G(\Phi(w))-N G(\Phi(z))}}{\sin(\pi (z-w)c_\theta^{-1} N^{-1/3})} \frac{e^{r (w-z)}}{z-w'}
\end{aligned}
\end{equation*}
where
\begin{equation*}
G(w)=\ln(\Gamma(w))+ f_\theta w - \kappa_\theta w^2/2.
\end{equation*}

In Proposition~\ref{PFPropBound} we show that for any $w,w'\in \mathcal{C}_w$, there exists a constant $C\in (0,\infty)$ such that
\begin{equation*}
|K_N(w,w')|\leq C e^{-|\Im(w)|}
\end{equation*}
uniformly for all $N$ large enough. Therefore,
\begin{equation*}
\left|\det(K_N(w_i,w_j))_{1\leq i,j\leq n}\right|\leq n^{n/2} C^n \prod_{i=1}^n e^{-|\Im(w_i)|}
\end{equation*}
where the factor $n^{n/2}$ is Hadamard's bound. From this bound, it follows that the Fredholm expansion of the determinant,
\begin{equation*}
\det(\Id+K_N)_{L^2(\mathcal{C}_w)} =\sum_{n=0}^\infty \frac{1}{n!} \int_{\mathcal{C}_w} dw_1 \cdots \int_{\mathcal{C}_w} dw_n \det(K_N(w_i,w_j))_{1\leq i,j\leq n},
\end{equation*}
is absolutely integrable and summable. Thus we can by dominated convergence take the $N\to\infty$ limit inside the series, i.e., replace $K_N$ by its pointwise limit,
\begin{equation}\label{PFeq27}
\lim_{N\to\infty} K_N(w,w') = \widetilde K_{\rm Ai}(w,w'):=\frac{-1}{2\pi\I} \int_{e^{-\pi \I/4}\infty}^{e^{\pi \I/4}\infty}dz \frac{e^{z^3/3-w^3/3}e^{r w-r z}}{(z-w)(z-w')},
\end{equation}
derived in Proposition~\ref{PFPropPtwiseConv}, i.e., we have shown that
\begin{equation*}
\lim_{N\to\infty} \det(\Id+K_N)_{L^2(\mathcal{C}_w)} = \det(\Id+\widetilde K_{\rm Ai})_{L^2(\mathcal{C}_w)}.
\end{equation*}
The last part is a standard reformulation, which we report in Lemma~\ref{PFLemTWreformuation}, see also~\cite{TW08b}. This ends the proof of Theorem~\ref{PFThmF2pert}.

\subsection{Pointwise convergence and bounds}
The function $G$ satisfies
\begin{equation}\label{PFeqG123}
G'(\theta)=G''(\theta)=0,\quad G^{(3)}(\theta)=-2\sum_{n=0}^\infty \frac{1}{(n+\theta)^3}=-2c_\theta^{3},\quad G^{(4)}(\theta)=\sum_{n=0}^\infty \frac{6}{(n+\theta)^4},
\end{equation}
therefore $G$ has a double critical point at $\theta$.
For the steep descent analysis we need to analyze the function \mbox{$g(x,y)=\Re(G(x+\I y))$}. It holds\footnote{See for example $\texttt{http://functions.wolfram.com/06.11.19.0001.01}$}
\begin{equation*}
\begin{aligned}
\Re(\ln\Gamma(x+\I y)) & = \sum_{n=1}^\infty \left(\frac{x}{n} - \frac12 \ln\left(\frac{(x+n)^2+y^2}{n^2}\right)\right)
- \gamma_{\rm E} x -\frac12 \ln(x^2+y^2) \\
&=\sum_{n=0}^\infty \left(\frac{x}{n+1} - \frac12 \ln\left((x+n)^2+y^2\right)+\ln(n)\mathbf{1}_{n\geq 1}\right)
- \gamma_{\rm E} x.
\end{aligned}
\end{equation*}
Together with (\ref{PFeqKappa}) and (\ref{PFeqF}) we get
\begin{equation}\label{PFeq21}
\begin{aligned}
g(x,y)&=\Re(\ln\Gamma(x+\I y))+ f_\theta x - \frac12 \kappa_\theta (x^2-y^2)\\
&=\sum_{n=0}^\infty\left(\frac{(n+2\theta)x-(x^2-y^2)/2}{(n+\theta)^2}-\frac12 \ln\left((x+n)^2+y^2\right)+\ln(n)\mathbf{1}_{n\geq 1}\right).
\end{aligned}
\end{equation}
It follows that
\begin{equation}\label{PFeqg1}
g_1(x,y):=\frac{\partial g(x,y)}{\partial x}
=\sum_{n=0}^\infty\left(\frac{n+2\theta-x}{(n+\theta)^2}-\frac{x+n}{(x+n)^2+y^2}\right)
\end{equation}
and
\begin{equation}\label{PFeqg2}
g_2(x,y):=\frac{\partial g(x,y)}{\partial y} =\sum_{n=0}^\infty\left(\frac{y}{(\theta+n)^2}-\frac{y}{(x+n)^2+y^2}\right).
\end{equation}

\begin{proposition}\label{PFPropPtwiseConv}
Uniformly for $w,w'$ in a bounded set of $\mathcal{C}_w$,
\begin{equation}\label{PFeqKernelPtF2}
\lim_{N\to\infty} K_N(w,w') = \frac{-1}{2\pi\I} \int_{e^{-\pi \I/4}\infty}^{e^{\pi \I/4}\infty}dz \frac{e^{z^3/3-w^3/3}e^{r w-r z}}{(z-w)(z-w')}.
\end{equation}
\end{proposition}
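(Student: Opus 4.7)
The plan is to establish (\ref{PFeqKernelPtF2}) by (i) identifying an $N$-independent contour on which the inner integral can be taken, (ii) computing the pointwise limit of the integrand via Taylor expansion at the double critical point $\theta$, (iii) applying dominated convergence, and (iv) deforming the resulting contour to the claimed one.

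First, for $w,w'$ in a bounded subset of $\mathcal{C}_w$ one has $\Re(w),\Re(w')\le 0$, so $\Re(v)=\theta+c_\theta^{-1}N^{-1/3}\Re(w)\in[\theta-CN^{-1/3},\theta]$ for large $N$. This places us in case~(1) of the path prescription for $\mathcal{C}_{\tilde z}$ with $\ell=0$: the contour $\mathcal{C}_{\tilde z}$ is the single vertical line $\theta+c_\theta^{-1}N^{-1/3}+\I\R$, which under $z=(\tilde z-\theta)c_\theta N^{1/3}$ becomes the fixed contour $\{1+\I s:s\in\R\}$, independent of $N$. Next, using (\ref{PFeqG123}), Taylor expansion gives the uniform-on-compacts estimate
\begin{equation*}
N\bigl[G(\Phi(z))-G(\theta)\bigr] = -\frac{z^3}{3} + O\bigl(N^{-1/3}|z|^4\bigr),
\end{equation*}
and combined with $\frac{c_\theta^{-1}N^{-1/3}\pi}{\sin(\pi(z-w)c_\theta^{-1}N^{-1/3})}\to\frac{1}{z-w}$ and the $N$-independence of $e^{r(w-z)}/(z-w')$, one obtains pointwise convergence of the integrand to $\frac{e^{z^3/3-w^3/3}e^{r(w-z)}}{(z-w)(z-w')}$.

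To push the limit through the integral I would produce a uniform integrable majorant on the line $\{1+\I s:s\in\R\}$. Splitting at $|s|=MN^{1/3}$, the Taylor estimate yields in the near regime
\begin{equation*}
N\bigl[g(\Phi(z))-g(\theta)\bigr] = s^2 - \tfrac{1}{3} + O(N^{-1/3}),
\end{equation*}
so the exponential factor is bounded by $Ce^{-s^2/2}$. In the far regime, direct manipulation of (\ref{PFeq21}) gives
\begin{equation*}
g(x,y)-g(x,0) = \frac{\kappa_\theta}{2}y^2 - \frac{1}{2}\sum_{n\ge 0}\ln(1+y^2/(x+n)^2),
\end{equation*}
and for $|y|$ bounded away from zero the $\tfrac12\kappa_\theta y^2$ term dominates (the sum grows only linearly in $|y|$), yielding $Ng(\Phi(z))-Ng(\theta)\ge c'N^{1/3}s^2 - O(1)$ and hence the much stronger bound $\exp(-c'N^{1/3}s^2)$ on the exponential factor. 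Monotonicity of $y\mapsto g(\theta+c_\theta^{-1}N^{-1/3},y)$ on $y\ge 0$, which follows from the positivity of (\ref{PFeqg2}) for $x$ near $\theta$, ensures these bounds interpolate cleanly across the two regimes. The sine prefactor stays bounded (and in fact decays like $e^{-\pi c_\theta^{-1}N^{-1/3}|s|}$ for $|s|$ very large, via $|\sin(a+\I b)|\sim e^{|b|}/2$), while $1/|z-w'|$ is bounded by $1/|1-\Re(w')|$. Dominated convergence then produces
\begin{equation*}
\lim_{N\to\infty}K_N(w,w') = \frac{-1}{2\pi\I}\int_{1-\I\infty}^{1+\I\infty}\frac{e^{z^3/3-w^3/3}e^{r(w-z)}}{(z-w)(z-w')}\,dz.
\end{equation*}

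Finally, I would deform the vertical line $1+\I\R$ to the claimed contour from $e^{-\pi\I/4}\infty$ to $e^{\pi\I/4}\infty$. The integrand is analytic in the wedge between the two contours (the poles at $z=w,w'$ both lie in $\{\Re\le 0\}$), and $|e^{z^3/3}|$ decays super-exponentially at infinity throughout that wedge because $\arg(z^3)\in(3\pi/4,3\pi/2)$ there, so the arcs at infinity contribute nothing. The main obstacle I anticipate is the tail bound in the transitional regime $|s|\sim N^{1/3}$, where Taylor expansion at $\theta$ is no longer sharp and the large-imaginary-part asymptotics of $\ln\Gamma$ have not yet taken over; working directly with the explicit series (\ref{PFeq21}) and using monotonicity via (\ref{PFeqg2}) as above gives a route that avoids needing sharp intermediate asymptotics and matches the two regimes cleanly.
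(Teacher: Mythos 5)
Your high-level strategy -- take the $N\to\infty$ limit on the fixed vertical line $1+\I\R$ via a dominated-convergence argument and then deform the limiting integral to the $e^{\pm\I\pi/4}$ rays -- is legitimate and slightly different from the paper's proof (which splits at $|\Im z|=\delta N^{1/3}$, shows the far part is $O(e^{-cN})$ by steep descent, and keeps the quartic factor $e^{-\mu_\theta z^4 N^{-1/3}}$ in the near part before deforming to a V-contour at scale $\delta N^{1/3}$ and only then discarding it). But there is a genuine gap in your near-regime estimate. You write that for $|s|\leq MN^{1/3}$ one has $N[g(\Phi(z))-g(\theta)] = s^2-\tfrac13+O(N^{-1/3})$; this is false. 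The third-order Taylor expansion at the double critical point carries an error $O(N^{-1/3}(1+|s|)^4)$, which at the edge of your near regime is of order $M^4 N$, not $O(N^{-1/3})$. What actually saves the bound is the \emph{sign} of the quartic term: expanding one more order gives $N[g(\Phi(z))-g(\theta)] = (s^2-\tfrac13) + \mu_\theta N^{-1/3}(s^4-6s^2+1) + O(N^{-2/3}(1+|s|)^5)$ with $\mu_\theta = G^{(4)}(\theta)c_\theta^{-4}/24>0$ (since $G^{(4)}(\theta)=\sum_{n\ge0}6/(\theta+n)^4>0$ by (\ref{PFeqG123})). For $s^2>6$ the quartic contribution is positive and, crucially, dominates the quintic remainder on the range $|s|\le\delta N^{1/3}$, which is why the bound $\geq s^2 - C$ holds up to that scale. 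The paper's proof retains the $e^{-\mu_\theta z^4 N^{-1/3}}$ factor precisely to exploit this; your argument never invokes the sign of $\mu_\theta$, so as written the majorant $Ce^{-s^2/2}$ is unjustified in the range $N^{1/6}\lesssim |s|\lesssim N^{1/3}$.

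Your invocation of monotonicity does not close this gap either, because monotonicity of $y\mapsto g(x,y)$ gives only a \emph{constant} lower bound $N[g(\Phi(1+\I s))-g(\theta)]\geq N[g(\Phi(1+\I s_0))-g(\theta)]$ for $|s|\geq s_0$, not one growing quadratically in $s$; so you cannot produce an $N$-uniform integrable majorant that way for $|s|$ up to $\sim N^{1/3}$ (you would instead have to replace "dominated convergence" by a direct argument that the contribution from that range goes to zero, which amounts to the paper's steep-descent step). Also, the far-field claim that $\frac{\kappa_\theta}{2}y^2$ "dominates the sum for $|y|$ bounded away from zero" needs $|y|$ sufficiently large: for small nonzero $y$ the sum equals $\kappa_\theta y^2 + O(y^4)$, so the difference $g(x,y)-g(x,0)$ is only $O(y^4)$ there, and the quadratic domination kicks in only beyond a $\theta$-dependent threshold. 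To repair your proof, either (i) track the sign of the quartic as the paper does and use it to obtain $N[g(\Phi(z))-g(\theta)]\geq s^2 - C$ for $|s|\leq\delta N^{1/3}$, interpolating by monotonicity to the far regime; or (ii) shrink the near regime to $|s|\leq N^{\alpha}$ with $\alpha<1/8$ so that the cubic Taylor error is genuinely $o(1)$, and show the contribution from $N^{\alpha}\leq|s|\leq MN^{1/3}$ vanishes using monotonicity (which then is not a dominated-convergence majorant argument but a direct estimate, as in the paper).
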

\begin{proof}
Consider $w,w'$ in a bounded set of $\mathcal{C}_w$, i.e., the original variables $v,v'$ of order $N^{-1/3}$ around the critical point $\theta$. For $N$ large enough and $w$ bounded, \mbox{$\Re((-w)c_\theta^{-1} N^{-1/3}) \in (0,1)$}, and
$\mathcal{C}_z:=\Phi^{-1}(\mathcal{C}_{\tilde z})=\{1+\I y,y\in \R\}$.
Using (\ref{PFeqG123}) we have the expansion
\begin{equation}\label{PFeq23}
\begin{aligned}
N G(\Phi(w)) &= N G(\theta)-\frac{1}{3} w^3 + \Or(w^4 N^{-1/3})\\
-N G(\Phi(z)) &= -N G(\theta)+\frac13 z^3 -\mu_\theta z^4 N^{-1/3}+\Or(z^5 N^{-2/3})
\end{aligned}
\end{equation}
with $\mu_\theta=G^{(4)}(\theta) c_\theta^{-4}/24>0$ and
\begin{equation}\label{PFeq23b}
\frac{\pi}{\sin(\pi(z-w)c_\theta^{-1} N^{-1/3})}=\frac{c_\theta N^{1/3}}{z-w}(1+\Or((z-w)^2 N^{-1/3})).
\end{equation}
It is also easy to control the $w'$-dependence because $|z-w'|\geq 1$.

Now we divide the integral over $z$ into \mbox{(a) $|\Im(z)|> \delta N^{1/3}$} and \mbox{(b) $|\Im(z)|\leq \delta N^{1/3}$} for some $\delta>0$ which can be taken as small as desired (but independent of $N$).

\smallskip
\emph{(a) Contribution of the integration over $|\Im(z)|> \delta N^{1/3}$.}
We need to estimate
\begin{equation}\label{PFeq33}
\left|\frac{-c_\theta^{-1} N^{-1/3}}{2\pi \I}
\int_{1+\I y,\\|y|> \delta N^{1/3}}
dz \frac{\pi e^{N G(\Phi(w))-N G(\Phi(z))}}{\sin(\pi (z-w)c_\theta^{-1} N^{-1/3})} \frac{e^{r (w-z)}}{z-w'}\right|.
\end{equation}
From (\ref{PFeq23}), (\ref{PFeq23b}), and the fact that $w$ is in a bounded neighborhood of $0$, we have
\begin{equation}\label{PFeq34}
(\ref{PFeq33})\leq \Or(1) \int_{|y|\geq \delta N^{1/3}}
dy\, e^{N \Re(G(\Phi(0))-G(\Phi(1+\I y)))}.
\end{equation}
Setting $\tilde\e=c_\theta^{-1} N^{-1/3}$ and doing the change of variable $\tilde y=y c_\theta^{-1} N^{-1/3}$ we obtain
\begin{equation}\label{PFeq35}
(\ref{PFeq34})\leq \Or(N^{1/3}) \int_{\delta/c_\theta}^\infty d\tilde y\, e^{N (g(\theta+\tilde\e,\tilde y)-g(\theta,0))}
\end{equation}
The function $g(x,y):=\Re(G(x+\I y))$ is given in (\ref{PFeq21}). Finally, in Lemma~\ref{PFlemPathZ} we show that the path $\theta+\tilde\e+\I\R$ is steep descent for the function $-G(\tilde z)$ with derivative of $-\Re(G(\tilde z))$ going to $-\infty$ linearly in $\Im(\tilde z)$. It then follows that (\ref{PFeq35}) is of order $N^{1/3} e^{Ng(\theta,0)-N g(\theta+\tilde\e,0)}e^{-c_1(\delta) N}$ for some positive constant $c_1(\delta)\sim \delta^4$ for small $\delta$.
But
\begin{equation*}
Ng(\theta,0)-N g(\theta+\tilde\e,0)= \frac13+\Or(N^{-1/3}).
\end{equation*}
Thus the contribution of the integration over $|\Im(z)|> \delta N^{1/3}$ is $\Or(e^{-c_2(\delta) N})$ for some positive  constant $c_2(\delta)\sim \delta^4$ for small $\delta$.

\smallskip
\emph{(b) Contribution of the integration over $|\Im(z)|\leq \delta N^{1/3}$.} We need to determine the asymptotics of
\begin{equation}\label{PFeq37}
\frac{-c_\theta^{-1} N^{-1/3}}{2\pi \I}
\int_{1+\I y,\\|y|\leq \delta N^{1/3}}
dz \frac{\pi e^{N G(\Phi(w))-N G(\Phi(z))}}{\sin(\pi (z-w)c_\theta^{-1} N^{-1/3})} \frac{e^{r (w-z)}}{z-w'}.
\end{equation}
Using the expansion (\ref{PFeq23}) and (\ref{PFeq23b}) we get
\begin{equation}\label{PFeq32}
(\ref{PFeq37})=\frac{-1}{2\pi \I} \int_{1-\I\delta N^{1/3}}^{1+\I\delta N^{1/3}}dz \frac{e^{-\mu_\theta z^4 N^{-1/3}}}{(z-w)(z-w')} \frac{e^{z^3/3-rz}}{e^{w^3/3-rw}}(1+\Or((z-w)^2 N^{-1/3}))e^{\Or(w^4 N^{-1/3}; z^5 N^{-2/3})}.
\end{equation}
Denoting $z=1+\I y$ we have
\begin{equation*}
\Re(z^3/3)= -3y^2+1,\quad \Re(z^4)=y^4-6 y^2+1.
\end{equation*}
The convergence of the integral is controlled by $e^{-\mu_\theta y^4 N^{-1/3}-3 y^2}$. One employs the bound \mbox{$|e^{x}-1|\leq |x| e^{|x|}$} with $x=\Or(w^4 N^{-1/3}; z^5 N^{-2/3})$ to control the error terms. Altogether they are only of order $\Or(N^{-1/3})$, i.e., we have obtained
\begin{equation*}
(\ref{PFeq32})=\Or(N^{-1/3})+\frac{-1}{2\pi \I}
\int_{1-\I\delta N^{1/3}}^{1+\I\delta N^{1/3}}dz \frac{e^{-\mu_\theta z^4 N^{-1/3}}}{(z-w)(z-w')} \frac{e^{z^3/3-rz}}{e^{w^3/3-rw}}
\end{equation*}
Finally, we deform the integration contour to the following one: from $\delta N^{1/3} (1-\I)$ to \mbox{$\delta N^{1/3} (1+\I)$}. The error term is again of order $e^{-c_1(\delta) N}$. However, with the new contour, using again $|e^{x}-1|\leq |x| e^{|x|}$ but with \mbox{$x=-\mu_\theta z^4 N^{-1/3}$} one sees that the eliminating the quartic power in $z$ amounts in an error of order $\Or(N^{-1/3})$. The last step is to replace $\delta$ by $\infty$ in the integration boundaries. This leads to an extra error $\Or(e^{-c_3(\delta) N})$ with some positive constant $c_3(\delta)\sim \delta^3$ for small $\delta$.

To summarize, we first choose $\delta$ small enough so that all the $c_k(\delta)>0$. Then for all $N$ large enough we have shown that the contribution of the integration over $|\Im(z)|> \delta N^{1/3}$ is of order $\Or(e^{-c_1(\delta) N})$ and the integration over $|\Im(z)|\leq \delta N^{1/3}$ is given by
\begin{equation*}
\Or(N^{-1/3})+\frac{-1}{2\pi \I}
\int_{e^{-\pi \I/4}\infty}^{e^{\pi\I/4}\infty}dz \frac{1}{(z-w)(z-w')} \frac{e^{z^3/3-rz}}{e^{w^3/3-rw}}.
\end{equation*}
Taking the $N\to\infty$ limit we obtain the result.
\end{proof}

\begin{proposition}\label{PFPropBound}
For any $w,w'\in \mathcal{C}_w$, there exists a constant $C\in (0,\infty)$ such that
\begin{equation*}
|K_N(w,w')|\leq C e^{-|\Im(w)|}
\end{equation*}
uniformly for all $N$ large enough.
\end{proposition}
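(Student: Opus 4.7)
The target estimate $|K_N(w,w')|\le C e^{-|\Im(w)|}$ must come almost entirely from the factor $e^{NG(\Phi(w))}$ that sits outside the $z$-integral, since the remaining $z$-integral should be bounded uniformly in $w,w',N$. The plan is therefore to split the argument into two independent ingredients: (i) a steep descent bound for $\Re(G)$ on the contour $\mathcal{C}_w$ that produces the claimed exponential decay in $|\Im(w)|$, and (ii) a uniform bound on the integral $\int_{\mathcal{C}_z} dz\,\tfrac{\pi e^{-NG(\Phi(z))}}{\sin(\pi(z-w)c_\theta^{-1}N^{-1/3})}\cdot\frac{e^{r(w-z)}}{z-w'}$, multiplied by the $c_\theta^{-1}N^{-1/3}$ Jacobian.

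For (i), I would use the explicit representations (\ref{PFeqg1})–(\ref{PFeqg2}) of $g_1,g_2$ to prove that $\mathcal{C}_w$ is a steep descent path for $\Re(G)$ with maximum at $w=0$. Parametrizing $w=-|y|+\I y$ and writing $v=\Phi(w)=\theta+wc_\theta^{-1}N^{-1/3}$, one evaluates $N(g(\Re(v),\Im(v))-g(\theta,0))$. For $|y|\le \delta N^{1/3}$ (with $\delta$ small and fixed), Taylor expansion using $G'(\theta)=G''(\theta)=0$ and $G^{(3)}(\theta)=-2c_\theta^3$ gives $\Re(-w^3/3)=-\tfrac{2}{3}|y|^3$ plus a controlled quartic remainder, which is bounded above by $-c|y|$ in any bounded window and much stronger outside it. For $|y|\ge \delta N^{1/3}$, I would use monotonicity of $g$ along $\mathcal{C}_w$ — the same series analysis that will be needed in Lemma~\ref{PFlemPathZ} shows that along the tilted rays $\theta+(-1\pm\I)tc_\theta^{-1}N^{-1/3}$, the function $g$ decreases linearly in the distance from $\theta$, producing the bound $e^{N(g-g(\theta,0))}\le e^{-c'|y|}\le e^{-|\Im(w)|}$ after adjusting constants.

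For (ii), there are two sub-pieces because $\mathcal{C}_z$ consists of the vertical line $\theta+\tilde\e+\I\R$ plus possibly a finite number of small clockwise circles $B_{v+1},\dots,B_{v+\ell}$. On the vertical line, Lemma~\ref{PFlemPathZ} will give linear decay of $\Re(-G(\tilde z))$ in $|\Im(\tilde z)|$, so $\int |e^{-NG(\Phi(z))}|\,|dz|$ is of order $N^{-1/3}$ after accounting for the standard saddle-point contribution at $\Im(z)=0$, exactly compensating the prefactor $c_\theta^{-1}N^{-1/3}$ and the $N^{1/3}$ from the sine. The construction of $\mathcal{C}_z$ on page~\pageref{PFChoiceOfPathZ} (choice of $\tilde\e$ depending on how close poles $v+k$ lie to $\theta$) guarantees that $|\sin(\pi(z-w)c_\theta^{-1}N^{-1/3})|$ is bounded below by $\sin(\pi c_\theta^{-1}N^{-1/3}\cdot c_\theta^{-1}N^{-1/3})\gtrsim N^{-1/3}$, giving the reciprocal sine a uniform bound $O(N^{1/3})$, and also ensures $|z-w'|\ge 1/2$. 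The small circles, when present, contribute residues of the form $(-1)\cdot e^{N(G(\Phi(w))-G(\Phi(w)+c_\theta N^{1/3}k))}e^{-rkc_\theta N^{1/3}}/(w+c_\theta N^{1/3}k-w')$, and circles only appear when $|y|$ is large enough to make $\Re(v)+k$ land near $\theta$; one then shows that the exponent $N(G(\Phi(w))-G(\Phi(w)+c_\theta N^{1/3}k))$ is bounded (since both arguments are close to $\theta$) so that these residue terms are absorbed into the overall constant $C$, and the $e^{NG(\Phi(w))}$ factor still supplies the $e^{-|\Im(w)|}$ decay.

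The main obstacle will be the circle contributions: one has to simultaneously verify that (a) the residue terms do not spoil the $e^{-|\Im(w)|}$ decay (requiring a careful comparison between $G(\Phi(w))$ at the tilted point and $G$ at nearby real points $\theta+O(N^{-1/3})$), and (b) the choice of $\tilde\e\in\{c_\theta^{-1}N^{-1/3},\,3c_\theta^{-1}N^{-1/3}\}$ in the three cases of the page~\pageref{PFChoiceOfPathZ} construction keeps the sine denominator uniformly of order $N^{-1/3}$ in each case. The vertical-line bound is then clean: combining steep descent of $-G$ along $\theta+\tilde\e+\I\R$ (uniform in the choice of $\tilde\e$) with the lower bound on $|\sin|$ produces an $O(1)$ contribution. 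Assembling (i) and (ii) finishes the proof with $|K_N(w,w')|\le C e^{-|\Im(w)|}$.
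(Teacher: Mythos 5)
Your overall decomposition — pull out the $e^{NG(\Phi(w))}$ prefactor, show exponential decay in $|\Im(w)|$ via steep descent of $g(\theta-y,y)$ along $\mathcal{C}_w$, and bound the $\tilde z$-integral uniformly, splitting it into the vertical line and the residues at $v+1,\dots,v+\ell$ — is exactly the structure the paper uses, and the vertical-line part is handled essentially as the paper does (via the steep descent of $-G$ along $\theta+\tilde\e+\I\R$, which is Lemma~\ref{PFlemPathZ}, combined with the lower bound $|\sin(\pi(\tilde z-v))|\gtrsim N^{-1/3}$ guaranteed by the three-case construction of $\tilde\e$).

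However, your treatment of the circle contributions has a genuine gap, in fact two related ones. You claim that the exponent $N\bigl(G(\Phi(w))-G(\Phi(w)+k)\bigr)=N\bigl(G(v)-G(v+k)\bigr)$ is bounded "since both arguments are close to $\theta$." This is false. When poles appear we have $|\Im(v)|=|\Im(w)|c_\theta^{-1}N^{-1/3}\gtrsim 1$, so neither $v$ nor $v+k$ is near $\theta$: only $\Re(v)+k$ is forced into $[\theta-1,\theta]$ or thereabouts; the imaginary part of $v+k$ is the same (possibly large) $\Im(v)$. Moreover, $v$ and $v+k$ are at distance $k\ge 1$, so $N(G(v)-G(v+k))$ is of order $N$ and certainly not absorbed into a constant. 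The saving fact — and the actual content of the paper's argument here — is that this exponent is strongly \emph{negative}: setting $h(y,k):=g(\theta-y,y)-g(\theta-y+k,y)$, one shows (Lemma~\ref{PFlemPoles}) that $h(y,\cdot)$ is strictly decreasing on $[0,y+\tilde\e]$ with explicit lower bounds on $|\partial_k h|$, whence $h(y,1)\le -c\,y$ for some $c>0$. This gives $e^{Nh(y,1)}\le e^{-cNy}=e^{-c'N^{2/3}|\Im(w)|}$, which crushes the growth from the $e^{|r|k c_\theta N^{1/3}}$ factor and from the number of poles $\ell(v)=O(|\Im(w)|N^{-1/3})$ — a count you also do not track and which must be summed. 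Your idea that "the $e^{NG(\Phi(w))}$ factor still supplies the $e^{-|\Im(w)|}$ decay" for the circle terms is also off, because once the residue is taken the $e^{NG(v)}$ factor is paired against $e^{-NG(v+k)}$ and the decay must come from the sign of $h(y,k)$, not from the prefactor alone. In short, your plan reduces the circle terms to a triviality when the needed input is a nontrivial monotonicity lemma for $h$; without something like Lemma~\ref{PFlemPoles} the proof does not close.
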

\begin{proof}
Since the $z$-contour can be chosen such that $|z-w'|\geq 1/2$, we can estimate the absolute value of the factor $(z-w')^{-1}$ by $2$ and discard it from further considerations. For $w$ in a bounded set of $\mathcal{C}_w$, the statement is a consequence of the computations in the proof of Proposition~\ref{PFPropPtwiseConv}. Thus, it is enough to consider $w=-|y|+\I y$ for $y\geq L$, for $L$ which will be chosen large enough (but independent of $N$). In the original variables $v,v'$, this means that we need to consider $v=\theta-|y|+\I y$ for $y\geq L c_\theta^{-1} N^{-1/3}$. Let $v=\Phi(w)$, $v'=\Phi(w')$, then the kernel $K_N$ is given by
\begin{equation*}
K_N(w,w')=\frac{e^{N (G(v)-G(\theta))+r(v-\theta) c_\theta N^{1/3}}}{c_\theta N^{1/3}\, 2\pi\I}\int_{\mathcal{C}_{\tilde z}}d\tilde z \frac{\pi e^{N G(\theta)-N G(\tilde z)} e^{r(\theta-\tilde z) c_\theta N^{1/3}}}{\sin(\pi(\tilde z-v))(\tilde z-v')}.
\end{equation*}
We divide the bound dividing in two contributions: (a) integration over $\theta+\tilde\e+\I\R$, with \mbox{$\tilde\e=p c_\theta^{-1} N^{-1/3}$} (with $p\in\{1,3\}$ depending on the value of $v$, see the proof of Theorem~\ref{PFThmF2pert}(a) above), and (b) integration over the circles $B(v+k)$, $k=1,\ldots,\ell(v)$.

\smallskip
\emph{(a) Integration over $\theta+\tilde\e+\I\R$.} The relevant dependence on $v$ is in the prefactor $e^{N (G(v)-G(\theta))+r(v-\theta) c_\theta N^{1/3}}$ and in the sine. The dependence of $\tilde\e$ on $v$ is marginal, as the needed bounds can be made for any $\tilde\e$ small enough. The estimates as in the proof of Proposition~\ref{PFPropPtwiseConv} imply that this contribution is bounded by
\begin{equation*}
C e^{N (\Re(G(v))-G(\theta))+r(\Re(v)-\theta) c_\theta N^{1/3}} = C e^{N (g(\theta-y,y)-g(\theta,0))-r y c_\theta N^{1/3}},
\end{equation*}
where we used the parametrization $v=\theta-|y|+\I y$ and, by symmetry, considered only $y>0$.
In Lemma~\ref{PFlemPathW} we show that $g(\theta-y,y)$ is strictly decreasing as $y$ increases and for large $y$ the derivative goes to $-\infty$ (logarithmically). Thus, for any fixed $\delta>0$, there exists a constant $c_1>0$ such that for all $y\geq \delta$, $\partial_y g(\theta-y,y)\leq -c_1$. In Lemma~\ref{PFlemPathW} we also show that for small $y$, $g(\theta-y,y)-g(\theta,0)=-\tfrac23 c_\theta^3 y^3+\Or(y^4)$. Therefore, we can choose $\delta>0$ small enough such that:
\begin{enumerate}
\item[(1)] for $L c_\theta^{-1} N^{-1/3}\leq y\leq \delta$, $g(\theta-y,y)\leq g(\theta,0) -\tfrac13 c_\theta^3 y^3$,
\item[(2)] for $y>\delta/2$, $\partial_y g(\theta-y,y)\leq -2 c_1$. It follows $g(\theta-y,y)\leq g(\theta,0)-c_1 y$ for all $y>\delta$.
\end{enumerate}
Replacing $y=\Im(v)=\Im(w)/(c_\theta N^{1/3})$ we get the bounds:
\begin{enumerate}
\item[(1)] for $L\leq \Im(w) \leq \delta c_\theta N^{1/3}$,
\begin{equation*}
C e^{- \Im(w)^3/3-r \Im(w)}\leq C e^{-\Im(w)^3/6}\leq 3 C e^{-\Im(w)}
\end{equation*}
for $L$ large enough (depending on $r$ only).
\item[(2)] for $\Im(w)\geq \delta c_\theta N^{1/3}$,
\begin{equation*}
C e^{-\Im(w) (N^{2/3} c_1/c_\theta+r)}\leq C e^{-\Im(w)}
\end{equation*}
for $N$ large enough.
\end{enumerate}

\smallskip
\emph{(b) Integration over the circles $B(v+k)$, $k=1,\ldots,\ell(v)$.} This happens only if \mbox{$y+3 c_\theta^{-1} N^{-1/3} \geq 1$}, where $y=\Im(v)=\Im(w)/(c_\theta N^{1/3})$. The contribution of the integration over $B_{v+k}$ (up to a $\pm$ sign depending on $k$) is
\begin{equation*}
\frac{e^{N G(v)-N G(v+k)} e^{-rk c_\theta N^{1/3}}}{(v+k-v')}.
\end{equation*}
We have $|v+k-v'|\geq 1/\sqrt{2}$, thus the contribution from the pole at $v+k$ is bounded by
\begin{equation*}
2 e^{N (g(\theta-v,v)-g(\theta-v+k,v))}e^{|r| k c_\theta N^{1/3}}.
\end{equation*}
Define the function $h(v,k):=g(\theta-v,v)-g(\theta-v+k,v)$. In Lemma~\ref{PFlemPoles} we show that $h(v,k)$ is strictly decreasing as a function of $k$, for $k\in [0,y+\tilde\e]$ (we have a positive $\delta$ instead of $\tilde\e$, but for $N$ large enough $\tilde\e<\delta$). Also, $k\leq \ell(v) = \lfloor y+\tilde\e \rfloor$, so that the contribution of the poles at $v+1,\ldots,v+\ell(v)$ is bounded by
\begin{equation}\label{PFeq45}
2 \ell(v) e^{N h(v,1) +|r| \ell(v) c_\theta N^{1/3}}.
\end{equation}
We consider separately the cases (1) $y\leq\theta$ (i.e., $\Re(v)\geq 0$) and (2) $y>0$ (i.e., $\Re(v)<0$).
\begin{enumerate}
\item[(1)] For $y\leq \theta$, from the bound on $\partial_k h(v,k)$, see Lemma~\ref{PFlemPoles}, we get $h(v,1)\leq -y c_\theta^3/4$.
\item[(2)] For $y>\theta$, we know that $h(v,1)<0$ for all $y$ and when $y\to\infty$, $\partial_k h(v,k)|_{k=0}\simeq -y \kappa_\theta$. Since the function $h(v,1)$ is continuous in $y$, there exists a positive constant $c>0$ such that $h(v,1)\leq -c y$ for all $y>\theta$.
\end{enumerate}
Thus, with $c'=\min\{c,c_\theta^3/4\}$ we get
\begin{equation*}
(\ref{PFeq45})\leq e^{-\Im(w) N^{2/3}c'/c_\theta + \Or(1)} \Or(\Im(w) N^{-1/3})\leq C e^{-\Im(w)}
\end{equation*}
for $N$ large enough. This ends the proof of the Proposition.
\end{proof}

Finally let us collect the lemmas on the steep descent properties used in the propositions above.
\begin{lemma}\label{PFlemPathW}
The function $g(\theta-y,y)$ is strictly decreasing for $y>0$.\\
For $y\to\infty$ it holds $\partial_y g(\theta-y,y)\sim -\ln(y)$.\\
For $y\searrow 0$ we have $g(\theta-y,y)=g(\theta,0)-\tfrac23 c_\theta^3 y^3+\Or(y^4)$.
\end{lemma}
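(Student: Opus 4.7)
The plan is to parametrize the path by $v(y) = \theta - y + \I y$ and differentiate $h(y) := g(\theta - y, y) = \Re G(v(y))$ using the series formulas (\ref{PFeqg1}) and (\ref{PFeqg2}). By the chain rule, $h'(y) = -g_1(\theta-y,y) + g_2(\theta-y,y)$. Substituting $x = \theta - y$ into the two series, writing $m = n+\theta$, and combining the four pieces termwise, the elementary algebraic identity
\begin{equation*}
\frac{m - 2y}{(m-y)^2 + y^2} - \frac{1}{m} \;=\; \frac{-2y^2}{m\bigl((m-y)^2 + y^2\bigr)}
\end{equation*}
collapses everything to
\begin{equation*}
h'(y) \;=\; -2 y^2 \sum_{n=0}^\infty \frac{1}{(n+\theta)\bigl((n+\theta-y)^2 + y^2\bigr)}.
\end{equation*}
The right-hand side is manifestly strictly negative for every $y>0$, which proves the monotonicity assertion.

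For the large-$y$ asymptotics I would split the sum at $n+\theta = 2y$. On the tail $n+\theta > 2y$ one has $(n+\theta-y)^2 + y^2 \geq (n+\theta)^2/4$, so the tail contributes $O(1/y^2)$. On the head $n+\theta \leq 2y$ the quantity $(n+\theta-y)^2 + y^2$ is bounded between $y^2$ and $2y^2$, and summing $1/(n+\theta)$ over the range $0 \leq n \leq 2y - \theta$ produces a logarithm; the head therefore contributes an amount of order $(\ln y)/y^2$. Multiplying by $-2y^2$ yields $h'(y) \sim -\ln y$ as $y \to \infty$, as claimed.

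For the expansion near $y = 0$ I would avoid the series entirely and use instead (\ref{PFeqG123}), according to which $G'(\theta) = G''(\theta) = 0$ and $G^{(3)}(\theta) = -2c_\theta^3$. With $z := v(y) - \theta = y(-1+\I)$ one computes $(-1+\I)^3 = 2(1+\I)$, so
\begin{equation*}
G(v(y)) - G(\theta) \;=\; \frac{G^{(3)}(\theta)}{6}\, z^3 + O(y^4) \;=\; -\frac{2c_\theta^3}{3}\, y^3 (1+\I) + O(y^4),
\end{equation*}
and taking real parts yields $h(y) = g(\theta,0) - \tfrac{2}{3} c_\theta^3\, y^3 + O(y^4)$.

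The main point requiring some care is the head/tail split for $y \to \infty$: one must ensure the comparison constants in the bounds on $(n+\theta-y)^2 + y^2$ are uniform in $n$, so as to extract the precise logarithmic order of the sum. Everything else is a routine computation, made possible by the rather lucky cancellation that turns $h'(y)$ into a single positive series with a $-2y^2$ prefactor.
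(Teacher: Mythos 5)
Your algebraic reduction of $h'(y)$ and the small-$y$ Taylor expansion reproduce the paper's argument exactly: both obtain the single-series identity
\begin{equation*}
h'(y) = -\sum_{n=0}^\infty\frac{2 y^2}{(\theta+n)\bigl((\theta+n-y)^2+y^2\bigr)},
\end{equation*}
from which monotonicity is immediate, and both read off the cubic term at $y=0$ from (\ref{PFeqG123}) together with $\Re\bigl((-1+\I)^3\bigr)=2$.

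For the large-$y$ asymptotics you take a genuinely different route. The paper rewrites the series in terms of the digamma function and invokes the expansion $\Psi(z)=\ln z-\tfrac{1}{2z}+\Or(z^{-2})$, which produces the $\ln y$ growth \emph{and} the leading coefficient in one step. You instead split the series at $m:=n+\theta=2y$. This is more elementary, but as written it does not actually establish the asymptotic equivalence $\partial_y g(\theta-y,y)\sim-\ln y$: on the head you bound $(m-y)^2+y^2$ between $y^2$ and $2y^2$, which sandwiches $-h'(y)$ between $(1+o(1))\ln y$ and $(2+o(1))\ln y$, i.e.\ the correct order of magnitude but not the coefficient~$1$. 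Your remark about uniformity of the comparison constants points at the wrong concern --- those constants are uniform, but they differ by a factor of $2$, and that factor of two is precisely what hides the leading coefficient. To close the gap within your framework, note that the logarithmic contribution to $\sum 1/m$ comes from $m\ll y$, where one has the sharper estimate $(m-y)^2+y^2 = 2y^2\bigl(1+\Or(m/y)\bigr)$ rather than the two-sided envelope; restricting the head to $m\le\epsilon y$ then gives $(1+o(1))\ln y/(2y^2)$, which after multiplying by $-2y^2$ yields $h'(y)\sim-\ln y$. Equivalently one may substitute $m=yu$ and compare the sum to $\frac{1}{y^2}\int_{\theta/y}^\infty \frac{du}{u\bigl((u-1)^2+1\bigr)}$, whose logarithmic divergence near $u=0$ has coefficient $\tfrac12$, again giving the correct constant. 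Once one of these refinements is inserted, your elementary argument matches the strength of the paper's digamma computation.
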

\begin{proof}
Using (\ref{PFeqg1}) and (\ref{PFeqg2}) we have
\begin{equation}\label{PFeq49}
\begin{aligned}
&\frac{\partial g(\theta-y,y)}{\partial y} = g_2(\theta-y,y)-g_1(\theta-y,y)\\
&=-\sum_{n=0}^\infty\left(\frac{1}{\theta+n}-\frac{n+\theta-2y}{(n+\theta-y)^2+y^2}\right)=-\sum_{n=0}^\infty\frac{2 y^2}{(\theta+n)((\theta+n-y)^2+y^2)}
\end{aligned}
\end{equation}
which is $0$ for $y=0$ and strictly negative for $y>0$. The asymptotics for large $y$ are obtained by writing (\ref{PFeq49}) as
\begin{equation*}
\frac{\I}2\Psi(-y+\theta+\I y)-\frac{\I}2\Psi(-y+\theta-\I y)-\frac12\Psi(-y+\theta-\I y)-\frac12\Psi(-y+\theta+\I y)+\Psi(\theta)
\end{equation*}
and using the large-$z$ expansion
\begin{equation}\label{PFeq51}
\Psi(z)=\ln(z)-\frac{1}{2z}+\Or(z^{-2}).
\end{equation}
Taylor expansion gives the small $y$ estimate.
\end{proof}

\begin{lemma}\label{PFlemPathZ}
For any $x\geq \theta$, the function $g(x,y)$ is strictly increasing for $y>0$.\\
For $y\to\infty$ it holds $\partial_y g(x,y)\sim \kappa_{\theta} y$.
\end{lemma}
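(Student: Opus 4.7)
The plan is to work directly from the explicit series expansion (\ref{PFeqg2}) for the partial derivative,
\begin{equation*}
g_2(x,y)=\frac{\partial g(x,y)}{\partial y}=\sum_{n=0}^\infty\left(\frac{y}{(\theta+n)^2}-\frac{y}{(x+n)^2+y^2}\right),
\end{equation*}
since both parts of the lemma read off almost immediately from it.

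For the strict monotonicity, I would argue termwise: under the hypothesis $x\geq\theta$ we have $x+n\geq\theta+n>0$, so $(x+n)^2+y^2\geq(\theta+n)^2+y^2>(\theta+n)^2$ whenever $y>0$. Hence every summand $\frac{y}{(\theta+n)^2}-\frac{y}{(x+n)^2+y^2}$ is strictly positive, which gives $g_2(x,y)>0$ and thus strict monotonicity of $y\mapsto g(x,y)$ on $(0,\infty)$.

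For the asymptotic behavior I would factor out $y$ and rewrite
\begin{equation*}
\frac{g_2(x,y)}{y}=\kappa_\theta-\sum_{n=0}^\infty\frac{1}{(x+n)^2+y^2},
\end{equation*}
using (\ref{PFeqKappa}) for the first piece. Each term of the remaining sum tends to $0$ as $y\to\infty$ and is bounded above by $(x+n)^{-2}$, which is summable since $x\geq\theta>0$. Dominated convergence then gives $\sum_{n\geq 0}((x+n)^2+y^2)^{-1}\to 0$ and hence $g_2(x,y)/y\to\kappa_\theta$, i.e.\ $\partial_y g(x,y)\sim\kappa_\theta\,y$ as $y\to\infty$.

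I do not anticipate a real obstacle: once the series representation (\ref{PFeqg2}) is in hand, both assertions reduce to elementary termwise comparisons. The only point deserving a moment's care is checking that the dominating series $\sum_n(x+n)^{-2}$ is summable, which is assured by $x\geq\theta>0$, exactly the range the lemma is stated for.
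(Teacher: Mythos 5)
Your proposal is correct and follows essentially the same route as the paper: both start from the series expansion (\ref{PFeqg2}) for $\partial_y g(x,y)$, deduce strict positivity termwise, and identify the asymptotics $\partial_y g(x,y)\sim\kappa_\theta y$ by observing that the subtracted sum vanishes as $y\to\infty$. You simply make explicit the dominated-convergence step that the paper leaves implicit.
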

\begin{proof}
From (\ref{PFeqg2}) we have
\begin{equation*}
\frac{\partial g(x,y)}{\partial y} =\sum_{n=0}^\infty\left(\frac{y}{(\theta+n)^2}-\frac{y}{(x+n)^2+y^2}\right),
\end{equation*}
which is $0$ for $y=0$ and for $y>0$ is strictly positive. For large $y$, the second term goes to zero, leading to the estimate.
\end{proof}

\begin{lemma}\label{PFlemPoles}
Let $y>0$ be fixed. The function
\begin{equation*}
h(y,k):=g(\theta-y,y)-g(\theta-y+k,y)
\end{equation*}
satisfies $h(y,0)=0$, $h(y,k)$ is strictly decreasing for $k\in [0,y]$.\\
For any $\delta\in (0,\theta)$, $y\geq \delta$, $h(y,k)$ is strictly decreasing in $k\in [0,y+\delta/2]$.\\
For $y\to \infty$, $\partial_k h(y,k)|_{k=0}\sim -y \kappa_\theta$.\\
For $y\leq\theta$, $\partial_k h(y,k)\leq -\frac{k y c_\theta^3}{2}$ for $k\in [0,y]$.
\end{lemma}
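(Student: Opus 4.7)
My plan is to compute $\partial_k h(y,k)$ directly from the series representation (\ref{PFeqg1}) of $g_1$ and reduce every claim to a single algebraic identity. Writing $A := n+\theta > 0$ and $s := y-k$, the identity $h(y,0)=0$ is immediate, and
\begin{equation*}
\partial_k h(y,k) = -g_1(\theta-y+k, y) = -\sum_{n=0}^{\infty}\left(\frac{A+s}{A^2} - \frac{A-s}{(A-s)^2+y^2}\right).
\end{equation*}
Clearing denominators produces the key identity
\begin{equation*}
\frac{A+s}{A^2} - \frac{A-s}{(A-s)^2+y^2} = \frac{A(y^2-s^2) + s(s^2+y^2)}{A^2\bigl[(A-s)^2+y^2\bigr]},
\end{equation*}
from which every claim in the lemma will follow by a termwise estimate.

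For $k\in[0,y]$ one has $s\in[0,y]$, so both $A(y^2-s^2)$ and $s(s^2+y^2)$ are nonnegative, and they cannot vanish simultaneously (since $s(s^2+y^2)=0$ forces $s=0$, while $A(y^2-s^2)=0$ forces $s=y$, and $y>0$). Hence $g_1(\theta-y+k,y)>0$, giving strict monotonicity. For the extended range $k\in(y,y+\delta/2]$ with $y\geq\delta\in(0,\theta)$, the term $s(s^2+y^2)$ in the numerator becomes negative, so I would show $A(y^2-s^2)\geq|s|(s^2+y^2)$ using $A\geq\theta$, $|s|\leq\delta/2$, and $y\geq\delta$: this reduces, after a short computation, to $y^2(\theta-\delta/2)\geq\theta\delta^2/4+\delta^3/8$, which holds for $y\geq\delta$ because $\theta>\delta$. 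This delivers strict monotonicity on $[0,y+\delta/2]$.

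For the large-$y$ asymptotic at $k=0$, I would rewrite $g_1(x,y)=\Re\Psi(x+\I y)+f_\theta-\kappa_\theta x$, so that
\begin{equation*}
g_1(\theta-y,y) = \Re\Psi(\theta-y+\I y) + f_\theta - \kappa_\theta(\theta-y),
\end{equation*}
and apply the large-$z$ expansion (\ref{PFeq51}) to $z=\theta-y+\I y$, which has $|z|\sim y\sqrt{2}$, obtaining $\Re\Psi(z)=\tfrac12\ln((\theta-y)^2+y^2)+O(y^{-1})=\ln y+O(1)$. Thus $g_1(\theta-y,y)=\kappa_\theta y+O(\ln y)$, and $\partial_k h(y,0)\sim -\kappa_\theta y$.

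Finally, for the quantitative bound with $y\leq\theta$ and $k\in[0,y]$, observe that $y^2-s^2=k(2y-k)\geq ky$ on this range, while $s(s^2+y^2)\geq 0$. For the denominator, $s\in[0,y]\subset[0,A]$ gives $(A-s)^2\leq A^2$, and $y^2\leq\theta^2\leq A^2$, hence $A^2[(A-s)^2+y^2]\leq 2A^4$. Each summand is therefore at least $\frac{Aky}{2A^4}=\frac{ky}{2(n+\theta)^3}$, and summing in $n$ gives $g_1(\theta-y+k,y)\geq\frac{ky\,c_\theta^3}{2}$, i.e., $\partial_k h(y,k)\leq-\frac{kyc_\theta^3}{2}$. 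The only genuinely delicate point in this scheme is the extension to $s<0$, where the two contributions to the numerator compete in sign and must be balanced quantitatively using $\delta<\theta$; the remaining claims reduce to signs of $A(y^2-s^2)\pm s(s^2+y^2)$ once the identity above is in hand.
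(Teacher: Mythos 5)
Your proposal is correct and follows essentially the same route as the paper's proof. The notational shift to $A=n+\theta$, $s=y-k$ makes the algebra cleaner, but the key identity you derive for the numerator is precisely the paper's, and the termwise sign analysis, the extension past $s=0$ via the coarse bound in $\theta,\delta$, the $\Re\Psi$ reformulation with the large-$z$ expansion, and the quantitative bound $y^2-s^2\geq ky$ together with $(A-s)^2+y^2\leq 2A^2$ all mirror the published argument.
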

\begin{proof}
From (\ref{PFeqg1}) we have
\begin{equation*}
\begin{aligned}
\frac{\partial h(y,k)}{\partial k}&=-g_1(\theta-y+k,y)=-\sum_{n=0}^\infty\left(\frac{\theta+n+y-k}{(\theta+n)^2}-\frac{\theta+n-y+k}{(\theta+n-y+k)^2+y^2}\right)\\
&=-\sum_{n=0}^\infty\frac{(\theta+n)(y^2-(k-y)^2)+(y-k)^3+y^2(y-k)}{(\theta+n)^2((\theta+n-y+k)^2+y^2)}
\end{aligned}
\end{equation*}
which strictly negative for $k\in [0,y]$.

The second statement follows from
\begin{equation*}
\begin{aligned}
& (\theta+n)(y^2-(k-y)^2)+(y-k)^3+y^2(y-k) \\
&\quad\geq \theta(y^2-(k-y)^2)+(y-k)^3+y^2(y-k)\\
&\quad\quad \geq \theta (\delta^2/2-\delta^2/4)+\theta y^2/2-\delta^3/8-\delta y^2/2\geq \delta^8/8.
\end{aligned}
\end{equation*}

To get the asymptotics of the derivative for large $y$, we can rewrite
\begin{equation*}
\frac{\partial h(y,k)}{\partial k}\bigg|_{k=0}=\Psi(\theta)-\Psi'(\theta) y-\frac12 \Psi(-y+\theta-\I y)-\frac12 \Psi(-y+\theta+\I y)
\end{equation*}
and use (\ref{PFeq51}) and $\Psi'(\theta)=\kappa_\theta$.

Moreover, for $k\in [0,y]$ and $y\leq \theta$ we have the bound
\begin{equation}\label{PFeq56}
\begin{aligned}
\frac{\partial h(y,k)}{\partial k}&\leq-\sum_{n=0}^\infty\frac{(y^2-(k-y)^2)}{(\theta+n)((\theta+n-y+k)^2+y^2)}\\
&\leq-k y\sum_{n=0}^\infty\frac{1}{(\theta+n)((\theta+n-y+k)^2+y^2)}
\leq -k y\sum_{n=0}^\infty\frac{1}{2(\theta+n)^3}=-\frac{k y c_\theta^3}{2}.
\end{aligned}
\end{equation}
\end{proof}

\subsection{Proof of Theorem~\ref{PFThmF2pert}(b)}
Now we turn to the proof of the Theorem with boundary perturbations.

Note that due to the ordering of the $a_i$'s, $b_1\geq b_2>\cdots$. Call $\bar{b}=b_1$. The scaling of the $a_i$'s implies that the contours $\mathcal{C}_w$ and $\mathcal{C}_z$ can be chosen as before except for a modification in a $N^{-1/3}$-neighborhood of the critical point, since they have to pass on the right of $\theta+\bar b c_{\theta}^{-1} N^{-1/3}$ (see Figure~\ref{PFFigPathsBBPproof}).
\begin{figure}
\begin{center}
\psfrag{-thetat}[cb]{$-\theta c_\theta N^{1/3}$}
\psfrag{w}[cb]{$w$}
\psfrag{Cw}[lb]{$\mathcal{C}_w$}
\psfrag{Cz}[lb]{$\mathcal{C}_z$}
\psfrag{bbar}[cb]{$\bar b$}
\psfrag{0}[cb]{$0$}
\psfrag{O1}[cb]{$\Or(1)$}
\includegraphics[height=6cm]{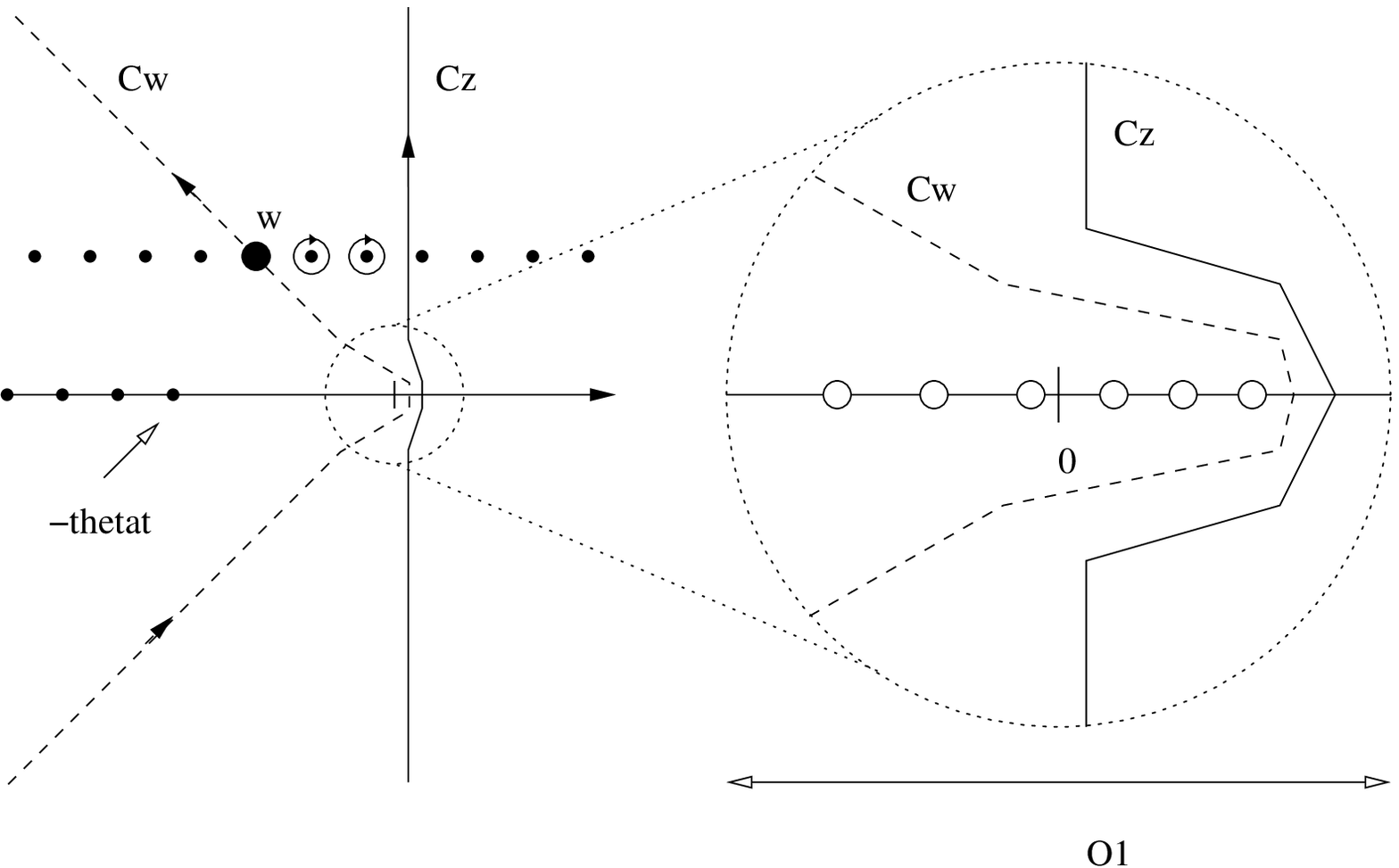}
\caption{Perturbation of the integration paths, compare with Figure~\ref{PFFigPathsTW} (right). The white dots on the right are the values of $b_1,\ldots,b_m$.}
\label{PFFigPathsBBPproof}
\end{center}
\end{figure}
Let us denote
\begin{equation}\label{PFeqPwza}
P(w,z,a):=\frac{\Gamma(\Phi(w)-a)}{\Gamma(\Phi(w))}\frac{\Gamma(\Phi(z))}{\Gamma(\Phi(z)-a)}.
\end{equation}
Then, the only difference with respect to the kernel (\ref{PFeqKernelPtF2}) is that in the $N\to\infty$ limit there might remains a factor coming from $\prod_{k=1}^m P(w,z,a_k)$.

Using $\frac{\Gamma(z+a)}{\Gamma(z+b)}\sim z^{a-b}(1+\Or(1/z))$ (see (6.1.47) of~\cite{AS84}), for any $w,z$ on $\mathcal{C}_w,\mathcal{C}_z$ we have the bound $|P(w,z,a_k)|\leq C e^{c|a_k| (|\Im(w)|+|\Im(z)|) N^{-1/3}}$ for some constants $C,c$.
The local modification of the paths has no influence on any of the bounds for large $w$ and $z$, so that the proof of pointwise convergence and of the bounds are minor modifications of Proposition~\ref{PFPropPtwiseConv} and Proposition~\ref{PFPropBound}. It remains to determine the pointwise limits of $P(w,z,a_k)$ as $N\to\infty$.\\
\emph{Case 1:} If $\limsup_{N\to\infty} (a_k(N)-\theta) N^{1/3} = -\infty$, then
\begin{equation*}
\lim_{N\to\infty}P(w,z,a_k)=1.
\end{equation*}
\emph{Case 2:} If $\limsup_{N\to\infty} (a_k(N)-\theta) N^{1/3} = b_k$, then
\begin{equation*}
\begin{aligned}
&\lim_{N\to\infty} \frac{\Gamma(\Phi(w)-\Phi(b_k))}{\Gamma(\Phi(w))}\frac{\Gamma(\Phi(z))}{\Gamma(\Phi(z)-\Phi(b_k))}\\
=& \lim_{N\to\infty} \frac{\Gamma((w-b_k) c_{\theta}^{-1} N^{-1/3})}{\Gamma(\theta+w c_{\theta}^{-1} N^{-1/3})}\frac{\Gamma(\theta+z c_{\theta}^{-1} N^{-1/3})}{\Gamma((z-b_k)c_{\theta}^{-1} N^{-1/3})}= \frac{z-b_k}{w-b_k}
\end{aligned}
\end{equation*}
because $\Gamma(z)=z^{-1}-\gamma_E+\Or(z)$ as $z\to 0$.

Therefore, one obtains
\begin{equation*}
\lim_{N\to\infty} \det(\Id+K_N)_{L^2(\mathcal{C}_w)} = \det(\Id+\widetilde K_{{\rm BBP},b})_{L^2(\mathcal{C}_w)}
\end{equation*}
where
\begin{equation}\label{eqBBPtilde}
\widetilde K_{{\rm BBP},b}(s,s')=\frac{-1}{2\pi\I} \int_{e^{-\pi \I/4}\infty}^{e^{\pi \I/4}\infty}dz \frac{e^{z^3/3-w^3/3}e^{r w-r z}}{(z-w)(z-w')}\prod_{k=1}^m \frac{z-b_k}{w-b_k}.
\end{equation}
The reformulation of Lemma~\ref{PFLemBBPreformuation} complete the proof.

\section{Details in the proof of Theorem~\ref{ThmIntDisAsy}}\label{proofCDRP}
As discussed at the beginning of Section~\ref{proofOConnellYorKPZ}, in the proof we parameterize using the position of the critical point $\theta$ instead of $\kappa$. Let us set $\T\in (0,\infty)$ and consider the scaling limit
\begin{equation*}
\theta:=\sqrt{N/\T},\quad \tau=\kappa_\theta N,\quad u=S e^{-N f_\theta}.
\end{equation*}
One has the following large-$\theta$ expansion of (\ref{PFeqKappa}) and (\ref{PFeqF}) gives
\begin{equation*}
\begin{aligned}
\kappa_\theta&=\frac{1}{\theta}+\frac{1}{2 \theta^2}+\frac{1}{6\theta^3}+\Or(\theta^{-5}),\\
f_\theta&=1-\ln(\theta)+\frac{1}{\theta}+\frac{1}{4\theta^2}+\Or(\theta^{-4}).
\end{aligned}
\end{equation*}
Thus,
\begin{equation*}
\begin{aligned}
\tau&=\kappa_\theta N=\sqrt{\T N}+\tfrac12 \T +\Or(N^{-1/2}),\\
u&=S e^{-N f_\theta}=S e^{-N-\frac12 N \ln(\T/N)+\sqrt{\T N}+\frac14 \T +\Or(N^{-1})}.
\end{aligned}
\end{equation*}
Equivalently, we can set $\tau=\sqrt{T N}$, then $\theta=\sqrt{N/T}+\frac12-\frac1{12}\sqrt{T/N}+\Or(N^{-3/2})$, so that
\begin{equation*}
\begin{aligned}
\T&=T-T^{3/2} /N^{1/2}+\tfrac{11}{12}T^2/N+\Or(N^{-3/2}),\\
u&=S e^{-N-\frac12 N\ln(T/N)-\tfrac12 \sqrt{T N}+T/4!+\Or(N^{-1})}.
\end{aligned}
\end{equation*}

As shown in Section~\ref{critpointCDRP}, what it remains is to prove Theorem~\ref{ThmCDRPdets}. We first prove the statement for the unperturbed case, and then we will show how the generalization is obtained.

\subsection{Proof of Theorem~\ref{ThmCDRPdets}(a).}
We have to determine is $\lim_{N\to\infty} \det(\Id+K_u)_{L^2(\mathcal{C}_v)}$. Consider the case of a drift vector $b=0$.
The path $\mathcal{C}_v$ is chosen as
\begin{equation*}
\begin{aligned}
\mathcal{C}_v&=\{\theta-1/4+\I r, |r|\leq r^*\}\cup\{\theta e^{\I t}, t^*\leq |t| \leq \pi/2\}\cup\{\theta-|y|+\I y,|y|\geq \theta\},\\
\end{aligned}
\end{equation*}
where $r^*=\sqrt{\theta/2-1/16}$, $t^*=\arcsin(\sqrt{1/2\theta-1/16\theta^2})$.
The path $\mathcal{C}_{\tilde z}$ is set as
\begin{equation}\label{eq5.11}
\mathcal{C}_{\tilde z}=\{\theta+p/4+\I \tilde y, \tilde y\in\R\}\cup \bigcup_{k=1}^{\ell} B_{v+k},
\end{equation}
where $B_{z}$ is a small circle around $z$ clockwise oriented and $p\in\{1,2\}$ depending on the value of $v$, see Figure~\ref{PFFigPathsKPZ}. More precisely, for given $v$, we consider the sequence of points $S=\{\Re(v)+1,\Re(v)+2,\ldots\}$ and we choose $p=p(v)$ and $\ell=\ell(v)$ as follows:\label{PFChoiceOfPathZKPZ}
\begin{itemize}
\item[(1)] If the sequence $S$ does not contain points in $[\theta,\theta+1/2]$, then let $\ell\in \N_0$ be such that $\Re(v)+\ell \in [\theta-1,\theta]$ and we set $p=1$.
\item[(2)] If the sequence $S$ contains a point in $[\theta,\theta+3/8]$, then let $\ell\in \N$ such that $\Re(v)+\ell \in [\theta,\theta+3/8]$ and set $p=2$.
\item[(3)] If the sequence $S$ contains a point in $[\theta+3/8,\theta+1/2]$, then let $\ell\in \N$ such that $\Re(v)+\ell \in [\theta-5/8,\theta-1/2]$ and set $p=1$.
\end{itemize}
With this choice, the singularity of the sine along the line $\theta+p/4+\I\R$ is not present, since the poles are at a distance at least $1/8$ from it. Also, the leading contribution of the kernel will come from situation (1) with $\ell=0$ and $p=1$.
\begin{figure}
\begin{center}
\psfrag{Phi1}[cb]{$\Phi^{-1}$}
\psfrag{0}[cb]{$0$}
\psfrag{theta}[cb]{$\theta$}
\psfrag{-thetat}[ct]{$-\frac{\theta}{\sigma}$}
\psfrag{+p4}[lt]{$\frac{p}{4\sigma}$}
\psfrag{-14}[ct]{$-\frac{1}{4\sigma}$}
\psfrag{theta+p4}[lt]{$\theta_+$}
\psfrag{theta-14}[ct]{$\theta_-$}
\psfrag{p}[lb]{$p$}
\psfrag{w}[cb]{$w$}
\psfrag{Cw}[lb]{$\mathcal{C}_w$}
\psfrag{v}[cb]{$v$}
\psfrag{Cv}[lb]{$\mathcal{C}_v$}
\psfrag{Cztilde}[lb]{$\mathcal{C}_{\tilde z}$}
\psfrag{Cz}[lb]{$\mathcal{C}_z$}
\includegraphics[height=6cm]{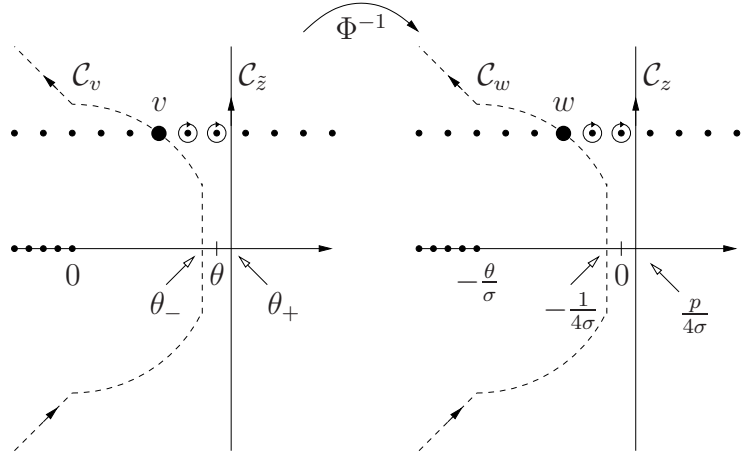}
\caption{Left: Integration paths $\mathcal{C}_v$ (dashed) and $\mathcal{C}_{\tilde z}$ (the solid line plus circles at $v+1,\ldots,v+\ell$), where $\theta_+=\theta+p/4$ and $\theta_-=\theta-1/4$, the small black dots are poles either of the sine or of the gamma function. Right:
Integration paths after the change of variables $\mathcal{C}_w$ (dashed) and $\mathcal{C}_z$ (the solid line plus circles at $w+1,\ldots,w+\ell$), with $p=p(w)\in\{1,2\}$}
\label{PFFigPathsKPZ}
\end{center}
\end{figure}
We denote
\begin{equation*}
\sigma:=(2/\T)^{1/3}
\end{equation*}
and we do the change of variable
\begin{equation*}
\{v,v',\tilde z\}=\{\Phi(w),\Phi(w'),\Phi(z)\}\quad \textrm{with}\quad\Phi(z):=\theta+z\sigma
\end{equation*}
and
\begin{equation}\label{PFeq62}
K_\theta(w,w'):=\sigma K_u(\Phi(w),\Phi(w'))=\frac{-1}{2\pi \I}\int_{\mathcal{C}_z}dz \frac{\sigma \pi S^{(z-w)\sigma}}{\sin(\pi (z-w)\sigma)}
\frac{e^{N G(\Phi(w))-N G(\Phi(z))}}{z-w'}.
\end{equation}
After this change of variable, the paths $\mathcal{C}_w=\Phi^{-1}(\mathcal{C}_v)$ and $\mathcal{C}_z=\Phi^{-1}(\mathcal{C}_{\tilde z})$ are given by
\begin{equation}\label{PFeqCwKPZ}
\begin{aligned}
\mathcal{C}_w=&\{-1/(4\sigma)+\I r/\sigma, |r|\leq r^*\}\cup\{(e^{\I t}-1)\theta/\sigma, t^*\leq |t|\leq \pi/2\}\cup\{-|y|+\I y,|y|\geq \theta/\sigma\},\\
\mathcal{C}_z=&\{p/(4\sigma)+\I y, y\in\R\}\cup \bigcup_{k=1}^{\ell} B_{w+k/\sigma},
\end{aligned}
\end{equation}
where $r^*=\sqrt{\theta/2-1/16}$, $t^*=\arcsin(\sqrt{1/2\theta-1/16\theta^2})$, and $B_{z}$ is a small circle around $z$ clockwise oriented. After this change of variable, we have
\begin{equation}\label{eq6.3}
\det(\Id+K_u)_{L^2(\mathcal{C}_v)} = \det(\Id+K_\theta)_{L^2(\mathcal{C}_w)}.
\end{equation}
Thus, we need to prove that
\begin{equation*}
\lim_{N\to\infty}\det(\Id+K_\theta)_{L^2(\mathcal{C}_w)}=\det(\Id-K_{{\rm CDRP}})_{L^2(\R_+)}
\end{equation*}
with $K_{{\rm CDRP}}$ given in Definition~\ref{2.18def}. The proof is very similar to the second part of the proof of Theorem~\ref{PFThmF2pert}(a), where this time the convergence of the kernel is in Proposition~\ref{PFPropPtConvKPZ} and the exponential bound in Proposition~\ref{PFPropBoundKPZ}. We then obtain
\begin{equation*}
\lim_{N\to\infty} \det(\Id+K_\theta)_{L^2(\mathcal{C}_w)} = \det(\Id+\widetilde K_{{\rm CDRP}})_{L^2(\mathcal{C}_w)}
\end{equation*}
with $\widetilde K_{{\rm CDRP}}$ given in (\ref{PFeqKPZtilde}). Lemma~\ref{PFLemKPZreformuation} shows that the limiting Fredholm determinant is equivalent to $\det(\Id-K_{{\rm CDRP}})_{L^2(\R_+)}$ and thus completes the proof of Theorem~\ref{ThmCDRPdets}(a).

\subsection{Pointwise convergence and bounds}
The leading contribution of the Fredholm determinant and in the kernel comes from $w,w',z$ of order $1$ away from $\theta\sim\Or(\sqrt{N})$. The scale for steep descent analysis is $N\theta$ instead of $N$ as in the case of the convergence to the GUE Tracy-Widom distribution function. So, the function whose real part has to be controlled is this time
\begin{equation*}
\widetilde G(Z):=\frac{G(\theta+\theta Z)}{\theta},
\end{equation*}
that satisfies
\begin{equation}\label{PFeqLargeThetaCircle}
\begin{aligned}
\widetilde G^{(3)}(0)&=-1+\Or(\theta^{-1}),\\
\widetilde G^{(4)}(0)&=2+\Or(\theta^{-1}),\\
\widetilde G^{(n)}(0)&=\Or(1),\quad n\geq 3,\\
G^{(n)}(\theta)&=\theta^{-n+1}\widetilde G^{(n)}(0).
\end{aligned}
\end{equation}
For asymptotic analysis we need to control the real part of $\widetilde G$, which we denote
\begin{equation*}
\widetilde g(X,Y):=\Re(\widetilde G(X+\I Y))=\frac{g(\theta+\theta X,\theta Y)}{\theta}.
\end{equation*}
In Lemmas~\ref{PFlemLargeThetaW},~\ref{PFlemLargeThetaZ}, and~\ref{PFlemLargeThetaPoles} we will analyze the steep descent properties for $\widetilde G$ (those are analogs of Lemmas~\ref{PFlemPathW}-\ref{PFlemPoles}), that we use to prove Proposition~\ref{PFPropPtConvKPZ} and Proposition~\ref{PFPropBoundKPZ} below.

\begin{proposition}\label{PFPropPtConvKPZ}
Uniformly for $w,w'$ in a bounded set of $\mathcal{C}_w$,
\begin{equation*}
\lim_{N\to\infty} K_\theta(w,w') = \widetilde K_{{\rm CDRP}}(w,w')
\end{equation*}
where
\begin{equation}\label{PFeqKPZtilde}
\widetilde K_{{\rm CDRP}}(w,w')= \frac{-1}{2\pi \I}\int_{\frac{1}{4\sigma}+\I \R}dz \frac{\sigma \pi S^{(z-w)\sigma}}{\sin(\pi (z-w)\sigma)} \frac{e^{z^3/3-w^3/3}}{z-w'}.
\end{equation}
\end{proposition}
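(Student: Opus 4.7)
My plan is to follow the strategy of Proposition~\ref{PFPropPtwiseConv}, but exploiting a simplification specific to the CDRP regime: since $\sigma=(2/\T)^{1/3}$ is fixed (not scaling with $N$), the factor $1/\sin(\pi(z-w)\sigma)$ by itself provides exponential decay in $|\Im z|$ uniformly in $N$, so dominated convergence applies on the entire vertical contour and no local/tail decomposition like the one in the Tracy-Widom case is needed. First I would reduce the contour. For $w,w'$ in a fixed bounded set and $N$ large, the selection rules on page~\pageref{PFChoiceOfPathZKPZ} place us in situation (1) with $p=1$ and $\ell=0$: on the vertical segment of $\mathcal{C}_w$ we have $\Re(v)=\theta-1/4$, so the sequence $\{\Re(v)+1,\Re(v)+2,\ldots\}$ begins at $\theta+3/4$ and misses $[\theta,\theta+1/2]$; on the circular arc, boundedness of $|w|$ forces $t=\Or(\theta^{-1/2})$ and hence $\Re(v)=\theta\cos t\in[\theta-1,\theta]$. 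So $\mathcal{C}_z$ collapses to $\{\tfrac{1}{4\sigma}+\I Y:Y\in\R\}$, matching the contour in $\widetilde K_{\rm CDRP}$ and eliminating all residue contributions.

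Pointwise convergence of the integrand then follows from (\ref{PFeqLargeThetaCircle}) together with the scaling identities $N=\T\theta^2$ and $\sigma^3=2/\T$. Since $G'(\theta)=G''(\theta)=0$, a Taylor expansion of $\widetilde G$ about $0$ gives $N[G(\Phi(w))-G(\theta)]=-w^3/3+\Or(1/\theta)$ and $N[G(\Phi(z))-G(\theta)]=z^3/3+\Or(1/\theta)$ for fixed bounded $w,z$. The remaining factors $\sigma\pi S^{(z-w)\sigma}/\sin(\pi(z-w)\sigma)$ and $1/(z-w')$ carry no $N$-dependence, so the integrand converges pointwise to that of $\widetilde K_{\rm CDRP}$.

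To upgrade this to integral convergence I would produce an $N$-independent integrable dominant. On the contour one has $z-w=1/(2\sigma)+\I(\Im z-\Im w)$ (for $w$ on the vertical part; the arc case is analogous), so the identity $|\sin(A+\I B)|^2=\sin^2 A+\sinh^2 B$ yields $|1/\sin(\pi(z-w)\sigma)|\leq Ce^{-\pi\sigma|\Im z|}$ uniformly for bounded $w$. The factor $|S^{(z-w)\sigma}|$ contributes at most $e^{|\arg S|\sigma|\Im z|}$ with $|\arg S|<\pi/2$ (as $\Re S>0$), giving a net integrable decay $e^{-c\sigma|\Im z|}$ with $c>0$. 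Finally, $|e^{N[G(\Phi(w))-G(\Phi(z))]}|$ is uniformly bounded in $N$ and in $z$ on the contour: by Lemma~\ref{PFlemPathZ} (applied at $x=\theta+1/4>\theta$), $g(\theta+1/4,\sigma Y)$ is strictly increasing in $|Y|$, so $|e^{-NG(\Phi(z))}|$ attains its maximum at $Y=0$; Taylor expansion then gives $N[g(\theta,0)-g(\theta+1/4,0)]=\T/384+\Or(1/\theta)=\Or(1)$. Dominated convergence delivers $K_\theta(w,w')\to\widetilde K_{\rm CDRP}(w,w')$ uniformly for $w,w'$ in bounded sets.

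The main technical point I expect to be delicate is the uniform-in-$N$ upper bound on the exponential factor. It rests on two ingredients: (i) the global monotonicity of $Y\mapsto g(\theta+1/4,\sigma Y)$ on $[0,\infty)$, which follows from the Digamma-sum representation of Lemma~\ref{PFlemPathZ} but needs to be invoked in a way that is uniform as $\theta$ grows with $N$; and (ii) the cancellation $N\cdot G^{(3)}(\theta)\cdot (\sigma/4)^3/6=\Or(1)$, resting on the precise scaling $G^{(3)}(\theta)=\Or(\theta^{-2})$ from (\ref{PFeqLargeThetaCircle}) combined with $N=\T\theta^2$ and $\sigma^3=2/\T$. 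Both ingredients are routine given the background lemmas for $\widetilde G$ (analogs of Lemmas~\ref{PFlemPathW}--\ref{PFlemPoles} developed in the next subsection), but verifying them carefully is the one place the proof has real content.
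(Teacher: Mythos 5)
Your proposal is correct, and it departs from the paper's proof in a substantive way that is worth noting. The paper decomposes the $z$-integral into $|\Im z|>\theta^{1/3}$ and $|\Im z|\leq\theta^{1/3}$ (the latter subdivided further), estimating each region separately and tracking explicit error rates of the form $O(e^{-cN^{1/6}})$ and $O(N^{-1/6})$; this mirrors the structure of Proposition~\ref{PFPropPtwiseConv} for the Tracy–Widom limit. You instead observe that, because $\sigma$ is fixed in the CDRP regime, $|1/\sin(\pi(z-w)\sigma)|\leq Ce^{-\pi\sigma|\Im z|}$ already provides $N$-independent exponential decay, and therefore a single dominated convergence argument applies once one has a uniform-in-$N$ bound on the exponential factor. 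The latter you obtain from the steep-descent monotonicity $g(\theta+1/4,\cdot)$ (Lemma~\ref{PFlemPathZ} or Lemma~\ref{PFlemLargeThetaZ}) together with the $O(1)$ cancellation $N[G(\theta)-G(\theta+1/4)]=\T/384+O(\theta^{-1})$, which I have verified. This is cleaner than the paper's route, at the cost of not exhibiting explicit error rates; the uniformity in $w,w'$ still requires splitting off a compact region of the contour, but this is routine. One small notational slip: the Taylor coefficient you need is $N\,G^{(3)}(\theta)\,\sigma^3 z^3/6$ evaluated at $z=1/(4\sigma)$, i.e. $N\,G^{(3)}(\theta)\cdot(1/4)^3/6$, not $N\,G^{(3)}(\theta)\cdot(\sigma/4)^3/6$; since $\sigma^3=2/\T=O(1)$ this does not change the conclusion $=O(1)$, but the intermediate expression is off by a factor of $\sigma^3$.
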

\begin{proof}
First remark that the only dependence on $N$ in the kernel (\ref{PFeq62}) is in the factor
\begin{equation*}
\exp\left[N \left(G(\Phi(w))-G(\Phi(z))\right)\right]=\exp\left[N\theta \left(\widetilde G(w\sigma/\theta)-\widetilde G(z\sigma/\theta)\right)\right].
\end{equation*}
Let $w,w'$ be in a bounded set of $\mathcal{C}_w$ around the origin. For $N$ large enough and $w$ bounded in $\mathcal{C}_w$, $\Re(w\sigma+1)>1/2$ and $\Re((z-w)\sigma)\in (0,1)$ so that we have $\ell=0$ and $p=1$, i.e., in this case $\mathcal{C}_z=\{\frac{1}{4\sigma}+\I y,y\in\R\}$. We have
\begin{equation}\label{PFeq2.14}
\begin{aligned}
N G(\Phi(w))=N G(\theta+w\sigma)&=N G(\theta)+\frac{N}{6} G^{(3)}(\theta)\sigma^3 w^3+ \Or(N w^4/\theta^3)\\
&=N G(\theta)-\frac{N w^3 \sigma^3}{6\theta^2}+\Or(N w^4/\theta^3,N w^3/\theta^3)\\
&=N G(\theta)-\frac{w^3}{3}+\Or(w^4/\theta)
\end{aligned}
\end{equation}
where the $\theta$-dependence in the error term follows from $G^{(4)}(\theta)=\Or(\theta^{-3})$ and then we used the expansion (\ref{PFeqLargeThetaCircle}) for $G^{(3)}(\theta)$.

We divide the integral over $z$ into two parts: (a) $|\Im(z)|>\theta^{1/3}$  and \mbox{(b) $|\Im(z)|\leq \theta^{1/3}$}.

\smallskip
\emph{(a) Contribution of the integration over $|\Im(z)|>\theta^{1/3}$.} For $w,w'$ on $\mathcal{C}_w$ of order $1$ and $z\in \mathcal{C}_z$, $|z-w'|\geq \Or(1)$, $|\sin(\pi (z-w)\sigma)^{-1}|=\Or(1)$. So,
\begin{equation*}
|K_\theta(w,w')|\leq \Or(\theta)\int_{\theta^{-2/3}}^\infty dY \exp\left[N\theta \left(\widetilde g(0,0)- \widetilde g((4\sigma \theta)^{-1},Y)\right)\right].
\end{equation*}
From Lemma~\ref{PFlemLargeThetaZ} we have that $-\widetilde g((4\sigma \theta)^{-1},Y)$ is strictly decreasing with derivative going to $-\infty$ as $Y$ goes to infinity. Then, the integral over $Y$ is bounded and of leading order
\begin{equation}\label{PFeq74}
\exp\left[N\theta \left(\widetilde g(0,0)- \widetilde g((4\sigma \theta)^{-1},\theta^{-2/3})\right)\right].
\end{equation}
The estimates for small $Y$  of Lemma~\ref{PFlemLargeThetaZ} with $X=(4\sigma \theta)^{-1}$ and $Y=\theta^{-2/3}$ lead then to
\begin{equation*}
(\ref{PFeq74})\leq \exp\left[N\theta \left(\widetilde g(0,0)- \widetilde g((4\sigma \theta)^{-1},0)-\frac1{12\theta^{8/3}}+\Or(\theta^{-11/3})\right)\right]=\Or(1) \exp\left(-\frac{\T^{5/6} N^{1/6}}{12}\right)
\end{equation*}
where we also used the fact that $\widetilde g((4\sigma \theta)^{-1},0)=\widetilde g(0,0)+\Or(\theta^{-3})$. Thus, the contribution in the kernel from the integration over $|\Im(z)|\geq \theta^{1/3}$ is of order $e^{-c N^{1/6}}$ for some positive constant $c>0$.

\smallskip
\emph{(b) Contribution of the integration over $|\Im(z)|\leq \theta^{1/3}$.}
We need to estimate
\begin{equation}\label{PFeq2.20}
\frac{-1}{2\pi \I}\int_{\frac{1}{4\sigma}+\I y,|y|\leq\theta^{1/3}}dz \frac{\sigma \pi S^{(z-w)\sigma}}{\sin(\pi (z-w)\sigma)} \frac{e^{N G(\Phi(w))-N G(\Phi(z))}}{z-w'}.
\end{equation}

Unlike the scaling where we have proven the convergence to the GUE Tracy-Widom distribution, in this case the sine function survives in the limiting expression and we do not have to employ the quartic term in the estimates (since it was used only to control the error term of the sine).

First we verify that the convergence is controlled by the third order term. For this purpose, we set $z=\I y +1/(4\sigma)$. Then, using (\ref{PFeqLargeThetaCircle}) we obtain (as in (\ref{PFeq2.14}))
\begin{equation*}
-N G(\theta+\sigma z) =-N G(\theta)+\frac{z^3}{3}+\Or(z^4/\theta).
\end{equation*}
The real part of the cubic term is given by
\begin{equation}\label{PFeq69b}
\Re\left(\frac{z^3}{3}\right)=-\frac{y^2}{4\sigma^2}+\frac{1}{192\sigma^3}.
\end{equation}
In our situation we have $|y|\leq \theta^{1/3}$, therefore $-\frac{y^2}{4\sigma^2}$ dominates $\Or(z^4/\theta)$ for large $\theta$ (since $y^2=\Or(\theta^{2/3})$).

We have
\begin{equation*}
(\ref{PFeq2.20})=\frac{-1}{2\pi \I}\int_{\frac{1}{4\sigma}+\I y,|y|\leq\theta^{1/3}}dz \frac{\sigma \pi S^{(z-w)\sigma}}{\sin(\pi (z-w)\sigma)} \frac{e^{z^3/3-w^3/3+\Or(w^4/\theta;z^4/\theta)}}{z-w'}.
\end{equation*}
We divide the integration in (b.1) $\theta^{1/6}\leq |y|\leq \theta^{1/3}$ and (b.2) $|y|\leq \theta^{1/6}$. Since the quadratic term in $y$ from (\ref{PFeq69b}) dominates the others, the contribution of (b.1) is only of order
$\Or(e^{-c_1 \theta^{1/3}})=\Or(e^{-c_2 N^{1/6}})$ for some constants $c_1,c_2>0$. The contribution from (b.2) is given by
\begin{equation}\label{PFeq2.24}
\frac{-1}{2\pi \I}\int_{\frac{1}{4\sigma}+\I y,|y|\leq\theta^{1/6}}dz \frac{\sigma \pi S^{(z-w)\sigma}}{\sin(\pi (z-w)\sigma)} \frac{e^{z^3/3-w^3/3+\Or(w^4/\theta;z^4/\theta)}}{z-w'}.
\end{equation}
For $|y|\leq \theta^{1/6}$, $\Or(z^4/\theta)=\Or(\theta^{-1/3})$. Using $|e^x-1|\leq |x| e^{|x|}$ for $x=\Or(z^4/\theta)$ and then for $x=\Or(w^4/\theta)$ we can delete the error term by making an error of order $\Or(\theta^{-1/3})=\Or(N^{-1/6})$. Thus,
\begin{equation*}
(\ref{PFeq2.24})=\Or(N^{-1/6})+\frac{-1}{2\pi \I}\int_{\frac{1}{4\sigma}+\I y,|y|\leq\theta^{1/6}}dz \frac{\sigma \pi S^{(z-w)\sigma}}{\sin(\pi (z-w)\sigma)} \frac{e^{z^3/3-w^3/3}}{z-w'}.
\end{equation*}
Finally, extending the last integral to $\frac{1}{4\sigma}+\I\R$ we make an error of order $\Or(e^{-c_3 \theta^{1/3}})$ for some constant $c_3>0$.

Putting all the above estimates together we obtain that, for $w,w'\in \mathcal{C}_w$ in a bounded set around $0$,
\begin{equation*}
K_\theta(w,w')=\Or(N^{-1/6})+\frac{-1}{2\pi \I}\int_{\frac{1}{4\sigma}+\I \R}dz \frac{\sigma \pi S^{(z-w)\sigma}}{\sin(\pi (z-w)\sigma)} \frac{e^{z^3/3-w^3/3}}{z-w'}.
\end{equation*}
\end{proof}

\begin{proposition}\label{PFPropBoundKPZ}
For any $w,w'$ in $\mathcal{C}_w$, uniformly for all $N$ large enough,
\begin{equation*}
|K_\theta(w,w')|\leq C e^{-|\Im(w)|}
\end{equation*}
for some constant $C$.
\end{proposition}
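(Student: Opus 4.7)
The plan is to mirror closely the argument of Proposition~\ref{PFPropBound}, with the key change being that all steep descent estimates must be carried out on the rescaled scale $N\theta$ in terms of $\widetilde G$ rather than on scale $N$ in terms of $G$. First, by Proposition~\ref{PFPropPtConvKPZ} the kernel $K_\theta(w,w')$ converges uniformly on bounded subsets of $\mathcal{C}_w\times\mathcal{C}_w$ to the continuous limit kernel $\widetilde K_{{\rm CDRP}}(w,w')$, so the required bound is immediate for $w$ in a fixed bounded set. We fix $L>0$ large (to be chosen depending on $S$ and $\sigma$), and thereafter consider $w\in\mathcal{C}_w$ with $|\Im(w)|\geq L$; by symmetry we may assume $\Im(w)\geq L$. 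Note that along $\mathcal{C}_w$ the factor $|z-w'|^{-1}$ can be uniformly bounded (say by $2$) after choosing $\mathcal{C}_z$ appropriately, so it may be discarded.

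Write the kernel as
\begin{equation*}
K_\theta(w,w')=\frac{e^{N(G(\Phi(w))-G(\theta))}}{c_\theta^{-1}N^{-1/3}}\cdot\frac{-c_\theta^{-1}N^{-1/3}}{2\pi\I}\int_{\mathcal{C}_z}dz\,\frac{\sigma\pi S^{(z-w)\sigma}}{\sin(\pi(z-w)\sigma)}\,\frac{e^{N(G(\theta)-G(\Phi(z)))}}{z-w'},
\end{equation*}
so that estimates split naturally into (i) the $w$-prefactor and (ii) the $z$-integral, which is further split into (a) the contribution from the vertical line $\theta+p/(4\sigma)+\I\R$ and (b) the residues at the $\ell=\ell(w)$ small circles $B_{w+k/\sigma}$, $1\leq k\leq \ell$. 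The decay of the prefactor is controlled by Lemma~\ref{PFlemLargeThetaW}: writing $w=-|Y|\theta/\sigma+\I Y\theta/\sigma$ on the linear tail part of $\mathcal{C}_w$ (or analogous parametrizations on the arc / vertical pieces), one has $\widetilde g(-Y,Y)-\widetilde g(0,0)\leq -c Y$ for $Y$ beyond a small threshold, with $c$ a positive constant, so that
\begin{equation*}
\bigl|e^{N(G(\Phi(w))-G(\theta))}\bigr|=e^{N\theta(\widetilde g-\widetilde g(0,0))}\leq C\,e^{-cN\theta\cdot|\Im(w)|\sigma/\theta}=C\,e^{-c'\sqrt{N}\,|\Im(w)|},
\end{equation*}
which is stronger than $e^{-|\Im(w)|}$ once $N$ is large. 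On the vertical segment and the circular arc of $\mathcal{C}_w$, a Taylor expansion based on $\widetilde G^{(3)}(0)=-1+\Or(\theta^{-1})$ yields a cubic decay in $\Im(w)$ that again dominates $e^{-|\Im(w)|}$ for the intermediate range $L\leq|\Im(w)|\leq\delta\theta/\sigma$.

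For contribution (ii.a), Lemma~\ref{PFlemLargeThetaZ} provides the steep-descent property of the vertical $z$-line, so the integral is uniformly $\Or(1)$ (actually $\Or(\sigma^{-1})$), independent of $w$; this is then absorbed into the prefactor. For contribution (ii.b), each residue at $w+k/\sigma$ contributes a factor of the form
\begin{equation*}
\frac{\sigma S^{k}\,e^{N(G(\Phi(w))-G(\Phi(w)+k))}}{w+k/\sigma-w'},
\end{equation*}
after which, relative to the prefactor already extracted, one picks up an extra factor $e^{-N(G(\Phi(w)+k)-G(\Phi(w)))}S^{k}$. The latter is controlled by Lemma~\ref{PFlemLargeThetaPoles}, which shows monotone decrease in $k$ of $\widetilde h(Y,K):=\widetilde g(-Y,Y)-\widetilde g(-Y+K,Y)$ with a slope going to $-Y$ for large $Y$; this gives $|e^{-N(G(\Phi(w)+k)-G(\Phi(w)))}|\leq e^{-c''k\,|\Im(w)|\sqrt{N}}$, which overwhelms the $S^{k}$ for all $N$ large (even at $k=1$, since then $\ell\leq \Im(w)\sigma/\theta\cdot\Or(1)$ and $|\Im(w)|\sqrt{N}$ grows). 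Summing over $k=1,\ldots,\ell$ yields a geometric series giving an $\Or(1)$ factor, so the total residue contribution is dominated by the prefactor decay.

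The combination of these bounds produces $|K_\theta(w,w')|\leq C e^{-|\Im(w)|}$ uniformly for $N$ large. The main obstacle, as in the analog for Theorem~\ref{PFThmF2pert}(a), will be the careful bookkeeping on the residue contributions (ii.b): because $\ell(w)$ grows linearly in $|\Im(w)|\sigma/\theta$ on the linear tail of $\mathcal{C}_w$, and because $S^{k\sigma}$ may be large when $|S|<1$, one must combine the Lemma~\ref{PFlemLargeThetaPoles} slope estimate with the $\sqrt{N}$-gain in the prefactor to prove that the residue sum does not overturn the exponential decay. All three pieces of $\mathcal{C}_w$ (vertical, arc, linear tail) must be handled with the appropriate local version of the Lemma~\ref{PFlemLargeThetaW} estimate, but in each case the relevant inequality $\widetilde g(X,Y)-\widetilde g(0,0)\leq -c Y$ holds uniformly, since the contour was chosen to be a steep-descent path for $\widetilde G$.
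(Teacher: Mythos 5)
Your proposal follows essentially the same route as the paper: extract the prefactor $S^{-w\sigma}e^{N(G(\Phi(w))-G(\theta))}$, control it by the steep-descent lemmas on $\widetilde G$ (Lemmas~\ref{PFlemPathWCircle} and~\ref{PFlemLargeThetaW}), split the $z$-integral into the vertical line (steep descent via Lemma~\ref{PFlemLargeThetaZ}) plus the residues at the sine poles $w+k/\sigma$ controlled by Lemma~\ref{PFlemLargeThetaPoles}, and handle the three pieces of $\mathcal{C}_w$ (short vertical segment, arc, linear tail) separately. So the skeleton is correct and matches the paper's Cases~1 and 2(a),(b). However, there are several scaling slips that appear to be carried over from the GUE Tracy--Widom proof of Theorem~\ref{PFThmF2pert}(a) and should be fixed. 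First, the Jacobian in the CDRP case is $\sigma=(2/\T)^{1/3}$, an $\Or(1)$ constant, not $c_\theta^{-1}N^{-1/3}$; you seem to be quoting the change-of-variable factor from the TW case, where the local scaling around the critical point is $N^{-1/3}$, whereas here $\Phi(z)=\theta+\sigma z$. Second, because the exponent is $N\theta\bigl(\widetilde g(X,Y)-\widetilde g(0,0)\bigr)$ with $Y=|\Im(w)|\sigma/\theta$, the prefactor decay on the linear tail is of order $e^{-cN\sigma|\Im(w)|}$, i.e., linear in $N$, not $\sqrt N$; this is even stronger than you claim, so your conclusion holds, but the stated rate is wrong. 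Third, the decay on the short vertical segment of $\mathcal{C}_w$ is quadratic in $\Im(w)$ (since $\Re(w)$ is fixed there), while on the arc the rate from Lemma~\ref{PFlemPathWCircle} is quartic ($\sin^4 t$), not cubic; again this does not break the proof since both still beat $e^{-|\Im(w)|}$ for the relevant ranges of $|\Im(w)|$, but the Taylor-expansion claim as stated is imprecise. Finally, the residue of $\sigma\pi S^{(z-w)\sigma}/\sin(\pi(z-w)\sigma)$ at $z=w+k/\sigma$ is $\pm S^k$ without the extra factor of $\sigma$ you wrote, and the paper bounds the $\ell\lesssim\theta|\Im(w)|$ residue contributions by $\ell$ times the dominant $k=1$ term via the monotonicity of $\widetilde h(Y,k)$ rather than by invoking a geometric series; the latter would need a bit more justification from Lemma~\ref{PFlemLargeThetaPoles} (strict decrease is what is proved, not a uniform geometric gap), though the paper's cruder bound already suffices since the exponent is of order $N|\Im(w)|$.
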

\begin{proof}
First recall the expression of the kernel,
\begin{equation*}
\begin{aligned}
K_\theta(w,w')&=\frac{-1}{2\pi \I}\int_{\mathcal{C}_z}dz \frac{\sigma \pi S^{(z-w)\sigma}}{\sin(\pi (z-w)\sigma)}
\frac{e^{N G(\Phi(w))-N G(\Phi(z))}}{z-w'}\\
&=S^{-w\sigma}e^{N G(\Phi(w))-N G(\theta)} \frac{-1}{2\pi \I}\int_{\mathcal{C}_z}dz \frac{\sigma \pi S^{z\sigma}}{\sin(\pi (z-w)\sigma)} \frac{e^{N G(\theta)-N G(\Phi(z))}}{z-w'}.
\end{aligned}
\end{equation*}
As in the proof of Proposition~\ref{PFPropBound}, the dependence on $w'$ is marginal because (a) we can choose the integration variable $z$ such that $|z-w'|\geq 1/(4\sigma)$ and (b) we will get the bound through evaluating the absolute value of the integrand of (\ref{PFeq62}).

\medskip
\emph{Case 1: $w\in \{-1/(4\sigma)+\I y, |y|\leq r^*/\sigma\}$ with $r^*=\sqrt{\theta/2-1/16}$.} In this case, the integration path for $z$ is $1/(4\sigma)+\I\R$ and no extra contributions from poles of the sine are present. The factor $1/\sin(\pi (z-w)\sigma)$ is uniformly bounded from above. Doing the change of variable $z=\frac{1}{4\sigma}+\I \frac{Y\, \theta}{\sigma}$ we get
\begin{equation}\label{PFeq2.30}
|K_\theta(w,w')|\leq \Or(1) e^{N \Re(G(\theta+w\sigma))-N G(\theta)}
\int_{\R}dY e^{N\theta \left(\widetilde g(0,0)-\widetilde g(\tilde \e,Y)\right)} \theta
\end{equation}
with $\tilde \e=1/(4\theta)$. The estimates as in the proof of Proposition~\ref{PFPropPtConvKPZ} on the integral over $Y$ yield
\begin{equation}\label{PFeq2.31}
(\ref{PFeq2.30})\leq \Or(1)\times  e^{N \Re(G(\theta+w\sigma))-N G(\theta)} =\Or(1)\times  e^{N\theta \Re(\widetilde G(w\sigma/\theta))-N\theta\widetilde G(0)}.
\end{equation}
Since $|w\sigma/\theta|\leq \Or(\theta^{-1/2})$ is small, we can use Taylor expansion and with (\ref{PFeqLargeThetaCircle}) we obtain
\begin{equation*}
N\theta\widetilde G(w\sigma/\theta))-N\theta\widetilde G(0) = -\frac13 w^3(1+\Or(\theta^{-1}))+\frac{\sigma}{6\theta}w^4(1+\Or(\theta^{-1})),
\end{equation*}
substituting $w=-1/(4\sigma)+\I y$ and taking the real part we get
\begin{equation*}
N\theta \Re(\widetilde G(w\sigma/\theta))-N\theta\widetilde G(0) =  -\frac{1}{4\sigma} y^2+\frac{\sigma}{6\theta}y^4+\Or(1)+\Or(y^3/\theta,y^4/\theta^2).
\end{equation*}
Now, for $|y|\leq \sqrt{\theta/2}/\sigma$, $\frac{\sigma}{6\theta}y^4\leq \frac{1}{12\sigma} y^2$ and the quadratic term dominates $\Or(y^3/\theta,y^4/\theta^2)$ for large $\theta$. Therefore, for all $\theta$ large enough, we have
\begin{equation*}
N\theta \Re(\widetilde G(w\sigma/\theta))-N\theta\widetilde G(0) \leq  -\frac{1}{8\sigma} y^2+\Or(1).
\end{equation*}
Consequently,
\begin{equation*}
|K_\theta(w,w')|\leq \Or(1) e^{-\frac{1}{8\sigma} |\Im(w)|^2}\leq C e^{-|\Im(w)|}
\end{equation*}
for some finite constant $C$.

\medskip
\emph{Case 2: $w\in\{(e^{\I t}-1)\theta/\sigma, t^*\leq |t|\leq \pi/2\}\cup\{-|y|+\I y,|y|\geq \theta/\sigma\}$}.
We divide the estimation of the bound by dividing into the contributions from (a) integration over $\frac{p}{4\sigma}+\I\R$ with $p\in\{1,2\}$ depending on $w$ (see the definitions after (\ref{eq5.11})) and (b) integration over the circles $B_{w+k/\sigma}$, $k=1,\ldots,\ell$.

\medskip
\emph{Case 2(a).} First notice that the estimate (\ref{PFeq2.30}) of \emph{Case 1} still holds with the minor difference that $\tilde\e=p/(4\theta)$ where $p\in\{1,2\}$ depending on the value of $w$. Then, also (\ref{PFeq2.31}) still holds, so that we need only to estimate $N\theta \Re(\widetilde G(w\sigma/\theta))-N\theta\widetilde G(0)$.

For $w\in\{(e^{\I t}-1)\theta/\sigma, t^*\leq |t|\leq \pi/2\}$, in Lemma~\ref{PFlemPathWCircle} we show that $\widetilde g(\cos(t)-1,\sin(t))-\widetilde g(0,0)\leq -\sin(t)^4/16$. Replacing $\Im(w)=\sin(t) \theta/\sigma$ and using $|\Im(w)|\geq \sqrt{\theta/2-1/16}$ we obtain
\begin{equation*}
N\theta \Re(\widetilde G(w\sigma/\theta))-N\theta\widetilde G(0)\leq -c_1 |\Im(w)|^4/\theta\leq -c_2 |\Im(w)| \sqrt{\theta} \leq -|\Im(w)|
\end{equation*}
for all $\theta$ large enough, where $c_1,c_2$ are some (explicit) constants. This is the desired bound.

For $w\in \{-|y|+\I y,|y|\geq \theta/\sigma\}$, from Lemma~\ref{PFlemLargeThetaW} it follows that there exists a constant $c_3>0$ such that $\partial_Y\widetilde g(-Y,Y)\leq -c_3$ and from Lemma~\ref{PFlemPathWCircle} we know that $\widetilde g(-1,1)-\widetilde g(0,0)\leq -1/16$. Thus, for $c_4=\min\{\sigma/16,c_3\}$ it holds
$\widetilde g(-1,1)-\widetilde g(0,0)\leq -c_4 Y$ for all $|Y|\geq 1/\sigma$. This means that
\begin{equation*}
N\theta \Re(\widetilde G(w\sigma/\theta))-N\theta\widetilde G(0)\leq -c_4 N \theta |\Im(w)|/\theta\leq -|\Im(w)|
\end{equation*}
for $N$ large enough, giving us the needed bound.

\medskip
\emph{Case 2(b).} It remains to check that the extra contributions of the poles of the sine also tend to zero exponentially in $|\Im(w)|$. The contribution of the integration over $B_{w+k/\sigma}$ is (up to a $\pm$ sign depending on $k$) given by
\begin{equation*}
\frac{S^k e^{N G(\Phi(w))-N G(\Phi(w+k/\sigma))}}{w+k/\sigma-w'}.
\end{equation*}
Let us set $\widetilde h(Y,k):=\widetilde g(-Y,Y)-\widetilde g(-Y+k,Y)$. From Lemma~\ref{PFlemLargeThetaPoles} it follows that the largest contribution comes from the integration over $B_{w+1/\sigma}$. We have at most $\Or(|\Im(w)|)$ poles and also \mbox{$|w+k/\sigma-w'|\geq \Or(1/\theta)$} (the worst case is at the junction between the arc of circle and the straight lines). Thus, the contribution of all the poles is bounded by
\begin{equation}\label{PFeq2.40}
\Or(\theta |\Im(w)|) S^{|\Im(w)|} e^{N G(\Phi(w))-N G(\Phi(w+1/\sigma))} =\Or(\theta |\Im(w)|) S^{|\Im(w)|} e^{N\theta \widetilde h(Y,1)},
\end{equation}
where $Y=|\Im(w)|\sigma/\theta$. We consider separately the cases $1/\sqrt{2\theta}\leq Y\leq 1$ and $Y>1$:
\begin{itemize}
  \item[(1)] For $1/\sqrt{2\theta}\leq Y\leq 1$, the bound on $\partial_k \widetilde h(Y,k)$ leads to $\widetilde h(Y,1)\leq -Y/8$.
  \item[(2)] For $Y>1$ we know that $\widetilde h(Y,1)<0$ and $\partial_k \widetilde h(Y,k)|_{k=0}\simeq -Y$ as $Y\to\infty$. By the continuity of $\widetilde h(Y,1)$ in $Y$, there exists a constant $c_5>0$ such that $\widetilde h(Y,1)\leq -c_5 Y$ for all $Y\geq 1$.
\end{itemize}
Therefore, with $c_6=\min\{c_5,1/8\}$ and inserting $Y=|\Im(w)|\sigma/\theta$ we have
\begin{equation*}
(\ref{PFeq2.40})\leq \Or(\theta |\Im(w)|) S^{|\Im(w)|} e^{-N c_5\sigma |\Im(w)|} \leq \Or(1) e^{-|\Im(w)|}
\end{equation*}
for $N$ large enough. We have shown that also the contributions of the poles have the desired bound.
\end{proof}

\begin{lemma}\label{PFlemPathWCircle}
The function $\widetilde g(\cos(t)-1,\sin(t))$ is zero at $t=0$ and strictly decreasing for $t\in (0,\pi/2]$.\\
For $t\in [0,\pi/2]$ and $\theta$ large enough, $\partial_t \widetilde g(\cos(t)-1,\sin(t)) \leq -\sin(t)(1-\cos(t))/2$ so that
\begin{equation*}
\widetilde g(\cos(t)-1,\sin(t))- \widetilde g(0,0)\leq - \sin(t)^4/16.
\end{equation*}
\end{lemma}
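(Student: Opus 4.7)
The plan is to derive $\partial_t\widetilde g(\cos t - 1,\sin t)$ in closed form, show it has the right sign and magnitude, then integrate. Parametrizing the circle as $v = \theta e^{\I t}$ gives $\widetilde g(\cos t - 1,\sin t) = g(\theta\cos t,\theta\sin t)/\theta$. Applying the chain rule to the series formulas \mbox{(\ref{PFeqg1})--(\ref{PFeqg2})} and combining the $n$-th summand of each using the identity $(\theta\cos t + n)^2 + (\theta\sin t)^2 = (n+\theta)^2 - 2n\theta(1-\cos t)$, a short algebraic simplification produces
\begin{equation*}
\partial_t\widetilde g(\cos t - 1,\sin t) = -\sin t\,(1-\cos t)\sum_{n=0}^\infty\frac{2\theta^2[\theta + 2n\cos t]}{(n+\theta)^2\bigl[(n+\theta)^2 - 2n\theta(1-\cos t)\bigr]} =: -\sin t(1-\cos t)\,T(\theta,t).
\end{equation*}
All summands of $T$ are non-negative for $t\in[0,\pi/2]$, which immediately gives strict decrease on $(0,\pi/2]$ and vanishing of the derivative at $t=0$.

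For the quantitative bound I need $T(\theta,t)\geq 1/2$ (with room to spare). First, the summand $f(n;\theta,t)$ is monotonically decreasing in $n$: computing $\partial_n f$ yields a denominator $D^2 > 0$ and a numerator that factors as $-4\theta^2(n+\theta)\bigl[3(\cos t)n^3 + (2+\cos t + 4\cos^2 t)n^2\theta + (1+4\cos t)n\theta^2 + \theta^3\bigr]$, with every coefficient non-negative for $\cos t\in[0,1]$. Monotonicity then gives $T(\theta,t)\geq \int_0^\infty f(x;\theta,t)\,dx$, and the substitution $u = x/\theta$ renders the integral $\theta$-independent:
\begin{equation*}
T_\infty(t) = \int_0^\infty\frac{2(1+2u\cos t)}{(1+u)^2\bigl[(1+u)^2 - 2u(1-\cos t)\bigr]}\,du = 2 - \int_0^\infty\frac{2u^2}{(1+u)^2(u^2 + 2u\cos t + 1)}\,du,
\end{equation*}
where the second form uses the identity $1+2u\cos t = [(1+u)^2 - 2u(1-\cos t)] - u^2$. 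For $\cos t\in[0,1]$ the remaining integrand is largest at $\cos t = 0$, and an explicit partial-fraction calculation shows $\int_0^\infty 2u^2/[(1+u)^2(u^2+1)]\,du = 1$. Thus $T_\infty(t)\geq 1$, so $T(\theta,t)\geq 1 \geq 1/2$ uniformly in $t\in[0,\pi/2]$.

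Finally, integrating the derivative bound from $0$ to $t$ gives $\widetilde g(\cos t - 1,\sin t) - \widetilde g(0,0) \leq -\tfrac12\int_0^t\sin s(1-\cos s)\,ds = -(1-\cos t)^2/4$; the factorization $\sin^2 t = (1-\cos t)(1+\cos t)$ together with the trivial bound $(1+\cos t)^2\leq 4$ then yields $(1-\cos t)^2/4 \geq \sin^4 t/16$, producing the claimed inequality. The main technical obstacle is the monotonicity of $f(\cdot;\theta,t)$, since a sign error there would sabotage the integral comparison; however the explicit factorization above with manifestly non-negative coefficients makes the argument robust for all $\cos t\in[0,1]$.
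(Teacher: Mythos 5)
Your proof is correct. The derivative computation coincides with the paper's (it reproduces~(\ref{PFeq2.34}), using the identity $(n+\theta)^2-2n\theta(1-\cos t)=n^2+2n\theta\cos t+\theta^2$), and both arguments then integrate the derivative bound and pass from $(1-\cos t)^2/4$ to $\sin^4 t/16$ in the same way. Where you genuinely diverge from the paper is in how you lower-bound the sum $T(\theta,t)$. The paper bounds crudely: it drops the $2n\cos t$ term in the numerator, bounds the denominator factor $n^2+2n\theta\cos t+\theta^2$ above by $(n+\theta)^2$, and arrives at $\sum_n 2\theta^3/(n+\theta)^4 = \tfrac23+\Or(\theta^{-1})$, which exceeds $1/2$ only for $\theta$ large — this is where the lemma's ``$\theta$ large enough'' hypothesis gets used. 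You instead show the summand $f(\cdot;\theta,t)$ is decreasing in $n$ (your factorization of $\partial_n f$ with manifestly non-negative coefficients checks out), compare the sum to the integral, and evaluate the $\theta$-independent integral exactly to get $T_\infty(t)=2-\int 2u^2/[(1+u)^2(u^2+2u\cos t+1)]\,du\geq 2-1=1$. This buys you a bound that is uniform in $\theta>0$ and gives the sharper constant $T\geq 1$ rather than $T\geq 1/2$, at the cost of the monotonicity verification. The paper's version is shorter and suffices since $\theta\to\infty$ in the application; yours is cleaner in that it removes an unnecessary asymptotic qualification from the estimate.
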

\begin{proof}
We have $\widetilde g(\cos(t)-1,\sin(t))=\theta^{-1}g(\theta\cos(t),\theta\sin(t))$, thus
\begin{equation}\label{PFeq2.34}
\begin{aligned}
\frac{\partial \widetilde g(\cos(t)-1,\sin(t))}{\partial t}
&=\cos(t) g_2(\theta\cos(t),\theta\sin(t))-\sin(t) g_1(\theta\cos(t),\theta\sin(t))\\
&=-\sum_{n=0}^\infty \frac{2\theta^2\sin(t)(1-\cos(t))(2n\cos(t)+\theta)}{(2n\theta\cos(t)+n^2+\theta^2)(n+\theta)^2}
\end{aligned}
\end{equation}
is strictly negative for all $t\in (0,\pi/2]$, which shows the first result. We can further bound
\begin{equation*}
\begin{aligned}
(\ref{PFeq2.34})&\leq -\sin(t)(1-\cos(t)) \sum_{n=0}^\infty \frac{2\theta^3}{(2n\theta\cos(t)+n^2+\theta^2)(n+\theta)^2}
\\
&\leq -\sin(t)(1-\cos(t)) \sum_{n=0}^\infty \frac{2\theta^3}{(n+\theta)^4}=-\sin(t)(1-\cos(t))(\tfrac23+\Or(\theta^{-1})).
\end{aligned}
\end{equation*}
Thus, for large enough $\theta$, the derivative is bounded by $-\sin(t)(1-\cos(t))/2$, $t\in [0,\pi/2]$. Integrating over $[0,t]$ gives
\begin{equation*}
\widetilde g(\cos(t)-1,\sin(t))- \widetilde g(0,0)\leq -(1-\cos(t))^2/4\leq -\sin(t)^4/16
\end{equation*}
for $t\in [0,\pi/2]$.
\end{proof}

\begin{lemma}\label{PFlemLargeThetaZ}
For any $X\geq 0$, the function $\widetilde g(X,Y)$ is strictly increasing for $Y>0$, with
$\partial_Y \widetilde g(X,Y)\geq \partial_Y \widetilde g(0,Y)$.\\
For $Y\searrow 0$, $\partial_Y \widetilde g(0,Y)= Y^3/3+\Or(Y^3/\theta;Y^5)$, so that
\begin{equation*}
\widetilde g(X,Y)\geq \widetilde g(X,0)+Y^4/12+\Or(Y^4/\theta;Y^6).
\end{equation*}
For $Y\to\infty$ it holds $\partial_Y \widetilde g(X,Y)\sim Y$.
\end{lemma}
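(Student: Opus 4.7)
The plan is to work directly from the explicit series representation of $\partial_Y \widetilde g$ obtained from (\ref{PFeqg2}), rescaled to the tilde variables.

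First, I would note that the relation $\widetilde g(X,Y) = g(\theta+\theta X,\theta Y)/\theta$ together with the expression (\ref{PFeqg2}) for $g_2 = \partial_y g$ gives
\begin{equation*}
\partial_Y \widetilde g(X,Y) = g_2(\theta+\theta X,\theta Y) = \sum_{n=0}^{\infty}\left(\frac{\theta Y}{(\theta+n)^2}-\frac{\theta Y}{(\theta+\theta X+n)^2+\theta^2 Y^2}\right).
\end{equation*}
For $X\ge 0$ each summand is a product of $Y$ with a nonnegative factor (since $(\theta+\theta X+n)^2+\theta^2 Y^2 \ge (\theta+n)^2$), so $\partial_Y \widetilde g(X,Y)>0$ for $Y>0$; this gives the claimed strict monotonicity in $Y$. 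The inequality $\partial_Y\widetilde g(X,Y)\ge \partial_Y\widetilde g(0,Y)$ follows by writing
\begin{equation*}
\partial_Y\widetilde g(X,Y)-\partial_Y\widetilde g(0,Y)=\sum_{n=0}^\infty \theta Y\left(\frac{1}{(\theta+n)^2+\theta^2 Y^2}-\frac{1}{(\theta+\theta X+n)^2+\theta^2 Y^2}\right),
\end{equation*}
whose summands are nonnegative when $X\ge 0$.

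Next, for the small-$Y$ expansion, I would use the Taylor expansion of $\widetilde G$ around $0$. Since $\widetilde G'(0)=G'(\theta)=0$ and $\widetilde G''(0)=\theta G''(\theta)=0$, and by (\ref{PFeqLargeThetaCircle}) $\widetilde G^{(3)}(0)=-1+\Or(\theta^{-1})$, $\widetilde G^{(4)}(0)=2+\Or(\theta^{-1})$, and $\widetilde G^{(n)}(0)=\Or(1)$ for larger $n$, evaluation at $Z=\I Y$ and taking real parts yields
\begin{equation*}
\widetilde g(0,Y)=\widetilde G(0)+\frac{Y^4}{12}+\Or(Y^4/\theta;Y^6),
\qquad \partial_Y\widetilde g(0,Y)=\frac{Y^3}{3}+\Or(Y^3/\theta;Y^5).
\end{equation*}
Integrating the inequality $\partial_Y\widetilde g(X,Y')\ge \partial_Y\widetilde g(0,Y')$ from $0$ to $Y$ and substituting the expansion above then gives
\begin{equation*}
\widetilde g(X,Y)\ge \widetilde g(X,0)+\frac{Y^4}{12}+\Or(Y^4/\theta;Y^6),
\end{equation*}
as claimed.

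Finally, for the large-$Y$ asymptotics, in the series for $\partial_Y\widetilde g(X,Y)$ the second fraction in each summand satisfies
$\theta Y/((\theta+\theta X+n)^2+\theta^2 Y^2) \le 1/(\theta Y) \to 0$ termwise, and the tail is summable uniformly in $Y$ by comparison with $\sum 1/(\theta+n)^2$, so dominated convergence yields
\begin{equation*}
\partial_Y\widetilde g(X,Y)=\theta Y\kappa_\theta \;-\; \sum_{n=0}^\infty\frac{\theta Y}{(\theta+\theta X+n)^2+\theta^2 Y^2}=\theta \kappa_\theta\, Y+o(Y),
\end{equation*}
as $Y\to\infty$. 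Since $\theta\kappa_\theta=\theta\Psi'(\theta)\to 1$ as $\theta\to\infty$, this matches the stated asymptotic $\partial_Y\widetilde g(X,Y)\sim Y$. No step here is a genuine obstacle: the proof is a direct consequence of term-by-term monotonicity of the series and the vanishing of the first two Taylor coefficients of $\widetilde G$ at the critical point.
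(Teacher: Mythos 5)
Your proof follows essentially the same line as the paper's: the positivity and the comparison $\partial_Y\widetilde g(X,Y)\ge\partial_Y\widetilde g(0,Y)$ are obtained from the identical series manipulations, and the small-$Y$ expansion is done via the Taylor coefficients of $\widetilde G$ from (\ref{PFeqLargeThetaCircle}) rather than by expanding the series $\partial_Y\widetilde g(0,Y)=\sum_n \theta^3 Y^3/((\theta+n)^2((\theta+n)^2+\theta^2Y^2))$ directly — a cosmetic difference.

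However, your justification of the large-$Y$ asymptotics contains a genuine error. You claim the summands $\theta Y/((\theta+\theta X+n)^2+\theta^2 Y^2)$ are dominated, uniformly in $Y$, by a multiple of $1/(\theta+n)^2$, and invoke dominated convergence to conclude that the subtracted series vanishes as $Y\to\infty$. Neither assertion is correct: for $n\gtrsim \theta Y$ the summand is of order $\theta Y/n^2$, which is larger than $C/(\theta+n)^2$ by a factor of order $\theta Y$, so no $Y$-independent dominating summable sequence exists. Moreover, the subtracted series does not tend to zero at all — comparing it with the integral $\int_0^\infty \theta Y\,dx/((\theta(1+X)+x)^2+\theta^2 Y^2)=\pi/2-\arctan((1+X)/Y)$ shows it converges to $\pi/2$ as $Y\to\infty$. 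What saves the conclusion is simply that the series is $O(1)$ uniformly in large $Y$, hence $o(Y)$; the correct phrasing is boundedness of the subtracted sum, not dominated convergence. (The paper is equally terse here, writing only that the second term "becomes irrelevant," but it does not assert a false uniform domination.) With that repair your argument is correct.
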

\begin{proof}
We have
\begin{equation*}
\frac{\partial \widetilde g(X,Y)}{\partial Y} =g_2(\theta+\theta X,\theta Y)=\sum_{n=0}^\infty\left(\frac{\theta Y}{(\theta+n)^2}-\frac{\theta Y}{(\theta+\theta X+n)^2+\theta^2 Y^2}\right),
\end{equation*}
which is $0$ for $Y=0$ and for $Y>0$ is strictly positive. The inequality
\begin{equation*}
\partial_Y \widetilde g(X,Y)\geq \partial_Y \widetilde g(0,Y)=\sum_{n=0}^\infty
\frac{\theta^3 Y^3}{(\theta+n)^2((\theta+n)^2+\theta^2 Y^2)},
\end{equation*}
whose expansion for small $Y$ and large $\theta$ is given by $Y^3/3+\Or(Y^3/\theta;Y^5)$.

For large $Y$, the second term becomes irrelevant with respect to the first, so that \mbox{$\partial_Y \widetilde g(X,Y)\sim \theta \kappa_\theta Y= Y(1+\Or(\theta^{-1}))$}.
\end{proof}

\begin{lemma}\label{PFlemLargeThetaW}
The function $\widetilde g(-Y,Y)$ is strictly decreasing for $Y>0$.\\
For $Y\to\infty$ it holds $\partial_Y \widetilde g(-Y,Y)\sim -\ln(Y)$.\\
For $Y\searrow 0$ we have $\widetilde g(-Y,Y)=\widetilde g(0,0)-\tfrac13 Y^3+\Or(Y^3/\theta;Y^4)$.
\end{lemma}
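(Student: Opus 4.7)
The plan is to reduce everything to the corresponding statements already established in Lemma~\ref{PFlemPathW} for the unrescaled function $g(\theta-y,y)$, using the relation
\begin{equation*}
\widetilde g(-Y,Y) = \frac{1}{\theta}\,g(\theta - \theta Y,\theta Y).
\end{equation*}
By the chain rule,
\begin{equation*}
\partial_Y \widetilde g(-Y,Y) = -g_1(\theta-\theta Y,\theta Y) + g_2(\theta-\theta Y,\theta Y) = \bigl[\partial_y g(\theta-y,y)\bigr]_{y=\theta Y}.
\end{equation*}
Substituting the closed form \eqref{PFeq49} derived in the proof of Lemma~\ref{PFlemPathW} gives
\begin{equation*}
\partial_Y \widetilde g(-Y,Y) = -\sum_{n=0}^{\infty}\frac{2(\theta Y)^2}{(\theta+n)\bigl((\theta+n-\theta Y)^2 + (\theta Y)^2\bigr)},
\end{equation*}
which is manifestly strictly negative for all $Y>0$, establishing the strict monotonicity.

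For the large-$Y$ asymptotics, I would simply quote the large-$y$ asymptotic derived in Lemma~\ref{PFlemPathW} (obtained by writing $g_2 - g_1$ in terms of $\Psi$ and applying the expansion $\Psi(z) = \ln z - \tfrac{1}{2z} + \mathcal{O}(z^{-2})$ along the rays $-y+\theta \pm iy$ whose argument tends to $\pm 3\pi/4$): this yields $\partial_y g(\theta-y,y) \sim -\ln y$ as $y\to\infty$. Setting $y=\theta Y$ gives $\partial_Y \widetilde g(-Y,Y) \sim -\ln(\theta Y) \sim -\ln Y$ as $Y\to\infty$ (with $\theta$ fixed), as desired.

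For the small-$Y$ expansion, I would expand the denominator in the sum above:
\begin{equation*}
(\theta+n-\theta Y)^2 + (\theta Y)^2 = (\theta+n)^2\!\left(1 - \frac{2\theta Y}{\theta+n} + \mathcal{O}(Y^2)\right),
\end{equation*}
giving
\begin{equation*}
\partial_Y \widetilde g(-Y,Y) = -2\theta^2 Y^2\sum_{n=0}^\infty\frac{1}{(\theta+n)^3} + \mathcal{O}(Y^3) = -2\theta^2 c_\theta^3\, Y^2 + \mathcal{O}(Y^3).
\end{equation*}
The Euler--Maclaurin expansion (or direct integral comparison) of $c_\theta^3=\sum_{n\ge 0}(\theta+n)^{-3}$ gives $2\theta^2 c_\theta^3 = 1 + \mathcal{O}(\theta^{-1})$, so $\partial_Y \widetilde g(-Y,Y) = -Y^2 + \mathcal{O}(Y^2/\theta; Y^3)$. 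Integrating from $0$ yields
\begin{equation*}
\widetilde g(-Y,Y) - \widetilde g(0,0) = -\tfrac{1}{3}Y^3 + \mathcal{O}(Y^3/\theta; Y^4),
\end{equation*}
which is the claimed expansion.

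No part of this is genuinely difficult: the entire lemma is a transcription of Lemma~\ref{PFlemPathW} under the rescaling $y = \theta Y$, together with a careful accounting of the leading constant $2\theta^2 c_\theta^3 \to 1$. The only point requiring mild care is tracking the error in the small-$Y$ expansion so as to separate the $\mathcal{O}(Y^3/\theta)$ contribution (from the correction $2\theta^2 c_\theta^3 - 1 = \mathcal{O}(\theta^{-1})$) from the genuinely higher-order $\mathcal{O}(Y^4)$ Taylor remainder, both of which appear in the stated error term.
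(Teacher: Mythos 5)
Your proof is correct, and it reduces the lemma to the already-established closed form for $\partial_y g(\theta-y,y)$ in exactly the same way the paper does: by the chain rule with $y=\theta Y$. The strict monotonicity argument is identical.

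Where you diverge mildly from the paper is in extracting the two asymptotics. For the large-$Y$ behavior, you quote the asymptotic $\partial_y g(\theta-y,y)\sim-\ln y$ already proved in Lemma~\ref{PFlemPathW} and substitute $y=\theta Y$ (correctly noting $\ln(\theta Y)\sim\ln Y$ for $\theta$ fixed); the paper instead re-derives the logarithmic behavior by replacing the sum over $n$ with an integral in the rescaled variable $\eta=n/\theta$. Your version is shorter and relies only on a statement already proved; the paper's version keeps explicit control of the $\theta$-dependence, which is in the spirit of the surrounding lemmas but not needed for the claim as stated. For the small-$Y$ expansion, you Taylor-expand the explicit series for $\partial_Y\widetilde g(-Y,Y)$ term by term, identify the leading coefficient $-2\theta^2c_\theta^3$, use the expansion $2\theta^2c_\theta^3=1+\mathcal{O}(\theta^{-1})$, and integrate; the paper instead Taylor-expands the holomorphic function $\widetilde G(Z)$ around its double critical point (the paper writes ``around $Z=1$,'' which is evidently a typo for $Z=0$), using the stated normalizations $\widetilde G'(0)=\widetilde G''(0)=0$, $\widetilde G^{(3)}(0)=-1+\mathcal{O}(\theta^{-1})$, and takes the real part at $Z=-Y+\mathrm{i}Y$. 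Both are routine computations leading to the same answer; the paper's route is arguably slicker because the $\mathcal{O}(Y^4)$ error is visible immediately from the next Taylor coefficient, whereas yours requires a word of care about the term-by-term expansion of the series being uniformly summable (which it is, since for $n\ge 0$ the ratios $2\theta Y/(\theta+n)$ are bounded for $Y<1/2$ and decay in $n$). Either way, the argument is sound.
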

\begin{proof}
As in the proof of Lemma~\ref{PFlemPathW}, we use (\ref{PFeqg1}) and (\ref{PFeqg2}) to obtain
\begin{equation*}
\begin{aligned}
\frac{\partial \widetilde g(-Y,Y)}{\partial Y} &= g_2(\theta-\theta Y,\theta Y)-g_1(\theta-\theta Y,\theta Y)
=-\sum_{n=0}^\infty\frac{2 Y^2\theta^2}{(\theta+n)((\theta+n-\theta Y)^2+\theta^2 Y^2)}.
\end{aligned}
\end{equation*}
which is $0$ for $Y=0$ and strictly negative for $Y>0$.

We can rewrite the sum with the variable $\eta =n/\theta$. Then, for large (but still fixed) $\theta$ the sum over $\eta\in \{0,1,2,\ldots\}/\theta$ is very close to the integral over $\eta\in [0,\infty)$. From this one deduces that for large $Y$, $\partial_Y \widetilde g(-Y,Y)\sim -\ln(Y)$. The asymptotics for $Y\searrow 0$ is obtained by writing the Taylor series of $\widetilde G(Z)$ around $Z=1$ and taking the real part of it.
\end{proof}

\begin{lemma}\label{PFlemLargeThetaPoles}
Let $Y>0$ be fixed. The function
\begin{equation*}
\widetilde h(Y,k):=\widetilde g(-Y,Y)-\widetilde g(-Y+k,Y)
\end{equation*}
satisfies $\widetilde h(Y,0)=0$, $\widetilde h(Y,k)$ is strictly decreasing for $k\in [0,Y]$.\\
For any $\delta\in (0,1)$, $Y\geq \delta$, $\widetilde h(Y,k)$ is strictly decreasing in $k\in [0,Y+\delta/2]$.\\
For $Y\to \infty$, $\partial_k \widetilde h(Y,k)|_{k=0}\sim -Y$.\\
For $Y\leq 1$, $\partial_k \widetilde h(Y,k)\leq - kY/4$ for $k\in [0,Y]$.
\end{lemma}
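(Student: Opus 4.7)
The plan is to use the simple scaling identity
\[
\widetilde h(Y,k) \;=\; \tfrac{1}{\theta}\,h(\theta Y,\theta k),
\]
which follows from $\widetilde g(X,Y)=g(\theta+\theta X,\theta Y)/\theta$ together with the definitions of $h$ and $\widetilde h$, to reduce all four assertions to the corresponding ones of the previously proved Lemma~\ref{PFlemPoles}. Differentiating gives $\partial_k\widetilde h(Y,k)=\partial_{k'}h(\theta Y,k')|_{k'=\theta k}$, so signs, zeros and decay rates in the $k$ variable pull back directly under the dilation $y=\theta Y,\,k_{\mathrm{old}}=\theta k_{\mathrm{new}}$.

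With this dictionary in hand, the first two claims (monotonicity on $[0,Y]$, respectively on $[0,Y+\delta/2]$ for $Y\ge\delta\in(0,1)$) become the first two monotonicity claims of Lemma~\ref{PFlemPoles} applied with $y=\theta Y$ and $\delta'=\theta\delta\in(0,\theta)$, since the admissible range $[0,y+\delta'/2]$ for $h(y,\cdot)$ translates to $[0,Y+\delta/2]$ for $\widetilde h(Y,\cdot)$. Likewise, the asymptotic $\partial_{k'}h(y,0)\sim-y\kappa_\theta$ as $y\to\infty$ rescales to $\partial_k\widetilde h(Y,0)\sim-\theta Y\kappa_\theta$, and the elementary expansion $\kappa_\theta=1/\theta+\Or(\theta^{-2})$ from (\ref{PFeqKappa}) gives $\theta\kappa_\theta\to 1$ in the CDRP regime $\theta\to\infty$, yielding $\partial_k\widetilde h(Y,0)\sim-Y$ as claimed.

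The only step requiring a small extra input is the final bound. The estimate $\partial_{k'}h(y,k')\le -k'y\,c_\theta^3/2$ for $y\le\theta$, $k'\in[0,y]$ from Lemma~\ref{PFlemPoles} rescales to
\[
\partial_k\widetilde h(Y,k)\;\le\;-kY\cdot\tfrac12\theta^2 c_\theta^3\qquad\text{for } Y\le 1,\ k\in[0,Y],
\]
so it suffices to verify $\theta^2 c_\theta^3\ge 1/2$ for all $\theta$ large. Since $c_\theta^3=\sum_{n\ge 0}(n+\theta)^{-3}$ and $x\mapsto(x+\theta)^{-3}$ is positive, decreasing and convex, the left-endpoint Riemann sum strictly exceeds the integral $\int_0^\infty(x+\theta)^{-3}dx=1/(2\theta^2)$, so $\theta^2 c_\theta^3>1/2$ and the claimed bound $\partial_k\widetilde h(Y,k)\le -kY/4$ follows. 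I do not anticipate any genuine obstacle here: once Lemma~\ref{PFlemPoles} is available the entire content is the rescaling identity above, combined with the two elementary large-$\theta$ asymptotics $\kappa_\theta\sim 1/\theta$ and $c_\theta^3\sim 1/(2\theta^2)$.
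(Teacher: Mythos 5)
Your proposal is correct and takes essentially the same route the paper does: the paper's one-line proof of the first assertions is ``follows directly from Lemma~\ref{PFlemPoles},'' which is precisely your scaling identity $\widetilde h(Y,k)=\tfrac1\theta h(\theta Y,\theta k)$ made explicit, and the final bound in the paper is derived by the identical substitution $k\to\theta k$, $y\to\theta Y$ in (\ref{PFeq56}) together with $\sum_{n\ge0}(\theta+n)^{-3}\ge1/(2\theta^2)$. The only cosmetic difference is in the large-$Y$ asymptotic: the paper says ``approximate the sums by integrals,'' whereas you pull back the $-y\kappa_\theta$ asymptotic from Lemma~\ref{PFlemPoles} and then invoke $\theta\kappa_\theta\to1$; these are two presentations of the same fact, and both rely (as the application in Proposition~\ref{PFPropBoundKPZ} allows, since only a negative constant times $Y$ is ultimately needed) on $\theta$ being large.
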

\begin{proof}
The first statements follows directly from Lemma~\ref{PFlemPoles}. The asymptotics for large $Y$ can be obtained by approximating the sums in $\partial_k \widetilde h$ by integrals. The bound for $Y\leq 1$ follows from (\ref{PFeq56}) with $k\to \theta k$, $y\to \theta Y$, and $\sum_{n=0}^\infty (\theta+n)^{-3}\geq 1/(2\theta^2).$
\end{proof}

\subsection{Proof of Theorem~\ref{ThmCDRPdets}(b).}
Now we consider the perturbed case, where
\begin{equation*}
a_k:=\theta+b_k,\quad k=1,\ldots,m.
\end{equation*}
Then, the change of variable as in (\ref{PFeq62}) leads to the kernel
\begin{equation*}
K_\theta(w,w'):=\sigma K_u(\Phi(w),\Phi(w'))
=\frac{-1}{2\pi \I}\int_{\mathcal{C}_z}dz \frac{\sigma \pi S^{(z-w)\sigma}}{\sin(\pi (z-w)\sigma)}
\frac{e^{N G(\Phi(w))-N G(\Phi(z))}}{z-w'} \prod_{k=1}^m P(w,z,b_k)
\end{equation*}
where the perturbation term is
\begin{equation*}
P(w,z,b_k)=\frac{\Gamma(\sigma w-b_k) \Gamma(\Phi(z)) \theta^{\sigma w}}{\Gamma(\sigma z-b_k)\Gamma(\Phi(w))\theta^{\sigma z}}.
\end{equation*}
The difference from Theorem~\ref{ThmCDRPdets}(a) is that now (as it was the case for Theorem~\ref{PFThmF2pert}(b)), the paths $\mathcal{C}_z$ and $\mathcal{C}_w$ have to be locally modified around the critical point, $\theta$, so that they remains on the right of all the $b_1/\sigma,\ldots,b_m/\sigma$, see Figure~\ref{PFFigPathsKPZproof} for an illustration.
\begin{figure}
\begin{center}
\psfrag{-thetat}[cb]{$-\theta c_\theta N^{1/3}$}
\psfrag{w}[cb]{$w$}
\psfrag{Cw}[lb]{$\mathcal{C}_w$}
\psfrag{Cz}[lb]{$\mathcal{C}_z$}
\psfrag{bbar}[cb]{$\bar b$}
\psfrag{O1}[cb]{$\Or(1)$}
\psfrag{0}[cb]{$0$}
\psfrag{-thetat}[ct]{$-\frac{\theta}{\sigma}$}
\psfrag{+p4}[lt]{$\frac{p}{4\sigma}$}
\psfrag{-14}[ct]{$\frac{1}{4\sigma}$}
\includegraphics[height=6cm]{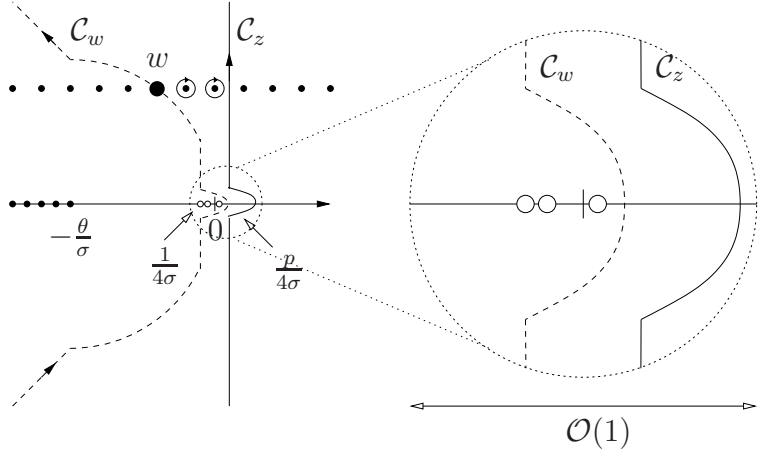}
\caption{Perturbation of the integration paths, compare with Figure~\ref{PFFigPathsKPZ} (right). The white dots on the right are the values of $b_1/\sigma,\ldots,b_m/\sigma$.}
\label{PFFigPathsKPZproof}
\end{center}
\end{figure}

We just have to show that
\begin{equation*}
\lim_{N\to\infty}\det(\Id+K_\theta)_{L^2(\mathcal{C}_w)}=\det(\Id-K_{{\rm CDRP},b})_{L^2(\R_+)}
\end{equation*}
with $K_{{\rm CDRP},b}$ as in (\ref{eq2.18}).

The proof is a minor modification of the one of Theorem~\ref{PFThmF2pert}(b). The local modification of the paths have no influence on the bounds for large $z$ and/or for large $w$. This is because $N G(\theta+b_k)-N G(\theta) = \Or(1)$ and the path for $z$ is the same away from a distance $\Or(1)$ from the origin.
What remains to be clarified is the limit kernel. We can choose the path $\mathcal{C}_w$ to be as before with a small perturbation (e.g.\ a circle) around $0$ so that it passes on the right of the all the $b_k$'s. Then, we modify the path $\mathcal{C}_z$ in the same way, i.e., by doing the same small perturbation but shifted to the right by $1/2\sigma$ (see Figure~\ref{PFFigPathsKPZproof} too). This ensures that we do not get extra poles from the sine. Finally, for the pointwise convergence of the kernel the new term remaining is
\begin{equation*}
\lim_{N\to\infty}P(w,z,b_k)=\frac{\Gamma(\sigma w-b_k)}{\Gamma(\sigma z-b_k)}.
\end{equation*}
Indeed, since $\Phi(z)=\theta+\Or(1)$ and $\Phi(w)=\theta+\Or(1)$, then $\lim_{N\to\infty}\frac{\Gamma(\Phi(z)) \theta^{\sigma w}}{\Gamma(\Phi(w))\theta^{\sigma z}}=1$.

Finally one reformulate the Fredholm determinant into one on $L^2(\R_+)$ in the same way as the unperturbed case of Lemma~\ref{PFLemKPZreformuation}.
The only small difference is that we the first step requires $\Re(z-w')>0$, which holds only for $b_k<\frac14$, for all $k$. Under this condition the rewriting holds. By looking at the final expressions one verifies that both sides are analytic in the parameters $b_1,\ldots,b_m$. Thus we have equality by analytic continuation.  This ends the proof of Theorem~\ref{ThmCDRPdets}(b).

\section{Details in the proof of Theorem~\ref{NeilPolymerFredDetThm}}\label{MacSec}

\subsection{Proof of Proposition~\ref{abgCor}}\label{abgCorproof}

This closely follows the proof of~\cite{BC11} Proposition~3.2.8 and Corollary 3.2.10. However, in that case the contour playing the role of $\CwPre{\tilde \alpha,\varphi}$ is bounded whereas it is unbounded presently. As such, some additional estimates must be made, so we include the entire proof here.

First observe that we may combine the $q$-moments $\mu_{k}=\langle q^{k\lambda_N}\rangle_{\MM_{t=0}(\tilde a_1,\dots,\tilde a_N;\rho)}$ (see definitions in Section~\ref{emergfreddet}) into a generating function
\begin{equation*}
G_{q}(\zeta)=\sum_{k\geq 0} \frac{(\zeta/(1-q))^k}{k_q!} \left\langle q^{k\lambda_N}\right\rangle_{\MM_{t=0}(\tilde a_1,\dots,\tilde a_N;\rho)}
\end{equation*}
where $k_q!= (q;q)_n/(1-q)^n$ and $(a;q)_k=(1-a)\cdots (1-aq^{k-1})$ (when $k=\infty$ the product is infinite, though convergent since $|q|<1$).
The convergence of the series defining $G_{q}(\zeta)$ follows from the fact that $q^{k\lambda_N}\leq 1$ and
\begin{equation*}
\frac{(\zeta/(1-q))^k}{k_q!}  = \frac{\zeta^k}{(1-q)\cdots (1-q^k)},
\end{equation*}
which shows geometric decay for large enough $k$. This justifies writing
\begin{equation*}
G_{q}(\zeta)=\left\langle \sum_{k\geq 0} \frac{(\zeta/(1-q))^k}{k_q!} q^{k\lambda_N}\right\rangle_{\MM_{t=0}(\tilde a_1,\dots,\tilde a_N;\rho)}  = \left\langle \frac{1}{\left(\zeta q^{\lambda_N};q\right)_{\infty}}\right\rangle_{\MM_{t=0}(\tilde a_1,\dots,\tilde a_N;\rho)}
\end{equation*}
where the second equality follows from the $q$-Binomial theorem~\cite{AAR04}.

It now suffices to show that $G_{q}(\zeta) =\det(\Id+K)$ as in the statement of the proposition. From now on, all contour integrals are along $\CwPre{\tilde \alpha,\varphi}$. Observe that we can rewrite the summation in the definition of $\mu_k$ so that
\begin{equation*}
\mu_k \frac{\zeta^k}{k_q!}= \sum_{L\geq 0}\sum_{\substack{m_1,m_2,\ldots\\ \sum m_i =L \\ \sum i m_i = k}} \frac{1}{(m_1+m_2+\cdots)!}\cdot \frac{(m_1+m_2+\cdots)!}{m_1! m_2! \cdots}\int\cdots \int I_L(\lambda;w;\zeta) \prod_{j=1}^{L} \frac{dw_j}{2\pi \I},
\end{equation*}
where $w=(w_1,\ldots, w_L)$ , $\lambda=(\lambda_1,\ldots, \lambda_L)$ and is specified by $\lambda=1^{m_1}2^{m_2}\cdots$, and where the integrand is
\begin{equation*}
I_L(\lambda;w;\zeta) = \det\left[\frac{1}{w_i q^{\lambda_i}-w_j}\right]_{i,j=1}^{L} \prod_{j=1}^{L}(1-q)^{\lambda_j}\zeta^{\lambda_j} f(w_j)f(qw_j)\cdots f(q^{\lambda_j-1}w_j).
\end{equation*}

The term $\tfrac{(m_1+m_2+\cdots)!}{m_1! m_2! \cdots}$ is a multinomial coefficient and can be removed by replacing the inner summation by
\begin{equation*}
\sum_{n_1,\ldots,n_L\in \mathcal{L}_{k,m_1,m_2,\ldots}} \int\cdots \int I_L(n;w;\zeta)\frac{dw_j}{2\pi \I},
\end{equation*}
with $n=(n_1,\ldots,n_L)$ and where
\begin{equation*}
\mathcal{L}_{k,m_1,m_2,\ldots} = \{n_1,\ldots,n_L\geq 1: \sum_i n_i = k \textrm{ and for each } j\geq 1, m_j \textrm{ of the } n_i \textrm{ equal } j\}.
\end{equation*}
This gives
\begin{equation*}
\mu_k \frac{\zeta^k}{k_q!} = \sum_{L\geq 0}\frac{1}{L!} \sum_{\substack{n_1,\ldots,n_L\geq 1\\ \sum n_i=k}} \int\cdots \int I_L(n;w;\zeta)\frac{dw_j}{2\pi \I}.
\end{equation*}

Now we may sum over $k$ which removes the requirement that $\sum_i n_i = k$. This yields that the left-hand side of equation (\ref{deteqn}) can be expressed as
\begin{equation}\label{fredexpabove}
\sum_{L\geq 0} \frac{1}{L!} \sum_{n_1,\ldots,n_L\geq 1} \int \cdots \int \det\left[\frac{1}{q^{n_i}w_i-w_j}\right]_{i,j=1}^{L} \prod_{j=1}^{L} (1-q)^{n_j}\zeta^{n_j} f(w_j)f(qw_j)\cdots f(q^{n_j-1}w_j) \frac{dw_j}{2\pi \I } .
\end{equation}
This is the definition of the Fredholm determinant expansion $\det(\Id+K)$, as desired. As these were purely formal manipulations, at this point to complete the proof we must justify the rearrangements in the above argument. In order to do this, we will show that the double summation of (\ref{fredexpabove}) is absolutely convergent. This is the point at which the unboundedness of the $\CwPre{\tilde \alpha,\varphi}$ contour introduces a slight divergence from the analogous proof of~\cite{BC11} Proposition~3.2.8 where the contour was bounded and of finite length.

Basically, the absolute convergence follows from the exponential decay of the function $f$ as the real part of $w$ increases to positive infinity, combined with Hadamard's inequality. Let us bound the absolute value of the integrand in (\ref{fredexpabove}). Note that by assumption $q^{n_i}w_i/w_j-1$ is bounded from 0 uniformly as $w_i$, $w_j$, and $n_i$ vary. Thus, it follows that for some finite constant $B_1$,
\begin{equation*}
\left| \det\left[\frac{1}{q^{n_i}w_i-w_j}\right]_{i,j=1}^{L}\right| \leq B_1^{L} L^{L/2}.
\end{equation*}
Since the function $f(w)$ is bounded as $w$ varies and has exponential decay with respect to the real part of $w$, we can replace
\begin{equation*}
\left|f(w_j)f(qw_j)\cdots f(q^{n_j-1}w_j)\right| \leq (B_2)^{n_j}e^{-c\Re(w_j)}
\end{equation*}
for constants $c>0$ and $B_2<\infty$. Thus we find that
\begin{equation}\label{eq7.12}
|(\ref{fredexpabove})|\leq \sum_{L\geq 0} \frac{1}{L!} B_1^L L^{L/2} \left(\sum_{n\geq 1} (B_2 (1-q) \zeta)^n \int \frac{|dw|}{2\pi} e^{-c \Re(w)}\right)^L.
\end{equation}
Since $w$ is being integrated along $\CwPre{\tilde \alpha,\varphi}$, the integral over $w$ is bounded by some constant $B_3<\infty$. Finally, for $|\zeta|$ small enough the geometric series converges and it is bounded by a constant $B_4$. Therefore
\begin{equation*}
(\ref{eq7.12})\leq \sum_{L\geq 0} \frac{(B_1 B_3 B_4)^{L} L^{L/2}}{L!}<\infty.
\end{equation*}
Thus we have shown that the double summation in (\ref{fredexpabove}) is absolutely convergent, completing the proof of Proposition~\ref{abgCor}.

\subsection{Proof of Theorem~\ref{PlancherelfredThm}}\label{PlancherelfredThmproof}
This theorem and its proof are adapted from~\cite{BC11} Theorem~3.2.11. However, in that theorem, the $w$-contour $\CwPre{\tilde \alpha,\varphi}$, was of finite length and the $s$-contour $\CsPre{w}$ was just a vertical line. The need for slightly more involved contours comes from the unboundedness of the $w$-contour and the necessity that $\tilde K_{\zeta}(w,w')$ goes to zero sufficiently fast as $|w|$ grows along the $w$-contour.

The starting point for this proof is Proposition~\ref{abgCor}. There are, however, two issues we must deal with. First, the operator in the proposition acts on a different $L^2$ space; second, the equality is only proved for $|\zeta|<C^{-1}$ for some constant $C>1$. We split the proof into three steps. \emph{Step 1:} We present a general lemma which provides an integral representation for an infinite sum. \emph{Step 2:} Assuming $\zeta\in \{\zeta:|\zeta|<C^{-1}, \zeta\notin\Rplus\}$ we derive equation (\ref{thmlaplaceeqn}). \emph{Step 3:} A direct inspection of the left-hand side of that equation shows that for all $\zeta\neq q^{-M}$ for $M\geq 0$ the expression is well-defined and analytic. The right-hand side expression can be analytically extended to all $\zeta\notin \Rplus$ and thus by uniqueness of the analytic continuation, we have a valid formula on all of $\C\setminus\Rplus$.

\subsubsection*{Step 1:}
The purpose of the next lemma is to change that $L^2$ space we are considering and to replace the summation in Proposition~\ref{abgCor} by a contour integral.
\begin{lemma}\label{gammasumlemma}
For all functions $g$ which satisfy the conditions below, we have the identity that for $\zeta\in \{\zeta:|\zeta|<1, \zeta\notin\Rplus\}$
\begin{equation*}
\sum_{n=1}^{\infty} g(q^n) (\zeta)^n  = \frac{1}{2\pi \I} \int_{C_{1,2,\ldots}} \Gamma(-s)\Gamma(1+s)(-\zeta)^s g(q^s) ds,
\end{equation*}
where the infinite contour $C_{1,2,\ldots}$ is a negatively oriented contour which encloses $1,2,\ldots$ and no singularities of $g(q^s)$, and $z^s$ is defined with respect to a branch cut along $z\in \R_-$. For the above equality to be valid the left-hand-side must converge, and the right-hand-side integral must be able to be approximated by integrals over a sequence of contours $C_{k}$ which enclose the singularities at $1,2,\ldots, k$ and which partly coincide with $C_{1,2,\ldots}$ in such a way that the integral along the symmetric difference of the contours $C_{1,2,\ldots}$ and $C_{k}$ goes to zero as $k$ goes to infinity.
\end{lemma}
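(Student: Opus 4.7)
The plan is to recognize this as a Mellin--Barnes representation whose proof reduces to the residue theorem plus a straightforward limiting argument. The key observation is that the integrand $\Gamma(-s)\Gamma(1+s)(-\zeta)^s g(q^s)$ is meromorphic in a neighborhood of $C_{1,2,\ldots}$, with simple poles only at $s=1,2,\ldots$ coming from $\Gamma(-s)$ (by assumption, $g(q^s)$ contributes no poles inside the contour, and $\Gamma(1+s)$ cancels the pole of $\Gamma(-s)$ at $s=0$, which in any case is not enclosed).

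First I would compute the residue at $s=n$ for $n\geq 1$. Using $\Res_{s=n}\Gamma(-s)=(-1)^{n+1}/n!$, $\Gamma(1+n)=n!$, and the fact that with the prescribed branch cut along $\R_{-}$ we have $(-\zeta)^n=(-1)^n\zeta^n$ (which is consistent because the hypothesis $\zeta\notin\R_{+}$ places $-\zeta$ off the branch cut), the residue of the full integrand at $s=n$ evaluates to
\begin{equation*}
\frac{(-1)^{n+1}}{n!}\cdot n!\cdot (-1)^n\zeta^n\cdot g(q^n)=-\zeta^n g(q^n).
\end{equation*}

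Next I would apply the residue theorem along the approximating contours $C_k$ from the hypothesis. Because $C_k$ is negatively oriented and encloses precisely the poles $s=1,2,\ldots,k$,
\begin{equation*}
\frac{1}{2\pi\I}\int_{C_k}\Gamma(-s)\Gamma(1+s)(-\zeta)^s g(q^s)\,ds=-\sum_{n=1}^k\Res_{s=n}=\sum_{n=1}^k g(q^n)\zeta^n.
\end{equation*}
Taking $k\to\infty$, the right-hand side converges to $\sum_{n=1}^\infty g(q^n)\zeta^n$ by the stated convergence hypothesis, while the left-hand side converges to the integral over $C_{1,2,\ldots}$ by the stated hypothesis that the integral over the symmetric difference $C_k\triangle C_{1,2,\ldots}$ vanishes in the limit. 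This yields the claimed identity.

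The proof is essentially bookkeeping once the residue structure is identified, so there is no serious obstacle; the only subtle point is making sure the branch of $(-\zeta)^s$ is handled consistently when crossing from the integral representation into the residues, which is why the hypothesis $\zeta\notin\R_{+}$ (placing $-\zeta\notin\R_{-}$) is exactly what is needed to make $(-\zeta)^n=(-1)^n\zeta^n$ with the principal branch. The qualitative hypotheses on convergence and on the approximating contours are listed in the statement precisely so as to reduce the proof to this residue calculation; in applications of the lemma these will need to be verified separately, but within the lemma itself they are taken as given.
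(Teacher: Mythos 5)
Your proof is correct and is exactly the argument the paper has in mind; the paper states it in one line (``The identity follows from $\Res{s=k}\Gamma(-s)\Gamma(1+s)=(-1)^{k+1}$''), and you have simply unpacked that residue computation, the branch bookkeeping giving $(-\zeta)^n=(-1)^n\zeta^n$, and the passage to the limit over the approximating contours $C_k$, all of which are precisely what the terse official proof is invoking.
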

\begin{proof}
The identity follows from $\Res{s=k}\Gamma(-s)\Gamma(1+s) = (-1)^{k+1}$.
\end{proof}

\subsubsection*{Step 2:} For this step let us assume that $\zeta\in \{\zeta:|\zeta|<C^{-1}, \zeta\notin\Rplus\}$. We may rewrite equation (\ref{abgKernel}) as
\begin{equation*}
K(n_1,w_1;n_2,w_2) = \zeta^{n_1} g_{w_1,w_2}(q^{n_1})
\end{equation*}
where $g$ is given in equation (\ref{gwwprimeeqn}).

Writing out the $M^{th}$ term in the Fredholm expansion we have
\begin{equation*}
\frac{1}{M!} \sum_{\sigma\in S_M} \sign(\sigma)\prod_{j=1}^{M} \int_{\CwPre{\tilde \alpha,\varphi}} \frac{dw_j}{2\pi \I} \sum_{n_j=1}^{\infty} \zeta^{n_j} g_{w_j,w_{\sigma(j)}}(q^{n_j}).
\end{equation*}

In order to apply Lemma~\ref{gammasumlemma} we need to define the sequence of contours $C_{k}$ (in fact we need only specify the contours for $k$ large). Let $C_{k}$ be composed of the union of two parts -- the first part is the portion of the contour $\CsPre{w}$ which lies within the ball of radius $k+1/2$ centered at the origin; the second part is the arc of the boundary of that ball which causes the union to be a closed contour which encloses $\{1,2,\ldots, k\}$ and no other integers. The contours $C_{k}$ are oriented positively and illustrated in the left-hand-side of Figure~\ref{SymDiffContours}. The infinite contour $C_{1,2,\ldots}$ is chosen to be $\CsPre{w}$ oriented as in the statement of the theorem (decreasing imaginary part). By the definition of the contours $\CwPre{\tilde \alpha,\varphi}$ and $\CsPre{w}$ we are assured that the contours $C_k$ do not contain any poles beyond those of the Gamma function $\Gamma(-s)$. This is due to the fact that the contours have been chosen such that as $s$ varies, $q^sw$ stays entirely to the left of $\CwPre{\tilde \alpha,\varphi}$ and hence does not touch $w'$.

\begin{figure}
\begin{center}
\psfrag{0}[cb]{$0$}
\psfrag{k1}[rt]{$k$}
\psfrag{k2}[lt]{$k+1$}
\psfrag{Cs}[lb]{$C_{k}$}
\psfrag{Carc}[lb]{$C^{arc}_k$}
\psfrag{Cseg}[lb]{$C^{seg}_k$}
\psfrag{R}[cb]{$R$}
\psfrag{2d}[lb]{$2d$}
\psfrag{12}[cb]{$\frac12$}
\includegraphics[height=5cm]{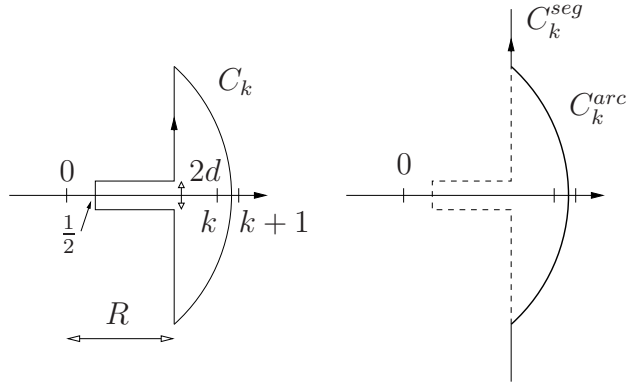}
\caption{Left: The contour $C_{k}$ composed of the union of two parts -- the first part is the portion of the contour $\CsPre{w}$ which lies within the ball of radius $k+1/2$ centered at the origin; the second part is the arc of that ball which causes the union to be a closed contour which encloses $\{1,2,\ldots, k\}$ and no other integers. Right: The symmetric difference between $C_k$ and $\CsPre{w}$ is given by two parts: a semi-circle arc which we call $C^{arc}_k$ and a portion of $R+\I \R$ with magnitude exceeding $k+1/2$ which we call $C^{seg}_k$.}\label{SymDiffContours}
\end{center}
\end{figure}

In order to apply the above lemma we must also estimate the integral along the symmetric difference. Identify the part of the symmetric difference given by the circular arc as $C^{arc}_k$ and the part given by the portion of $R+\I \R$ with magnitude exceeding $k+1/2$ as $C^{seg}_k$ (see the right-hand-side of Figure~\ref{SymDiffContours}). First observe that for $w_1,w_2$ fixed, $g_{w_1,w_2}(q^s)$ stays uniformly bounded as $s$ varies along these contours. Consider next $(-\zeta)^s$. If $-\zeta = r e^{i\sigma}$ for $\sigma\in (-\pi,\pi)$ and $r>0$ we have $(-\zeta)^s =r^s e^{\I s\sigma}$. Writing $s=x+\I y$ we have $|(-\zeta)^s| = r^{x}e^{-y\sigma}$. Note that our assumption on $\zeta$ corresponds to $r<1$ and $\sigma\in(-\pi,\pi)$. Concerning the product of Gamma functions, recall Euler's Gamma reflection formula
\begin{equation*}
\Gamma(-s)\Gamma(1+s) = \frac{\pi}{\sin (-\pi s)}.
\end{equation*}
One readily confirms that for all $s$: $\dist(s,\Z)>c$ for some $c>0$ fixed,
\begin{equation*}
\left| \frac{\pi}{\sin (-\pi s)} \right| \leq \frac{c'}{e^{\pi \Im(s)}}
\end{equation*}
for a fixed constant $c'>0$ which depends on $c$. Therefore, along the $C^{seg}_k$ contour where $s=R+\I y$,
\begin{equation*}
|(-\zeta)^s\Gamma(-s)\Gamma(1+s)|\sim r^R e^{-y\sigma}e^{-\pi|y|},
\end{equation*}
and since $\sigma\in(-\pi,\pi)$ is fixed, this product decays exponentially in $|y|$ and the integral goes to zero as $k$ goes to infinity. Along the $C^{arc}_k$ contour, the product of Gamma functions still behaves like $c'e^{-\pi |y|}$ for some fixed $c'>0$. Thus along this contour (again using the notation $s=x+\I y$)
\begin{equation*}
|(-\zeta)^s\Gamma(-s)\Gamma(1+s)| \sim e^{-y\sigma}r^x e^{-\pi|y|}.
\end{equation*}
Since $r<1$ and $-(\pi+\sigma)<0$ these terms behave like $e^{-c''(x+|y|)}$ ($c''>0$ fixed) along the circular arc. Clearly, as $k$ goes to infinity, the integrand decays exponentially in $k$ (versus the linear growth of the length of the contour) and the conditions of the lemma are met.

Applying Lemma~\ref{gammasumlemma} we find that the $M^{th}$ term in the Fredholm expansion can be written as
\begin{equation*}
\frac{1}{M!} \sum_{\sigma\in S_M} \sign(\sigma)\prod_{j=1}^{M} \int_{\CwPre{\tilde \alpha,\varphi}} \frac{dw_j}{2\pi \I}\int_{\CsPre{w_j}}\frac{ds_j}{2\pi \I} \,\Gamma(-s)\Gamma(1+s)(-\zeta)^{s} g_{w_j,w_{\sigma(j)}}(q^{s}).
\end{equation*}
Therefore the determinant can be written as $\det(\Id+\tilde K_{\zeta})$ as desired.

\subsubsection*{Step 3:} At this point we now make critical use of the choice for the contour $\CwPre{\tilde \alpha,\varphi}$ on which $w$ varies, since proving analyticity in $\zeta$ of the Fredholm determinant requires the decay properties of the kernel with respect to $w$ varying along $\CwPre{\tilde \alpha,\varphi}$.

In order to analytically extend our formula we must prove two facts. First, that the left-hand side of equation (\ref{thmlaplaceeqn}) is analytic for all $\zeta\notin\Rplus$; and second, that the right-hand side determinant is defined (i.e., its expansion is convergent) and analytic for all $\zeta\notin\Rplus$.

Expand the left-hand side of equation (\ref{thmlaplaceeqn}) as
\begin{equation*}
\sum_{n=0}^{\infty} \frac{ \PP(\lambda_N = n) } {(1-\zeta q^n)(1-\zeta q^{n+1})\cdots},
\end{equation*}
where $\PP=\PP_{\MM_{t=0}(\tilde a_1,\ldots,\tilde a_N;\rho)}$.

Observe that for any $\zeta\notin \{q^{-M}\}_{M=0,1,\ldots}$, within a neighborhood of $\zeta$ the infinite products are uniformly convergent and bounded away from zero. As a result the series is uniformly convergent in a neighborhood of any such $\zeta$ which implies that its limit is analytic, as desired.

Turning to the Fredholm determinant, we must show that the series denoted by $\det(\Id+\tilde K_{\zeta})$ is an analytic function of $\zeta$ away from $\Rplus$. For this we will appeal to the fact that limits of uniformly absolutely convergent series of analytic functions are themselves analytic. Recall that
\begin{equation*}
\det(\Id+\tilde K_{\zeta}) = 1 + \sum_{n=1}^{\infty} \frac{1}{n!} \int_{\CwPre{\tilde \alpha,\varphi}}\frac{dw_1}{2\pi \I} \cdots  \int_{\CwPre{\tilde \alpha,\varphi}} \frac{dw_n}{2\pi \I} \det(\tilde K_{\zeta}(w_i,w_j))_{i,j=1}^n.
\end{equation*}
It is clear from the definition of $\tilde K_{\zeta}$ that $\det(\tilde K_{\zeta}(w_i,w_j))_{i,j=1}^{n}$ is analytic in $\zeta$ away from $\Rplus$. Thus any partial sum of the above series is analytic in the same domain. What remains is to show that the series is uniformly absolutely convergent on any fixed neighborhood of $\zeta$ not including $\Rplus$.  Towards this end consider the $n^{th}$ term in the Fredholm expansion:
\begin{equation}\begin{aligned}\label{fredexpbound}
F_n(\zeta) &=& \frac{1}{n!} \int_{\CwPre{\tilde \alpha,\varphi}}\frac{dw_1}{2\pi \I}\cdots  \int_{\CwPre{\tilde \alpha,\varphi}}\frac{dw_n}{2\pi \I} \int_{\CsPre{w_1}} \frac{ds_1}{2\pi \I} \cdots \int_{\CsPre{w_n}} \frac{ds_n}{2\pi \I} \det\left(\frac{1}{q^{s_i}w_i -w_j}\right)_{i,j=1}^{n}\\
&&\times \prod_{j=1}^{n}  \left(\Gamma(-s_j)\Gamma(1+s_j) (-\zeta)^{s_j} \exp\big(\gamma w_j(q^{s_j}-1)\big)\prod_{m=1}^{N}\frac{(q^{s_j}w_j/\tilde a_m;q)_{\infty}}{(w_j/\tilde a_m;q)_{\infty}} \right).
\end{aligned}\end{equation}
We wish to bound the absolute value of this. We may pull the absolute values inside the integration. Now observe the following bounds which hold uniformly over all $w_j\in \CwPre{\tilde \alpha,\varphi}$ and then all $s_j\in \CsPre{w_j}$. For the first bound note that for all $z:|z|\leq 1$, there exists a constant $c_q$ such that $|(z;q)_{\infty}|<c_q$. From that it follows that for any $|z|>1$, $|(z;q)_{\infty}|\leq c_q|(z;q)_{k}|$ where $k$ is such that $|zq^k|\leq 1$. This $k$ is approximately $-\ln(|z|)/\ln(q)$ and hence bounded by $k\leq c_q' \ln|z|$ for some other constant $c_q'$. Finally $|(z;q)_{k}|\leq c_q'' |z|^k \leq c_q'' |z|^{c_q'\ln|z|}$. From this and the fact that $|q^{s_j}|<1$ (recall that $\Re(s_j)>0$ along  $\CsPre{w_j}$), it follows that for $|w_j|\geq 1$ along $\CwPre{\tilde \alpha,\varphi}$ we can bound
\begin{equation}\label{pochbound}
\left| \prod_{m=1}^{N} \frac{(q^{s_j}w_j/\tilde a_m;q)_{\infty}}{(w_j/\tilde a_m;q)_{\infty}}\right| \leq c_1 |w_{j}|^{N c_q'\ln(|w_j|/\tilde a)}
\end{equation}
for some constant $c_1$ and $\tilde a=\min_i\{\tilde a_i\}$. For $|w_j|\leq 1$ along $\CwPre{\tilde \alpha,\varphi}$, we can bound the above left-hand side by a constant and since $|w_j|$ is bounded from below along $\CwPre{\tilde \alpha,\varphi}$, it follows that the above bound (\ref{pochbound}) holds for all $w\in \CwPre{\tilde \alpha,\varphi}$, possibly with a modified value of $c_1$.

By Hadamard's inequality and the conditions we have imposed on $\CsPre{w_j}$ we get the crude bound
\begin{equation}\label{detbound}
\left| \det\left(\frac{1}{q^{s_i}w_i -w_j}\right)_{i,j=1}^{n} \right| \leq c_2^n n^{n/2}.
\end{equation}
for some fixed constant $c_2>0$.

Finally note that by the conditions we imposed in choosing the contours, for $w_j$ on $\CwPre{\tilde \alpha,\varphi}$ and $s_j$ on $\CsPre{w_j}$, we have $\Re\left(w_j(q^s-1)\right)\leq -c_{\varphi} |w_j|$ where $c_{\varphi}>0$ is a constant depending on $\varphi\in (0,\pi/4)$. From this it follows that
\begin{equation}\label{expbound}
\left|\exp\big(\gamma w_j(q^{s_j}-1)\big)\right| \leq \exp\big(-\gamma c_{\varphi}|w_j|\big).
\end{equation}

Taking the absolute value of (\ref{fredexpbound}) and bringing the absolute value all the way inside the integrals, we find that after plugging in the results of (\ref{pochbound}), (\ref{detbound}) and (\ref{expbound})\footnote{For a complex contour $C$ and a function $f:C\to \R$ we write $\int_C |dz|f(z)$ for the integral of $f$ along $C$ with respect to the arc length $|dz|$.}
\begin{equation}\label{almostthere}
|F_n(\zeta)| \leq  \frac{(c_1 c_2)^n n^{n/2}}{n!} \left(\int_{\CwPre{\tilde \alpha,\varphi}}\frac{|dw|}{2\pi}\int_{\CsPre{w}}\frac{|ds|}{2\pi} \left|\Gamma(-s)\Gamma(1+s) (-\zeta)^{s}\right| |w|^{N c_q'\ln(|w|/\tilde a)} \exp(-\gamma c_{\varphi}|w|)\right)^n.
\end{equation}

We integrate this in the $s$ variables first. For $\zeta\notin \Rplus$ we would like to bound

\begin{equation*}
 \int_{\CsPre{w}} \frac{|ds|}{2\pi} |\Gamma(-s)\Gamma(1+s) (-\zeta)^{s}|
\end{equation*}
for a neighborhood of $\zeta$ which does not touch $\Rplus$. We divide the contour of integral into two regions and bound the integral along each region: (1) The portion of the contour from $R-\I d$ to $1/2-\I d$ and then vertical to $1/2$ and its reflection through the real axis; (2) The portion of the contour which is infinite from $R-\I \infty$ to $R-\I d$ and then from $R+\I d$ to $R+\I \infty$. Recall that by Remark~\ref{contoursrem} we may assume that up to constants $d\approx |w|^{-1}$ and $R\approx \ln|w|$ for $|w|$ large enough.

Case (1): By standard bounds $|\Gamma(-s)\Gamma(1+s)|\leq 1/d \approx |w|$ (since $1/\sin(x)\approx 1/x$ near $x=0$). Calling $r$ the maximal modulus over the neighborhood of $|\zeta|$ in question, it follows that since the $s$ integral in Case (1) has length less than $2R$ (when $d<1/2$), the first part of the integral is bounded by a constant times $|w|\ln|w| r^{c_3\ln|w|}$ with a constant $c_3>0$.

Case (2): The product of Gamma functions decays exponentially in $s$ and so the integral is estimated by $r^{R}$ which, by Remark~\ref{contoursrem} is like $r^{c_3\ln|w|}$.

Summing up the above two cases we have that for $|w|$ large,
\begin{equation}\label{gammabound}
\int_{\CsPre{w}} \frac{|ds|}{2\pi} |\Gamma(-s)\Gamma(1+s) (-\zeta)^{s}| \leq c_4 r^{c_3\ln|w|} |w| \ln|w|.
\end{equation}

This estimate can be plugged in to the right-hand side of (\ref{almostthere}) to reduce the bound to just an integral in the $w_j$. This integral factors and thus we have
\begin{equation*}\begin{aligned}
|F_n(\zeta)| &\leq& \frac{(c_1 c_2)^n n^{n/2}}{n!} \left(\int_{\CwPre{\tilde \alpha,\varphi}}\frac{|dw|}{2\pi} |w|^{N c_q'\ln(|w|/\tilde a)} c_4  r^{c_3\ln|w|}|w|\ln|w|\exp(-\gamma c_{\varphi}|w|)\right)^n.
\end{aligned}\end{equation*}
The integral in $|w|$ is clearly convergent due to the exponential decay (which easily overwhelms the growth of $|w|^{Nc_q'\ln|w|}$ as well as the other terms). Thus the right-hand side above is bounded by $c_5^n n^{n/2}/n!$ for some constant $c_5$. Thus $F_n(\zeta)$ is absolutely convergent, uniformly over any fixed neighborhood of a $\zeta\notin \Rplus$. This implies that $\det(\Id+\tilde K_{\zeta})$ is analytic in $\zeta\notin\Rplus$ and hence completes the proof of \emph{Step 3} and hence the proof of the theorem.

\subsection{Proof of Proposition~\ref{postasymlemma}}\label{prop3sec}

By virtue of Lemma~\ref{exponentialdecaycutoff} it suffices to show that for some $c,C>0$,
\begin{equation}\label{kuineq1}
|K_u(v,v')|\leq Ce^{-c|v|}.
\end{equation}

Before proving this let us recall from Definition~\ref{CwPredef} the contours with which we are dealing. The variable $v$ lies on $\Cv{\alpha,\varphi}$ and hence can be written as $v=\alpha -\kappa \cos(\varphi)  \pm  \I \kappa \sin(\varphi)$, for $\kappa\in \R_+$, where the $\pm$ represents the two rays of the contour. The $s$ variables lies on $\Cs{v}$ which depends on $v$ and has two parts: The portion (which we have denoted $\Cs{v,\sqsubset}$) with real part bounded between $1/2$ and $R$ and imaginary part $\pm d$ for $d$ sufficiently small, and the vertical portion (which we have denoted $\Cs{v,\vert}$) with real part $R$. The condition on $R$ implies that $R\geq-\Re(v)+\alpha+1$ and for our purposes we will assume $R=-\Re(v)+\alpha+1$.

Let us denote by $h(s)$ the integrand in (\ref{kvvprimeC}), through which $K_u(v,v')$ is defined.
We split the proof into three steps. \emph{Step 1:} We show the integral of $h(s)$ over $s\in \Cs{v,\sqsubset}$ is bounded by an expression with exponential decay in $|v|$. \emph{Step 2:} We show the integral of $h(s)$ over $s\in \Cs{v,\vert}$ is bounded by an expression with exponential decay in $|v|$. \emph{Step 3:} We show that the integral of $h(s)$ over the entire contour $s\in \Cs{v}$ is bounded by a fixed constant, independent of $v$ or $v'$. The combination of these three steps imply the inequality (\ref{kuineq1}) and hence complete the proof.

\subsubsection*{Step 1:}
We the various terms in $h(s)$ separately and develop bounds for each. Let us write $s=x+\I y$ and note that along the contour $\Cs{v,\sqsubset}$, $y\in [-d,d]$ for $d$ small, and $x\in [1/2,R]$.

Let us start with $e^{v\tau s + \tau s^2/2}$.
The norm of the above expression is bounded by the exponential of the real part of the exponent. For $s$ along $\Cs{v,\sqsubset}$
\begin{equation*}
\Re(v s + s^2/2) = x\Re(v)+\frac{x^2}{2}-y\Im(v)-\frac{y^2}{2} .
\end{equation*}
We take $R=-\Re(v)+\alpha+1$, $d$ sufficiently small, and the bound $\Re(v)\leq \tilde c'-c' |v|$ for some constants $c',\tilde c'$ (depending on $\varphi$), to deduce
\begin{equation*}
\Re(v s + s^2/2) \leq \tilde c -c |v| x
\end{equation*}
for some constants $c,\tilde c>0$, from which
\begin{equation*}
|e^{v\tau s + \tau s^2/2}| \leq C e^{-c\tau |v|x}.
\end{equation*}
Let us now turn to the other terms in $h(s)$. We have
\begin{equation*}
|u^s|\leq e^{x\ln |u| - y\arg(u)}
\end{equation*}
and we may also bound
\begin{equation*}
\left|\frac{\Gamma(v-a_m)}{\Gamma(s+v-a_m)}\right| \leq C, \qquad \left|\frac{1}{v+s-v'}\right|\leq C,\qquad |\Gamma(-s)\Gamma(1+s)|\leq C,
\end{equation*}
for some constants $C>0$ (which may be different in each case). The first bound comes from the functional equation for the Gamma function, and the last from the fact that $s$ is bounded away from $\Z$.

Combining these together shows that for $|v|$ large, the portion of the integral of $h(s)$ for $s$ in $\Cs{v,\sqsubset}$ is bounded by (recall $s=x+\I y$)
\begin{equation*}
\int_{\Cs{v,\sqsubset}}|ds| C' e^{-c\tau |v|x + x\ln |u| -\arg(u)y}  \leq C e^{-c|v|}
\end{equation*}
for some constants $c,C>0$. Since for $|y|$ in a bounded set, everything starting from the integration path is clearly bounded, the bound holds.

\subsubsection*{Step 2:}
As above, we consider the various terms in $h(s)$ separately and develop bounds for each. Let us write $s=R+\I y$ and note that $s\in \Cs{v,\vert}$ corresponds to $y$ varying over all $|y|\geq d$.  As in \emph{Step 1}, the most important bound will be that of $e^{v\tau s + \tau s^2/2}$.

Observe that
\begin{equation*}
\Re(v s + s^2/2) = \Re(v)R - \Im(v) y + \frac{R^2}{2} -\frac{y^2}{2} = -\frac{(y+\Im(v))^2}{2} + \frac{\Im(v)^2}{2} +\frac{R^2}{2} +\Re(v)R.
\end{equation*}
Observe that because $\varphi\in (0,\pi/4)$ and $R=-\Re(v)+\alpha+1$,
\begin{equation}\label{eq6.9}
\frac{\Im(v)^2}{2} +\frac{R^2}{2} +\Re(v)R \leq \tilde c -c |v|^2
\end{equation}
for some constants $c,\tilde c>0$. Thus
\begin{equation}\label{step2bdd}
\Re(v s + s^2/2) \leq -\frac{(y+\Im(v))^2}{2} +\tilde c-c |v|^2.
\end{equation}

Let us now turn to the other terms in $h(s)$. We bound
\begin{equation*}
|u^s|\leq e^{R\ln |u| - y\arg(u)}.
\end{equation*}
By standard bounds for the large imaginary part behavior we can show
\begin{equation*}
\left|\frac{\Gamma(v-a_m)}{\Gamma(s+v-a_m)}\right| \leq C e^{\frac{\pi}{2} |y|}
\end{equation*}
for some constant $C>0$ sufficiently large. Also, $|1/(v+s-v')|\leq C$ for a fixed constant. Finally, the term
\begin{equation*}
|\Gamma(-s)\Gamma(1+s)|\leq Ce^{-\pi |y|},
\end{equation*}
for some constant $C>0$.

Combining these together shows that the integral of $h(s)$ over $s$ in $\Cs{v,\vert}$ is bounded by a constant time
\begin{equation}\label{eqnest}
\int_{\R} \exp\left(-\tau \frac{(y+\Im(v))^2}{2} -\tau c |v|^2 + R\ln|u| -y\arg(u) -\pi  |y| + N\frac{\pi}{2}|y|\right) dy.
\end{equation}

We can factor out the terms above which do not depend on $y$, giving
\begin{equation*}
\exp\left(-\tau c |v|^2 + R\ln|u| \right) \int_{\R} \exp\left(-\tau \frac{(y+\Im(v))^2}{2} -y \arg(u) + N\frac{\pi}{2}|y|\right) dy.
\end{equation*}
Notice that the prefactors on $y$ and $|y|$ in the integrand's exponential are fixed constants. We can therefore use the following bound that for $a$ fixed and $b\in \R$, there exists a constant $C$ such that
\begin{equation*}
\int_{\R} e^{-\beta(y+b)^2 + a|y|}dy \leq C e^{|ab|},\quad \beta>0.
\end{equation*}
Using this we find that we can upper-bound (\ref{eqnest}) by
\begin{equation}\label{6.12}
\exp\left(-\tau c |v|^2 + R\ln|u|+c'|v|\right).
\end{equation}
For $|v|$ large enough the Gaussian decay in the above bound dominates, and hence integral of $h(s)$ over $s$ in $\Cs{v,\vert}$ is bounded by
\begin{equation*}
C e^{-c|v|}
\end{equation*}
for some constants $c,C>0$.

\subsubsection*{Step 3:}
Since $v'$ only comes in to the term $1/(v+s-v')$ in the integrand, it is clear that the above arguments imply that the integral of $h(s)$ over the entire contour $s\in \Cs{v}$ is bounded by a fixed constant, independent of $v$ or $v'$. This completes the third step and hence completes the proof of Proposition~\ref{postasymlemma}.

\subsection{Proof of Proposition~\ref{finiteSprop}}\label{prop1sec}
Fix $\eta,r>0$. We are presently considering the Fredholm determinant of the kernels $K_u^{\e}$ and $K_u$ restricted to the fixed finite contour $\CvRLeq{\alpha,\varphi,<r}$. By Lemma~\ref{uniformptconvergence} we need only show convergence as $\e\to 0$ of the kernel $K_u^{\e}(v,v')$ to $K_u(v,v')$, uniformly in $v,v'\in \CvRLeq{\alpha,\varphi,<r}$. This is achieved via the following lemma.
\begin{lemma}
For all $\eta'>0$ there exists $\e_0>0$ such that for all $\e<\e_0$ and for all \mbox{$v,v'\in \CvRLeq{\alpha,\varphi,<r}$},
\begin{equation}\label{etapeqn}
\left|K_u^{\e}(v,v')-K_u(v,v')\right|\leq \eta'.
\end{equation}
\end{lemma}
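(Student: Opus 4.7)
The plan is to exploit the pointwise limits (\ref{pwlimits1})--(\ref{pwlimits4}) together with uniform integrable bounds. Fix $r$ and the compact contour piece $\CvRLeq{\alpha,\varphi,<r}$ so that every term involving only $v,v'$ stays in a compact region bounded away from the poles of the integrand. Since $v,v'$ vary over a compact set, the constants hidden in the estimates can be chosen uniformly.

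First I would align the two $s$-contours. The $s$-integral in $K_u^{\e}(v,v')$ is originally over $\CsPre{q^v}$, i.e.\ over the preimage under $w\mapsto q^v$ of Definition~\ref{CwPredef}'s contour. A quick check shows that as $\e\to 0$, after the change of variable $s\mapsto s$ (the $s$ variable is the same before and after the $w=q^v$ scaling), this contour converges locally uniformly to $\Cs{v}$. Using analyticity of the $q$-deformed integrand in $s$ on a sufficiently large neighborhood avoiding the poles at $s\in\Z$ and the pole at $q^{s+v}=q^{v'}$ (which, by the choice of $R$ and $d$ in Definition~\ref{CaCsdef}, stays bounded away from the contour for $v,v'\in\CvRLeq{\alpha,\varphi,<r}$ and $\e$ small), I can freely deform $\CsPre{q^v}$ to $\Cs{v}$ at the cost of zero. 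Thus, for $\e$ small enough, both kernels are integrals of their respective integrands over the common contour $\Cs{v}$.

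Next I split the $s$-contour into a truncated piece $\Cs{v}\cap\{|s|\leq L\}$ and its tail $\Cs{v}\cap\{|s|>L\}$. The main step is to show that there exist constants $C,c>0$ and a function $\Phi(s)$ with $\int_{\Cs{v}}\Phi(s)|ds|<\infty$, such that for all $\e$ small enough and all $v,v'\in\CvRLeq{\alpha,\varphi,<r}$, the integrand $h^q(s)$ defining $K_u^\e$ and the integrand $h(s)$ defining $K_u$ are both bounded in absolute value by $\Phi(s)$. The required bounds are exactly those employed in Steps~1 and~2 of the proof of Proposition~\ref{postasymlemma}: on the horizontal part $\Cs{v,\sqsubset}$, the factors $\Gamma(-s)\Gamma(1+s)$, the Gamma ratios, and $1/(v+s-v')$ are uniformly bounded, while $|u^s e^{v\tau s+\tau s^2/2}|$ is integrable; on the vertical part $\Cs{v,\vert}$, one gets Gaussian decay in $\Im(s)$ that dominates the exponential growth of the Gamma ratios. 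The $q$-deformed versions admit the same bounds with $\e$-uniform constants since $|q^v\ln q|\to 1$, $\Gamma_q\to\Gamma$ uniformly on compact sets of the argument, and $|\exp(\gamma q^v(q^s-1))|$ has the same Gaussian-in-$\Im(s)$ decay as $|\exp(v\tau s+\tau s^2/2)|$ (this is exactly the content of (\ref{pwlimits4}), which holds with $\e$-uniform upper bounds, cf.\ the analysis in Section~\ref{prop2sec}).

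Having established such a dominating $\Phi$, I choose $L$ so large that the tails $\int_{|s|>L}\Phi(s)|ds|<\eta'/3$, which then bounds the tails of both $K_u^\e$ and $K_u$ by $\eta'/3$ uniformly in $\e$ and in $v,v'\in\CvRLeq{\alpha,\varphi,<r}$. On the truncated piece $|s|\leq L$, the pointwise convergences (\ref{pwlimits1})--(\ref{pwlimits4}) are uniform in $(v,v',s)$ in the relevant compact set, so the difference of integrands converges uniformly to zero. Taking $\e$ small enough, the contribution from $|s|\leq L$ is at most $\eta'/3$. Combining, $|K_u^\e(v,v')-K_u(v,v')|\leq \eta'$, proving the lemma. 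The main obstacle is the uniform-in-$\e$ construction of $\Phi(s)$ on the vertical part of $\Cs{v}$: one needs to verify that the Gaussian decay $\exp(-\tau(y+\Im(v))^2/2)$ coming from $e^{v\tau s+\tau s^2/2}$ has a genuine $\e$-independent analogue for $\exp(\gamma q^v(q^s-1))$. This is a straightforward but careful second-order Taylor expansion of $q^s-1=-\e s+\tfrac12 \e^2 s^2+\cdots$ against $\gamma=\tau\e^{-2}$, exploiting $\Re(v)$ bounded and $\Im(s)$ growing, which reproduces the same quadratic-in-$\Im(s)$ decay modulo $\e$-uniform multiplicative constants.
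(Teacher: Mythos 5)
Your overall plan matches the paper's: fix the compact $v,v'$ set, align the $s$-contours, split the $s$-integral into a compact piece (where uniform convergence of integrands holds) and a tail (where both integrands are uniformly small), and conclude by the triangle inequality. The paper's proof of this lemma is organized into exactly this structure (Claim 1 handles $\Cs{v,<M}$, Claims 2 and 3 bound the tails of $h^q$ and $h$ separately); your single-dominating-function formulation is just a mild repackaging of Claims 2 and 3.

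The one place your sketch is substantively imprecise is the claim that $|\exp(\gamma q^v(q^s-1))|$ has ``the same Gaussian-in-$\Im(s)$ decay'' as $|\exp(v\tau s + \tau s^2/2)|$ with $\e$-uniform constants, justified via a Taylor expansion of $q^s-1$. That expansion is only valid for $|\e s|\ll 1$, i.e.\ for $|\Im(s)|\ll\e^{-1}$. Beyond the fundamental domain $|\Im(s)|<\pi\e^{-1}$ the function $q^s$ is periodic along vertical lines, so $|\exp(\gamma q^v(q^s-1))|$ is a bounded \emph{periodic} function of $\Im(s)$ --- it does not decay at all. Consequently there is no $\e$-uniform Gaussian dominating function. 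What does exist (and what the paper constructs in the proof of the second claim, via the periodic bound (\ref{perbdd}) and the function $f^\e(s)={\rm dist}(\Im(s),2\pi\e^{-1}\Z)$) is an $\e$-uniform dominating function that is Gaussian inside each fundamental domain and \emph{exponentially} decaying at rate governed by $\Gamma(-s)\Gamma(1+s)\sim e^{-\pi|\Im(s)|}$ across the shifted domains; the contributions of the shifts beyond the fundamental domain are then summed as a geometric series and go to zero as $\e\to 0$. To make your plan airtight you need to replace the claimed global Gaussian decay of the $q$-exponential with this Gaussian-inside-the-fundamental-domain plus $\Gamma$-decay-across-domains argument; once that substitution is made, your approach coincides with the paper's.
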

\begin{proof}

The kernels $K_u^{\e}$ and $K_u$ are both defined via integrals over $s$. The contour on which $s$ is integrated can be fixed for ($\e<\e_0$) to equal $\Cs{v}$, which is the $s$ contour used to define $K_u$. The fact that the $s$ contours are the same for $K_u^{\e}$ and $K_u$ is convenient. The proof of this lemma will follow from three claims. The first deals with the uniformity of convergence of the integrand defining $K_u^{\e}$ to the integrand defining $K_u$ for $s$ restricted to any fixed compact set.

Before stating this claim, let us define some notation.

\begin{definition}\label{CsMdef}
Let $\Cs{v,>M}= \{s\in \Cs{v}: |s|\geq M\}$ be the portion of $\Cs{v}$ of magnitude greater than $M$ and similarly let  $\Cs{v,<M}= \{s\in \Cs{v}: |s|<M\}$. Let us assume $M$ is large enough so that $\Cs{v,>M}$ is the union of two vertical rays with fixed real part $R=-\Re(v)+\alpha+1$. Assuming this, we will write $s=R+\I y$. Then for $y_M=(M^2- (-\Re(v)+\alpha+1)^2)^{1/2}$, the contour $\Cs{v,>M}=\{R+\I y: |y|\geq y_M\}$.
\end{definition}

\begin{claim}\label{etappclaim}
For all $\eta''>0$ and $M>0$ there exists $\e_0>0$ such that for all $\e<\e_0$, for all $v,v'\in \CvRLeq{\alpha,\varphi,<r}$, and for all $s\in \Cs{v,<M}$,
\begin{equation}\label{uniflemmaeqn}
\left| h^q(s) - \Gamma(-s)\Gamma(1+s)  \prod_{m=1}^{N}\frac{\Gamma(v-a_m)}{\Gamma(s+v-a_m)} \frac{ u^s e^{v\tau s+\tau s^2/2}}{v+s-v'}\right|\leq \eta'',
\end{equation}
where $h^q$ is given in (\ref{446}).
\end{claim}
\begin{proof}
This is a strengthened version of the pointwise convergence in (\ref{pwlimits1}) through (\ref{pwlimits4}). It follows from the uniform convergence of the $\Gamma_q$ function to the $\Gamma$ function on compact regions away from the poles, as well as standard Taylor series estimates. The choice of contours is such that the pole arising from $1/(v+s-v')$ is uniformly avoided in the limiting procedure as well.
\end{proof}

It remains to show that for $M$ large enough, the integrals defining $K_u^{\e}(v,v')$ and $K_u(v,v')$ restricted to $s$ in $\Cs{v,>M}$, have negligible contribution to the kernel, uniformly over $v,v'$ and $\e$. This must be done separately for each of the kernels and hence requires two claims.

\begin{claim}
For all $\eta'>0$ there exists $M_0>0$ and $\e_0>0$ such that for all $\e<\e_0$, for all $v,v'\in \CvRLeq{\alpha,\varphi,<r}$, and for all $M>M_0$,
\begin{equation*}
\left|\int_{\Cs{v,>M}} ds h^q(s)\right|\leq \eta'.
\end{equation*}
\end{claim}
\begin{proof}
We will use the notation introduced in Definition~\ref{CsMdef} and assume $M_0$ is large enough so that $\Cs{v,>M}$ is only comprised of two vertical rays.

Let us first consider the behavior of the left-hand side of (\ref{pwlimits4}). The magnitude of this term is bounded by the exponential of
\begin{equation*}
\Re(\tau \e^{-1} s + \e^{-2} \tau q^v (q^s-1)).
\end{equation*}
This equation is periodic in $y$ (recall $s=R+\I y$) with a fundamental domain $y\in[-\pi\e^{-1},\pi \e^{-1}]$. For $\e^{-1}\pi>|y|>y_0$ for some $y_0$ which can be chosen uniformly in $v$ and $\e$, the following inequality holds
\begin{equation*}
\Re(\tau \e^{-1} s + \e^{-2} \tau q^v (q^s-1)) \leq -\tau y^2/6
\end{equation*}
This can is proved by careful Taylor series estimation and the inequality that for $x\in [-\pi,\pi]$, $\cos(x)-1\leq -x^2/6$.
This provides Gaussian decay in the fundamental domain of $y$.

Turning to the ratio of $q$-Gamma functions in (\ref{pwlimits3}), observe that away from its poles, the denominator
\begin{equation}\label{festatement}
\left|\frac{1}{\Gamma_q(s+v-a_m)}\right|\leq c e^{c' f^{\e}(s)}
\end{equation}
where $c,c'$ are positive constants independent of $\e$ and $v$ (as it varies in its compact contour) and $f^{\e}(s) = {\rm dist}(\Im(s),2\pi \e^{-1} \Z)$. This establishes a periodic bound on this denominator, which grows at most exponentially in the fundamental domain. The numerator $\Gamma_q(v-a_m)$ in (\ref{pwlimits3}) is bounded uniformly by a constant. This is because the $v$ contour was chosen to avoid the poles of the Gamma function, and the convergence of the $q$-Gamma function to the Gamma function is uniform on compact sets away from those poles.

Finally, the magnitude of (\ref{pwlimits1}) corresponds to  $|u^s|$ and behaves like $e^{-R\ln{|u|} + y\arg(u)}$. Thus, we have established the following inequality which is uniform in $v,v'$ and $\e$ as $y$ varies:
\begin{equation}\label{perbdd}
\left|\left(\frac{-\zeta}{(1-q)^N}\right)^s \frac{q^v \ln q}{q^{s+v} - q^{v'}} e^{\gamma q^v(q^{s}-1)} \prod_{m=1}^{N} \frac{\Gamma_q(v-\ln_q(\tilde a_m))}{\Gamma_q(s+v-\ln_q(\tilde a_m))} \right| \leq c''\, e^{-f^{\e}(s)^2/6+c'N|f^{\e}(s)|}
\end{equation}
for some constant $c''>0$. Notice that this inequality is periodic with respect to the fundamental domain for \mbox{$y\in[-\pi\e^{-1},\pi \e^{-1}]$}.

The last term to consider is $\Gamma(-s)\Gamma(1+s)$ which is not periodic in $y$ and decays like $e^{-\pi |y|}$ for $y\in \R$. Since $\Cs{v,>M}$ is only comprised of two vertical rays we must control the integral of $h^q(s)$ for $s=R+\I y$ and $|y|>y_M$. By making sure $M$ is large enough, we can use the periodic bound (\ref{perbdd}) to show that the integral over $y_M<|y|<\e^{-1} \pi$ is less than $\eta$ (with the desired uniformity in $v,v'$ and $\e$. For the integral over $|y|>\e^{-1}\pi$, we can use the above exponential decay of $\Gamma(-s)\Gamma(1+s)$. On shifts by $2\pi \e^{-1}\Z$ of the fundamental domain, the exponential decay of $\Gamma(-s)\Gamma(1+s)$ can be compared to the boundedness of the other terms (which is certainly true considering the bounds we established above). The integral of each shift can be bounded by a term in a convergent geometric series. Taking $\e_0$ small then implies that the sum can be bounded by $\eta'$ as well. Since $\eta'$ was arbitrary the proof is complete.
\end{proof}

\begin{claim}
For all $\eta'>0$ there exists $M_0>0$ such that for all $v,v'\in \CvRLeq{\alpha,\varphi,<r}$, and for all $M>M_0$,
\begin{equation*}
\left|\int_{\Cs{v,>M}} ds \Gamma(-s)\Gamma(1+s)  \prod_{m=1}^{N}\frac{\Gamma(v-a_m)}{\Gamma(s+v-a_m)} \frac{ u^s e^{v\tau s+\tau s^2/2}}{v+s-v'}\right|\leq\eta'.
\end{equation*}
\end{claim}
\begin{proof}
The desired decay here comes easily from the behavior of $vs+s^2/2$ as $s$ varies along $\Cs{v,>M}$. As before, assume that $M_0$ is large enough so that this contour is only comprised of two vertical rays and set $s=R+ \I y$ for $y\in \R$ for $|y|>y_M$. As in the proof of Proposition~\ref{postasymlemma} given in Section~\ref{prop3sec} one shows that
\begin{equation*}
|e^{v\tau s+\tau s^2/2}|\leq C e^{-cy^2}
\end{equation*}
uniformly over $v,v'\in \CvRLeq{\alpha,\varphi,<R}$, and for all $M>M_0$. This behavior should be compared to that of the other terms: $|\Gamma(-s)\Gamma(1+s)|\approx e^{-\pi |y|}$; $|u^s|= e^{-R\ln{|u|} + y\arg(u)}$; $\left|\frac{\Gamma(v-a_m)}{\Gamma(s+v-a_m)}\right|\leq C e^{|y| \pi/2}$; and $|1/(v+s+v')|\leq C$ as well. Combining these observations we see that the integral decays in $|y|$ at worst like $C e^{-cy^2+c' |y|}$. Thus, by choosing $M$ large enough so that $y_M\gg 1$ we can be assured that the integral over $|y|>y_M$ is as small as desired, proving the above claim.
\end{proof}

Let us now combine the above three claims to finish the proof of the Proposition~\ref{finiteSprop}. Choose $\eta'=\eta/3$ and fix $M_0$ and $\e_0'$ as specified by the second and third of the above claims. Fix some $M>M_0$ and let $L$ equal the length of the finite contour $\Cs{v,<M}$. Set $\eta''=\frac{\eta'}{3L}$ and apply Claim~\ref{etappclaim}. This yields an $\e_0$ (which we can assume is less than $\e_0'$) so that (\ref{uniflemmaeqn}) holds. This implies that for $\e<\e_0$, and for all $v,v'\in \CvRLeq{\alpha,\varphi,<r}$,
\begin{equation*}
\left| \int_{\Cs{v,<M}} h^q(s) ds - \int_{\Cs{v,<M}} \Gamma(-s)\Gamma(1+s)  \prod_{m=1}^{N}\frac{\Gamma(v-a_m)}{\Gamma(s+v-a_m)} \frac{ u^s e^{v\tau s+\tau s^2/2}}{v+s-v'}ds \right|\leq \eta'/3.
\end{equation*}
From the triangle inequality and the three factors of $\eta'/3$ we arrive at the claimed result of (\ref{etapeqn}) and thus complete the proof of the lemma and hence also Proposition~\ref{finiteSprop}.
\end{proof}

\subsection{Proof of Proposition~\ref{compactifyprop}}\label{prop2sec}
The proof of this proposition is essentially a finite $\e$ (recall $q=e^{-\e}$) perturbation of the proof of Proposition~\ref{postasymlemma} given in Section~\ref{prop3sec}. The estimates presently are a little more involved since the functions involved are $q$-deformations of classic functions. However, by careful Taylor approximation with remainder estimates, all estimates can be carefully shown. By virtue of Lemma~\ref{exponentialdecaycutoff} it suffices to show that for some $c,C>0$,
\begin{equation}\label{kueineq}
|K_u^{\e}(v,v')|\leq C e^{-c|v|}.
\end{equation}

Before proving this let us recall from Definition~\ref{cpctcontdef} the contours with which we are dealing. The variable $v$ lies on $\CvEpsR{\alpha,\varphi,r}$ for $\varphi\in (0,\pi/4)$. The $s$ variables lies on $\CsPre{v}$ which depends on $v$ and has two parts: The portion (which we have denoted $\CsPre{v,\sqsubset}$) with real part bounded between $1/2$ and $R$ and imaginary part $\pm d$ for $d$ sufficiently small, and the vertical portion (which we have denoted $\CsPre{v,\sqsubset}$) with real part $R$. The condition on $R$ is that $R\geq  -\Re(v)+\alpha+1$ and for our purposes we can take that to be an equality.

Let us recall that the integrand in (\ref{445}), through which $K_u^{\e}(v,v')$ is defined, is denoted by $h^q(s)$. We split the proof into three steps. \emph{Step 1:} We show the integral of $h^q(s)$ over $s\in \CsPre{v,\sqsubset}$ is bounded for all $\e<\e_0$ by an expression with exponential decay in $|v|$. \emph{Step 2:} We show the integral of $h^q(s)$ over $s\in \CsPre{v,\vert}$ is bounded for all $\e<\e_0$ by an expression with exponential decay in $|v|$. \emph{Step 3:} We show that for all $\e<\e_0$, the integral of $h^q(s)$ over the entire contour $\CsPre{v}$ is bounded by a fixed constant, independent of $v$ or $v'$. The combination of these three steps imply the inequality (\ref{kueineq}) and hence complete the proof.

\subsubsection*{Step 1:}
We consider the various terms in $h^q(s)$ separately (in particular we consider the left-hand sides of (\ref{pwlimits1})  through (\ref{pwlimits4})) and develop bounds for each which are valid uniformly for $\e<\e_0$ and $\e<0$ small enough. Let us write $s=x+\I y$ and note that along the contour $\CsPre{v,\sqsubset}$, $y\in [-d,d]$ for $d$ small, and $x\in [1/2,R]$.

Let us start with the left-hand side of (\ref{pwlimits4}) which can be rewritten as
\begin{equation*}
\exp\left(\tau \Re(\e^{-1}s + \e^{-2} q^v(q^s-1))\right).
\end{equation*}
The norm of the above expression is bounded by the exponential of the real part of the exponent.
For $\varphi\in (0,\pi/4)$ one shows (as a perturbation of the analogous estimate in \emph{Step 1} of the Proof of Proposition~\ref{postasymlemma}) via Taylor expansion with remainder estimates that
\begin{equation*}
\tau \Re(\e^{-1}s + \e^{-2} q^v(q^s-1))\leq \tilde c- \tau c|v| x,
\end{equation*}
for some constants $c,\tilde c$.
The above bound implies
\begin{equation*}
\left|\exp\left(\tau \Re(\e^{-1}s + \e^{-2} q^v(q^s-1))\right)\right|\leq C e^{-c|v|x}.
\end{equation*}

Let us now turn to the other terms in $h^q(s)$. We bound the left-hand side of (\ref{pwlimits1}) as
\begin{equation*}
\left|e^{-\tau s \e^{-1}}\left(\frac{-\zeta}{(1-q)^N}\right)^s\right| \leq C |u^s| \leq C e^{x\ln|u| - y\arg(u)}.
\end{equation*}
We may also bound the left-hand sides of (\ref{pwlimits2}) and (\ref{pwlimits3}), as well as the remaining product of Gamma functions by constants:
\begin{equation*}
\left|\frac{\Gamma_q(v-\ln_q(\tilde a_m))}{\Gamma_q(s+v-\ln_q(\tilde a_m))} \right|\leq C,\qquad
\left|\frac{q^v \ln q}{q^{s+v} - q^{v'}}\right|\leq C, \qquad
|\Gamma(-s)\Gamma(1+s)|\leq C,
\end{equation*}
for some constants $C>0$ (which may be different in each case). The first bound comes from the functional equation for the $q$-Gamma function, and the last from the fact that $s$ is bounded away from $\Z$.

Combining these together shows that for $|v|$ large,
\begin{equation*}
\left|\int_{\CsPre{v,\sqsubset}}h^q(s) ds\right| \leq \int_{\CsPre{v,\sqsubset}} C e^{-\tau c|v| \Re(s) + x\ln|u|-y \arg(u)} |ds| \leq C' e^{-c'|v|}
\end{equation*}
for some constants $c',C'>0$, while for bounded $|v|$ the integral is just bounded as well.

\subsubsection*{Step 2:}
As above, we consider the various terms in $h^q(s)$ separately and develop bounds for each. Let us write $s=R+\I y$ and note that $s\in \CsPre{v,\vert}$ corresponds to $y$ varying over all $|y|\geq d$. Three of the terms we consider (corresponding to the left-hand sides of (\ref{pwlimits2}), (\ref{pwlimits3}) and (\ref{pwlimits4})) are periodic functions in $y$ with fundamental domain $y\in [-\pi\e^{-1},\pi\e^{-1}]$. We will first develop bounds on these three terms in this fundamental domain, and then turn to the non-periodic terms.

We start by controlling the behavior of the left-hand side of (\ref{pwlimits4}) as $y$ varies in its fundamental domain.  For each $\varphi<\pi/4$ there exists a sufficiently small (yet positive) constant $c'$ such that as $y$ varies in its fundamental domain
\begin{equation*}
\tau \Re(\e^{-1}s + \e^{-2} q^v(q^s-1)) \leq c' \tau \Re(vs+s^2/2).
\end{equation*}
On account of this, we can use the bound (\ref{step2bdd}) from the proof of Proposition~\ref{postasymlemma}. This implies that
\begin{equation*}
\tau \Re(\e^{-1}s + \e^{-2} q^v(q^s-1)) \leq c' \tau \left(-\frac{(y+\Im(v))^2}{2} -c |v|^2\right).
\end{equation*}

Let us now turn to the other $y$-periodic terms in $h^q(s)$. By bounds for the large imaginary part behavior of the $q$-Gamma function we can show

\begin{equation*}
\left|\frac{\Gamma_q(v-\ln_q(\tilde a_m))}{\Gamma_q(s+v-\ln_q(\tilde a_m))} \right| \leq C e^{c f^{\e}(s+v)}
\end{equation*}
for some constants $c,C>0$ where $f^{\e}(s) = {\rm dist}(\Im(s),2\pi \e^{-1} \Z)$. Note that as opposed to (\ref{festatement}) when $|v|$ was bounded, in the above inequality we write $f^{\e}(s+v)$ in the exponential on the right-hand side. This is because we are presently considering unbounded ranges for $v$.

Also, we can bound
\begin{equation*}
\left|\frac{q^v \ln q}{q^{s+v} - q^{v'}}\right|\leq C
\end{equation*}
for some constant $C>0$.

The parts of $h^q(s)$ which are not periodic in $y$ can easily be bounded. We bound the left-hand side of (\ref{pwlimits1}) as in \emph{Step 1} by
\begin{equation*}
\left|e^{-\tau s \e^{-1}}\left(\frac{-\zeta}{(1-q)^N}\right)^s\right| \leq C |u^s| \leq C e^{x\ln|u| - y\arg(u)}.
\end{equation*}

Finally, the term
\begin{equation*}
|\Gamma(-s)\Gamma(1+s)|\leq Ce^{-\pi |y|},
\end{equation*}
for some constant $C>0$.

We may now combine the estimates above. The idea is to first prove that the integral on the fundamental domain $y\in [-\pi\e^{-1},\pi\e^{-1}]$ is exponentially small in $|v|$. Then, by using the decay of the two non-periodic terms above, we can get a similar bound for the integral as $y$ varies over all of $\R$. For $j\in \Z$, define the $j$ shifted fundamental domain as $D_j=j\e^{-1}2\pi + [-\e^{-1}\pi,\e^{-1}\pi]$. Let
\begin{equation*}
I_j:= \int_{D_j} h^q(R+\I y) dy
\end{equation*}
and observe that combining all of the bounds developed above, we have that
\begin{equation*}
|I_j|\leq  C \int_{-\e^{-1}\pi}^{\e^{-1}\pi} F_1(y) F_2(y) dy,
\end{equation*}
where
\begin{equation*}
\begin{aligned}
F_1(y) &= \exp\left(c' \tau \left(-\frac{(y+\Im(v))^2}{2} -c |v|^2\right) +c'' f^{\e}(s+v)  +x\ln|u|\right),\\
F_2(y) &= \exp\left(- (y+j\e^{-1}2\pi)\arg(u) -\pi |y+j\e^{-1}2\pi| \right).
\end{aligned}
\end{equation*}
The term $F_1(y)$ is from the periodic bounds while $F_2(y)$ from the non-periodic terms (hence explaining the $j\e^{-1}2\pi$ shift in $y$).
By assumption on $u$, we have $-\arg(u)-\pi=\delta\leq c$ for some $\delta$. Therefore
\begin{equation*}
F_2(y) \leq C e^{-c\e^{-1} |j|}
\end{equation*}
form some constants $c,C>0$. Thus
\begin{equation*}
|I_j|\leq C e^{-c\e^{-1} |j|} \int_{-\e^{-1}\pi}^{\e^{-1}\pi} F_1(y)dy.
\end{equation*}
Just as in the end of \emph{Step 2} in the proof of Proposition~\ref{postasymlemma} we can estimate the integral
\begin{equation*}
\int_{-\e^{-1}\pi}^{\e^{-1}\pi} F_1(y)dy \leq  \hat C e^{-\hat c|v|}
\end{equation*}
for some constants $\hat C,\hat c>0$. This implies
\begin{equation*}
|I_j|\leq \hat C C e^{-c\e^{-1} |j|} e^{-\hat c|v|}.
\end{equation*}
Finally, observe that
\begin{equation*}
\left|\int_{\CsPre{v,\vert}} h^{q}(s) ds\right| \leq \sum_{j\in \Z} |I_j| \leq \hat C C e^{-\hat c|v|} \sum_{j\in \Z}e^{-c\e^{-1} |j|} \leq  C' e^{-\hat c|v|}
\end{equation*}
where $C'$ is independent of $\e$ as long as $\e<\e_0$ for some fixed $\e_0$. This is the bound desired to complete this step.

\subsubsection*{Step 3:}
Since $v'$ only comes in to the term $\frac{q^v \ln q}{q^{s+v}-q^{v'}}$ in the integrand, it is clear that the above arguments imply that the integral of $h^q(s)$ over the entire contour $s\in \CsPre{v}$ is bounded by a fixed constant, independent of $v$ or $v'$. This completes the third step and hence completes the proof of Proposition~\ref{compactifyprop}.

\section{Appendix}

\subsection{Two probability lemmas}

\begin{lemma}[Lemma~4.1.38 of~\cite{BC11}]\label{problemma1}
Consider a sequence of functions $\{f_n\}_{n\geq 1}$ mapping $\R\to [0,1]$ such that for each $n$, $f_n(x)$ is strictly decreasing in $x$ with a limit of $1$ at $x=-\infty$ and $0$ at $x=\infty$, and for each $\delta>0$, on $\R\setminus[-\delta,\delta]$, $f_n$ converges uniformly to $\mathbf{1}_{x\leq 0}$. Consider a sequence of random variables $X_n$ such that for each $r\in \R$,
\begin{equation*}
\EE[f_n(X_n-r)] \to p(r)
\end{equation*}
and assume that $p(r)$ is a continuous probability distribution function. Then $X_n$ converges weakly in distribution to a random variable $X$ which is distributed according to \mbox{$\PP(X\leq r) = p(r)$}.
\end{lemma}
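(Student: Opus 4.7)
The plan is to sandwich $\mathbb{E}[f_n(X_n-r)]$ between the distribution functions of $X_n$ evaluated at $r\pm\delta$, with errors controlled by the uniform convergence hypothesis, and then let $\delta\to 0$ using continuity of $p$. More precisely, for a fixed $r\in\mathbb{R}$ and $\delta>0$, the uniform convergence of $f_n$ to $\mathbf{1}_{x\leq 0}$ on $\mathbb{R}\setminus[-\delta,\delta]$ yields a sequence $\epsilon_n=\epsilon_n(\delta)\to 0$ such that $f_n(y)\geq 1-\epsilon_n$ for all $y\leq -\delta$ and $f_n(y)\leq \epsilon_n$ for all $y\geq \delta$.

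First I would split the expectation according to whether $X_n-r$ lies in $(-\infty,-\delta]$, $(-\delta,\delta)$, or $[\delta,\infty)$, and bound $f_n$ crudely by $1$ or by $\epsilon_n$ on each piece. On the ``good'' sides this gives the two-sided estimate
\begin{equation*}
(1-\epsilon_n)\,\mathbb{P}(X_n\leq r-\delta) \;\leq\; \mathbb{E}[f_n(X_n-r)] \;\leq\; \mathbb{P}(X_n\leq r+\delta) + \epsilon_n.
\end{equation*}
Letting $n\to\infty$ and using the hypothesis $\mathbb{E}[f_n(X_n-r)]\to p(r)$, I obtain
\begin{equation*}
\limsup_{n\to\infty}\mathbb{P}(X_n\leq r-\delta)\leq p(r),\qquad \liminf_{n\to\infty}\mathbb{P}(X_n\leq r+\delta)\geq p(r).
\end{equation*}

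Next I would reindex by shifting $r\mapsto r+\delta$ in the first inequality and $r\mapsto r-\delta$ in the second, producing
\begin{equation*}
p(r-\delta)\leq \liminf_{n\to\infty}\mathbb{P}(X_n\leq r) \leq \limsup_{n\to\infty}\mathbb{P}(X_n\leq r) \leq p(r+\delta).
\end{equation*}
Since $p$ is continuous, sending $\delta\downarrow 0$ forces $\mathbb{P}(X_n\leq r)\to p(r)$ at every $r\in\mathbb{R}$. Because $p$ is a (continuous) probability distribution function, this is precisely weak convergence of $X_n$ to the random variable $X$ with $\mathbb{P}(X\leq r)=p(r)$.

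There is no real obstacle here: the only delicate point is making sure the middle region $(-\delta,\delta)$, on which no uniform bound on $f_n$ is available, gets absorbed into the $\mathbb{P}(X_n\leq r+\delta)$ term in the upper bound rather than being handled by the uniform convergence, and this is automatic because $f_n\leq 1$. The continuity of $p$ is used only at the very end to collapse the $\delta$-sandwich.
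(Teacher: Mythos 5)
Your argument is correct, and it is the standard sandwiching proof one would expect; the paper itself does not reproduce the argument but simply cites \cite{BC11} for it, so there is no distinct "paper proof" to compare against. The one thing worth noting is a small notational point at the endpoints: the hypothesis gives uniform convergence on $\R\setminus[-\delta,\delta]$, which excludes $y=\pm\delta$, so strictly speaking your bounds $f_n(y)\geq 1-\epsilon_n$ for $y\leq -\delta$ and $f_n(y)\leq\epsilon_n$ for $y\geq\delta$ should either use the open events $\{X_n<r-\delta\}$, $\{X_n>r+\delta\}$, or invoke the hypothesis at $\delta/2$ to cover the closed half-lines. Either fix is immediate and the rest of your chain of inequalities, the reindexing $r\mapsto r\pm\delta$, and the final appeal to continuity of $p$ to collapse the $\delta$-window are all sound.
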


\begin{lemma}[Lemma~4.1.39 of~\cite{BC11}]\label{problemma2}
Consider a sequence of functions $\{f_n\}_{n\geq 1}$ mapping \mbox{$\R\to [0,1]$} such that for each $n$, $f_n(x)$ is strictly decreasing in $x$ with a limit of $1$ at \mbox{$x=-\infty$} and $0$ at $x=\infty$, and $f_n$ converges uniformly on $\R$ to $f$. Consider a sequence of random variables $X_n$ converging weakly in distribution to $X$.
Then
\begin{equation*}
\EE[f_n(X_n)] \to \EE[f(X)].
\end{equation*}
\end{lemma}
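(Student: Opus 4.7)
The natural approach is to split via the triangle inequality:
\begin{equation*}
\bigl|\EE[f_n(X_n)] - \EE[f(X)]\bigr| \leq \bigl|\EE[f_n(X_n)] - \EE[f(X_n)]\bigr| + \bigl|\EE[f(X_n)] - \EE[f(X)]\bigr|,
\end{equation*}
and to show each term tends to zero. The first term is controlled by uniform convergence: since $\sup_{x\in\R}|f_n(x)-f(x)|\to 0$, one has $|\EE[f_n(X_n)]-\EE[f(X_n)]|\leq \|f_n-f\|_\infty\to 0$ irrespective of the laws of $X_n$. That part is immediate.

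The more delicate step is the second term, where we need to pass the weak convergence $X_n\Rightarrow X$ through $f$. The plan is to show that $f$ is bounded, monotone non-increasing, and continuous at every point of positive $P_X$-mass, so that the Portmanteau theorem applies. Boundedness and monotonicity are inherited from the hypotheses: since each $f_n$ is strictly decreasing and takes values in $[0,1]$, and $f_n\to f$ uniformly, the limit $f$ is non-increasing and bounded in $[0,1]$, with $f(-\infty)=1$ and $f(+\infty)=0$. A monotone real function has at most countably many discontinuities, so the discontinuity set $D$ of $f$ is countable. One then invokes the standard fact that if $g$ is a bounded Borel function with $P_X(D_g)=0$, then $X_n\Rightarrow X$ implies $\EE[g(X_n)]\to\EE[g(X)]$; here I would give a clean proof by Skorokhod representation, constructing $\widetilde X_n\to\widetilde X$ almost surely on a common space, noting $f(\widetilde X_n)\to f(\widetilde X)$ on the full-measure event $\{\widetilde X\notin D\}$, and invoking bounded convergence.

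The main obstacle is thus ensuring $P_X(D)=0$. This does not follow from the hypotheses in isolation, since the strictly decreasing functions $f_n$ need not be continuous, so $f$ may genuinely have jumps. However, in every invocation of this lemma in the paper the limit $X$ has a continuous distribution function (e.g.\ $F_{\mathrm{GUE}}$ or $F_{\mathrm{BBP},b}$), so $P_X$ charges no countable set and $P_X(D)=0$ automatically; this is the sense in which the lemma is used. If one wants a fully self-contained statement, one can either assume $f_n$ continuous (forcing $f$ continuous as a uniform limit of continuous functions) or assume the continuity of the law of $X$ at discontinuity points of $f$. Putting the two bounds together, for any $\e>0$ choose $n_0$ with $\|f_n-f\|_\infty<\e/2$ for $n\geq n_0$ and $n_1\geq n_0$ with $|\EE[f(X_n)]-\EE[f(X)]|<\e/2$ for $n\geq n_1$, yielding $|\EE[f_n(X_n)]-\EE[f(X)]|<\e$ and completing the proof.
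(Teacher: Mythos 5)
Since the paper supplies no proof of this lemma---it simply cites Lemma~4.1.39 of \cite{BC11}---there is no in-text argument to compare against. Your decomposition is correct: the control of $|\EE[f_n(X_n)]-\EE[f(X_n)]|$ by $\|f_n-f\|_\infty$ is immediate, and your treatment of the second term via the extended continuous mapping theorem (or Skorokhod representation plus bounded convergence) is valid once one knows $P_X(D_f)=0$, where $D_f$ is the countable discontinuity set of the monotone limit $f$.

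You are right to flag $P_X(D_f)=0$ as a genuine gap---the lemma as stated is in fact false without it. For a counterexample, set $h(x)=(1+e^x)^{-1}$, $g=\mathbf{1}_{x<0}$, and $f_n=(1-n^{-1})g+n^{-1}h$: each $f_n:\R\to[0,1]$ is strictly decreasing with the required boundary limits, and $f_n\to g$ uniformly, yet the deterministic choice $X_n=-1/n\Rightarrow X=0$ gives $\EE[f_n(X_n)]=f_n(-1/n)\to 1$ while $\EE[g(X)]=g(0)=0$. The needed extra hypothesis---continuity of the $f_n$, which forces $D_f=\emptyset$, or atomlessness of the law of $X$ on $D_f$---is exactly what you identify, and it is satisfied where this lemma is actually invoked (Step~1 of the proof of Theorem~\ref{NeilPolymerFredDetThm}), since the $f_n$ there are continuous $q$-exponentials and the limiting law is a Whittaker-measure marginal with a Lebesgue density. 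Two incidental remarks: your argument uses neither strict monotonicity nor the boundary limits $f_n(\pm\infty)=1,0$, only monotonicity and uniform boundedness, so those hypotheses are superfluous; and integration by parts gives a route to the second term that avoids Skorokhod and makes the role of the missing condition transparent---with $\mu=-df$ a sub-probability measure, $\EE[f(X_n)]-\EE[f(X)]=\int(F_n-F)\,d\mu$, which vanishes by bounded convergence exactly when $\mu$ does not charge the atoms of $X$.
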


\subsection{Some properties of Fredholm determinants}

We give some important properties for Fredholm determinants. For a more complete treatment of this theory see, for example,~\cite{Sim00}.

\begin{lemma}[Proposition~1 of~\cite{TW08b}]\label{TWprop1}
Suppose $t\to \Gamma_t$ is a deformation of closed curves and a kernel $L(\eta,\eta')$ is analytic in a neighborhood of $\Gamma_t\times \Gamma_t\subset \C^2$ for each $t$. Then the Fredholm determinant of $L$ acting on $\Gamma_t$ is independent of $t$.
\end{lemma}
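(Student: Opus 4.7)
The plan is to expand the Fredholm determinant as a series and show that each term in the series is separately constant in $t$, by an $n$-fold application of Cauchy's theorem for holomorphic functions.

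First I would write the Fredholm expansion
\begin{equation*}
\det(\Id+L)_{L^2(\Gamma_t)} = 1+\sum_{n=1}^{\infty} F_n(t), \qquad F_n(t):=\frac{1}{n!} \int_{\Gamma_t}\cdots\int_{\Gamma_t} \det\bigl[L(\eta_i,\eta_j)\bigr]_{i,j=1}^{n} \prod_{i=1}^{n}\frac{d\eta_i}{2\pi\I},
\end{equation*}
and show that $F_n(t)$ does not depend on $t$ for each fixed $n\geq 1$. By hypothesis, for every $t$ the kernel $L(\eta,\eta')$ is jointly analytic in an open neighborhood $U_t\times U_t\subset\C^2$ of $\Gamma_t\times \Gamma_t$. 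Consequently the integrand $\det[L(\eta_i,\eta_j)]_{i,j=1}^n$ is holomorphic in $(\eta_1,\dots,\eta_n)$ in an open neighborhood of $\Gamma_t^n$ in $\C^n$. I would then use continuity of $t\mapsto \Gamma_t$ and a compactness argument: the deformation is locally a small perturbation of closed curves staying inside the domain of analyticity, so it suffices to show $F_n(t)$ is locally constant. For nearby parameters $t,t'$, the closed curves $\Gamma_t$ and $\Gamma_{t'}$ are homologous inside the analyticity neighborhood, and one may integrate one variable at a time: holding $\eta_2,\dots,\eta_n$ fixed on $\Gamma_t$, Cauchy's theorem deforms the $\eta_1$ contour from $\Gamma_t$ to $\Gamma_{t'}$ without changing the integral; then repeat for $\eta_2$, and so on through $\eta_n$. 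This shows $F_n(t)=F_n(t')$ locally, and hence globally on any connected set of parameters $t$.

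The remaining issue is to justify that constancy of each $F_n$ implies constancy of the sum, i.e., to exchange the $t$-dependence with the summation. The cleanest route is to use the standard Hadamard bound $|\det[L(\eta_i,\eta_j)]|\leq n^{n/2}\|L\|_{\infty,\Gamma_t}^n$ where $\|L\|_{\infty,\Gamma_t}:=\sup_{\eta,\eta'\in\Gamma_t}|L(\eta,\eta')|$, together with uniform boundedness of $|\Gamma_t|$ and $\|L\|_{\infty,\Gamma_t}$ on compact parameter sets (which follows from continuity of $t\mapsto\Gamma_t$ and the joint continuity of $L$ on the analyticity neighborhood). This gives
\begin{equation*}
|F_n(t)|\leq \frac{n^{n/2}}{n!}\Bigl(\tfrac{|\Gamma_t|}{2\pi}\|L\|_{\infty,\Gamma_t}\Bigr)^n,
\end{equation*}
which is summable and uniform for $t$ in a compact set, so $\sum_n F_n(t)$ is a locally uniformly convergent series of constant functions, hence constant.

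I expect the main (very mild) obstacle to be the bookkeeping of the iterated Cauchy deformation in the multi-contour setting: one must verify that at each step the intermediate contour configuration $\Gamma_{t'}^{k}\times \Gamma_t^{n-k}$ still lies in the domain of analyticity of $L$ in every pair of variables. This is automatic once the neighborhoods $U_t$ deform continuously with $t$, so for $t'$ sufficiently close to $t$ both $\Gamma_t$ and $\Gamma_{t'}$ lie simultaneously inside a common product neighborhood where the integrand is holomorphic in each variable separately. A standard cover-and-chain argument along the one-parameter family $\{\Gamma_t\}$ then upgrades local constancy of each $F_n$, and hence of the determinant, to constancy in $t$.
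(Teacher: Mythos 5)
The paper does not prove this lemma but cites it as Proposition~1 of Tracy--Widom \cite{TW08b}; their argument is exactly the one you give, namely expanding the Fredholm series and deforming the $n$-fold contour one variable at a time via Cauchy's theorem, using compactness of the parameter interval to upgrade local constancy to global. Your write-up is correct; the only small simplification is that once each $F_n(t)$ is shown to be constant in $t$, mere pointwise convergence of $\sum_n F_n$ already yields constancy of the determinant, so the uniform-in-$t$ Hadamard bound is needed only to guarantee the Fredholm series is well-defined, not to swap limits.
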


\begin{lemma}\label{exponentialdecaycutoff}
Consider the Fredholm determinant $\det(\Id+K)_{L^2(\Gamma)}$ on an infinite complex contour $\Gamma$ and an integral operator $K$ on $\Gamma$. Parameterize $\Gamma$ by arc length with some fixed point corresponding to $\Gamma(0)$. Assume that $|K(v,v')|\leq C$ for some constant $C$ and for all $v,v'\in \Gamma$ and that either of the following two exponential decay conditions holds: There exists constants $c,C>0$ such that
\begin{equation*}
|K(\Gamma(s),\Gamma(s'))|\leq Ce^{-c|s|},
\end{equation*}
Then the Fredholm series defining $\det(\Id+K)_{L^2(\Gamma)}$ is well-defined. Moreover, for any $\eta>0$ there exists an $r_0>0$ such that for all $r>r_0$
\begin{equation*}
|\det(\Id+K)_{L^2(\Gamma)} - \det(\Id+K)_{L^2(\Gamma_r)}|\leq \eta
\end{equation*}
where $\Gamma_r=\{\Gamma(s):|s|\leq r\}$.
\end{lemma}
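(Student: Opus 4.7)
The plan is to work directly with the Fredholm expansion
\[
\det(\Id+K)_{L^2(\Gamma)} = 1+\sum_{n=1}^{\infty}\frac{1}{n!}\int_{\Gamma^n}\det[K(v_i,v_j)]_{i,j=1}^n\prod_{i=1}^n dv_i,
\]
and combine Hadamard's inequality with the exponential decay of the kernel. Writing $v_i=\Gamma(s_i)$, Hadamard's bound gives
\[
|\det[K(v_i,v_j)]_{i,j=1}^n|\leq \prod_{i=1}^n\Bigl(\sum_{j=1}^n|K(v_i,v_j)|^2\Bigr)^{1/2}\leq n^{n/2}C^n\prod_{i=1}^n e^{-c|s_i|},
\]
where the outer factor uses the decay of $K$ in (say) its first argument. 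Since $\Gamma$ is parameterized by arc length, $\int_\Gamma e^{-c|s|}|dv|=\int_{\R}e^{-c|s|}ds=2/c<\infty$, so the $n$-th term in the Fredholm expansion is bounded by $n^{n/2}(2C/c)^n/n!$. By Stirling, this sequence decays super-exponentially, which proves absolute convergence and hence well-definedness.

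For the truncation estimate, decompose
\[
\det(\Id+K)_{L^2(\Gamma)}-\det(\Id+K)_{L^2(\Gamma_r)}=\sum_{n=1}^\infty\frac{1}{n!}\int_{\Gamma^n\setminus\Gamma_r^n}\det[K(v_i,v_j)]\prod_i dv_i,
\]
and bound $\Gamma^n\setminus\Gamma_r^n$ by the union of the events $\{|s_i|>r\}$. By symmetry of the integrand (up to permuting rows/columns, which does not change $|\det|$), the integral is bounded by $n$ times the integral where $|s_1|>r$ and the remaining $s_j$ range over all of $\Gamma$. Applying Hadamard again and using $\int_{|s|>r}e^{-c|s|}ds=(2/c)e^{-cr}$, we obtain
\[
\Bigl|\det(\Id+K)_{L^2(\Gamma)}-\det(\Id+K)_{L^2(\Gamma_r)}\Bigr|\leq e^{-cr}\sum_{n=1}^\infty\frac{n\cdot n^{n/2}(2C/c)^n}{n!}.
\]
The sum is a finite constant by the same Stirling argument, so the right-hand side is $O(e^{-cr})$, which can be made smaller than any prescribed $\eta>0$ by choosing $r_0$ sufficiently large.

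There is really no serious obstacle here; this is a standard tail-estimate argument for Fredholm determinants. The only point requiring mild care is to apply Hadamard's inequality so that the decay factor $e^{-c|s_i|}$ appears once per row (so that after integration we get a finite geometric-type bound per variable), and to notice that the symmetry of $|\det[K(v_i,v_j)]|$ under simultaneous row/column permutation lets us reduce to the event $\{|s_1|>r\}$ at the cost of a factor $n$. The alternative hypothesis (decay in the second argument) is handled identically by applying Hadamard's inequality to columns instead of rows.
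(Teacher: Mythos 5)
Your proof is correct. The first part (absolute convergence via Hadamard plus the exponential decay) is essentially identical to the paper's argument. For the truncation estimate, however, you take a genuinely different route: you expand $\Gamma^n\setminus\Gamma_r^n$ as a union over which $|s_i|>r$, exploit the invariance of $|\det[K(v_i,v_j)]|$ under simultaneous row/column permutation to reduce to the event $\{|s_1|>r\}$ at the cost of a factor $n$, and then integrate explicitly to obtain a quantitative bound of size $O(e^{-cr})$. The paper instead writes $\det(\Id+K)_{L^2(\Gamma_r)}=\det(\Id+P_rK)_{L^2(\Gamma)}$ with $P_r$ the projection onto $\Gamma_r$, observes $P_rK\to K$ pointwise with the same Hadamard bound serving as a dominating function, and invokes dominated convergence. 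Your approach is slightly more hands-on and buys an explicit exponential rate in $r$; the paper's dominated-convergence argument is shorter and avoids the symmetrization step but yields no rate. Both are valid; the quantitative bound is a mild bonus but is not needed for the statement as formulated.
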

\begin{proof}
The Fredholm series expansion (\ref{eqFredholm}) is given by
\begin{equation}\label{eq7.1}
\det(\Id+K)_{L^2(\Gamma)} = \sum_{n\geq 0}\frac{1}{n!} \int_{\Gamma}ds_1\cdots \int_{\Gamma}ds_n \det(K(\Gamma(s_i),\Gamma(s_j)))_{i,j=1}^n
\end{equation}
is well-defined since by using Hadamard's bound\footnote{Hadamard's bound: the determinant of a $n\times n$ matrix with entries of absolute value not exceeding $1$ is bounded by $n^{n/2}$.} one gets that
\begin{equation}\label{eq7.2}
\left|\det(K(\Gamma(s_i),\Gamma(s_j)))_{i,j=1}^n\right|\leq n^{n/2} C^n \prod_{j=1}^n e^{-c|s_j|}
\end{equation}
which is absolutely integrable / summable. To show is $\det(\Id+K)_{L^2(\Gamma_r)}\to \det(\Id+K)_{L^2(\Gamma)}$ as $r\to\infty$. From (\ref{eq7.1}) one immediately gets that
\begin{equation*}
\det(\Id+K)_{L^2(\Gamma_r)} = \det(\Id+P_r K)_{L^2(\Gamma)}.
\end{equation*}
where $P_r$ is the projection onto $\Gamma_r$. The kernel $(P_r K)(s_i,s_j)$ converges pointwise to $K(s_i,s_j)$ and (\ref{eq7.2}) provides a in $r$ uniform, integrable / summable bound for $\det(K(\Gamma(s_i),\Gamma(s_j)))_{i,j=1}^n$. Therefore, by dominated convergence as $r\to\infty$ the two Fredholm determinant converge.
\end{proof}

\begin{lemma}\label{uniformptconvergence}
Consider a finite length complex contour $\Gamma$ and a sequence of integral operators $K^{\e}$ on $\Gamma$, as well as an addition integral operator $K$ also on $\Gamma$. Assume that for all $\eta>0$ there exists $\e_0$ such that for all $\e<\e_0$ and all $z,z'\in \Gamma$, $|K^{\e}(z,z') - K(z,z')|\leq \eta$ and that there is some constant $C$ such that $|K(z,z')|\leq C$ for all $z,z'\in \Gamma$. Then
\begin{equation*}
\lim_{\e\to 0} \det(\Id+K^{\e})_{L^2(\Gamma)}  = \det(\Id+K)_{L^2(\Gamma)}.
\end{equation*}
\end{lemma}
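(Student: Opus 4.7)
The plan is to expand both Fredholm determinants as their defining series
\begin{equation*}
\det(\Id+K)_{L^2(\Gamma)} = 1+\sum_{n=1}^{\infty}\frac{1}{n!}\int_{\Gamma}\!\!\cdots\!\!\int_{\Gamma}\det\bigl[K(z_i,z_j)\bigr]_{i,j=1}^n\,dz_1\cdots dz_n
\end{equation*}
and similarly for $K^{\e}$, and then to interchange the $\e\to 0$ limit with the summation and integration. Since $\Gamma$ has finite length and the pointwise convergence is uniform on $\Gamma\times\Gamma$, this reduces to a clean application of the dominated convergence theorem, and no contour deformation or analyticity arguments are needed.

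First I would note that the uniform hypothesis implies that for any fixed $\eta>0$ and $\e$ small enough, $|K^{\e}(z,z')|\le C+\eta$ for all $z,z'\in\Gamma$. Applying Hadamard's bound to each $n\times n$ determinant gives
\begin{equation*}
\bigl|\det[K^{\e}(z_i,z_j)]_{i,j=1}^n\bigr|\le n^{n/2}(C+\eta)^n, \qquad \bigl|\det[K(z_i,z_j)]_{i,j=1}^n\bigr|\le n^{n/2}C^n,
\end{equation*}
and integrating over $\Gamma^n$ (of total measure $|\Gamma|^n/(2\pi)^n$, with $|\Gamma|$ the finite arc-length) yields the uniform bound
\begin{equation*}
\frac{1}{n!}\left|\int_{\Gamma}\!\!\cdots\!\!\int_{\Gamma}\det[K^{\e}(z_i,z_j)]\,dz_1\cdots dz_n\right|\le \frac{n^{n/2}\bigl((C+\eta)|\Gamma|/(2\pi)\bigr)^n}{n!},
\end{equation*}
the right-hand side being summable in $n$ by Stirling. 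This $\e$-independent bound (valid for all $\e<\e_0$) is the dominating envelope for the outer sum.

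For each fixed $n$, termwise convergence is immediate: multilinearity of the determinant together with the uniform bound $|K^{\e}(z,z')-K(z,z')|\le\eta$ shows that $\det[K^{\e}(z_i,z_j)]\to\det[K(z_i,z_j)]$ uniformly on $\Gamma^n$ as $\e\to 0$, so the $n$-th integral converges. Combining this with the summable uniform envelope above, dominated convergence (applied to counting measure on $n$) justifies interchanging $\lim_{\e\to 0}$ with $\sum_n$ and yields the claim. There is no essential obstacle here; the only point requiring any care is to record that the Hadamard bound on $K^{\e}$ holds uniformly in $\e$, which is why one takes $\e_0$ small enough to absorb the $\eta$-perturbation of $C$ at the outset.
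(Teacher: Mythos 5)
Your proof is correct and follows the same strategy the paper uses: expand the Fredholm series, use the uniform bound $|K^{\e}(z,z')|\le C+\eta$ for small $\e$ together with Hadamard's inequality to produce a summable, $\e$-independent envelope, and then pass to the limit by dominated convergence. The paper's version is terser (it just cites Lemma~\ref{exponentialdecaycutoff} and asserts the result), but the substance is identical; your extra step of spelling out termwise convergence via multilinearity and uniform kernel convergence is exactly what the paper leaves implicit.
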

\begin{proof}
As in Lemma~\ref{exponentialdecaycutoff} one writes the Fredholm series. Since $\Gamma$ is finite, the Fredholm determinants $\det(\Id+K)_{L^2(\Gamma)}$ is well-defined because $|K(z,z')|\leq C$ (use Hadamard's bound). By assumption, $K^\e$ converges pointwise to $K$ and we have the uniform bound \mbox{$|K^\e(z,z')|\leq C+\eta$}. This ensures that $\det(\Id+K^{\e})_{L^2(\Gamma)}$ is well-defined and that we can take the limit inside the Fredholm series, providing our result.
\end{proof}

\subsection{Reformulation of Fredholm determinants}\label{AppFredDet}

\begin{lemma}\label{PFLemTWreformuation}
Let $\widetilde K_{\rm Ai}$ as in (\ref{PFeq27}), $\mathcal{C}_w$ as in (\ref{PFeqCw}), and $K_{\rm Ai}$ the Airy kernel. Then it holds
\begin{equation*}
\det(\Id+\widetilde K_{\rm Ai})_{L^2(\mathcal{C}_w)}=\det(\Id-K_{\rm Ai})_{L^2(r,\infty)}.
\end{equation*}
\end{lemma}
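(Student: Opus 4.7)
The plan is to factor $\widetilde K_{\rm Ai}$ as a product of three operators using the integral representations $\frac{1}{z-w}=\int_0^\infty e^{-\lambda(z-w)}d\lambda$ and $\frac{1}{z-w'}=\int_0^\infty e^{-\lambda'(z-w')}d\lambda'$, which are valid because along the cubic contour from $e^{-\I\pi/4}\infty$ to $e^{\I\pi/4}\infty$ we have $\Re(z)>0$, while on $\mathcal{C}_w$ we have $\Re(w),\Re(w')\leq 0$. Substituting and interchanging the order of integration (justified by absolute convergence from the cubic decay of $|e^{z^3/3}|$ on $\Gamma$) reduces the $z$-integral to $\frac{1}{2\pi\I}\int_\Gamma e^{z^3/3-z(r+\lambda+\lambda')}dz=\Ai(r+\lambda+\lambda')$, where $\Gamma$ is first deformed to the standard Airy contour using that $\Re(z^3)<0$ in the sectors $\arg z\in(\pi/6,\pi/2)$ and its conjugate. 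This yields
\begin{equation*}
\widetilde K_{\rm Ai}(w,w')=-\int_0^\infty\!\!\int_0^\infty e^{(\lambda+r)w-w^3/3}\,\Ai(r+\lambda+\lambda')\,e^{\lambda'w'}\,d\lambda\,d\lambda'.
\end{equation*}

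With this identity, I would introduce three operators: $P:L^2(\R_+)\to L^2(\mathcal{C}_w)$ with kernel $\phi(w,\lambda)=e^{(\lambda+r)w-w^3/3}$, $K_{\Ai,r}:L^2(\R_+)\to L^2(\R_+)$ with kernel $\Ai(r+\lambda+\lambda')$, and $Q:L^2(\mathcal{C}_w)\to L^2(\R_+)$ with kernel $\psi(\lambda',w')=e^{\lambda'w'}$. Then $\widetilde K_{\rm Ai}=-PK_{\Ai,r}Q$. Applying the standard cyclic identity for Fredholm determinants
\begin{equation*}
\det(\Id-PK_{\Ai,r}Q)_{L^2(\mathcal{C}_w)}=\det(\Id-K_{\Ai,r}QP)_{L^2(\R_+)}
\end{equation*}
reduces the problem to identifying $K_{\Ai,r}QP$.

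Next I would compute $QP$. Its kernel is
\begin{equation*}
(QP)(\lambda',\lambda)=\frac{1}{2\pi\I}\int_{\mathcal{C}_w}e^{(\lambda+\lambda'+r)w-w^3/3}\,dw.
\end{equation*}
Under the substitution $w=-z$ and the fact that $e^{-w^3/3}$ decays exponentially in the two sectors of the left half-plane containing the tails of $\mathcal{C}_w$, the contour $\mathcal{C}_w$ (oriented upward through the left half-plane) is equivalent under deformation to the reflected standard Airy contour; keeping track of orientation gives $(QP)(\lambda',\lambda)=\Ai(r+\lambda+\lambda')$. Therefore $K_{\Ai,r}QP$ has kernel
\begin{equation*}
\int_0^\infty\Ai(r+\lambda_1+\mu)\Ai(r+\mu+\lambda_2)\,d\mu = K_{\rm Ai}(\lambda_1+r,\lambda_2+r),
\end{equation*}
which is exactly the shifted Airy kernel. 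Changing variables $\eta=\lambda+r$ converts $\det(\Id-K_{\rm Ai}(\cdot+r,\cdot+r))_{L^2(\R_+)}$ into $\det(\Id-K_{\rm Ai})_{L^2(r,\infty)}$, completing the proof.

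The main obstacle will be justifying the trace-class conditions needed for the cyclic determinant identity and the interchanges of integration: one must verify absolute convergence of the double integrals in a quantitative way, using that $\phi(w,\lambda)$ decays like $e^{-|\Im(w)|^3/3}$ along the tails of $\mathcal{C}_w$ (so $P$ is bounded uniformly in $\lambda$ on compacts) and that $\Ai(r+\lambda+\lambda')$ has super-exponential decay in $\lambda+\lambda'$. These estimates allow the factorization to be read as an identity between Hilbert-Schmidt (in fact trace-class) operators, at which point the cyclic property applies standardly.
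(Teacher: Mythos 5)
Your proof is correct, and it takes a genuinely different route from the paper's. The paper applies the Laplace representation only to $1/(z-w)$, factoring $\widetilde K_{\rm Ai} = -AB$ with $A(w,\lambda)=e^{-w^3/3+w(r+\lambda)}$ and $B(\lambda,w')$ retaining the full $z$-integral; then $BA$ is immediately the double-contour-integral form of the shifted Airy kernel, so a single cyclic swap finishes the argument. You instead apply Laplace representations to both $1/(z-w)$ and $1/(z-w')$, producing a three-operator factorization $\widetilde K_{\rm Ai}=-P K_{\Ai,r} Q$ with the middle factor having the separable Airy kernel $\Ai(r+\lambda+\lambda')$; cycling gives $K_{\Ai,r}QP$, and you then must separately recognize that $QP$ also reproduces the kernel $\Ai(r+\lambda+\lambda')$ (via $w\mapsto -z$, which works because $\mathcal{C}_w$ is exactly the reflected Airy contour). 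The composition then lands on the Hankel/convolution form $\int_{\R_+}\Ai(\eta+t)\Ai(\eta'+t)\,dt$. So the paper's route is a bit shorter and produces the double-contour form of $K_{\rm Ai}$ directly, while yours requires one extra contour identification but makes the Hankel-composition structure of the Airy kernel appear naturally; both forms are recorded as equivalent in Definition~\ref{GUEBBPdef}, so either endpoint completes the lemma. Your orientation bookkeeping in the $w\mapsto -z$ step is correct ($\mathcal{C}_w$ maps to the downward Airy contour, and $dw=-dz$ flips it back), and your decay estimates (cubic in $|\Im w|$ along the tails of $\mathcal{C}_w$, super-exponential in the Airy function) are the right quantitative inputs for the trace-class justification, which the paper handles at the same level of detail by citing \cite{Sim00}.
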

\begin{proof}
The integration path in (\ref{PFeq27}) can be chosen to have $\Re(z)>0$ and since $\Re(w)<0$ for $w\in \mathcal{C}_w$, we can use
\begin{equation*}
\frac{1}{z-w}=\int_{\R_+}d\lambda e^{-\lambda(z-w)}
\end{equation*}
to write $\widetilde K_{\rm Ai}(w,w')=-(A B)(w,w')$ with $A:L^2(\mathcal{C}_w)\to L^2(\R_+)$ and \mbox{$B:L^2(\R_+)\to L^2(\mathcal{C}_w)$} have kernels
\begin{equation*}
A(w,\lambda)=e^{-w^3/3+w(r+\lambda)}, \quad B(\lambda,w')=\int_{e^{-\pi \I/4}\infty}^{e^{\pi \I/4}\infty} \frac{dz}{2\pi\I} \frac{e^{z^3/3-z(r+\lambda)}}{z-w'}.
\end{equation*}
We also have
\begin{equation*}
\begin{aligned}
(BA)(\eta,\eta')&=\frac{1}{2\pi\I}\int_{\mathcal{C}_w} dw B(\eta,w) A(w,\eta')\\
&=\frac{1}{(2\pi\I)^2} \int_{e^{-3\pi \I/4}\infty}^{e^{3\pi \I/4}\infty} dw \int_{e^{-\pi \I/4}\infty}^{e^{\pi \I/4}\infty} dz
\frac{1}{z-w}\frac{e^{z^3/3-z(r+\eta)}}{e^{w^3/3-w(r+\eta')}}= K_{\rm Ai}(\eta+r,\eta'+r).
\end{aligned}
\end{equation*}
Then, since $\det(\Id-AB)_{L^2(\mathcal{C}_w)}=\det(\Id-BA)_{L^2(\R_+)}=\det(\Id-K_{\rm Ai})_{L^2(r,\infty)}$ we get the claimed result. The first equality is a general result which applies as long as $AB$ and $BA$ are both trace-class operators \cite{Sim00}.
\end{proof}

\begin{lemma}\label{PFLemBBPreformuation}
Let $\widetilde K_{\rm BBP}$ as in (\ref{eqBBPtilde}), $\mathcal{C}_w$ as in Theorem~\ref{PFThmF2pert} (b), and $K_{\rm BBP}$ as in (\ref{eqBBP}). Then it holds
\begin{equation*}
\det(\Id+\widetilde K_{\rm BBP})_{L^2(\mathcal{C}_w)}=\det(\Id-K_{\rm BBP})_{L^2(r,\infty)}.
\end{equation*}
\end{lemma}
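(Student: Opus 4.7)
The plan is to mimic the strategy of Lemma~\ref{PFLemTWreformuation}, absorbing the BBP spike product $\prod_{k=1}^m (z-b_k)/(w-b_k)$ into the factorization of $\widetilde K_{\rm BBP}$. Concretely, deform the $z$-contour in \eqref{eqBBPtilde} so that $\Re(z-w)>0$ along the double contour (this is possible because the $w$-contour passes to the right of every $b_k$ while $z$ lies further to the right of $w$). One may then write $1/(z-w)=\int_{\R_+}d\lambda\,e^{-\lambda(z-w)}$ and factor $\widetilde K_{\rm BBP}=-A_b B_b$ where
\begin{equation*}
A_b(w,\lambda)=\frac{e^{-w^3/3+w(r+\lambda)}}{\prod_{k=1}^m(w-b_k)},\qquad B_b(\lambda,w')=\frac{1}{2\pi\I}\int_{e^{-\pi\I/4}\infty}^{e^{\pi\I/4}\infty}dz\,\frac{e^{z^3/3-z(r+\lambda)}\prod_{k=1}^m(z-b_k)}{z-w'},
\end{equation*}
with $A_b\colon L^2(\mathcal{C}_w)\to L^2(\R_+)$ and $B_b\colon L^2(\R_+)\to L^2(\mathcal{C}_w)$. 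Because $\mathcal{C}_w$ stays at positive distance from $\{b_1,\dots,b_m\}$, the denominator in $A_b$ causes no trouble and the Gaussian-type decay of the cubic exponentials ensures trace-class regularity in both directions.

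Next I would exploit the cyclic identity $\det(\Id-A_bB_b)_{L^2(\mathcal{C}_w)}=\det(\Id-B_bA_b)_{L^2(\R_+)}$ and compute
\begin{equation*}
(B_bA_b)(\eta,\eta')=\frac{1}{(2\pi\I)^2}\int_{\mathcal{C}_w}dw\int dz\,\frac{1}{z-w}\frac{e^{z^3/3-z(r+\eta)}}{e^{w^3/3-w(r+\eta')}}\prod_{k=1}^m\frac{z-b_k}{w-b_k},
\end{equation*}
where the $\lambda$-integral has been executed and Fubini applied. Comparing with the definition of $K_{{\rm BBP},b}$ in \eqref{eqBBP}, this is exactly $K_{{\rm BBP},b}(\eta+r,\eta'+r)$ once the $w$- and $z$-contours are deformed (without crossing poles) into the contours of Definition~\ref{GUEBBPdef}. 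A change of variable $\eta\mapsto\eta-r$, $\eta'\mapsto\eta'-r$ on $L^2(\R_+)$ then identifies $\det(\Id-B_bA_b)_{L^2(\R_+)}$ with $\det(\Id-K_{{\rm BBP},b})_{L^2(r,\infty)}$.

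The main obstacle, as in the Airy case, will be justifying the trace-class property of $A_b$ and $B_b$ simultaneously and verifying that all contour deformations (between the $e^{\pm\pi\I/4}\infty$ rays used in $\widetilde K_{\rm BBP}$ and the $e^{\pm\pi\I/3}\infty$ rays used in $K_{{\rm BBP},b}$, and between $\mathcal{C}_w$ and the $e^{\pm2\pi\I/3}\infty$ contour) do not cross any of the singularities $z=w$, $z=b_k$, $w=b_k$. The condition that $\mathcal{C}_w$ passes to the right of $b_1,\dots,b_m$ is precisely what is needed for the factor $1/\prod(w-b_k)$ in $A_b$ to be bounded along $\mathcal{C}_w$; together with cubic decay $e^{-w^3/3}$ in the wedge $|\arg(w-\theta_0)-\pi|<\pi/4$, this yields Hilbert--Schmidt bounds that are uniform in $\lambda\ge0$ after multiplication by the polynomial growth $\prod(z-b_k)$ in $B_b$. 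With those estimates in hand, Fubini, the cyclic determinant identity, and the shift on $L^2(\R_+)$ complete the argument.
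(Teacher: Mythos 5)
Your argument is correct and is essentially the paper's own: the paper simply observes that the Airy factorization goes through once $A(w,\lambda)$ is multiplied by $\prod_{k=1}^m(w-b_k)^{-1}$ and $B(\lambda,w')$ by $\prod_{k=1}^m(z-b_k)$, which is exactly your $A_b$, $B_b$. You have merely unpacked that one-line remark into the cyclic identity $\det(\Id-A_bB_b)=\det(\Id-B_bA_b)$ and the resulting contour comparison, so there is no substantive difference from the paper's proof.
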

\begin{proof}
The proof is as the one of Lemma~\ref{PFLemTWreformuation}, except that in $A(w,\lambda)$ is multiplied by $\prod_{k=1}^m\frac{1}{w-b_k}$ and $B(\lambda,w')$ by $\prod_{k=1}^m (z-b_k)$.
\end{proof}

\begin{lemma}\label{PFLemKPZreformuation}
Let $\widetilde K_{{\rm CDRP}}$ as in (\ref{PFeqKPZtilde}), $\mathcal{C}_w$ as in (\ref{PFeqCwKPZ}), and $K_{{\rm CDRP}}$ the CDRP kernel given in (\ref{PFeqKPZ}). Then it holds
\begin{equation*}
\det(\Id+\widetilde K_{{\rm CDRP}})_{L^2(\mathcal{C}_w)}=\det(\Id-K_{{\rm CDRP}})_{L^2(\R_+)}.
\end{equation*}
\end{lemma}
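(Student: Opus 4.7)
The plan is to mimic the strategy used in Lemma~\ref{PFLemTWreformuation}: twice apply the identity $\det(\Id-AB)=\det(\Id-BA)$ (valid once one checks the relevant operators are trace class, as in~\cite{Sim00}) in order to reduce $\widetilde K_{{\rm CDRP}}$ to $K_{\rm CDRP}$. The first reduction will pass through the Airy kernel as an intermediate step, and the second will recognize $K_{\rm CDRP}$ as in (\ref{PFeqKPZ}). The main obstacle is simply to linearize the sine factor appearing in $\widetilde K_{{\rm CDRP}}$.

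The key computation is that for $0<\Re((z-w)\sigma)<1$, the classical beta-function identity $\int_0^\infty u^{\alpha-1}/(1+u)\,du=\pi/\sin(\pi\alpha)$ (after setting $u=e^t$ and rescaling) gives
\begin{equation*}
\frac{\sigma \pi S^{(z-w)\sigma}}{\sin(\pi(z-w)\sigma)}=\int_{\R}\frac{S}{S+e^{-t/\sigma}}\,e^{(w-z)t}\,dt.
\end{equation*}
With the choice of contours in (\ref{PFeqCwKPZ}), one has $\Re(z-w)\sigma=1/2$ near the origin, so the hypothesis is satisfied (after a possible small deformation of $\mathcal{C}_w$). Substituting this into the expression (\ref{PFeqKPZtilde}) for $\widetilde K_{{\rm CDRP}}(w,w')$ and commuting the $t$- and $z$-integrals (justified by the Gaussian-type decay obtained from $e^{z^3/3}$ on the vertical line $\tfrac{1}{4\sigma}+\I\R$) allows us to write $\widetilde K_{{\rm CDRP}}=-AB$ where
\begin{equation*}
A(w,t)=e^{-w^3/3+wt},\qquad B(t,w')=\frac{S}{S+e^{-t/\sigma}}\cdot\frac{1}{2\pi\I}\int_{\frac{1}{4\sigma}+\I\R}dz\,\frac{e^{z^3/3-zt}}{z-w'},
\end{equation*}
with $A:L^2(\mathcal{C}_w)\to L^2(\R)$ and $B:L^2(\R)\to L^2(\mathcal{C}_w)$. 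Then $\det(\Id+\widetilde K_{{\rm CDRP}})_{L^2(\mathcal{C}_w)}=\det(\Id-BA)_{L^2(\R)}$, and a direct computation gives
\begin{equation*}
(BA)(t,t')=\frac{S}{S+e^{-t/\sigma}}\cdot\frac{1}{(2\pi\I)^2}\int_{\mathcal{C}_w}dw\int_{\frac{1}{4\sigma}+\I\R}dz\,\frac{e^{z^3/3-zt}\,e^{-w^3/3+wt'}}{z-w}.
\end{equation*}
Since $e^{-w^3/3}$ decays along $\mathcal{C}_w$ (which asymptotically lies at angles $\pm 3\pi/4$) and $e^{z^3/3}$ decays along $\tfrac{1}{4\sigma}+\I\R$, one may deform $\mathcal{C}_w$ to a contour from $e^{-2\pi\I/3}\infty$ to $e^{2\pi\I/3}\infty$ and the $z$-contour to one from $e^{-\pi\I/3}\infty$ to $e^{\pi\I/3}\infty$ without crossing the pole $z=w$ (verified by checking that the original contours already have $\Re(z)>\Re(w)$, and the deformed contours preserve this). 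After the deformation the double integral is precisely the Airy-kernel formula in Definition~\ref{GUEBBPdef}, so $(BA)(t,t')=\tfrac{S}{S+e^{-t/\sigma}}K_{\rm Ai}(t,t')$.

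Finally, using the integral representation $K_{\rm Ai}(t,t')=\int_0^\infty \Ai(t+\lambda)\Ai(t'+\lambda)\,d\lambda$ from Definition~\ref{GUEBBPdef}, factor once more as $\tfrac{S}{S+e^{-t/\sigma}}K_{\rm Ai}=A''B''$, with
\begin{equation*}
A''(t,\lambda)=\frac{S}{S+e^{-t/\sigma}}\Ai(t+\lambda),\qquad B''(\lambda,t')=\Ai(t'+\lambda),
\end{equation*}
where $A'':L^2(\R_+)\to L^2(\R)$ and $B'':L^2(\R)\to L^2(\R_+)$. Applying $\det(\Id-A''B'')=\det(\Id-B''A'')$ and observing, via Fubini, that $(B''A'')(\lambda,\lambda')=\int_\R\Ai(t+\lambda)\tfrac{S}{S+e^{-t/\sigma}}\Ai(t+\lambda')\,dt=K_{{\rm CDRP}}(\lambda,\lambda')$ yields
\begin{equation*}
\det(\Id+\widetilde K_{{\rm CDRP}})_{L^2(\mathcal{C}_w)}=\det(\Id-K_{{\rm CDRP}})_{L^2(\R_+)}.
\end{equation*}
The main technical point throughout is to verify that all operator products are trace class (or at least Hilbert--Schmidt in appropriate factorizations) so that the swap identity $\det(\Id-AB)=\det(\Id-BA)$ is legitimate; this follows from the Gaussian decay of $e^{\pm z^3/3}$ along the contours and the rapid decay of $\Ai$, as in the analogous step of Lemma~\ref{PFLemTWreformuation}.
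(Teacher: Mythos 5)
Your argument is correct, and it reaches the same conclusion as the paper's, but by a genuinely different route. The paper's proof uses a single swap identity: it factors $\widetilde K_{{\rm CDRP}}=AB$ using the geometric representation $\frac{1}{z-w'}=\int_{\R_+}d\eta\,e^{-\eta(z-w')}$ (with $B(\eta,w')=e^{\eta w'}$ and $A$ the remaining $z$-contour integral), so that $\det(\Id+AB)_{L^2(\mathcal{C}_w)}=\det(\Id+BA)_{L^2(\R_+)}$ passes directly to $L^2(\R_+)$ in one step; only afterwards is the sine identity used, together with the contour integral formula for $\Ai$, to recognize $-BA$ as the kernel (\ref{PFeqKPZ}). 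You instead apply the sine identity \emph{first}, factor through $L^2(\R)$, and are led to identify the intermediate operator $BA$ on $L^2(\R)$ as the multiplicatively deformed Airy operator $\tfrac{S}{S+e^{-t/\sigma}}K_{\rm Ai}$; you then need a \emph{second} application of the swap, via the representation $K_{\rm Ai}(t,t')=\int_{\R_+}\Ai(t+\lambda)\Ai(t'+\lambda)\,d\lambda$, to reach $L^2(\R_+)$. Your route is a little longer and requires two instances of the swap (and correspondingly two trace-class verifications), but it buys a clean conceptual checkpoint: it exhibits explicitly the equivalent determinant $\det(\Id-\tfrac{S}{S+e^{-\cdot/\sigma}}K_{\rm Ai})_{L^2(\R)}$ on the real line, a standard alternative formulation of this Fredholm determinant in the KPZ literature. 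One small remark: as you yourself flag, the sine identity requires $0<\Re((z-w)\sigma)<1$, which holds only after replacing $\mathcal{C}_w$ by the vertical line $-\tfrac{1}{4\sigma}+\I\R$; the paper's proof makes the same silent replacement when writing its $w$-integral over $-\tfrac{1}{4\sigma}+\I\R$, so this is not an extra gap on your part, but it deserves explicit mention in both treatments.
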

\begin{proof}
Using
\begin{equation*}
\frac{1}{z-w'}=\int_{\R_+}d\eta e^{-\eta(z-w')}
\end{equation*}
we get
\begin{equation*}
\widetilde K_{{\rm CDRP}}(w,w')=\int_{\R_+} d\eta A(w,\eta) B(\eta,w')
\end{equation*}
with $B(\eta,w')=e^{\eta w'}$ and
\begin{equation*}
A(w,\eta)=\frac{-1}{2\pi\I}\int_{\frac{1}{4\sigma}+\I\R}dz \frac{\sigma \pi S^{(z-w)\sigma}}{\sin(\pi(z-w)\sigma)} e^{z^3/3-w^3/3-\eta z}.
\end{equation*}
Thus $\det(\Id+\widetilde K_{{\rm CDRP}})_{L^2(\mathcal{C}_w)}=\det(\Id-K_{{\rm CDRP}})_{L^2(\R_+)}$
where $K_{{\rm CDRP}}=-BA$, namely
\begin{equation}\label{PFeqA9}
\begin{aligned}
K_{{\rm CDRP}}(\eta,\eta')&=-\frac{1}{2\pi\I}\int_{-\frac{1}{4\sigma}+\I\R}dw B(\eta,w) A(w,\eta')\\
&=\frac{1}{(2\pi\I)^2}\int_{-\frac{1}{4\sigma}+\I\R} dw \int_{\frac{1}{4\sigma}+\I\R} dz \frac{\sigma \pi S^{(z-w)\sigma}}{\sin(\pi(z-w)\sigma)}  \frac{e^{z^3/3-z\eta'}}{e^{w^3/3-w\eta}}.
\end{aligned}
\end{equation}
The next step uses the following identity: for $0<\Re(u)<1$ it holds
\begin{equation*}
\frac{\pi\, S^{u}}{\sin(\pi u)}=\int_{\R} \frac{S e^{u t}}{S+e^t}dt
\end{equation*}
from which, for $0<\Re(u)<1/\sigma$ it holds
\begin{equation*}
\frac{\pi\, \sigma S^{\sigma u}}{\sin(\pi \sigma u)}=\int_{\R} \frac{S e^{-u t}}{S+e^{-t/\sigma}}dt.
\end{equation*}
We can use wit $u=z-w$ and obtain
\begin{equation}
\begin{aligned}
K_{{\rm CDRP}}(\eta,\eta')&=\int_{\R}dt \frac{S}{S+e^{-t/\sigma}} \bigg(\frac{1}{2\pi\I}\int_{-\frac{1}{4\sigma}+\I\R} dw  e^{-w^3/3+w(\eta+t)}\bigg)\bigg(\frac{1}{2\pi\I} \int_{\frac{1}{4\sigma}+\I\R} dz e^{z^3/3-z(\eta'+t)}\bigg)\\
&=\int_{\R}dt \frac{S}{S+e^{-t/\sigma}} \Ai(\eta+t)\Ai(\eta'+t),
\end{aligned}
\end{equation}
the expression of (\ref{PFeqKPZ}).
\end{proof}

\bibliographystyle{cpam}
\bibliography{Biblio}

\end{document}